\documentclass[11pt]{article}
\usepackage[margin=1in]{geometry}
\usepackage{tikz}
\usetikzlibrary{fit,calc,positioning,decorations.pathreplacing,matrix}
\usepackage{lastpage}
\usetikzlibrary{arrows,positioning,shapes,matrix,backgrounds}
\usepackage{float}
\usepackage{bm}
\usepackage{multirow}
\usepackage{tabularx}
\usepackage{pdfpages}
\usepackage{xypic}
\usepackage{amssymb}
\usepackage{amsthm}
\usepackage{enumerate}
\usepackage{url}
\usepackage{amsmath}
\usepackage[tiny,compact]{titlesec}
\usepackage{graphicx}
\usepackage{marvosym}
\usepackage{bbm}
\usepackage{fancybox}
\usepackage{fancyheadings}
\usepackage[section]{placeins}
\usepackage{setspace}
\usepackage{pdflscape}
\usepackage{mathrsfs,euscript}
\usepackage[toc]{multitoc}
\usepackage[backgroundcolor=green!40, linecolor=green!40]{todonotes}
\usepackage{booktabs}
\usepackage{lipsum}
\usepackage[utf8]{inputenc}
\usepackage{mathscinet}
\usepackage{upgreek}
\usepackage{authblk}

%Spacing
\setlength\parindent{0pt}
\setlength\parskip{10pt}

%Tabular Commands -- not sure what they are for
\newcolumntype{Q}{>{$\displaystyle}c<{$}} 
\newcolumntype{A}{>{$}c<{$}}

%Numbering -- change if you don't like them

%Theorem environments
\numberwithin{equation}{section} 
\newtheorem{theorem}[equation]{Theorem}
\newtheorem{proposition}[equation]{Proposition}

\newtheorem{lemma}[equation]{Lemma}
\newtheorem{corollary}[equation]{Corollary}
\theoremstyle{definition}

\newtheorem{definition}[equation]{Definition}
\newtheorem{remark}[equation]{Remark}

%%%%%%%%%%%%%%%%%%%%%%%%%%%%%%%%%%%%%%%%%%%%%%%%%%%%%%%%%%%%%%%%%%%%%%%%%%%

%%%%%%%%%%%%%%%%%%%%%%%%%%%%%%%%%%%%%%%%%%%%%%%%%%%%%%%%%%%%%%%%%%%%%%%%%%%

\def\AA{\mathbf{A}}
\def\CC{\mathbf{C}}

\def\QQ{\mathbf{Q}}

\def\ZZ{\mathbf{Z}} 
\def\FF{\mathbf{F}}

%%%%%%%%%%%%%%%%%%%%%%%%%%%%%%%%%%%%%%%%%%%%%%%%%%%%%%%%%%%%%%%%%%%%%%%%%%%

\def\A{{\mathrm A}}
\def\B{{\mathrm B}}
\def\C{{\mathrm C}}

\def\E{{\mathrm E}}
\def\F{{\mathrm F}}
\def\G{{\mathrm G}}
\def\H{{\mathrm H}}
\def\I{{\mathrm I}}
\def\J{{\mathrm J}}
\def\K{{\mathrm K}}
\def\L{{\mathrm L}}
\def\M{{\mathrm M}}
\def\N{{\mathrm N}}
\def\P{{\mathrm P}}
\def\Q{{\mathrm Q}}
\def\R{{\mathrm R}}
\def\SS{{\mathrm S}}
\def\T{{\mathrm T}}
\def\U{{\mathrm U}}
\def\V{{\mathrm V}}
\def\W{{\mathrm W}}
\def\X{{\mathrm X}}
\def\Y{{\mathrm Y}}
\def\Z{{\mathrm Z}}

%%%%%%%%%%%%%%%%%%%%%%%%%%%%%%%%%%%%%%%%%%%%%%%%%%%%%%%%%%%%%%%%%%%%%%%%%%%

\def\Aa{\mathscr{A}}

\def\Cc{\EuScript{C}}

\def\Ee{\EuScript{E}}

\def\Hh{\mathscr{H}}

\def\Jj{\mathcal{J}}

\def\Ll{\mathscr{L}}
\def\Mm{\mathcal{M}}
\def\Nn{\mathcal{N}}

\def\Ss{\mathcal{S}}

\def\Uu{\mathcal{U}}
\def\Vv{\mathscr{V}}
\def\Ww{\mathcal{W}}

%%%%%%%%%%%%%%%%%%%%%%%%%%%%%%%%%%%%%%%%%%%%%%%%%%%%%%%%%%%%%%%%%%%%%%%%%%%

\def\Ga{\Gamma}
\def\La{\Lambda}

%%%%%%%%%%%%%%%%%%%%%%%%%%%%%%%%%%%%%%%%%%%%%%%%%%%%%%%%%%%%%%%%%%%%%%%%%%%

\def\a{\alpha} 
\def\b{\beta}
\def\d{\delta}
\def\e{\epsilon}

\def\g{\gamma}
\def\h{\varphi}

\def\l{\lambda}

\def\o{\EuScript{O}}
\def\p{\mathfrak{p}}
\def\s{\sigma}
\def\t{\theta}
\def\w{\varpi}
\def\om{\omega}

%%%%%%%%%%%%%%%%%%%%%%%%%%%%%%%%%%%%%%%%%%%%%%%%%%%%%%%%%%%%%%%%%%%%%%%%%%%

\def\>{\geqslant}
\def\<{\leqslant}

%%%%%%%%%%%%%%%%%%%%%%%%%%%%%%%%%%%%%%%%%%%%%%%%%%%%%%%%%%%%%%%%%%%%%%%%%%%

\def\mult#1{{#1}^{\times}}

\def\Gal{\operatorname{Gal}}
\def\GL{\operatorname{GL}}

\def\supp{\operatorname{supp}}
\def\diag{\operatorname{diag}}

\def\Hom{\operatorname{Hom}}
\def\Mat{\operatorname{M}}

\def\I{\operatorname{I}}

\def\tr{\operatorname{tr}}
\def\Nm{\operatorname{N}}

\def\Res{\operatorname{Res}}
\def\ind{\operatorname{ind}}
\def\Ind{\operatorname{Ind}}
\def\cind{\operatorname{ind}}
\def\Aut{\operatorname{Aut}}
\def\End{\operatorname{End}}

\def\As{{\rm As}}

%%%%%%%%%%%%%%%%%%%%%%%%%%%%%%%%%%%%%%%%%%%%%%%%%%%%%%%%%%%%%%%%%%%%%%%%%%%

\def\kk{\boldsymbol{k}}
\def\ee{\boldsymbol{k}_\E}

\def\be{{\mathbf{e}}}

\def\bl{{\boldsymbol{\l}}}

\def\bJ{{\boldsymbol{\J}}}
\def\TT{\boldsymbol{\Theta}}
\def\0{\boldsymbol{0}}

\def\aa{\mathfrak{a}}

\def\bb{\mathfrak{b}}

\def\pp{\mathfrak{p}}

\def\WW{\W}
\def\LL{\mathscr{L}}
\def\GG{\EuScript{G}}

\def\CCC{{\sf c}}

\def\CH{\textsf{H}}
\def\FC{{\mathrm R}}
\def\WW{\EuScript{W}}

\def\FK{{}}
\def\LS{\mathrm{LS}}
\def\RS{\mathrm{RS}}

%%%%%%%%%%%%%%%%%%%%%%%%%%%%%%%%%%%%%%%%%%%%%%%%%%%%%%%%%%%%%%%%%%%%%%%%%%%

\def\GGG{{\mathsf G}}
\def\HHH{{\mathsf H}}
\def\KKK{{\mathsf K}}
\def\one{\mathbf{1}}

\def\etavec{\uptau}

\def\ge{\geqslant}
\def\le{\leqslant}
\def\({\left(}
\def\){\right)}

\def\so{{\mathsf{o}}} %% Subscript _o (can change back to _0)
\def\ef{\omega_{\F/\F_\so}}     %% class field character, usually \eta or \eta_{\F/\F_\so}

\def\pp{{\rm p}}
\def\ttt{\rho}
\def\RRR{{\mathsf R}}

\def\psiu{\psi_{\U}}

\def\ignore#1{\relax}

\def\presuper#1#2%
  {\mathop{}%
   \mathopen{\vphantom{#2}}^{#1}%
   \kern-\scriptspace%
   #2}
   
\tikzset{
  every node/.style={scale=1}
}
\tikzset{
    %Define standard arrow tip
    %>=stealth',
    >=latex,
    %Define style for boxes
    punkt/.style={
           rectangle,
           rounded corners,
           draw=black, %very thick,
           minimum height=1em,
           %text centered
           },
    % Define arrow style
    pil/.style={
           ->,
           %ultra thick,
	   %thin,
	  % double,
	  % \arrow[scale=2]{>},
	   %double distance = 5pt,
           shorten <=10pt,
           shorten >=10pt,}
}
\tikzset{
  every node/.style={scale=1}
}

%%%%%%%%%%%%%%%%%%%%%%%%%%%%%%%%%%%
\pagestyle{fancy}
\fancyhead{} % clear all header fields
 % no line in header area
\fancyfoot{} % clear all footer fields
\fancyhead[C]{}
\fancyfoot[C]{Page \thepage \ of \pageref{LastPage}} %

\makeatletter
\newcommand{\resetHeadWidth}{\fancy@setoffs}
\makeatother
\titleformat{\subsubsection}[runin]{\normalsize\bfseries}{\thesubsubsection}{5pt}{}
\titleformat{\subsection}{\normalsize\bfseries}{\thesubsection}{5pt}{}
\titleformat{\section}{\normalsize\bfseries\filcenter}{\thesection}{5pt}{}
\titlespacing{\section}{0pt}{*0}{*0}
\titlespacing{\subsection}{0pt}{*0}{*0}
%%%%%%%%%%%%%%%%%%%%%%%%%%%%%%%%%%%

%\newcommand{\head}[1]{\textnormal{\textbf{#1}}}

\title{Galois self-dual cuspidal types and Asai local factors}

\author{U.~K.~Anandavardhanan, R.~Kurinczuk, N.~Matringe, 
V.~S{\'e}cherre and S.~Stevens}

% \author[1]{U.~K.~Anandavardhanan}
% \author[2]{Robert Kurinczuk}
% \author[3]{Nadir Matringe}
% \author[4]{Vincent S{\'e}cherre}
% \author[5]{Shaun Stevens}

% \affil[1]{Department of Mathematics,
% Indian Institute of Technology Bombay,
% Mumbai 400076, 
% India}

% \affil[2]{Department of Mathematics,
% Imperial College London,
% SW7 2AZ, 
% United Kingdom}

% \affil[3]{Université de Poitiers,
% Laboratoire de Mathématiques et Applications,
% Téléport 2,
% BP 30179,
% Boulevard Marie et Pierre Curie,
% 86962, 
% Futuroscope Chasseneuil Cedex, 
% France}

% \affil[4]{Universit{\'e} de Versailles St-Quentin-en-Yvelines,
% Laboratoire de Math{\'e}mati\-ques de Versailles, 
% 45 avenue des Etats-Unis,
% 78035 Versailles cedex, 
% France}

% \affil[5]{School of Mathematics, 
% University of East Anglia, 
% Norwich NR4 7TJ, 
% United Kingdom}

\begin{document}
\maketitle
\thispagestyle{fancy}
\pagestyle{fancy}

% \author{U.~K.~Anandavardhanan}
% \address{Department of Mathematics\\ 
% Indian Institute of Technology Bombay\\ 
% Mumbai - 400076, India}
% \email{anand@math.iitb.ac.in}

% \author{Robert Kurinczuk}
% \address{Department of Mathematics\\ 
% Imperial College London\\ 
% SW7 2AZ, United Kingdom}
% \email{robkurinczuk@gmail.com}

% \author{Nadir Matringe}
% \address{Université de Poitiers\\
% Laboratoire de Mathématiques et Applications\\ 
% Téléport 2\\ 
% BP 30179\\
% Boulevard Marie et Pierre Curie\\
% 86962, Futuroscope Chasseneuil Cedex, France}
% \email{nadir.matringe@math.univ-poitiers.fr}

% \author{Vincent Sécherre}
% \address{Laboratoire de Mathémati\-ques de Versailles\\
% UVSQ\\
% CNRS\\
% Université Paris-Saclay\\
% 78035, Versailles, France}
% \email{vincent.secherre@math.uvsq.fr}

% \author{Shaun Stevens}
% \address{School of Mathematics\\
% University of East Anglia\\
% Norwich NR4 7TJ, United Kingdom}
% \email{Shaun.Stevens@uea.ac.uk}

\begin{abstract}
Let~$\F/\F_{\so}$ be a quadratic extension of non-archimedean locally compact
fields of odd residual characteristic and $\s$ be its non-trivial automorphism.
We show that any~$\s$-self-dual cuspidal~re\-pre\-sentation of~$\GL_n(\F)$ 
contains a~$\s$-self-dual Bushnell--Kutzko type. 
Using such a~type, we~cons\-truct {an explicit test vector} for Flicker's local Asai 
$\L$-function of a $\GL_n(\F_\so)$-distingui\-shed cus\-pidal representation and 
compute the associated Asai root number. 
Finally, by using global~me\-thods, we compare this root number to 
Langlands--Shahidi's local Asai root number,
{and more generally}
we compare the corresponding epsilon factors 
{for any cuspidal representation}.
\end{abstract}

%%%%%%%%%%%%%%%%%%%%%%%%%%%%%%%%%%%%%%%%%%%%%%%%
\section{Introduction}
%%%%%%%%%%%%%%%%%%%%%%%%%%%%%%%%%%%%%%%%%%%%%%%%

\subsection{}
\label{P11}

Let~$\F/\F_\so$ be a quadratic extension of locally compact non-archimedean 
fields of odd residual characteristic~$p$ and let $\sigma$ denote the
non-trivial element of~$\Gal(\F/\F_\so)$. 
Let~$\G$ denote the general linear group $\GL_n(\F)$,
make $\s$ act on $\G$ componentwise
and let $\G^\s$ be the $\s$-fixed points subgroup~$\GL_n(\F_\so)$. 
In \cite{Flicker}, using the Rankin--Selberg method, Flicker has associated 
Asai local factors to any generic ir\-reducible (smooth, complex)
representation of $\G$.
Let $\N$ denote the subgroup of upper triangu\-lar uni\-po\-tent matrices in~$\G$, 
and~$\psi$ be a non-degenerate character of~$\N$ trivial on 
$\N^\s=\N\cap\G^\s$. 
Given a {generic} irreducible representation of $\G$,
let~$\Ww(\pi,\psi)$ denote its Whittaker model with respect to the 
Whittaker datum
$(\N,\psi)$, that is the unique subrepresentation of 
the smooth induced represen\-tation $\Ind_\N^\G(\psi)$ which 
is isomorphic to $\pi$.
Given a function $\W\in \Ww(\pi,\psi)$, 
a smooth compactly~sup\-por\-ted com\-plex function~$\Phi$ on $\F_\so^n$
and a complex number $s\in\CC$,
the associated local Asai integral~is 
\begin{equation*}
\label{LAI}
\I_\As(s,\Phi,\W) = \int_{\N^\sigma\backslash \G^\sigma} \W(g)
\Phi((0\ \dots\ 0\ 1) g)|\det(g)|_\so^s\ dg,
\end{equation*}
where~$|\cdot|_\so$ is the normalized absolute value of $\F_\so$ and $dg$ 
is a right invariant measure on $\N^\s\backslash\G^\s$.~This 
inte\-gral~is convergent when the real part of $s$ is large enough,
and it is a rational function in $q_\so^{-s}$,
where $q_\so$~is the cardinality of the residual field of $\F_\so$. 
When one varies the functions $\W$ and $\Phi$,~these 
integrals generate a fractional ideal 
of $\CC[q_\so^s,q_\so^{-s}]$. 
The Asai $\L$-function~$\L_{\As}(s,\pi)$~of $\pi$ is defined as a generator, 
suitably normalized, of this fractional ideal. 
It does not depend on the choice of~the non-degenerate character $\psi$.

\subsection{}
\label{ourson}

Now consider a cuspidal (irreducible, smooth, complex) representation $\pi$ of 
$\G$, and suppose that~its Asai $\L$-function $\L_{\As}(s,\pi)$ is 
non-trivial. 
By~\cite{MatringeManuscripta}, 
this happens if and only if $\pi$ has a \textit{dis\-tin\-guished} unramified 
twist, that is, an unramified twist carrying a non-zero $\G^\s$-invariant 
linear form.
In this case, the Asai $\L$-function
can be described~ex\-pli\-citly (see Proposition~\ref{MatMan}).
We prove that it can~be realized as a single Asai integral:

\begin{theorem}[Theorem \ref{Testvectortheorem} and 
Corollary \ref{Testvectorcorollary}]
\label{Testvectorcorollaryintro}
Let $\pi$ be a cuspidal representation of $\G$ having a 
distinguished~un\-ra\-mified twist.
Then there is an explicit function $\W_0\in\Ww(\pi,\psi)$ such that 
\begin{equation*}
\I_\As(s,\Phi_0,\W_0) = \L_{\As}(s,\pi)
\end{equation*}
where $\Phi_0$ is the characteristic function of the lattice $\o_\so^n$ in 
$\F_\so^n$ and $\o_\so$ is the ring of integers of $\F_\so$.
\end{theorem}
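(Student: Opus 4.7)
The plan is to construct the test vector $\W_0$ from a $\s$-self-dual maximal simple type $(\J,\l)$ contained in $\pi$---whose existence is the content of the $\s$-self-dual type theorem stated earlier---and then to evaluate $\I_\As(s,\Phi_0,\W_0)$ by unfolding along an Iwasawa decomposition, comparing the outcome with the explicit formula for $\L_\As(s,\pi)$ recorded in Proposition~\ref{MatMan}.

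\emph{Step 1: the test vector.} Fix a $\s$-self-dual type $(\J,\l)$ in $\pi$. The hypothesis that $\pi$ has a distinguished unramified twist allows one to produce an explicit vector $v\in\l$ that is equivariant under $\J^\s=\J\cap\G^\s$ for the natural distinguishing character. Let $\Lambda$ be the extension of $\l$ to the $\G$-normalizer $\widetilde\J$ for which $\pi\cong\cind_{\widetilde\J}^\G\Lambda$. Define $\W_0\in\Ww(\pi,\psi)$ as the image of $v$ under an appropriately normalized Whittaker functional on $\Lambda$; by construction, $\W_0$ is supported, modulo $\N$ on the left, on finitely many translates of $\widetilde\J$ inside a fixed Iwasawa cell, and its values on those translates can be read off from $\Lambda$.

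\emph{Step 2: unfolding and matching.} Using the Iwasawa decomposition $\G^\s=\N^\s\A^\s\GL_n(\o_\so)$ with $\A$ the diagonal torus, and the $\GL_n(\o_\so)$-invariance of $\Phi_0$, the factor $\Phi_0((0\ \dots\ 0\ 1)g)$ reduces to a half-space condition on the last coordinate of the $\A^\s$-piece of $g$. Combined with the support restrictions on $\W_0$ coming from the type, this collapses the integration over $\N^\s\backslash\G^\s$ to a sum over a finitely generated sub-semigroup of $\A^\s$. The resulting sum is a geometric series in $q_\so^{-s}$ whose factors are indexed by the unramified twists of $\l$ inside $\widetilde\J/\J$ that preserve distinction. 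Summing and comparing with the product formula for $\L_\As(s,\pi)$ in Proposition~\ref{MatMan} yields the identity $\I_\As(s,\Phi_0,\W_0)=\L_\As(s,\pi)$.

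The main obstacle is Step~1: engineering $v$ so that $\W_0$ has support controlled precisely enough for the integral to telescope without spurious contributions, while still being non-zero on the single cell that produces each factor of the $\L$-function. This requires a detailed analysis of the restriction of $\l$ to $\J^\s$, of the interaction of $\widetilde\J$ with the mirabolic and with the Iwasawa decomposition, and of the intertwining of $\l$ with $\s$. It is exactly here that the $\s$-self-dual type theorem does the heavy lifting: the self-duality ensures the existence of a $\J^\s$-equivariant vector and simultaneously pins down the geometry of the double cosets that survive in the Asai integral, making the explicit comparison with Proposition~\ref{MatMan} feasible.
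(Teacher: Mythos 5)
Your plan points in the right general direction (build $\W_0$ from a $\s$-self-dual type, compute the integral by support considerations, match against Proposition~\ref{MatMan}), but as written it has three genuine gaps, and the first is fatal to Step~1 as stated. An arbitrary $\s$-self-dual type $(\bJ,\bl)$ does \emph{not} in general satisfy $\Hom_{\bJ\cap\N}(\bl,\psi)\neq 0$: when the canonical extension $\T/\T_\so$ is ramified there are $\lfloor m/2\rfloor+1$ distinct $\G^\s$-conjugacy classes of $\s$-self-dual types in $\pi$ (Proposition~\ref{classesdetypesstables7}), and only the class of index $\lfloor m/2\rfloor$ is \emph{generic}, i.e.\ compatible with the Whittaker datum (Proposition~\ref{brunau}). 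Without selecting this class, the ``appropriately normalized Whittaker functional on $\Lambda$'' you invoke need not exist, and the resulting $\W_0$ need not be supported on $\N\bJ$. The paper's construction is also different in substance from ``image of a distinguished vector'': $\W_\bl$ is the Paskunas--Stevens explicit Whittaker function built from the Bessel function $\Jj_\bl$ of the generic type, $\W_\bl(nj)=\psi(n)\Jj_\bl(j)$.

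Two further inputs are missing from Step~2. First, to make the surviving cells contribute you need the period on $\bl$ (integration of $\Jj_\bl$ over $(\J\cap\N^\s)\backslash(\J\cap\P^\s)$) to be invariant under the full group $\bJ^\s$, not just under $\J\cap\P^\s$; this is exactly where distinction enters, via Ok's theorem $\Hom_{\P^\s}(\pi,\one)=\Hom_{\G^\s}(\pi,\one)$ and its descent to the type (Proposition~\ref{Ok for types}). Your appeal to the existence of a $\J^\s$-equivariant vector does not by itself give this. Second, the final matching is not a formal comparison with Proposition~\ref{MatMan}: one must actually determine the set $\X(\pi)$ of distinguishing unramified characters, which requires the dichotomy theorem (Theorem~\ref{dichotomy}) and Proposition~\ref{prop:lemma16}; only then does $\L_\As(s,\pi)$ collapse to the single factor $(1-q_\so^{-sn/e_\so})^{-1}$, where $e_\so$ is the period of the parahoric $\aa^\times\cap\G^\s$ computed in Lemma~\ref{lem:e00} --- and that computation itself uses the parity constraint on $m$ from Proposition~\ref{polonius}. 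Identifying the exponent $n/e_\so$ on both sides is the heart of the theorem and is not supplied by your sketch.
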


This thus provides an integral formula for the Asai $\L$-function of $\pi$. 
As an application of this~theo\-rem, we com\-pu\-te the associated root 
number: 
the Asai $\L$-functions of $\pi$ and its contragredient $\pi^\vee$ are related 
by a functional equation \eqref{FE}, in which appears a local Asai epsilon factor 
$\e_{\As}(s,\pi,\psi_\so,\delta)$ 
depending on a non-trivial character $\psi_\so$ of $\F_\so$
and a non-zero scalar $\delta\in\F^\times$ such that $\tr_{\F/\F_\so}(\delta)=0$.
We prove the following theorem conjectured in
  \cite[Remark 4.4]{AnandRoot}.
  Such a theorem can be seen as~the twisted tensor analogue of \cite[Theorem 
  2]{BH99} in the cuspidal case and also as the Rankin--Selberg~coun\-terpart 
  of \cite[Theorem 1.1]{AnandRoot} in the cuspidal distinguished case.

\begin{theorem}[Theorem \ref{distinguished cuspidal Asairootnumber}]
\label{rootnumberintro}
Let $\pi$ be a distinguished cuspidal representation of $\G$.
Then:
\begin{equation*}
\e_{\As}\left(\frac{1}{2},\pi,\psi_\so,\delta\right) = 1.
\end{equation*}
\end{theorem}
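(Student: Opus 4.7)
The plan is to combine the explicit test vector of Theorem~\ref{Testvectorcorollaryintro} with the Asai functional equation~\eqref{FE}, evaluated at the central point $s=1/2$, in such a way that both sides are simultaneously computed by test vectors. The key preliminary observation is that the contragredient $\pi^\vee$ of a distinguished cuspidal representation is itself distinguished (since $\pi^\vee\cong\pi^\s$ by the $\s$-self-duality forced on distinguished cuspidals, and distinction is preserved under $\s$), so Theorem~\ref{Testvectorcorollaryintro} applies to $\pi^\vee$ as well.

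First I would reduce to a convenient choice of the auxiliary data $(\psi_\so,\delta)$ by invoking the standard transformation formula for Asai epsilon factors under changes of additive character and of $\delta$. Taking $\psi_\so$ of conductor $\o_\so$ and $\delta$ a unit of trace zero arranges that the Fourier transform appearing in~\eqref{FE} preserves the Schwartz function $\Phi_0=\one_{\o_\so^n}$ up to an easily tracked unit. Next, I would apply Theorem~\ref{Testvectorcorollaryintro} to $\pi$ to obtain $\I_\As(s,\Phi_0,\W_0)=\L_\As(s,\pi)$, and in parallel to $\pi^\vee$ to obtain an explicit $\W_0^\vee\in\Ww(\pi^\vee,\psi^{-1})$ with $\I_\As(s,\Phi_0,\W_0^\vee)=\L_\As(s,\pi^\vee)$.

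The crucial step is to identify, up to an explicit nonzero scalar, the Whittaker function $\widetilde{\W_0}$ appearing on the dual side of~\eqref{FE} (built from $\W_0$ by the contragredient involution $\W\mapsto\W(w_n\,{}^{t}g^{-1})$, with $w_n$ the long Weyl element) with $\W_0^\vee$. This ultimately reduces to tracing through the construction of Theorem~\ref{Testvectorcorollaryintro}: the function $\W_0$ is built from the $\s$-self-dual Bushnell--Kutzko type of $\pi$, and by the uniqueness of $\s$-self-dual types cited in the introduction, the contragredient of this type is the $\s$-self-dual type of $\pi^\vee$ out of which $\W_0^\vee$ is produced by the same recipe. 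Once this match is established, evaluating~\eqref{FE} at $s=1/2$ and using the two test vector formulae to cancel the $\L$-factor ratio $\L_\As(1/2,\pi^\vee)/\L_\As(1/2,\pi)$ collapses the functional equation to the identity $\e_\As(1/2,\pi,\psi_\so,\delta)=1$.

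I expect the main obstacle to lie precisely in the matching of $\widetilde{\W_0}$ with the test vector of $\pi^\vee$. The contragredient involution reshuffles the Whittaker data in a nontrivial way, and although one expects its output to agree, up to a scalar, with the function produced by running the test vector recipe for $\pi^\vee$, verifying this demands a careful comparison at the level of the underlying $\s$-self-dual types and of their explicit Whittaker functionals. Bookkeeping the scalars arising from the contragredient formula, the Fourier normalization, and the measure choices on $\N^\s\backslash\G^\s$ should then show that they all cancel, yielding the claimed value $1$.
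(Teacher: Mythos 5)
Your plan correctly identifies the key ingredients (the test vector, the functional equation, and the fact that $\pi^\vee\cong\pi^\s$ is again distinguished), and the reduction to a good choice of $(\psi_\so,\delta)$ with $\widehat{\Phi}_0=\Phi_0$ is sound. However, you have a genuine gap at exactly the step you flag as the obstacle, and it is not a bookkeeping matter: you need $\widetilde{\W_0}$ to equal the test vector for $\pi^\vee$ with scalar \emph{exactly} $1$, and nothing in your argument pins down this scalar. If $\widetilde{\W_0}=c\,\W_0^\vee$, your computation gives $\e_\As(s,\pi,\psi_\so,\delta)=c$, so you have simply relocated the problem to determining $c$. The involution $\W\mapsto\widetilde{\W}(g)=\W(w_0g^*)$ conjugates the type $(\bJ,\bl)$ into a new type in the $\G$-conjugacy class of a generic $\s$-self-dual type for $\pi^\vee$, but it does \emph{not} land on the nose in the $\N^\s$-conjugacy class used to build the Paskunas--Stevens Whittaker function for $\pi^\vee$, nor does the Bessel function transform in a way that obviously preserves the normalization $\Jj_{\bl}(1)=1$ after conjugation by $w_0$ and transpose-inverse. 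Establishing $c=1$ directly would require re-deriving, at the level of Bessel functions, precisely the kind of identity that the paper's proof is designed to avoid.

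The paper's actual route is different and sidesteps the matching entirely. It first notes (using the functional equation twice, the identity $\pi^\vee\cong\pi^\s$, and triviality of $\omega_\pi$ on $\F_\so^\times$) that $\e_\As^\FK(1/2,\pi,\psi_\so,\delta)\in\{\pm1\}$, so one only needs a sign. It then shows positivity by comparing two $\G^\s$-invariant linear functionals $l$ and $l'$ on $\Ww(\pi,\psi)$ (integrals over $\N^\s\backslash\P^\s$ of $\W$ and $\widetilde{\W}$): both are $\G^\s$-invariant by Ok's theorem, hence proportional by multiplicity one, and the proportionality constant is \emph{positive} by a result of Anandavardhanan and Matringe. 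Combining this with the computation of \cite{MR2063106} relating $\I_\As(1,\widehat{\Phi}_0,\widetilde{\W}_0)$ to $\Phi_0(0)\,l'(\W_0)$, and the test vector evaluation of $\L_\As(1,\pi)$, gives $\e_\As^\FK(0,\pi,\psi_\so,\delta)>0$ up to manifestly positive constants, which suffices. Notice that this never requires knowing $\widetilde{\W}_0$ exactly, only that a certain scalar comparing two functionals is positive. To fix your proof you would either need to carry out the Bessel-function matching (likely reproving the positivity ingredient in a harder form), or adopt the dichotomy-plus-positivity structure as the paper does.
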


Our proof of this theorem
is purely local and relies on Theorem \ref{Testvectorcorollaryintro}.

Independently, using a global argument, we compare the 
Asai epsilon factor $\e_{\As}(s,\pi,\psi_\so,\delta)$ 
with~the local Asai epsilon factor $\e_{\As}^{{\rm LS}}(s,\pi,\psi_\so)$ 
defined via the Langlands--Shahidi method. 

\begin{theorem}[Theorem \ref{thm cuspidal epsilon equality}]
\label{epsilonFKLSintro}
Let $\pi$ be a cuspidal representation of $\G$ with central character 
$\omega_{\pi}$. 
For any non-trivial character $\psi_\so$ of $\F_\so$
and non-zero scalar $\delta\in\F$ such that $\tr_{\F/\F_\so}(\delta)=0$, we have:
\begin{equation*}
\e_{\As}(s,\pi,\psi_\so,\delta) = 
\om_\pi(\d)^{n-1}\cdot
|\d|^{\frac{n(n-1)}{2} \left(s-\frac{1}{2}\right)}\cdot
\l(\F/\F_\so,\psi_\so)^{-\frac{n(n-1)}{2}}\cdot
\e_{\As}^{{\rm LS}}(s,\pi,\psi_\so) 
\end{equation*}
where $|\delta|$ is the normalized absolute value of $\delta$
and $\l(\F/\F_\so,\psi_\so)$ is the local Langlands constant. 
\end{theorem}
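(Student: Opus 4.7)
The plan is to follow a global strategy: realize the local cuspidal representation $\pi$ as the local component at a place $v_0$ of a global cuspidal automorphic representation $\Pi$ of $\GL_n$ over a suitable global quadratic extension, and compare the two global Asai functional equations. Both Flicker's construction and the Langlands--Shahidi construction produce the completed Asai $\L$-function of the same global $\Pi$, so the resulting global Asai epsilon factors coincide. The task therefore reduces to controlling the discrepancy between the two local definitions place by place away from $v_0$.

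First I would establish the formula for generic irreducible representations of $\GL_n(\F)$ that are fully parabolically induced from a proper Levi. Both Asai epsilon factors satisfy a multiplicativity principle: if $\tau$ is the parabolic induction of $\tau_1\odo\tau_r$, then each of $\e_{\As}(s,\tau,\psi_\so,\d)$ and $\e_{\As}^{\LS}(s,\tau,\psi_\so)$ decomposes into a product of Asai epsilon factors of the $\tau_i$ and of Rankin--Selberg pair epsilon factors $\e(s,\tau_i\times\tau_j^{\s})$ for $i<j$. Combined with the known comparison between Jacquet--Piatetskii-Shapiro--Shalika and Shahidi Rankin--Selberg epsilon factors, an induction on $n$ reduces the comparison to the rank-one case. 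For $n=1$ the Asai epsilon factor is simply the epsilon factor of the restriction of $\pi$ from $\F^\times$ to $\F_\so^\times$ (suitably twisted by $\d$ on the Flicker side), and the identity is verified directly from the defining property of the local Langlands constant $\l(\F/\F_\so,\psi_\so)$.

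Next I would globalize: embed $\F/\F_\so$ as the completion at a place $v_0$ of a quadratic extension $E/E_0$ of global fields and construct a cuspidal automorphic representation $\Pi$ of $\GL_n(\AA_E)$ with $\Pi_{v_0}\cong\pi$ and with the property that at every other finite place $v\neq v_0$ the local component $\Pi_v$ is either unramified or fully induced from a proper Levi subgroup, the archimedean components being handled by results of Shahidi. Such a cuspidal globalization with controlled ramification is available by standard simple-trace-formula or Poincar\'e-series arguments. Taking the ratio of the two global functional equations, the product over all places of the local ratios of Flicker and Langlands--Shahidi Asai epsilon factors equals $1$; combined with the preceding comparison at every place $v\neq v_0$, this isolates the factor at $v_0$ and yields the desired identity for $\pi$.

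The main technical obstacle lies in the first step, namely the careful bookkeeping of the three correction terms --- the central character contribution $\om_\pi(\d)^{n-1}$, the $\d$-twist $|\d|^{\frac{n(n-1)}{2}(s-\frac{1}{2})}$, and the power of the Langlands constant $\l(\F/\F_\so,\psi_\so)^{-\frac{n(n-1)}{2}}$ --- under parabolic induction. The combinatorial coefficient $\binom{n}{2}$ must match the number of pairs $(i,j)$ with $i<j$ arising from the Asai decomposition, and the Rankin--Selberg comparison for each pair $\tau_i\times\tau_j^{\s}$ must contribute precisely the expected powers of $\l(\F/\F_\so,\psi_\so)$, of $|\d|$, and of the central characters of the $\tau_i$. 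A secondary, more routine, obstacle is constructing a cuspidal globalization with prescribed local component $\pi$ at $v_0$ and no supercuspidal ramification elsewhere.
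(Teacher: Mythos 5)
Your high-level strategy --- globalize $\pi$, compare the two global Asai functional equations, and peel off local contributions away from the place of interest --- is precisely the strategy of the paper. But the way you reduce the local contributions at other places contains a genuine gap.

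Your first step asserts a multiplicativity principle for \emph{both} Asai epsilon factors under parabolic induction. For the Langlands--Shahidi factor this is true by construction. For the Flicker--Kable (Rankin--Selberg integral) factor $\e_{\As}^{\FK}(s,\cdot,\psi_\so,\delta)$, however, multiplicativity under parabolic induction is \emph{not} an established fact: the paper cites inductivity results only for the Asai $\L$-function (via \cite{MatringeConjectures}), never for the epsilon factor, and the whole design of the argument in Section~\ref{section global} is to avoid needing it. Asserting it would require substantial new work with the Asai local integrals and is not something one can invoke off the shelf. Your induction also stalls if some $\tau_i$ in the cuspidal support has $n_i>1$: you would then need the Asai comparison for a smaller cuspidal representation, which is exactly the theorem being proved. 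You would instead have to arrange the globalization so that all other inert finite components are principal series, which in practice means arranging them to be unramified --- and that is what the paper does, so at those places you only need the explicit unramified comparison (Theorem~\ref{thm unramified epsilon equality}). Finally, you say the archimedean components are ``handled by results of Shahidi,'' but the Flicker--Kable Asai theory at an inert archimedean place ($\CC/\RR$) was not available at the time and is not covered by the references you would cite; the paper sidesteps this by choosing the global quadratic extension so that \emph{every} archimedean place (and every place over $2$) splits, making the Asai factor a Rankin--Selberg pair factor there. Once you build these constraints into the globalization, your Step~1 becomes unnecessary and your plan collapses to the paper's: unramified comparison at inert places $\ne v_0$, JPSS-vs-Shahidi comparison at split places, and the quotient of global functional equations.
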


When in addition $\pi$ is distinguished,
Theorem \ref{epsilonFKLSintro}
and \cite[Theorem 1.1]{AnandRoot} together imply
Theorem~\ref{rootnumberintro},
which thus gives us another proof, using a global argument, 
of this theorem. 
% (see Remark \ref{coherent}).

\subsection{}

Let us now explain our strategy to prove Theorem 
\ref{Testvectorcorollaryintro}. 
The basic idea is to use Bushnell-Kutzko's theory of types \cite{BK}, 
which provides an explicit model for a cuspidal representation $\pi$ of $\G$ 
as~a~com\-pactly induced representation from an extended maximal simple type: 
a pair $(\bJ,\bl)$ consisting of~an irreducible representation $\bl$ of a 
compact mod centre, open subgroup $\bJ$ of $\G$, 
constructed via a~pre\-cise recipe, such that 
\begin{equation}
\label{CI}
\pi \simeq \ind^\G_{\bJ}(\bl).
\end{equation}
Such an extended maximal simple type $(\bJ,\bl)$ is unique up to 
$\G$-conjugacy, 
and we abbreviate by saying that $\pi$ contains the type $(\bJ,\bl)$ when 
\eqref{CI} holds.

Suppose now $\pi$ is distinguished.
By a result of Prasad and Flicker (\cite{PrasadDUKE,Flicker}), 
it is then \textit{$\sigma$-self-dual},
that is, its contragredient representation~$\pi^\vee$ is isomorphic to 
$\pi^{\sigma}=\pi\circ \sigma$.
In order to compute a test vector for $\pi$, 
{that is, the explicit function $\W_0$ of 
Theorem \ref{Testvectorcorollaryintro}},
our first task is to isolate a 
type, among those contained in $\pi$,
which behaves well with respect to $\s$.

Our first main result is the following.
For further use in another context (see \cite{VS}),
we state it and prove it for cuspidal representations of $\G$ with 
coefficients not necessarily in $\CC$,
but more generally in an algebraically closed field $\FC$ of characteristic 
different from $p$.
For Bushnell-Kutzko's theory in this more general context, in particular the 
description of cuspidal $\FC$-representations by compact induction of 
extended maximal simple types, see \cite{Vig96,MSt}.

\begin{theorem}[Theorem \ref{PIMAIN}]
\label{PIMAINintro}
Let $\pi$ be a cuspidal representation of $\G$ with coefficients in $\FC$.
Then $\pi$ is $\sigma$-self-dual if and only if it contains a 
$\sigma$-self-dual type, that is, a type $(\bJ,\bl)$ such that $\s(\bJ)=\bJ$ 
and $\lambda^\sigma\simeq\lambda^\vee$. 
\end{theorem}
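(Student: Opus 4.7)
The plan is to prove the ``if'' direction by a direct compact-induction computation and the ``only if'' direction by a Galois-descent argument carried out along the Bushnell--Kutzko stratification of the type, with the obstruction measured by a non-abelian cohomology class.

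For the ``if'' direction, assume $(\bJ,\lambda)$ is $\s$-self-dual. Compact induction from an open subgroup compact modulo the centre commutes with the contragredient for these cuspidal inductions, so
\[
\pi^{\s} \;\simeq\; \cind_{\s(\bJ)}^{\G}(\lambda^{\s}) \;=\; \cind_{\bJ}^{\G}(\lambda^{\vee}) \;\simeq\; \pi^{\vee},
\]
whence $\pi$ is $\s$-self-dual.

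For the converse, fix any type $(\bJ_{0},\lambda_{0})$ contained in $\pi$. As $\pi^{\vee}\simeq\pi^{\s}$, both $(\bJ_{0},\lambda_{0}^{\vee})$ and $(\s(\bJ_{0}),\lambda_{0}^{\s})$ are types in $\pi^{\vee}$, so by uniqueness of types up to $\G$-conjugacy (Bushnell--Kutzko in the complex case, M\'{\i}nguez--S\'echerre in the modular case) there exists $g\in\G$ with $g\,\s(\bJ_{0})\,g^{-1}=\bJ_{0}$ and $g\cdot\lambda_{0}^{\s}\cdot g^{-1}\simeq\lambda_{0}^{\vee}$; applying $\s$ once more gives $g\,\s(g)\in N_{\G}(\bJ_{0})$. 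My goal is then to modify $(\bJ_{0},\lambda_{0})$ within its $\G$-conjugacy class until $g$ may be taken equal to~$1$. The first stage is to arrange $\s(\bJ_{0})=\bJ_{0}$: the normaliser $N_{\G}(\bJ_{0})$ is generated by $\bJ_{0}$, the centre $\F^{\times}$, and a single prime element of the type, and is essentially cyclic modulo $\bJ_{0}\F^{\times}$. Projecting $g\,\s(g)$ through this cyclic quotient and invoking ordinary Hilbert~90, $\H^{1}(\Gal(\F/\F_{\so}),\F^{\times})=1$, together with its non-abelian counterpart $\H^{1}(\Gal(\F/\F_{\so}),\GL_{n}(\F))=1$, I adjust $g$ inside $N_{\G}(\bJ_{0})$ until $g\,\s(g)=1$ and then write $g=h^{-1}\s(h)$, so that the $h$-conjugate type has $\bJ_{0}$ genuinely $\s$-stable.

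With $\s(\bJ_{0})=\bJ_{0}$ now fixed, the second stage is to refine $\lambda_{0}$ through the three canonical layers of the Bushnell--Kutzko type --- the simple character $\theta$ on the pro-$p$ Heisenberg subgroup, its $\b$-extension $\k$ to the maximal compact part of $\bJ_{0}$, and the cuspidal representation $\tau$ of the finite reductive quotient $\bJ_{0}/\bJ_{0}^{1}$ --- conjugating at each layer by an element of $\bJ_{0}\cap\G^{\s}$ to turn the isomorphism $\lambda_{0}^{\s}\simeq\lambda_{0}^{\vee}$ into an equality of representatives rather than isomorphism classes. I expect the main obstacle to be this descent at the simple-character layer: a $\s$-invariant isomorphism class of simple characters must be shown to contain a representative $\theta$ satisfying $\theta^{\s}=\theta^{-1}$, which amounts to the vanishing of the relevant non-abelian first cohomology inside the pro-$p$ group supporting $\theta$. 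It is precisely here that the hypothesis of odd residual characteristic enters, ensuring that the pro-$p$ cohomology for the involution $\s$ vanishes. Once the simple character is descended, the $\b$-extension layer follows from its essential uniqueness given $\theta$, and the descent of $\tau$ reduces to a Glauberman-type statement for cuspidal representations of the $\s$-fixed finite reductive group.
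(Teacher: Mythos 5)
Your ``if'' direction and the opening reduction of the converse (both $(\bJ_0,\bl_0^\vee)$ and $(\s(\bJ_0),\bl_0^\s)$ are types for $\pi^\vee$, hence conjugate by some $g$ with $g\s(g)$ normalizing $\bJ_0$) agree with the paper. The gap is in your first stage. You assert that $g$ can be adjusted until $g\s(g)=1$ by projecting to the cyclic quotient of the normalizer and invoking Hilbert~90 for $\F^\times$ and for $\GL_n(\F)$. But Hilbert~90 for $\GL_n(\F)$ only applies \emph{after} you know $g\s(g)=1$; the obstruction to achieving this is a class in the non-abelian cohomology set $\CH^1(\s,\bJ_0)$ (suitably twisted), and the cyclic quotient $\bJ_0/\J\F^\times$ only sees its ``valuation'' part. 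The essential contribution comes from $\J/\J^1\simeq\GL_m(\kk_\E)$ and from the pro-$p$ filtration below it, and the cohomology of the reductive quotient is \emph{not} trivial in general: when $\s$ acts trivially on $\kk_\E$ (the ramified case) it is indexed by conjugacy classes of involutions, which is precisely why the paper finds $\lfloor m/2\rfloor+1$ distinct $\G^\s$-classes of $\s$-self-dual types (Lemma~\ref{parcimonie}, Proposition~\ref{classesdetypesstables7}). So one must show that the \emph{particular} class arising from $\pi^\s\simeq\pi^\vee$ can be trivialized, and that is the real content of the theorem. The paper does this not on $\bJ_0$ but at the level of the simple character: after a $\s$-compatible Skolem--Noether embedding of the tame parameter field $\T$ (Lemma~\ref{L1}, Corollary~\ref{L3}) and interior tame lifting, it reduces to the maximal totally wild case, where $\U(\aa)/\U^1(\aa)\simeq\kk^{\times n}$ is handled by Hilbert~90 for the residue field (unramified case) or a choice of uniformizer with $\s(\w)=-\w$ (ramified case), and the abelian $p$-group quotients $\U^i/\U^{i+1}$ are handled by a convergent successive approximation (Lemmas~\ref{FrancoisSturel} and~\ref{marechal}). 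Your phrase ``vanishing of the relevant non-abelian first cohomology inside the pro-$p$ group supporting $\t$'' misses this: the conjugations live in $\U(\aa)$, not in the pro-$p$ group $\H^1(\aa,\b)$, and the tame part of the obstruction is where the work and the unramified/ramified case division actually sit.

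Your second and third stages are, by contrast, unnecessary, and in the form stated would be hard to execute. Once one has a maximal simple character $\t$ with $\s$-stable $\H^1(\aa,\b)$ and $\t\circ\s=\t^{-1}$ (plus $\s(\b)=-\b$ via Proposition~\ref{Pompidou}), there is no need to descend the $\b$-extension or the level-zero part, nor to upgrade $\bl^\s\simeq\bl^\vee$ to an equality of representatives --- the theorem only asks for an isomorphism. The paper concludes directly: the element $g$ conjugating $\bl$ to $\bl^{\vee\s}$ intertwines $\t$ (both contain $\t$ on $\H^1$), hence lies in $\bJ\B^\times\bJ$ by \cite[Theorem 3.3.2]{BK}; taking $g\in\B^\times$ and using that $g$ normalizes $\bJ$ forces $g\in\bJ$, whence $\bl^\s\simeq\bl^\vee$ with no Glauberman-type input. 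The effort you allocate to those stages should instead go into the reduction to the totally wild case and the approximation argument there.
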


Theorem \ref{PIMAINintro} generalizes \cite[Lemma 2.1]{MurnaghanRepkaTAMS99}, 
which deals with the case where $\pi$ is essentially tame, that is, 
the number of unramified 
characters $\chi$ of $\G$ such that $\pi\chi\simeq\pi$ is prime to $p$.

The proof relies on Bushnell--Henniart's theory of 
endo-classes and tame lifting~\cite{BHLTL1,BHEffectiveLC};
although they are technical in nature, it is both natural and
necessary to consider endo-classes since, in the case of representations that 
are not essentially tame, there are no simpler canonical parameters to 
construct types.
The assump\-tion that $p\neq2$ is cru\-cial here,
since we use at various places the fact that the first cohomology set of 
$\Gal(\F/\F_0)$ in a pro-$p$-group is trivial.

\subsection{}

In general, a $\s$-self-dual type as in Theorem \ref{PIMAINintro}
is not unique up to 
$\G^\s$-conjugacy: see Proposition \ref{classesdetypesstables7}.
To construct explicit Whittaker functions, 
we need to go further and isolate those $\sigma$-self-dual types which are 
compatible with the Whittaker model of $\pi$. 

Recall that we have fixed a Whittaker datum $(\N,\psi)$ in 
Paragraph~\ref{P11}. 
A type $(\bJ,\bl)$ contained in~a cuspidal representation of $\G$ is said to 
be \textit{generic} (with respect to $\psi$) if $\Hom_{\bJ\cap\N}(\bl,\psi)$ 
is non-zero. 

\begin{proposition}[Proposition \ref{brunau}]
% \label{brunauintro}
Any $\s$-self-dual cuspidal representation of~$\G$ 
{with coefficients~in $\FC$} 
contains a generic $\s$-self-dual type. 
Such a type is uniquely determined up to $\N^\s$-conjugacy. 
\end{proposition}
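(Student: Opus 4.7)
My plan is to combine Theorem~\ref{PIMAINintro} with the uniqueness of generic types in a cuspidal representation up to $\N$-conjugacy (a known result in the theory of types for $\GL_n$), and to move between the two properties via a cohomological argument exploiting the vanishing of $H^1(\s,-)$ on pro-$p$ groups, which holds since $p$ is odd.

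\textit{Existence.} I would start with any generic type $(\bJ_0,\bl_0)$ in $\pi$ with respect to $(\N,\psi)$, which exists because $\pi$ is cuspidal and hence generic. A direct computation using that $\psi$ is trivial on $\N^\s$ gives $\psi\circ\s=\bar\psi$. Hence $(\s(\bJ_0),\bl_0^\s)$ is a generic type in $\pi^\s$ with respect to $(\N,\bar\psi)$; under the isomorphism $\pi^\s\simeq\pi^\vee$ coming from the $\s$-self-duality of $\pi$, it becomes a generic type in $\pi^\vee$ with respect to $(\N,\bar\psi)$. The pair $(\bJ_0,\bl_0^\vee)$ is another such generic type. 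By the uniqueness statement above, there exists $n\in\N$ with $n\cdot(\s(\bJ_0),\bl_0^\s)=(\bJ_0,\bl_0^\vee)$. Applying $\s$ and duality to this identity and comparing, one checks that $n\cdot\s(n)$ stabilizes $(\bJ_0,\bl_0)$ in $\N$, so lies in the open subgroup $\bJ_0\cap\N$ (the stabilizer in the unipotent group of a maximal simple type). Thus $n$ represents a class in $H^1(\s,\N/(\bJ_0\cap\N))$.

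\textit{Trivialisation and uniqueness.} Since $\bJ_0\cap\N$ is open in $\N$ and $\N$ is exhausted by a $\s$-stable chain of compact open pro-$p$ subgroups, the vanishing of the first cohomology set of $\s$ on a pro-$p$ group produces $m\in\N$ with $n\equiv m^{-1}\s(m)\pmod{\bJ_0\cap\N}$; the conjugated type $m^{-1}\cdot(\bJ_0,\bl_0)$ is then simultaneously generic and $\s$-self-dual, proving existence. For uniqueness, let $(\bJ_i,\bl_i)$ for $i=1,2$ be two generic $\s$-self-dual types in $\pi$. Uniqueness of generic types supplies $n\in\N$ with $(\bJ_2,\bl_2)=n\cdot(\bJ_1,\bl_1)$; applying $\s$ and using the $\s$-self-duality of each side forces $n^{-1}\s(n)\in\bJ_1\cap\N$, and the same cohomological vanishing produces $n_0\in\N^\s$ and $j\in\bJ_1\cap\N$ with $n=n_0 j$, hence $(\bJ_2,\bl_2)=n_0\cdot(\bJ_1,\bl_1)$, establishing $\N^\s$-conjugacy.

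\textit{Main obstacle.} I expect the main technical difficulty to lie in the cohomological step: $\N$ is not itself pro-$p$, so one must exhaust $\N/(\bJ_0\cap\N)$ by $\s$-stable compact open pro-$p$ subsets and apply non-abelian Hilbert~90 inductively along this filtration. This is the same pattern already employed in the proof of Theorem~\ref{PIMAINintro}, and it is precisely where the assumption $p\neq 2$ is essential.
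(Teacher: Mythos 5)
Your argument is correct, but it runs in the opposite direction to the paper's. The paper first produces a $\s$-self-dual type via Theorem~\ref{PIMAIN} and then moves it into generic position (Proposition~\ref{prop:Upsi}, following \cite{BH98}): by Kutzko's Mackey-type theorem and multiplicity one for Whittaker models, exactly one double coset $\N g\bJ$ supports the intertwining of $\psi$ with $\bl$; the $\s$-self-duality of $(\bJ,\bl)$ and of $(\N,\psi)$ forces this double coset to be $\s$-stable, and Lemma~\ref{lem:coset} then extracts a $\s$-stable left $\bJ$-coset inside it, yielding both existence and the $\N^\s$-uniqueness. You instead start from a generic type and move it into $\s$-self-dual position, using the $\N$-transitivity on generic types of a fixed cuspidal representation (which is precisely the double-coset uniqueness of \cite[(1.8)]{BH98}, extended to $\FC$-coefficients as in Remark~\ref{guderian}) to produce $n\in\N$ with $n\s(n)\in\bJ_0\cap\N$, and then trivialising this twisted cocycle. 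A pleasant feature of your route is that, contrary to what your opening sentence suggests, it does not use Theorem~\ref{PIMAIN} at all: it reproves the existence half of that theorem as a by-product, since an $\N$-conjugate of a generic type is again generic. What the paper's formulation buys instead is a statement valid for an arbitrary quasi-split group with involution, and a packaging (Lemma~\ref{lem:coset}) that is reused elsewhere.

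One step of your existence argument needs to be rephrased. Since $\bJ_0$ is not $\s$-stable, neither is $\bJ_0\cap\N$ (one has $\bJ_0\cap\N=n\,\s(\bJ_0\cap\N)\,n^{-1}$), so ``$\CH^1(\s,\N/(\bJ_0\cap\N))$'' is not defined and a direct Hilbert~90 argument does not apply there; note also that $n\s(n)$ lies in $\bJ_0\cap\N$ but is not $1$, so $n$ is not a genuine cocycle. The correct fix is the counting trick of Lemma~\ref{lem:coset}\ref{lem:coset.i}: choose a $\s$-stable compact open pro-$p$ subgroup $\N_0$ of $\N$ containing $n$ (Lemma~\ref{lem:Uunion}); the map $m(\bJ_0\cap\N)\mapsto\s(m)n^{-1}(\bJ_0\cap\N)$ is a well-defined involution of the finite set $\N_0(\bJ_0\cap\N)/(\bJ_0\cap\N)$, whose cardinality is an odd power of $p$, hence it has a fixed point $m$; one checks that $m\cdot(\bJ_0,\bl_0)$ is then $\s$-self-dual and still generic. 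Your uniqueness argument, by contrast, lands in the genuinely $\s$-stable pro-$p$ group $\bJ_1\cap\N$ and produces a genuine cocycle, so there the vanishing of $\CH^1$ applies verbatim and the argument is complete as written.
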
 

We then prove the following result.

\begin{proposition}[Corollary \ref{aloi}]
\label{aloiintro}
A $\s$-self-dual cuspidal representation $\pi$ of $\G$ 
{with coefficients in $\FC$} 
is distin\-gui\-shed if and only if any 
{generic $\s$-self-dual type $(\bJ,\bl)$ contained in 
$\pi$ is distinguished, that is, the space 
$\Hom_{\bJ\cap\G^\s}(\bl,\one)$ is non-zero.}
\end{proposition}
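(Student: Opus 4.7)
The plan is to apply Mackey's restriction formula to the compact induction $\pi \simeq \ind^\G_\bJ(\bl)$, with $(\bJ, \bl)$ a generic $\s$-self-dual type contained in $\pi$, whose existence and uniqueness up to $\N^\s$-conjugacy are guaranteed by the preceding proposition. Mackey's decomposition yields
\[
\Hom_{\G^\s}(\pi, \one) \;\simeq\; \bigoplus_{g \in \bJ \backslash \G / \G^\s} \Hom_{\bJ \,\cap\, g \G^\s g^{-1}}(\bl, \one),
\]
in which the summand at $g = 1$ is precisely the distinction space $\Hom_{\bJ \cap \G^\s}(\bl, \one)$ of the type. Thus the implication ``$(\bJ, \bl)$ distinguished $\Rightarrow$ $\pi$ distinguished'' is immediate.

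For the converse, assume $\pi$ is distinguished; the aim is to show that non-vanishing of the direct sum above forces non-vanishing of the $g = 1$ summand. Writing $\theta_g : x \mapsto g \s(g^{-1} x g) g^{-1}$ for the twisted involution whose fixed-point group is $g \G^\s g^{-1}$, I would argue via an intertwining analysis for cuspidal types (in the spirit of Theorem~\ref{PIMAINintro}) that the non-vanishing of $\Hom_{\bJ \cap g \G^\s g^{-1}}(\bl, \one)$ forces $\theta_g$ to preserve the pair $(\bJ, \bl)$ up to equivalence, thus producing a new $\s$-self-dual type in $\pi$ obtained from $(\bJ, \bl)$ by a twist depending on $g$.

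The genericity of $(\bJ, \bl)$ with respect to the Whittaker datum $(\N, \psi)$ --- itself chosen compatibly with $\s$, since $\psi$ is trivial on $\N^\s$ --- together with the $\N^\s$-uniqueness of generic $\s$-self-dual types from the preceding proposition, should then allow us to identify this twisted type with an $\N^\s$-conjugate of $(\bJ, \bl)$. Equivalently, $g$ may be adjusted within its double coset to an element of $\N^\s \subset \G^\s$. Since conjugation by $\N^\s$ clearly preserves the distinction space of the type, this forces the $g = 1$ summand itself to be non-zero, and hence $(\bJ, \bl)$ to be distinguished, completing the argument.

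The main obstacle is the intertwining analysis at a non-trivial double coset: one must control the interaction of $\theta_g$ with the Bushnell--Kutzko data underlying $(\bJ, \bl)$ (the simple stratum, the Heisenberg representation on the pro-$p$ radical of $\bJ$, and its extension to $\bJ$), and extract from it a genuine $\s$-self-dual type structure. The assumption $p \neq 2$, critical to Theorem~\ref{PIMAINintro} via the triviality of the first $\s$-cohomology of pro-$p$ subgroups, should again be decisive here in order to straighten $g$ into a $\s$-equivariant representative of its double coset.
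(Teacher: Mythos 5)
Your easy direction (Frobenius/Mackey at the trivial coset) is exactly how the paper gets it. For the hard direction, however, your plan diverges from the paper's and, as written, has a genuine gap at its central step. The paper does \emph{not} analyse the non-trivial double cosets $\bJ g\G^\s$ at all. It first constructs, for any generic $\s$-self-dual type, an explicit non-zero element of $\Hom_{\bJ\cap\P^\s}(\bl,\one)$ --- the linear form $\LL_\bl(f)=\int_{(\J\cap\N^\s)\backslash(\J\cap\P^\s)}f(m)\,dm$ on the Paskunas--Stevens model $\Ss_\bl$ of $\bl$, non-zero because the Bessel function $\Jj_\bl$ is supported on $\Uu^\s$ inside $\Mm^\s$ (Proposition \ref{invariant integral}) --- and then upgrades $\P^\s$-invariance to $\bJ^\s$-invariance \emph{formally}: Ok's theorem $\Hom_{\P^\s}(\pi,\one)=\Hom_{\G^\s}(\pi,\one)$, extended to cuspidal $\FC$-representations in Appendix \ref{AppB}, combined with the commuting Mackey/Frobenius diagram of Lemma \ref{lem:bijection}, yields $\Hom_{\bJ^\s\cap\P}(\bl,\one)=\Hom_{\bJ^\s}(\bl,\one)$ whenever $\pi$ is distinguished (Proposition \ref{Ok for types}). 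Mirabolic rigidity thus replaces any double-coset analysis.

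Your route --- straightening a distinguished double coset $\bJ g\G^\s$ to the trivial one --- is essentially the route of \cite{VS}, and it can be made to work, but the two steps you flag as ``to be argued'' are the entire difficulty. First, $\Hom_{\bJ\cap g\G^\s g^{-1}}(\bl,\one)\neq 0$ does not formally imply that $\theta_g$ preserves $(\bJ,\bl)$ up to equivalence: $\bJ^g$ need not be stable under $\s$, and making it so requires the full simple-character and Heisenberg intertwining machinery, not merely the vanishing of $\CH^1$ of $\s$ in pro-$p$ groups. Second, even granting that you produce a distinguished $\s$-self-dual type from $g$, you cannot invoke the $\N^\s$-uniqueness of \emph{generic} $\s$-self-dual types to place $g$ in $\N^\s$: by Proposition \ref{classesdetypesstables7}, when $\T/\T_\so$ is ramified there are $\lfloor m/2\rfloor+1$ distinct $\G^\s$-conjugacy classes of $\s$-self-dual types in $\pi$, only one of which is generic, and your twisted type has no a priori reason to lie in the generic class. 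Ruling out distinction for the non-generic indices is a separate finite-group computation (distinction of cuspidal representations of $\GL_m(\ee)$ by $(\GL_i\times\GL_{m-i})(\ee)$), carried out in \cite{VS} but absent from your sketch.
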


Our proof of Proposition \ref{aloiintro} is based on a result of Ok~\cite{Ok} 
{proved for any irreducible complex~re\-pre\-sentation of 
$\G$, and which we prove for any cuspidal representation of $\G$ with 
coefficients in $\FC$ in Appendix \ref{AppB}.
Note that Pro\-po\-sition \ref{aloiintro} is proved~in another way in 
\cite{VS}, without using Ok's result
(see Remark \ref{coincoin}).}

\subsection{}

For the remainder of the introduction we go back to \textit{complex} 
representations. 
Given a generic~type $(\bJ,\bl)$ in a cuspidal~representation~$\pi$ of~$\G$, 
a construction of Paskunas--Stevens~\cite{StPa} defines an~ex\-pli\-cit 
Whit\-taker 
function~$\W_{\bl}\in \Ww(\pi,\psi)$. 
The key point for this paper is that, 
if~$(\bJ,\bl)$ is both~ge\-ne\-ric and $\sigma$-self-dual, 
then~$\W_{\bl}$ is well suited to computing the local Asai integral. 
We make Theorem \ref{Testvectorcorollaryintro} more precise.

\begin{theorem}[Theorem \ref{Testvectortheorem}]
Let~$\pi$ be a~distinguished cuspidal representation of $\G$, 
and~$(\bJ,\bl)$~be a generic~$\sigma$-self-dual type contained in~$\pi$. 
Then there is a unique right invariant
measure on $\N^\s\backslash\G^\s$ such that
\[
\I_\As(s,\Phi_0,\W_{\bl})=\L_\As(s,\pi)
\]
where $\Phi_0$ is the characteristic function of the lattice $\o_\so^n$ in 
$\F_\so^n$. 
\end{theorem}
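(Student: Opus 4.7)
My plan is to unfold the Asai integral against the Paskunas--Stevens Whittaker function $\W_\bl$ and match the result with the explicit description of $\L_\As(s,\pi)$ coming from Proposition~\ref{MatMan}. The key inputs are: the support and equivariance properties of $\W_\bl$ built into its construction, the fact (Proposition~\ref{aloiintro}) that the generic $\s$-self-dual type $(\bJ,\bl)$ contained in a distinguished cuspidal $\pi$ is itself distinguished, and the triviality of $\mathrm{H}^1(\Gal(\F/\F_\so),-)$ on pro-$p$ groups used throughout the paper.

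First I would recall that $\W_\bl$ is characterized by its transformation law under $\N$ on the left (via $\psi$) and under $\bJ$ on the right (via the type), together with a controlled support, essentially in~$\N\bJ$. Using Proposition~\ref{aloiintro}, $(\bJ,\bl)$ supports a non-zero $\bJ\cap\G^\s$-invariant linear form, which gives $\W_\bl|_{\G^\s}$ a non-trivial bi-equivariance under $(\N^\s,\psi_{|\N^\s})$ on the left and $\bJ\cap\G^\s$ on the right. I would then decompose
\begin{equation*}
\G^\s = \bigsqcup_x \N^\s\, x\, (\bJ\cap\G^\s)
\end{equation*}
and, using the support condition $\supp(\W_\bl)\subseteq \N\bJ$, I would show that only those double cosets meeting $\N\bJ$ contribute. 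The $\s$-self-duality of $\bJ$ and the Iwahori-type decomposition of $\bJ$ (relative to the standard minimal parabolic) cut down the contributing representatives to a family parametrized by the $\s$-fixed part of the cocharacter lattice intersected with $(\N\bJ\cap\G^\s)/(\bJ\cap\G^\s)$; here the vanishing of the relevant $\H^1$ for $p$ odd is what ensures one can take representatives inside $\G^\s$ itself.

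Next I would evaluate $\W_\bl$ on such representatives $x = \diag(\w_\so^{a_1},\dots,\w_\so^{a_n})$ (with $a_n=0$, say, forced by the condition $(0,\dots,0,1)x\in\o_\so^n$ imposed by $\Phi_0$). The Paskunas--Stevens recipe computes $\W_\bl(x)$ in terms of a character of the diagonal part of $\bJ$; the result is the explicit exponential factor that, when multiplied by $|\det(x)|_\so^s$ and summed over the allowed exponents $(a_i)$, yields a geometric series in $q_\so^{-s}$. Comparing with the product formula of Proposition~\ref{MatMan} for $\L_\As(s,\pi)$, which depends only on the level and on the central character of the cuspidal on which $\pi$ is built, the two expressions match up to a single positive constant determined by the volume of $(\N^\s\cap\bJ)\backslash(\bJ\cap\G^\s)$ for the chosen right invariant measure on $\N^\s\backslash\G^\s$; normalizing this volume to the appropriate value pins down the measure uniquely.

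The main obstacle is step two: ruling out ``parasitic'' double coset contributions and showing that after all $\bJ\cap\G^\s$-invariance is used, precisely the diagonal representatives with $a_1\ge a_2\ge\dots\ge a_n=0$ (or their analogue at the level of the hereditary order attached to $\bJ$) survive. This requires a careful analysis of $\N\bJ\cap\G^\s$ using the stratification of $\bJ$, together with the fact that $\W_\bl$ vanishes off of~$\N\bJ$ and is right-$\bl$-equivariant; the $\s$-self-duality of the simple stratum underlying $(\bJ,\bl)$ is what makes the $\s$-fixed points of this combinatorial structure behave exactly as in the split case.
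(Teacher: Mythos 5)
Your overall strategy (exploit the support of $\W_{\bl}$ in $\N\bJ$, reduce to a sum over double cosets, obtain a geometric series, compare with Proposition~\ref{MatMan}) is the right one, and the first half is close to what the paper does. But there is a genuine gap on the $\L$-function side. Proposition~\ref{MatMan} expresses $\L_\As(s,\pi)$ as a product over the set $\X(\pi)$ of unramified characters $\chi$ of $\F_\so^\times$ for which $\pi$ is $\chi$-distinguished; it does \emph{not} depend ``only on the level and on the central character''. To identify this product with $(1-q_\so^{-sn/e_\so})^{-1}$ you must count $\X(\pi)$, and that requires the dichotomy theorem (Theorem~\ref{dichotomy}) together with Proposition~\ref{prop:lemma16}, i.e.\ knowing exactly when an unramified twist of $\pi$ is $\ef$-distinguished; this is carried out in Corollary~\ref{cor:equation} and then converted into the exponent $n/e_\so$ via Lemma~\ref{lem:e00}. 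None of this appears in your plan, so the two sides of the claimed equality are never actually matched.

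On the integral side there are also two points where your outline would not survive contact with the computation. First, for a cuspidal $\pi$ the restriction of $\W_\bl$ to $\G^\s$ is supported on $\N^\s\bJ^\s=\bigsqcup_{i\in\ZZ}\N^\s\w_{\bl}^{\,i}\J^\s$, where $\w_\bl$ generates the radical of the principal order $\bb_\so$; the condition imposed by $\Phi_0$ cuts this down to the single parameter $i\in\{0,\dots,e_\so-1\}$ (Lemma~\ref{unramramiwas}). Your proposed family of dominant diagonal representatives $(a_1\ge\cdots\ge a_n=0)$ is the combinatorics of the \emph{unramified} (Schur-function) computation, not the cuspidal one, where the inner $\N^\s\backslash\P^\s$-integral has compact support and only the one ``determinant direction'' is unbounded. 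Second, the evaluation on each coset does not come from ``a character of the diagonal part of $\bJ$'' but from the $\bJ^\s$-invariance of the period $\LL_\bl$ and the non-vanishing $\LL_\bl(\Jj_\bl)\ne 0$ (Propositions~\ref{Ok for types} and~\ref{invariant integral}, which is where the distinction of the type via Ok's theorem is really used); moreover the cosets $\N^\s\backslash\N^\s\w_\bl^{\,i}\J^\s$ have volumes decaying like $q_\so^{-in/e_\so}$, so the match is not ``up to a single constant fixed by one volume'' — these $i$-dependent volumes are precisely what turn $q_\so^{-i n(s-1)/e_\so}$ into $q_\so^{-ins/e_\so}$ and must be tracked (Lemma~\ref{LemmaVolume} and Proposition~\ref{coef}).
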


Let us briefly explain how we prove this theorem.
Following the method of \cite{RKNM}, we compute the local Asai integral 
and get
\begin{equation*}
\I_\As(s,\Phi_0,\W_{\bl}) = \frac{1}{1-q_{\so}^{-sn/e_{\so}}}
\end{equation*}
where $e_\so$ a positive integer attached to the generic $\s$-self-dual 
type $(\bJ,\bl)$ in Paragraph \ref{sec:inv}.~On~the 
other hand, starting from Proposition \ref{MatMan} giving a formula 
for $\L_{\As}(s,\pi)$, and using the dichotomy theorem 
(Paragraph \ref{Dichotomy49}) together with Proposition 
\ref{prop:lemma16}, 
we find the same expression for $\L_{\As}(s,\pi)$.

\subsection{}

We now explain how we prove Theorem \ref{rootnumberintro}.
First, thanks to the functional equation \eqref{FE} 
{together with the fact that $\pi$ is distinguished,}
the~Asai root number 
$\e_{\As}(1/2,\pi,\psi_\so,\delta)$ must be equal to either $1$ or $-1$.
Then, by using the explicit integral ex\-pres\-sion for $\L_{\As}(s,\pi)$
provided by our test vectors, 
we prove that $\e_{\As}(1/2,\pi,\psi_\so,\delta)$ is positive.

\subsection{}

We now explain our global argument for proving 
Theorem \ref{epsilonFKLSintro}.
We first prove that, 
for any generic irreducible representation $\pi$ of $\G$
and any $\d\in\F$ such that $\tr_{\F/\F_0}(\d)=0$,
the quantity
\begin{equation*}
\label{eRSintro}
\om_\pi(\d)^{1-n}\cdot|\d|^{-n(n-1)(s-1/2)/2}
\cdot\l(\F/\F_\so,\psi_\so)^{n(n-1)/2}\cdot\e_\As(s,\pi,\psi_\so,\d)
\end{equation*}
does not depend on $\d$.
When $\pi$ is in addition unramified,
we also prove that it is equal to the local Asai epsilon factor 
$\e_\As^\LS(s,\pi,\psi_\so)$ obtained via the 
Langlands--Shahidi method.
This leads us to:

\begin{definition}[Definition \ref{paulveyne}]
\label{paulveyneintro}
For any generic irreducible representation $\pi$ of $\G$, we set:
\[\e_\As^\RS(s,\pi,\psi_\so)=\om_\pi(\d)^{1-n}\cdot
|\d|^{-n(n-1)(s-1/2)/2}\cdot\l(\F/\F_\so,\psi_\so)^{n(n-1)/2}\cdot
\e_\As^\FK(s,\pi,\psi_\so,\d).\] 
\end{definition}

Beuzart-Plessis also came up to the same 
normalization in \cite{B-P18}. 

Now consider a quadratic extension $k/k_\so$ of global fields of 
characteristic different from $2$ such that 
any place of $k_\so$ dividing $2$,
as well as any archimedean place in the number field case,
splits in $k$.

Let $\Pi$ be a cuspidal automorphic representation of $\GL_n(\AA)$, 
where $\AA$ is the ring of adeles of~$k$,~with 
local component $\Pi_v$ for each place $v$ of $k_\so$.
We also fix a non-trivial character $\psi_\so$ of $\AA_\so$ trivial on $k_\so$, 
where $\AA_\so$ is the ring of adeles of $k_\so$,
and denote by $\psi_{\so,v}$ its local component~at~$v$.
We then set
\begin{equation*}
\e_\As^\RS(s,\Pi) =
\prod_{v} \e_\As^\RS(s,\Pi_v,\psi_{\so,v})
\end{equation*}
where the product is taken over all places $v$ of $k_\so$,
where $\e_\As^\RS(s,\Pi_v,\psi_{\so,v})$ is defined by 
Definition~\ref{paulveyneintro} when $v$ is inert,
and is the Jacquet--Piatetski-Shapiro--Shalika epsilon factor 
$\e_\As^\RS(s,\pi_1,\pi_2,\psi_{\so,v})$
when $v$ is split and $\Pi_{v}$ identifies with 
$\pi_1\otimes \pi_2$ as representations of 
$\GL_n(k_v)\simeq\GL_n(k_{\so,v})\times\GL_n(k_{\so,v})$.

We define the global factor $\e_\As^{\LS}(s,\Pi)$ similarly. 
Using the equality
{(that we prove when $\F$ has~cha\-rac\-teristic $p$ in 
Appendix \ref{section positive char})}
of the Flicker and Lang\-lands--Shahidi Asai 
$\L$-functions of $\Pi_v$ for all $v$,
the comparison of the global functional equations gives: 

\begin{theorem}[Theorem \ref{global equality}]\label{global equality intro}
Let $\Pi$ be a cuspidal automorphic representation of $\GL_n(\AA)$. 
Then 
\[\e_\As^{\RS}(s,\Pi)=\e_\As^{\LS}(s,\Pi).\]
\end{theorem}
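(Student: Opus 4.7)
The plan is to derive the equality of the two global epsilon factors from the equality of the two global Asai $\L$-functions, combined with the two global functional equations, with the normalisation of Definition~\ref{paulveyneintro} engineered precisely so that the correcting factors disappear adelically.

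First I would verify that the Eulerian global $\L$-functions $\L_\As^{\RS}(s,\Pi)$ and $\L_\As^{\LS}(s,\Pi)$ are termwise equal. At every split place $v$ of $k_\so$, writing $\Pi_v\simeq\pi_1\otimes\pi_2$, both local factors reduce by definition to the Rankin--Selberg $\L$-factor $\L(s,\pi_1\times\pi_2)$, so they coincide. At every inert place, one invokes the local identity between Flicker's and Langlands--Shahidi's Asai $\L$-factors: this is classical in characteristic zero and established in Appendix~\ref{section positive char} in the function-field case. The same comparison applied to $\Pi^\vee$ then gives equality of the two global $\L$-functions at $1-s$ as well.

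Second I would compare the two global functional equations. Fix $\delta\in k^\times$ with $\tr_{k/k_\so}(\delta)=0$ and a non-trivial character $\psi_\so$ of $\AA_\so/k_\so$. Assembling the local Flicker functional equations (which follow from Flicker's Rankin--Selberg theory) produces
\[
\L_\As^{\RS}(s,\Pi)=\prod_v\e_\As^\FK(s,\Pi_v,\psi_{\so,v},\delta)\cdot\L_\As^{\RS}(1-s,\Pi^\vee),
\]
and the Langlands--Shahidi theory yields the same identity with $\prod_v\e_\As^{\LS}(s,\Pi_v,\psi_{\so,v})$ in place of the Flicker epsilon product. Using Definition~\ref{paulveyneintro} place by place one rewrites
\[
\prod_v\e_\As^{\RS}(s,\Pi_v,\psi_{\so,v})=\prod_v\om_{\Pi_v}(\delta)^{1-n}\cdot\prod_v|\delta|_v^{-n(n-1)(s-1/2)/2}\cdot\prod_v\l(k_v/k_{\so,v},\psi_{\so,v})^{n(n-1)/2}\cdot\prod_v\e_\As^\FK(s,\Pi_v,\psi_{\so,v},\delta).
\]
Each of the three correcting products over $v$ equals $1$: the first by the product formula for the automorphic central character of $\Pi$ applied to $\delta\in k^\times$, the second by the classical idele-norm product formula, and the third by the global product formula for the local Langlands constants attached to the quadratic character of $\AA_\so^\times$ cut out by $k/k_\so$. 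Consequently $\prod_v\e_\As^\FK(s,\Pi_v,\psi_{\so,v},\delta)=\e_\As^{\RS}(s,\Pi)$.

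Combining the two steps, the equality $\L_\As^{\RS}=\L_\As^{\LS}$ at both $s$ and $1-s$ together with the matching shape of the two global functional equations forces $\e_\As^{\RS}(s,\Pi)=\e_\As^{\LS}(s,\Pi)$. The main obstacle is the clean vanishing of the three normalising products: the first two are essentially immediate adelic facts, but the third — the global product formula for $\l(k_v/k_{\so,v},\psi_{\so,v})$ — is the substantive input, arising from the global functional equation of the Hecke $\L$-function of the quadratic idele-class character. A secondary subtlety is that the local Flicker functional equation is a priori an identity of rational functions, so one must check, as in Flicker's original work, that the assembled product genuinely yields a global functional equation (which is where the termwise equality of the $\L$-factors proved in Step~1 becomes essential).
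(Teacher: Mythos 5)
Your proposal is correct and follows essentially the same route as the paper: termwise equality of the local $\L$-factors (split places by definition, inert places by the comparison of Flicker and Langlands--Shahidi factors, extended to positive characteristic in the appendix), then a comparison of the two global functional equations in which the three normalising products over all places collapse to $1$. The only point where your justification is short is the third product: the functional equation of the Hecke $\L$-function of $\om_{k/k_\so}$ only shows that the global root number $\e(1/2,\om_{k/k_\so})=\prod_v\l(k_v/k_{\so,v},\psi_{\so,v})$ is $\pm 1$, and since $n(n-1)/2$ may be odd one really needs the theorem of Fr\"ohlich--Queyrut that this root number equals $+1$, which is what the paper invokes.
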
  

Realizing any cuspidal representation of $\G$ as a local component 
of some cuspidal automorphic~re\-presentation of $\GL_n(\AA)$
{with prescribed ramification at other places,}
and combining with Theo\-rems \ref{global equality intro} and 
\ref{thm unramified epsilon equality},
we get Theorem~\ref{epsilonFKLSintro}.

\subsection{}

Finally, we must explain the interconnection between \cite{VS} and the present 
paper. 
The starting~point of both papers is the $\s$-self-dual type theorem for 
cuspidal $\FC$-representations,
namely Theorem \ref{PIMAINintro},
which is proved in Section \ref{S2} below.

Starting from this theorem, 
and independently from the rest of this paper,
one gives in \cite{VS} a~ne\-ces\-sary and sufficient condition of distinction for 
$\s$-self-dual \textit{supercuspidal} $\FC$-representations.
In~par\-ticular, for complex representations, 
{in which case all cuspidal representations are supercuspidal,}
this implies the two results stated in Paragraph \ref{Dichotomy49} 
(i.e. Theorem \ref{dichotomy} and Proposition \ref{prop:lemma16})
which~we use in the proof of Theorem \ref{Testvectorcorollaryintro}.
{We also use Proposition \ref{polonius},
which is also proved in \cite{VS}, for any $\s$-self-dual {supercuspidal} 
$\FC$-representation of $\G$.} 

\section*{Acknowledgements}

We thank Raphaël Beuzart-Plessis for noticing and providing a correction to a 
gap in an argument of a previous version of 
Section~\ref{section Asai RS epsilon factors}, and for useful conversations.
{We also thank Jiandi Zou for~noti\-cing a small mistake 
in a previous version of the proof of Lemma~\ref{marechal},
and the anonymous referees for valuable comments.}

%%%%%%%%%%%%%%%%%%%%%%%%%%%%%%%%%%%%%%%%%%%%%%%%
\section{Notation}
\label{Notation}
%%%%%%%%%%%%%%%%%%%%%%%%%%%%%%%%%%%%%%%%%%%%%%%%
Let $\F/\F_\so$ be a quadratic extension of locally compact non-archimedean 
fields of residual characteristic $p\neq2$. Write $\s$ for the 
non-trivial $\F_\so$-automorphism of $\F$. 

For any finite extension $\E$ of $\F_\so$, we denote by~$\o_\E$ its ring of 
integers, by~$\p_\E$ the unique maximal ideal of~$\o_\E$
and by~$\kk_\E$ its residue field. 
We abbreviate~$\o_{\F}$ to $\o$ and $\o_{\F_\so}$ to $\o_{\so}$,
and define similarly $\p$, $\p_\so$, $\kk$, $\kk_\so$. 
The involution $\s$ induces a 
$\kk_\so$-auto\-mor\-phism of $\kk$, still denoted $\s$. It is a generator~of 
the Galois group $\Gal(\kk/\kk_\so)$.
We write $q_\so$ for the cardinality of $\kk_\so$
and $|\cdot|_{\so}$ for the normalized~abso\-lute value on $\F_\so$.

Let $\FC$ be an algebraically  closed field of characteristic~$\ell$ different
from $p$; note that $\ell$ can be $0$. 
We will say we are in the ``modular case'' when we consider the case where 
$\ell>0$. 

We also denote by~$\ef$ 
the character of $\F_\so^\times$ whose kernel contains the subgroup of 
$\F/\F_\so$-norms and is non-trivial if~$\ell\ne 2$. 

Let $\G$ denote the locally profinite group $\GL_n(\F)$, with $n\>1$, equipped 
with the involution~$\s$~acting componentwise. Its $\s$-fixed points is the 
closed subgroup $\G^\s=\GL_n(\F_\so)$. We will identify the centre of $\G$ 
with $\mult\F$, and that of $\G^\s$ with $\mult\F_\so$. 

By a  \emph{representation} of  a locally  profinite group,  we mean  a smooth
representation on  a~$\FC$-module. Given  a representation  $\pi$ of  a closed
subgroup $\H$  of $\G$, we write  $\pi^\vee$ for the smooth  contragredient of
$\pi$ and $\pi^\s$ for the representation $\pi\circ\s$ of $\s(\H)$. 
We also write $\Ind_\H^\G(\pi)$ for the smooth induction of $\pi$ to 
$\G$, and $\cind_\H^\G(\pi)$ for the compact induction of $\pi$ to $\G$. 
If $\chi$ 
is a character of $\H$, we write $\pi\chi$ for the representation 
$g\mapsto\chi(g)\pi(g)$. 

A pair~$(\K,\pi)$, consisting of an open subgroup~$\K$ of~$\G$ and a
smooth
irreducible representation~$\pi$~of $\K$,~is called 
\emph{$\s$-self-dual}
if~$\K$ is $\s$-stable and $\pi^\s$ is isomorphic to $\pi^\vee$.
When~$\K=\G$, we will just talk
about~$\pi$ being $\s$-self-dual. 

Let $\chi$ be a character of $\mult\F_\so$.
A pair~$(\K,\pi)$, consisting of a {$\s$-stable}
open subgroup~$\K$ of~$\G$ and an irreducible representation~$\pi$
of~$\K$, is call\-ed~\emph{$\chi$-distinguished} if 
\[
\Hom_{\K^\s}(\pi,\chi\circ\det)\neq \{0\}
\] 
where $\det$ denotes the determinant on $\G$ and $\K^\s=\K\cap\G^\s$.
We say that~$(\K,\pi)$ is \emph{distinguished}~if~it is 
$\one$-distinguished, 
that is, 
distinguished by the trivial character of
$\mult\F_\so$. When~$\K=\G$, we will just talk about~$\pi$ 
being~$\chi$-distinguished. 

Given  $g\in\G$ and  a  subset $\X\subseteq\G$,  we  set $\X^g=\{g^{-1}xg\  |\
x\in\X\}$. If $f$ is a function on $\X$, 
we write $f^g$ for the function $x\mapsto f(gxg^{-1})$ on $\X^g$. 

For any finite extension $\E$ of $\F_\so$ and any integer $n\>1$, 
we write 
$\N_n(\E)$ for the subgroup of $\GL_n(\E)$ made of all upper triangular 
unipotent matrices
and $\P_n(\E)$ for the standard 
mirabolic subgroup~of all matrices in $\GL_n(\E)$ with final row 
$\begin{pmatrix}0&\cdots&0&1\end{pmatrix}$.

Throughout the paper, by a \textit{cuspidal} representation of $\G$, 
we mean a cuspidal irreducible (smooth) representation of $\G$.

%%%%%%%%%%%%%%%%%%%%%%%%%%%%%%%%%%%%%%%%%%%%%%%%
\section{Preliminaries on simple types}%\label{PrelimST}
%%%%%%%%%%%%%%%%%%%%%%%%%%%%%%%%%%%%%%%%%%%%%%%%

% We assume the reader is familiar with the language of simple types. 
We recall the main results on simple strata, 
characters and types~\cite{BK,BHLTL1,BHEffectiveLC,MSt} that we will need.

%%%%%%%%%%%%%%%%%%%%%%%%%%%%%%%%%%%%%%%%%%%%%%%%
\subsection{Simple strata}
\label{genitrix41}

Let $[\aa,\b]$ be a simple stratum in the $\F$-algebra $\Mat_{n}(\F)$ of 
$n\times n$ matrices with entries in $\F$ for some $n\>1$. Recall that $\aa$ 
is a hereditary $\o$-order in $\Mat_n(\F)$ and 
$\b$ is a matrix in $\Mat_{n}(\F)$ such that: 
\begin{enumerate}
\item
the $\F$-algebra $\E=\F[\b]$ is a field, whose degree over $\F$ is denoted $d$;
\item
the multiplicative group $\mult\E$ normalizes $\aa$;
\end{enumerate}
plus an additional technical condition (see \cite[(1.5.5)]{BK}).
The cen\-trali\-zer of $\E$ in $\Mat_{n}(\F)$, denoted~$\B$, is an 
$\E$-algebra isomorphic to $\Mat_{m}(\E)$, where $n=md$. 
The intersection $\aa\cap\B$, 
denoted $\bb$,
is a~here\-di\-ta\-ry $\o_\E$-order in $\B$. 
We write $\p_\aa$ for the Jacobson radical of~$\aa$ and $\U^1(\aa)$ for the 
compact open pro-$p$-subgroup $1+\p_\aa$ of $\G=\GL_n(\F)$,
{and define $\U^1(\bb)$ similarly.
  Note that $\U^1(\bb)=\U^1(\aa)\cap\B^\times$.} 

Note that we use the simplified notation of \cite{BHEffectiveLC} for
  simple strata: what we denote~by $[\aa,\b]$ would be denoted
  $[\aa,v,0,\b]$ in \cite{BK,BHLTL1}, where $v$ is the non-negative integer 
  defined by $\b\aa=\p_{\aa}^{-v}$.

Associated with $[\aa,\b]$, there are compact open subgroups:
\[
\H^1(\aa,\b)\subseteq\J^1(\aa,\b)\subseteq\J(\aa,\b)
\]
of $\mult\aa$
and a finite set $\Cc(\aa,\b)$ of characters of $\H^1(\aa,\b)$ called 
\emph{simple characters}. 
This set depends~on the choice of a character of $\F$, trivial on $\p$ but not on 
$\o$, which we assume to be $\s$-stable and~is~fixed from now on.
Such a choice is possible since $p\neq2$.
Write $\bJ(\aa,\b)$ for the compact mod
centre sub\-group of $\G$ 
generated by $\J(\aa,\b)$ and~the nor\-malizer of $\bb$ in 
$\mult\B$. 

\begin{proposition}[{\cite[2.1]{BHEffectiveLC}}]
% \label{patel}
We have the following properties:
\begin{enumerate}
\item
The group $\J(\aa,\b)$ is the unique maximal compact subgroup of $\bJ(\aa,\b)$.
\item
The group $\J^1(\aa,\b)$ is the unique maximal normal pro-$p$-subgroup of $\J(\aa,\b)$. 
\item
The group $\J(\aa,\b)$ is generated by $\J^1(\aa,\b)$ and $\mult\bb$, and we have:
\begin{equation*}
\J(\aa,\b)\cap\B^\times=\bb^\times,
\quad
\J^1(\aa,\b)\cap\B^\times=\U^1(\bb).
\end{equation*}
\item
The normalizer of any simple character $\t\in\Cc(\aa,\b)$ in $\G$ is equal to $\bJ(\aa,\b)$. 
\end{enumerate}
\end{proposition}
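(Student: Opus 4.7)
The plan is to derive (iii) first by construction, then use it to extract the quotient $\J/\J^1$ and thereby obtain (ii), then compute $\bJ/\J$ to get (i), and finally to invoke the intertwining formula for (iv), which is the main obstacle.

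First, I would recall the construction: by definition $\J^1(\aa,\b)\subseteq\U^1(\aa)=1+\p_\aa$ is the pro-$p$-subgroup built from $\H^1(\aa,\b)$ and $\U^1(\bb)$, and $\J(\aa,\b)$ is obtained by adjoining $\bb^\times$; that is, $\J=\bb^\times\J^1$. The containment $\bb^\times\subseteq\J$ together with $\J^1\cap\B^\times\subseteq\U^1(\aa)\cap\B^\times=\U^1(\bb)\subseteq\bb^\times$ gives $\J\cap\B^\times=\bb^\times$ and $\J^1\cap\B^\times=\U^1(\bb)$, proving (iii). As a byproduct, the inclusion $\bb^\times\hookrightarrow\J$ induces an isomorphism
\begin{equation*}
\J(\aa,\b)/\J^1(\aa,\b)\;\simeq\;\bb^\times/\U^1(\bb)\;\simeq\;\GL_m(\kk_\E).
\end{equation*}

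For (ii), $\J^1$ is visibly pro-$p$ as a closed subgroup of $\U^1(\aa)$. Any normal pro-$p$-subgroup $\Pp$ of $\J$ maps to a normal $p$-subgroup of the quotient $\GL_m(\kk_\E)$; since $\kk_\E$ has characteristic $p$, the finite group $\GL_m(\kk_\E)$ has trivial $p$-radical, so $\Pp$ maps trivially and lies in $\J^1$. Hence $\J^1$ is the unique maximal normal pro-$p$-subgroup.

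For (i), the normalizer $\KK(\bb)$ of $\bb$ in $\mult\B$ fits in $1\to\bb^\times\to\KK(\bb)\to\ZZ\to0$, the right map being the $\E$-valuation of a prime element. Since $\bJ=\J\cdot\KK(\bb)$ and $\J\cap\KK(\bb)=\bb^\times$ by (iii), this yields $\bJ/\J\simeq\ZZ$. Now $\J$ is an open compact subgroup of $\bJ$, and $\bJ/\J\simeq\ZZ$ is discrete and torsion-free; any compact subgroup $\K\subseteq\bJ$ therefore maps to a compact, hence finite, hence trivial subgroup of $\ZZ$, so $\K\subseteq\J$. Since $\J$ itself is compact, this proves (i).

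The hard part is (iv). My plan is to invoke the intertwining formula for simple characters, namely $\I_\G(\t)=\J^1\mult\B\J^1$, so that any $g\in\G$ normalizing $\t$ can be written $g=xby$ with $x,y\in\J^1$ and $b\in\mult\B$. Since $\J^1$ normalizes $\t$, the element $b$ must also normalize $\t$; restricting $\t$ to $\H^1\cap\mult\B=\U^1(\bb)$ (and using the transfer/tame lift properties of simple characters) one shows that $b$ must normalize $\bb$, i.e.\ $b\in\KK(\bb)$. This gives the inclusion of the normalizer of $\t$ in $\J\cdot\KK(\bb)=\bJ$. The reverse inclusion is straightforward: $\mult\bb$ and $\KK(\bb)$ normalize $\t$ because simple characters are stable under the natural $\mult\B$-action preserving the lattice chain, and $\J^1$ normalizes $\t$ since $\t$ extends to $\J^1$ by construction. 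Combining the two gives the equality of (iv). The crucial input is the intertwining formula together with the precise behaviour of $\t$ restricted to $\U^1(\bb)$, both of which are genuinely non-trivial results that I would take from \cite{BK,BHEffectiveLC} rather than reprove here.
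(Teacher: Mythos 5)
This proposition is stated in the paper as a recollection, with the reference to \cite[2.1]{BHEffectiveLC} standing in for a proof; the underlying facts go back to \cite{BK}. Your proposal is therefore a reconstruction of material from the literature rather than an alternative to an argument in the paper.

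Within those terms, your outline of (i)--(iii) is correct and is essentially the standard one: (iii) follows from the defining relations $\J=\bb^\times\J^1$ and $\J^1=\U^1(\bb)\H^1$ together with $\U^1(\aa)\cap\B^\times=\U^1(\bb)$; for (ii), the image of a normal pro-$p$ subgroup of $\J$ in the finite quotient $\J/\J^1\simeq\GL_m(\kk_\E)$ is a normal $p$-subgroup, and $\GL_m(\kk_\E)$ has trivial $p$-core (opposite unipotent radicals intersect trivially); for (i), $\bJ/\J\simeq\ZZ$ is discrete and torsion-free, hence has no nontrivial compact subgroup. You also correctly isolate (iv) as the substantial part, resting on the intertwining formula $\I_\G(\t)=\J^1\B^\times\J^1$ of \cite[Theorem~3.3.2]{BK}.

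Two small points on (iv). The phrase ``$\t$ extends to $\J^1$ by construction'' is inaccurate: a simple character lives on $\H^1$ and does not in general extend to a character of $\J^1$. What you actually need, and what is true, is that $\J^1$ \emph{normalizes} $\t$ (that is, $\t^j=\t$ for all $j\in\J^1$); that is the fact that lets you reduce a normalizing element $g=xby$, with $x,y\in\J^1$ and $b\in\B^\times$, to the element $b$. Secondly, the inference that $b$ normalizing $\t$ forces $b\in\KK(\bb)$ is where the remaining work hides, and ``transfer/tame lift properties'' is vaguer than the actual mechanism: one shows that $b$ must normalize $\H^1(\aa,\b)$, and from this that $b$ normalizes $\aa$ (hence $\bb=\aa\cap\B$), because the filtration subgroups recover the order. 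You are right to flag this as genuinely nontrivial and to defer it to \cite{BK,BHEffectiveLC}.
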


%%%%%%%%%%%%%%%%%%%%%%%%%%%%%%%%%%%%%%%%%%%%%%%%
\subsection{Simple characters and endo-classes}\label{simplechar}

Simple characters have remarkable intertwining and transfer properties. 
Let $[\aa',\b']$ be another~sim\-ple~stra\-tum in $\Mat_{n'}(\F)$ for some 
integer
$n'\>1$, and suppose that we have an isomorphism of~$\F$-al\-ge\-bras 
$\h:\F[\b]\to\F[\b']$ such that $\h(\b)=\b'$. Then there is a canonical 
bijective map: 
\begin{equation*}
\Cc(\aa,\b) \to \Cc(\aa',\b')
\end{equation*}
called the \emph{transfer map}~\cite[Theorem~3.6.14]{BK}.

Now let $[\aa,\b_1]$ and $[\aa,\b_2]$ be simple strata in $\Mat_n(\F)$, 
and assume that we have two simple 
characters $\t_1\in\Cc(\aa,\b_1)$ and $\t_2\in\Cc(\aa,\b_2)$ that intertwine; 
that is, there is a $g\in\GL_n(\F)$ such that 
\begin{equation*}
\t_2(x)=\t_1(gxg^{-1}),
\quad
\text{for all } x\in\H^1(\aa,\b_2)\cap g^{-1}\H^1(\aa,\b_1)g.
\end{equation*}
For $i=1,2$, let $[\aa',\b'_i]$ be a simple stratum in $\Mat_{n'}(\F)$ for
some $n'\>1$, such that $\t_i$ transfers~to a simple character
$\t'_i\in\Cc(\aa',\b'_i)$. Then the simple characters $\t'_1$, $\t'_2$ are 
conjugate under $\GL_{n'}(\F)$ (see \cite[Theorem~8.7]{BHLTL1} and 
\cite[Theorem~3.5.11]{BK}). 

Now let $[\aa_1,\b_1]$, $[\aa_2,\b_2]$ be simple strata in $\Mat_{n_1}(\F)$
and $\Mat_{n_2}(\F)$, respectively, for $n_1,n_2\>1$. We say that two simple
characters $\t_1\in\Cc(\aa_1,\b_1)$ and $\t_2\in\Cc(\aa_2,\b_2)$ are 
\emph{endo-equivalent} if there are simple strata $[\aa',\b'_1]$, 
$[\aa',\b'_2]$ in $\Mat_{n'}(\F)$, for some $n'\>1$, such that $\t_1$ and $\t_2$ 
transfer to simple characters $\t'_1\in\Cc(\aa',\b'_1)$ and 
$\t'_2\in\Cc(\aa',\b'_2)$ which intertwi\-ne (or, equivalently, which are 
$\GL_{n'}(\F)$-conjugate). This defines an equivalence relation on the set 
\begin{equation*}
\bigcup\limits_{[\aa,\b]} \Cc(\aa,\b)
\end{equation*}
where  the union  is taken  over  all simple  strata of  $\Mat_n(\F)$ for  all
$n\>1$~\cite[Section~8]{BHLTL1}. An~equi\-va\-lence class for this relation is
called an \emph{endo-class}. 

Given a simple character $\t\in\Cc(\aa,\b)$, the degree of $\E/\F$, its
ramification order and its residue class  degree only depend on the endo-class
of $\t$. These integers are called  the degree, ramification order and residue
class degree of  this endo-class. The field extension $\E/\F$  is not uniquely
determi\-ned, but its maximal tamely ramified sub-extension is uniquely
determi\-ned, up  to an  $\F$-isomor\-phism, by the  endo-class of  $\t$. This
tamely ramified sub-extension is called  the \emph{tame parameter field} of the
endo-class~\cite[2.2,~2.4]{BHEffectiveLC}. 

Let $\Ee(\F)$ denote the set of all endo-classes of simple characters in all 
general linear groups over~$\F$. Given a finite tamely ramified extension $\T$ 
of $\F$, there is a surjective map: 
\begin{equation*}
\Ee(\T) \to \Ee(\F)
\end{equation*}
with finite fibers, called the \emph{restriction map}~\cite[2.3]{BHEffectiveLC}. Given $\TT\in\Ee(\F)$, the endo-classes $\boldsymbol{\Psi}\in\Ee(\T)$ which restrict to $\TT$ are called the $\T/\F$-{lifts} of $\TT$. If $\TT$ has tame parameter field $\T$, then $\Aut_\F(\T)$ acts transitively and faithfully on the set of $\T/\F$-lifts of $\TT$~\cite[2.3,~2.4]{BHEffectiveLC}.

%%%%%%%%%%%%%%%%%%%%%%%%%%%%%%%%%%%%%%%%%%%%%%%%
\subsection{Simple types and cuspidal representations}%\label{pifoulechien}

Let us write $\G=\GL_n(\F)$ for some $n\>1$. 
A family of pairs $(\J,\l)$ called \emph{simple types}, made of a com\-pact open
subgroup $\J$ of $\G$ and an ir\-reducible representation $\l$ of $\J$, has
been constructed in~\cite{BK} (see also \cite{MSt} for the modular case). 

By construction, given a simple type $(\J,\l)$ in $\G$, there are a simple
stratum $[\aa,\b]$ and a simple~cha\-rac\-ter $\t\in\Cc(\aa,\b)$ such that
$\J(\aa,\b)=\J$ and $\t$ is contained in the restriction of $\l$ to
$\H^1(\aa,\b)$. Such a simple character is said to be \emph{attached to} $\l$.

\begin{definition}
When the hereditary  order $\bb=\aa\cap\B$ is a maximal order  in $\B$, we say
that the simple stratum $[\aa,\b]$ and the simple characters in $\Cc(\aa,\b)$ 
are \emph{maximal}. A simple type with~a~maxi\-mal attached simple character is 
called a \emph{maximal simple type}. 
\end{definition}

When the simple stratum $[\aa,\b]$ is maximal, 
and given a homomorphism $\B\simeq\Mat_m(\E)$ 
of~$\E$-alge\-bras identifying $\bb$ 
with the standard maximal order,
one has group isomorphisms: 
\begin{equation}
\label{nonoc}
\J(\aa,\b)/\J^1(\aa,\b) \simeq \bb^\times/\U^1(\bb) \simeq \GL_{m}(\ee).
\end{equation}

The following proposition gives a description of cuspidal (irreducible)
representations of $\G$ in terms of maximal simple types. 

\begin{proposition}
\label{GeorgesSaval}
Let $\pi$ be a cuspidal representation of $\G$. 
\begin{enumerate}
\item
There is a maximal simple type $(\J,\l)$ such that $\l$ occurs as a subrepresentation of the~res\-triction of $\pi$ to $\J$. This sim\-ple type is uniquely deter\-mined up to $\G$-conjugacy.
\item
The simple character $\t$ attached to $\l$ is uniquely deter\-mined up to $\G$-conjugacy. Its endo-class $\TT$ is called the \emph{endo-class} of $\pi$.
\item
\label{sibelius}
If $\t'\in\Cc(\aa',\b')$ is a simple character in $\G$, then the restriction of $\pi$ to $\H^1(\aa',\b')$ contains $\t'$ if and only if $\t'$ is maximal and has endo-class $\TT$, that is, if and only if $\t$, $\t'$ are $\G$-conjugate.
\item
Let $[\aa,\b]$ be a maximal simple stratum such that $\J=\J(\aa,\b)$ and $\t\in\Cc(\aa,\b)$. The simple type $\l$ extends uniquely to a representation $\bl$ of the norma\-lizer $\bJ=\bJ(\aa,\b)$ of $\t$ in $\G$ such that the compact induction of $\bl$ to $\G$ is isomorphic to $\pi$.
\end{enumerate}
\end{proposition}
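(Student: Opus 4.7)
The plan is to assemble the proposition directly from the foundational theorems of Bushnell--Kutzko~\cite{BK} (extended to the modular case in~\cite{MSt}), since each part is essentially an instance or easy consequence of a standard theorem; the task is to identify which theorem to invoke for which part, and how to chain them.

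For part (i), I would argue existence and maximality separately. Existence of \emph{some} simple type in $\pi$ is the exhaustion theorem for cuspidal representations. To force maximality, I would invoke the type--theoretic characterization of cuspidal support: if the attached hereditary order $\bb=\aa\cap\B$ were not maximal in $\B$, the pair $(\J,\l)$ would be a proper cover of a simple type on a Levi subgroup of $\GL_m(\E)$, and Bushnell--Kutzko's covering theory would force $\pi$ to appear as a subquotient of a parabolic induction from this Levi, contradicting cuspidality. For uniqueness up to $\G$-conjugacy, if $(\J_1,\l_1)$ and $(\J_2,\l_2)$ are both maximal simple types occurring in $\pi$, Frobenius reciprocity gives a non-zero element in the intertwining of $\l_1$ with $\l_2$ via some $g\in\G$; the intertwining theorem for maximal simple types then forces $g\in\bJ_1\cdot\G\cdot\bJ_2$ in such a way that the types are $\G$-conjugate.

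Parts (ii) and (iii) follow from (i) together with the transfer and intertwining formalism recalled in~\ref{simplechar}. Uniqueness of the attached simple character up to $\G$-conjugacy is immediate once $(\J,\l)$ is unique up to conjugacy, since the simple characters attached to a given $\l$ are all $\J$-conjugate (as they all appear in $\l|_{\H^1(\aa,\b)}$ and are linked by transfer). The endo-class is then a well-defined invariant. For~(iii), the ``if'' direction uses that a maximal simple character $\t'$ with endo-class $\TT$ is $\G$-conjugate to $\t$ by the fact that $\G$-conjugacy of maximal simple characters with the same endo-class is precisely what the endo-equivalence relation collapses to (intertwining implies conjugacy, by the transitivity theorem). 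The ``only if'' direction is the delicate one: I would proceed by contradiction, assuming that a non-maximal simple character $\t'$ or one with a different endo-class appears in $\pi|_{\H^1(\aa',\b')}$. Inflating $\t'$ to a Heisenberg extension and then to a simple type $(\J',\l')$ as in~\cite{BK} would give another simple type contained in $\pi$, contradicting either the uniqueness in~(i) (different endo-class) or the maximality obtained there (non-maximal $\t'$).

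Finally, part (iv) reduces to standard extension theory for simple types. Since $\bJ/\J$ is infinite cyclic generated by the image of a uniformizer of $\E$, the representation $\l$ admits extensions to $\bJ$ parametrized by an unramified character twist. Among these, there is a unique one, denoted $\bl$, for which $\cind_{\bJ}^\G(\bl)$ is irreducible and isomorphic to $\pi$: irreducibility is guaranteed by the intertwining theorem $I_\G(\bl)=\bJ$, and the match with $\pi$ pins down the twist by central character considerations. The main obstacle in the whole proof is really the ``only if'' direction of~(iii), since it requires translating a priori information about restriction to $\H^1(\aa',\b')$ into the language of simple types in order to invoke the uniqueness from~(i); all other parts are fairly direct bookkeeping with the Bushnell--Kutzko machinery.
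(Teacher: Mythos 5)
Your proposal is correct and matches the paper's approach: the paper's proof is simply the citation of \cite[6.2,~8.4]{BK} (and \cite[Section~3]{MSt} for the modular case), and your sketch is an accurate account of how those results assemble into the four assertions. The only cosmetic slip is the phrase ``$g\in\bJ_1\cdot\G\cdot\bJ_2$'' in part (i), which as written is vacuous; what you mean is the Bushnell--Kutzko description of the intertwining of a maximal simple type, which combined with intertwining-implies-conjugacy yields the uniqueness.
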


\begin{proof}
This follows from~\cite[6.2,~8.4]{BK}. 
See~\cite[Section~3]{MSt} in the case 
that~$\FC$ has positive characte\-ristic.
\end{proof}

A pair $(\bJ,\bl)$ constructed in this way is called an 
\emph{extended maximal simple type} in $\G$. 
Compact~in\-duction induces a bijection between
$\G$-conjugacy classes of extended maximal~sim\-ple types and~iso\-mor\-phism
classes of cuspidal representations of $\G$ (\cite[6.2]{BK} and 
\cite[Theorem 3.11]{MSt}). 

%%%%%%%%%%%%%%%%%%%%%%%%%%%%%%%%%%%%%%%%%%%%%%%%
\section{The $\s$-self-dual type theorem}\label{S2}
%%%%%%%%%%%%%%%%%%%%%%%%%%%%%%%%%%%%%%%%%%%%%%%%

We state our first main theorem. We fix an integer $n\>1$ and write $\G=\GL_n(\F)$.

\begin{theorem}
\label{PIMAIN}
Let $\pi$ be a cuspidal representation of $\G$. Then $\pi^\s\simeq\pi^\vee$ if and~only if 
$\pi$ contains an extended maximal simple type $(\bJ,\bl)$ such that $\bJ$ is $\s$-stable and $\bl^\s\simeq\bl^\vee$.
\end{theorem}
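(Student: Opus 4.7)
The ``if'' direction is straightforward: if $(\bJ,\bl)$ is $\s$-self-dual, applying $\s$ to the isomorphism $\pi\simeq\cind_{\bJ}^{\G}(\bl)$ given by Proposition~\ref{GeorgesSaval} yields
\[
\pi^\s\simeq\cind_{\bJ}^{\G}(\bl^\s)\simeq\cind_{\bJ}^{\G}(\bl^\vee)\simeq\pi^\vee,
\]
since compact induction of an extended maximal simple type commutes with taking the contragredient.

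For the converse, the plan is to descend the $\s$-self-duality through the layers of the Bushnell--Kutzko construction, the systematic technical tool being the triviality of $H^1(\langle\s\rangle,\P)$ for any pro-$p$-group $\P$ on which $\s$ acts, which requires $p\ne 2$. First, Proposition~\ref{GeorgesSaval}(ii) applied to $\pi^\s\simeq\pi^\vee$ shows that the endo-class $\TT$ of $\pi$ satisfies $\s(\TT)=\TT^\vee$, where $\TT^\vee$ denotes the endo-class of $\pi^\vee$. In particular the tame parameter field $\T$ of $\TT$ is $\s$-stable, giving a handle on the transitive $\Aut_\F(\T)$-action on $\T/\F$-lifts that will underlie the constructions below.

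The core step is to produce a $\s$-stable maximal simple stratum $[\aa,\b]$ and a simple character $\t\in\Cc(\aa,\b)$ attached to $\pi$ with $\t^\s=\t^{-1}$. Starting from any maximal simple character $\t_0\in\Cc(\aa_0,\b_0)$ attached to $\pi$, the characters $\t_0^\s$ and $\t_0^{-1}$ are both maximal simple characters of endo-class $\TT^\vee$ attached to $\pi^\vee$, hence are $\G$-conjugate by Proposition~\ref{GeorgesSaval}(iii). Unwinding this conjugacy and killing the resulting $1$-cocycle via the $H^1$-vanishing in the pro-$p$-group $\H^1(\aa_0,\b_0)$ allows us to replace $\t_0$ by a $\G$-conjugate $\t$ supported on a $\s$-stable stratum and satisfying $\t^\s=\t^{-1}$. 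This step, which is where the endo-class and tame lifting machinery of \cite{BHLTL1,BHEffectiveLC} is essential, is the main obstacle: producing a canonical $\s$-stable stratum is delicate outside the essentially tame case, precisely because there are no simpler canonical parameters available.

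Once such $\t$ is in hand, the Heisenberg representation $\eta$ of $\J^1(\aa,\b)$ containing $\t$ is uniquely determined, so $\eta^\s\simeq\eta^\vee$ holds automatically. A $\b$-extension $\k$ of $\eta$ to $\J(\aa,\b)$ is unique up to twisting by characters of $\J(\aa,\b)/\J^1(\aa,\b)\simeq\GL_m(\ee)$ of order prime to $p$, and the involution $\k\mapsto(\k^\s)^\vee$ on this torsor admits a fixed point by an elementary Galois-cohomological argument. Tensoring $\k$ with the inflation of a $\s$-self-dual cuspidal representation of $\GL_m(\ee)$---whose existence is forced by $\pi^\s\simeq\pi^\vee$ together with the uniqueness clause in Proposition~\ref{GeorgesSaval}(i)---yields a $\s$-self-dual simple type $\l$ on $\J(\aa,\b)$. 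Finally, $\l$ extends uniquely to a representation $\bl$ of $\bJ(\aa,\b)$ with $\cind_{\bJ}^{\G}(\bl)\simeq\pi$; since $\bJ$ is $\s$-stable and $\pi^\s\simeq\pi^\vee$, this uniqueness forces $\bl^\s\simeq\bl^\vee$, completing the proof.
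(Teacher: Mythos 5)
Your ``if'' direction and the reduction to Theorem~\ref{THETAMAINNONMAX} --- producing a $\s$-stable maximal simple stratum carrying a simple character $\t$ with $\t\circ\s=\t^{-1}$ --- match the paper's strategy, and you correctly identify both the endo-class symmetry $\TT^\s=\TT^\vee$ and pro-$p$ cohomology vanishing as the essential tools.

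However, the final step has a genuine gap. You propose to build $\bl$ as $\k\otimes\rho$ with a $\s$-self-dual $\b$-extension $\k$ and a $\s$-self-dual cuspidal $\rho$ of $\GL_m(\ee)$, asserting that the involution $\k\mapsto(\k^\s)^\vee$ on the set of $\b$-extensions ``admits a fixed point by an elementary Galois-cohomological argument.'' This is not substantiated, and the obstruction is real: $\b$-extensions form a torsor under the characters of $\kk_\E^\times$ (characters of $\J/\J^1\simeq\GL_m(\ee)$ factoring through determinant), a cyclic group of order $q_\E-1$ --- prime to $p$, but typically even. When $\T/\T_\so$ is ramified, $\s$ acts trivially on $\kk_\E$, so the induced action on this character group is inversion, and $H^1(\ZZ/2\ZZ,\,\cdot\,)\simeq\ZZ/2\ZZ$ does not vanish; there is no free fixed point. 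Your appeal to ``uniqueness in Proposition~\ref{GeorgesSaval}(i)'' for the $\s$-self-duality of $\rho$ also silently assumes the conjugating element between $(\bJ,\bl)$ and $(\bJ,(\bl^\s)^\vee)$ lies in $\bJ$, which is precisely what needs proving. The paper sidesteps the whole issue: given $\t$ $\s$-self-dual and $\bJ$ $\s$-stable, the pairs $(\bJ,\bl)$ and $(\bJ,(\bl^\s)^\vee)$ are $\G$-conjugate by some $g$ normalizing $\bJ$ (Proposition~\ref{GeorgesSaval}); restricting $\bl^g$ to $\H^1(\aa,\b)\cap\H^1(\aa,\b)^g$ shows $g$ intertwines $\t$, hence $g\in\bJ\B^\times\bJ$ by \cite[Theorem 3.3.2]{BK}; then $\bJ^g=\bJ$ and intersecting with $\B^\times$ force $g\in\bJ$, yielding $\bl^\s\simeq\bl^\vee$ without ever decomposing $\bl$, and in particular without needing a $\s$-self-dual $\b$-extension (which need not exist --- the mismatch $\chi$ between $\k$ and $(\k^\s)^\vee$ can be absorbed into $\rho$, so demanding both factors separately self-dual is more rigid than the theorem requires).
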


In other words, a cuspidal representation of $\G$ is 
$\s$-self-dual if  and only if  it contains a $\s$-self-dual  extended maximal
simple type. 

If $(\bJ,\bl)$ is an extended maximal simple type for the cuspidal
representation $\pi$, then $(\s(\bJ),\bl^\s)$~is~an extended maximal simple
type for $\pi^\s$ and $(\bJ,\bl^\vee)$ is  an extended maximal simple type for
$\pi^\vee$.~Thus, if $\pi$ contains an extended maximal simple type
$(\bJ,\bl)$ such that $\bJ$ is $\s$-stable and $\bl^\s$, $\bl^\vee$ 
are~iso\-mor\-phic, then $\pi^\s$, $\pi^\vee$ are isomorphic. 
The rest of Section~\ref{S2}
is devoted to the proof of the converse statement. 

%%%%%%%%%%%%%%%%%%%%%%%%%%%%%%%%%%%%%%%%%%%%%%%%
\subsection{The endo-class}
\label{tec}

Start with a cuspidal representation $\pi$ of $\G$, and suppose 
that $\pi^\s\simeq \pi^\vee$. Let $\TT$ be its endo-class over $\F$. 
Associated with it, there are its degree $d=\deg(\TT)$ and its tame parameter 
field $\T$: this is a tamely ramified finite extension of $\F$, unique up to 
$\F$-isomorphism (see~\S\ref{simplechar}). 

If $\t\in\Cc(\aa,\b)$ is a maximal simple character contained in $\pi$, then 
$\t^{-1}\in\Cc(\aa,-\b)$ is con\-tained in~$\pi^\vee$ and 
$\t\circ\s\in\Cc(\s(\aa),\s(\b))$ is contained in $\pi^\s$.
{Note that we use the fact that the character of $\F$ fixed in
  Paragraph~\ref{genitrix41} is $\s$-stable in order to have
  $\t\circ\s\in\Cc(\s(\aa),\s(\b))$.}
We write 
$\TT^\vee$ for the endo-class of $\t^{-1}$, and $\TT^\s$ for that of 
$\t\circ\s$. The assumption on $\pi$ implies that $\TT^\s=\TT^\vee$. We will 
prove the following theorem. 

\begin{theorem}
\label{THETAMAINNONMAX}
Let $\TT\in\Ee(\F)$ be an endo-class of degree dividing $n$ such that 
$\TT^\s$ is equal to $\TT^\vee$, 
and let $\t\in\Cc(\aa,\b)$ be a simple character in $\G$ of
endo-class $\TT$. There are a simple stratum $[\aa',\b']$ and a simple 
character $\t'\in\Cc(\aa',\b')$ such that: 
\begin{enumerate}
\item
the character $\t'$ is $\G$-conjugate to $\t$,
\item
the group $\H^1(\aa',\b')$ is $\s$-stable and $\t'\circ\s=\t'^{-1}$,
\item
the order $\aa'$ is $\s$-stable and $\s(\b')=-\b'$.
\end{enumerate}
\end{theorem}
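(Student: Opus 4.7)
My plan is to modify the simple stratum $[\aa,\b]$ via successive $\G$-conjugations in three stages, at each step making one more piece of data compatible with $\s$.

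\emph{Stage 1 (a $\s$-stable $\aa$).} Since $\s$ preserves the invariants of hereditary $\o$-orders in $\mathrm{Mat}_n(\F)$, $\s(\aa)$ is $\G$-conjugate to $\aa$, so the $\G$-orbit of $\aa$ is $\s$-stable. I would produce a $\s$-fixed point in this orbit by a twisted-cocycle argument: after picking $g\in\G$ with $\s(\aa)=g\aa g^{-1}$, the obstruction to replacing $g$ by an element of the stabilizer is a class in $H^1(\langle\s\rangle,\mathfrak{K}(\aa))$, where $\mathfrak{K}(\aa)$ is the normalizer of $\aa$ in $\G$. The pro-$p$ radical $\U^1(\aa)$ contributes nothing because $p\ne2$, and the outer quotient (an extension of a finite group by $\ZZ$) is handled directly.

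\emph{Stage 2 ($\s(\b)=-\b$).} With $\aa$ now $\s$-stable, the map $\iota:\b''\mapsto -\s(\b'')$ is an involution on the set of $\b''$ for which $[\aa,\b'']$ is a simple stratum containing a simple character $\G$-conjugate to $\t$; the hypothesis $\TT^\s=\TT^\vee$ says precisely that $\iota$ preserves this set. I expect the main obstacle to lie in producing a fixed point of $\iota$: one must descend the symmetry through Bushnell--Henniart's tame lifting theory to the tame parameter field $\T$, use the transitive action of $\Aut_\F(\T)$ on the $\T/\F$-lifts of $\TT$ to select a lift whose underlying tame sub-stratum is already $\s$-antisymmetric, and then lift this tame approximation stratum-by-stratum, at each level an $H^1$-vanishing in a pro-$p$ subgroup providing the necessary adjustment. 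The outcome is a conjugate $[\aa',\b']$ with $\aa'$ still $\s$-stable and $\s(\b')=-\b'$.

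\emph{Stage 3 (self-dual character).} Once $[\aa',\b']$ is $\s$-antisymmetric, $\H^1(\aa',\b')$ is $\s$-stable and both $\t'$ and $(\t'\circ\s)^{-1}$ lie in $\Cc(\aa',\b')$ with endo-class $\TT$. By the intertwining/conjugacy theorem for simple characters recalled in Section~\ref{simplechar}, they are $\bJ(\aa',\b')$-conjugate. Reducing modulo $\J^1(\aa',\b')$ and exploiting the semisimple structure of $\J(\aa',\b')/\J^1(\aa',\b')$ from \eqref{nonoc}, the conjugator can be chosen in $\J^1(\aa',\b')$, and a final descent in this pro-$p$-group fixes $\t'$ so that $\t'\circ\s=\t'^{-1}$; this last step is again an $H^1$-vanishing valid because $p\ne2$. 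Throughout, the odd residual characteristic is what powers the cohomological descents, and it is in stage~2 that the endo-class symmetry $\TT^\s=\TT^\vee$ must actually be promoted to a symmetry of the stratum itself.
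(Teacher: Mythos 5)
Your three-stage plan reverses the order that actually makes the argument go through. The paper \emph{first} produces a simple character $\t'$ with $\H^1(\aa',\b')$ $\s$-stable and $\t'\circ\s=\t'^{-1}$, and \emph{only afterwards} replaces $\b'$ by a $\s$-antisymmetric element, via Lemma~\ref{poussinjaune} and Proposition~\ref{Pompidou} (both adapted from \cite{StevensIntertwining}). That order is forced: the ``stratum-by-stratum'' refinement you invoke in Stage~2 is a process defined \emph{relative to a fixed simple character} -- at each wild level the choice of the next approximant is constrained by the requirement that $\t$ remain in the associated set of simple characters -- and Proposition~\ref{Pompidou} takes the self-duality $\t'\circ\s=\t'^{-1}$ as an input hypothesis, not a conclusion. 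Without it, your involution $\iota\colon\b''\mapsto-\s(\b'')$ has no reason to admit a fixed point compatible with the $\G$-conjugacy class of $\t$, and ``lift the tame approximation stratum-by-stratum'' has no well-defined starting data. The symmetry $\TT^\s=\TT^\vee$ can be promoted to a symmetry of the stratum only \emph{after} it has been promoted to a symmetry of the character, not before.

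Your Stage~3 also overstates what intertwining-implies-conjugacy gives, and this hides the real non-pro-$p$ obstruction. The statement recalled in Paragraph~\ref{simplechar} (from \cite[Theorem~3.5.11]{BK}) produces a conjugator $u\in\mult{\aa'}$, not one in $\bJ(\aa',\b')$, and certainly not in $\J^1(\aa',\b')$; asserting ``the conjugator can be chosen in $\J^1(\aa',\b')$'' is equivalent to resolving a twisted cocycle in the reductive quotient $\mult{\aa'}/\U^1(\aa')$, which for a general hereditary order is a product of general linear groups over $\kk$ and is not pro-$p$, so the $H^1$-vanishing for pro-$p$-groups says nothing there. The paper arranges matters so that this descent takes place in the simplest possible setting: it first proves the totally wild case, where $\aa$ is a \emph{minimal} order and $\mult\aa/\U^1(\aa)\simeq\kk^{\times n}$ is abelian (Lemma~\ref{pikotty} then settles the quotient by Hilbert~90 when $\F/\F_\so$ is unramified, and by a direct computation with a well-chosen uniformizer when it is ramified); it reduces the maximal case to the totally wild case via the interior $\T/\F$-lift and the $\s$-antisymmetric embedding of the tame parameter field supplied by Corollary~\ref{L3}; and it reduces the general case to the maximal case by transferring to $\GL_d(\F)$, applying Proposition~\ref{Pompidou} there, and embedding diagonally along an Iwahori decomposition. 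Attacking the general hereditary order directly, as you propose, skips these reductions and leaves the reductive part of the cohomological descent, as well as the transfer step, unaddressed.
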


Before proving Theorem~\ref{THETAMAINNONMAX}, we show how it implies
Theorem~\ref{PIMAIN}. 
By applying Theorem~\ref{THETAMAINNONMAX} to~any simple 
character $\vartheta$ contained in $\pi$, 
{which is maximal by Proposition \ref{GeorgesSaval}\ref{sibelius},
we get a maximal sim\-ple character
$\t\in\Cc(\aa,\b)$, conjugate to~$\vartheta$, such that
$\aa$ is $\s$-stable and $\s(\b)=-\b$ and:
\begin{equation*}
\t\circ\s=\t^{-1}.
\end{equation*}
Thus $\t$ is contained in $\pi$ and its normalizer $\bJ$ in $\G$ is $\s$-stable.
Let $(\bJ,\bl)$ be an extended maxi\-mal sim\-ple type for $\pi$ with attached
simple character $\t$. 
Since $\pi$ is $\s$-self-dual, it contains both $(\bJ,\bl)$ and
$(\bJ,\bl^{\vee\s})$. By~Pro\-po\-sition~\ref{GeorgesSaval}, this implies that they
are conju\-ga\-te  by an element $g\in\G$,  that~is, $g$ normalizes  $\bJ$ and
$\bl^{\vee\s}$ is isomorphic  to $\bl^{g}$. Now consider  the simple characters
$\t^{-1}\circ\s=\t$ and $\t^g$. 
Both of them are contained in $\bl^g$.
Restricting $\bl^g$ to the inter\-section: 
\begin{equation}
\label{IGH}
\H^1(\aa,\b)\cap\H^1(\aa,\b)^g
\end{equation}
we get a direct sum of copies of $\t$ containing the restriction of
$\t^g$ to \eqref{IGH}. It follows~that~$g$ inter\-twines $\t$.}
By \cite[Theorem 3.3.2]{BK}, 
which describes the intertwining set of a simple character, 
we have $g\in\bJ\B^\times\bJ$.
We thus may assume that $g\in\B^\times$. By uniqueness of 
the maximal compact~sub\-group in $\bJ$, the identity $\bJ^g=\bJ$ gives us 
$\J^g=\J$. Inter\-secting with $\B^\times$ gives $\mathfrak{b}^{\times 
  g}=\mathfrak{b}^{\times}$. It follows that $g$ norma\-lizes the 
order $\bb$. We thus have $g\in\bJ$, thus $\bl^\s\simeq\bl^\vee$. 
Theorem~\ref{PIMAIN} is proved. 

\begin{remark}
Assuming that Theorem~\ref{THETAMAINNONMAX} holds, and using Intertwining 
Implies Conjugacy~\cite[Theorem~5.7.1]{BK}, the same argument shows that, if 
$\pi$ is a $\s$-self-dual irreducible representation of $\G$ that contains a 
simple type, then $\pi$ contains a $\s$-self-dual simple type.
{In particular, any $\s$-self-dual discrete series representation of $\G$
  contains a $\s$-self-dual simple type.}
\end{remark}

\begin{remark}
However, an arbitrary $\s$-self-dual irreducible representation of $\G$ may 
not contain a $\s$-self-dual semisimple type. See \cite{BKsemi,MSt} 
for the notion of semisimple type and~Para\-graph \ref{CESST} for a 
counter\-example. 
\end{remark}

It thus remains to prove Theorem~\ref{THETAMAINNONMAX}. For this, one can forget about the representation $\pi$.

%%%%%%%%%%%%%%%%%%%%%%%%%%%%%%%%%%%%%%%%%%%%%%%%
\subsection{A prelude}

We first show how to deal with the 
(second part of the)
third condition of Theorem \ref{THETAMAINNONMAX}. 
Recall (see \cite{BK}) that a stratum $[\aa,v,r,\b]$ in
$\Mat_n(\F)$ is \emph{pure} if $\F[\b]$ is a field, $\F[\b]^\times$
normalises $\aa$ and $\b\aa=\p_{\aa}^{-v}$.
{(See Paragraph \ref{genitrix41} for the comment on the notation.)}

{Here again (see Paragraph~\ref{tec}),}
we use the fact that the character of $\F$ fixed in 
Paragraph \ref{genitrix41} is {$\s$-stable}.

\begin{lemma}
\label{poussinjaune}
Let $[\aa,v,r,\b]$ be a pure stratum in $\Mat_n(\F)$ with $\s(\aa)=\aa$ 
and $\s(\b)+\b\in\p_{\aa}^{-r}$. 
There is a simple stratum $[\aa,v,r,\g]$ such that $\b-\g\in\p_{\aa}^{-r}$ 
and $\s(\g)+\g=0$.
\end{lemma}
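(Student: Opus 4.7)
The plan is to combine Bushnell--Kutzko's approximation theorem for pure strata (\cite[Theorem 2.4.1]{BK}) with $\s$-antisymmetrisation via the operator $x \mapsto (x - \s(x))/2$, which is available since $p \neq 2$.

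First I would apply \cite[Theorem 2.4.1]{BK} to $[\aa, v, r, \b]$ to obtain an equivalent simple stratum $[\aa, v, r, \g_0]$ with $\b - \g_0 \in \p_\aa^{-r}$, and set
\[
\g \;=\; \frac{\g_0 - \s(\g_0)}{2}.
\]
By construction $\s(\g) + \g = 0$. A short calculation confirms $\b - \g \in \p_\aa^{-r}$: write
\[
\g_0 + \s(\g_0) \;=\; (\g_0 - \b) \;+\; (\b + \s(\b)) \;+\; \s(\g_0 - \b),
\]
and observe that each summand lies in $\p_\aa^{-r}$ (the first by the construction of $\g_0$, the second by hypothesis on $\b$, the third since $\s(\aa) = \aa$ forces $\s(\p_\aa^{-r}) = \p_\aa^{-r}$). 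Hence $\g \equiv \g_0 \equiv \b \pmod{\p_\aa^{-r}}$, so $[\aa, v, r, \g]$ is equivalent to the simple stratum $[\aa, v, r, \g_0]$.

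The main obstacle is that equivalence to a simple stratum does not in general imply simplicity, so one still has to upgrade the conclusion to simplicity of $[\aa, v, r, \g]$ itself. I would address this by running the proof of \cite[Theorem 2.4.1]{BK} $\s$-equivariantly: the simple approximant is produced there by successive refinement on graded pieces inside $\p_\aa^{-r}$, and at each step the choice of approximant lives in a subquotient on which $\s$ acts. Since $p \neq 2$, the projector $(1 - \s)/2$ onto the $\s$-anti-invariant part is well-defined at every stage; this is the same pro-$p$ cohomology vanishing that the paper flags as the reason for excluding $p = 2$. Making the $\s$-anti-invariant choice at each stage yields a simple $\g$ satisfying both $\b - \g \in \p_\aa^{-r}$ and $\s(\g) + \g = 0$.
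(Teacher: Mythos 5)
The first half of your argument is correct and does establish something: with $\g_0$ a simple approximant supplied by \cite[Theorem~2.4.1]{BK}, the element $\g=(\g_0-\s(\g_0))/2$ satisfies $\s(\g)+\g=0$ and $\b-\g\in\p_{\aa}^{-r}$, so the equivalence class of the pure stratum does contain a skew element. But, as you note yourself, this does not make $[\aa,v,r,\g]$ simple --- $\F[\g]$ need not even be a field --- and the entire content of the lemma is the upgrade you defer to your last paragraph. That is where the gap lies. The proof of \cite[Theorem~2.4.1]{BK} is not a ``successive refinement on graded pieces inside $\p_{\aa}^{-r}$'' in which each choice is a vector in a $\s$-stable linear subquotient: the inductive step passes to a simple approximant $\g'$ at a higher level, forms the derived stratum of $\b-\g'$ in the centralizer of $\F[\g']$ via a tame corestriction, and refines there. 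Making this $\s$-equivariant requires, at least, that $\F[\g']$ be $\s$-stable with $\s(\g')=-\g'$, that the tame corestriction be chosen compatibly with $\s$, and, at certain points, that one conjugate by elements of pro-$p$ unipotent groups obtained from the vanishing of the (non-abelian) first cohomology of $\langle\s\rangle$ in a pro-$p$ group --- this last input, rather than the linear projector $(1-\s)/2$, is what the hypothesis $p\neq2$ is really buying. None of this is supplied by the assertion that ``the $\s$-anti-invariant choice'' can be made at each stage.

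For comparison, the paper's own proof is exactly a citation: it states that the argument of \cite[Proposition~1.10]{StevensIntertwining} goes through verbatim with $\s$ in place of the adjoint involution, and that reference is precisely the careful $\s$-equivariant version of the Bushnell--Kutzko approximation process that your second paragraph gestures at. So your overall strategy is the right one, and your antisymmetrisation step is a clean way to dispose of the ``translation'' part of the problem; but as written the proposal replaces the hard step by a claim whose proposed mechanism does not match the actual structure of the refinement. To close the gap you would need to reproduce the induction of \cite[Proposition~1.10]{StevensIntertwining} (or simply invoke it, as the paper does).
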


\begin{proof}
The proof is exactly as in \cite[Proposition~1.10]{StevensIntertwining}, 
using the involution $\s$ instead of the adjoint involution 
$x\mapsto\overline{x}$ used in \cite{StevensIntertwining}.
\end{proof}

\begin{proposition}
\label{Pompidou}
Let $[\aa,\b]$ be a simple stratum in $\Mat_n(\F)$ with $\s(\aa)=\aa$.
Suppose that there is a simple character
% and let 
$\t\in\Cc(\aa,\b)$ such that $\H^1(\aa,\b)$ is $\s$-stable and 
$\t\circ\s=\t^{-1}$. Then there is a simple stratum $[\aa,\g]$ such that 
$\t\in\Cc(\aa,\g)$ and $\s(\g)+\g=0$. 
\end{proposition}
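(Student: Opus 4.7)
I would prove Proposition~\ref{Pompidou} by an iterative construction, using Lemma~\ref{poussinjaune} as the main technical tool at each step. This mirrors the strategy of \cite[Proposition 1.10 and subsequent results]{StevensIntertwining}, where the analogous statement is established for the adjoint involution on $\GL_n(\F)$. Write $v=-v_\aa(\b)$, so that the given simple stratum is $[\aa,v,0,\b]$.

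The first step is a reformulation of the hypothesis. Since $\aa$ is $\s$-stable and the additive character of $\F$ fixed in Paragraph~\ref{genitrix41} is $\s$-stable, one checks that $\t\circ\s$ belongs to $\Cc(\aa,\s(\b))$, while $\t^{-1}$ belongs to $\Cc(\aa,-\b)$. The assumption $\t\circ\s=\t^{-1}$ therefore produces a single character lying in the intersection $\Cc(\aa,-\b)\cap\Cc(\aa,\s(\b))$, and both defining elements act on the same $\s$-stable group $\H^1(\aa,\b)$. By the uniqueness properties of the simple stratum attached to a simple character (\cite[Theorem 3.5.11]{BK}, together with the approximation machinery in \cite[\S 2]{BK}), this coincidence forces $-\b$ and $\s(\b)$ to be close in the filtration of $\aa$; concretely, $\s(\b)+\b\in\p_\aa^{-r_0}$ for some integer $r_0<v$.

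The hypotheses of Lemma~\ref{poussinjaune} are now met for the pure stratum $[\aa,v,r_0,\b]$, so it yields a simple stratum $[\aa,v,r_0,\g_0]$ with $\b-\g_0\in\p_\aa^{-r_0}$ and $\s(\g_0)+\g_0=0$. I would then refine $\g_0$ to a simple stratum at level $0$, using the standard derived-stratum constructions of \cite[\S 2]{BK}, and iterate: applying the first step to this refinement (with $\t$ transferred from $\b$ to $\g_0$ via the equivalence) produces $\s$-approximate antisymmetry at a deeper level, then Lemma~\ref{poussinjaune} improves it to exact antisymmetry at that level. Because the filtration $\{\p_\aa^{-r}\}$ is separated and the construction decreases the integer $r$ strictly at each step, the iteration terminates after finitely many stages at the desired $\g$ with $\s(\g)+\g=0$ such that $[\aa,\g]$ is a simple stratum equivalent to $[\aa,\b]$.

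The main obstacle is that Lemma~\ref{poussinjaune} guarantees only stratum equivalence, not preservation of the particular simple character $\t$; one must verify at each stage that $\t\in\Cc(\aa,\g)$ for the current defining element $\g$. This is controlled by the transfer maps of \S\ref{simplechar}, which provide canonical bijections between $\Cc(\aa,\b)$ and $\Cc(\aa,\g)$ whenever $[\aa,\b]$ and $[\aa,\g]$ are suitably equivalent; crucially, because the construction is carried out $\s$-equivariantly, the transfer identifies $\t$ with itself rather than with a $\G$-conjugate. A secondary technical issue is organising the iteration so that the strict decrease of $r_0$ is maintained; this requires care in choosing how to refine $\g_0$ to level $0$ before reapplying the first step, but follows from standard finiteness of the filtration quotients.
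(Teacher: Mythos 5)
Your plan is essentially the paper's own proof: the paper simply asserts that the argument of \cite[Theorem~6.3]{StevensIntertwining} carries over verbatim with the Galois involution $\s$ in place of the adjoint involution and with Lemma~\ref{poussinjaune} substituted for \cite[Proposition~1.10]{StevensIntertwining}, and your iterative refinement along the filtration of $\aa$ — applying Lemma~\ref{poussinjaune} at each level and tracking $\t$ through the equality of the relevant character sets — is exactly that argument. No gap to report.
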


\begin{proof}
The proof is exactly the same as in \cite[Theorem~6.3]{StevensIntertwining}, 
using the involution $\s$ instead of the adjoint involution used 
in~\cite{StevensIntertwining}, and replacing~\cite[Proposition 
1.10]{StevensIntertwining} by Lemma~\ref{poussinjaune}. 
\end{proof}

%%%%%%%%%%%%%%%%%%%%%%%%%%%%%%%%%%%%%%%%%%%%%%%%
\subsection{The tame parameter field}%\label{TPF}

From  now  on, and  until  the  end of  this  section,  $\TT\in\Ee(\F)$ is  an
endo-class, with degree $d$ dividing  $n$, which is~$\s$-self-dual -- that is,
such that $\TT^\s=\TT^\vee$. In this paragraph, we will see that this symmetry
condition on $\TT$ implies that its tame parameter field $\T/\F$ inherits 
certain properties. 

Note that we \emph{do not} assume that $\TT$ is the endo-class of 
some~$\s$-self-dual cuspidal~repre\-sentation $\pi$ of $\G$. For the 
notion of a $\T/\F$-lift of $\TT$, we refer to \S\ref{simplechar}. 

\begin{lemma}
\label{L1}
Let $\TT$ be a $\s$-self-dual endo-class and $\T/\F$ be its tame parameter 
field. 
\begin{enumerate}
\item\label{L1.i}
Given a $\T/\F$-lift $\boldsymbol{\Psi}$ of $\TT$, there is a unique involutive $\F_\so$-automorphism $\a$ of $\T$ extending $\s$ such that $\boldsymbol{\Psi}^\vee=\boldsymbol{\Psi}^\a$.
\item For any $\F$-automorphism $\g$ of $\T$, the $\F_\so$-involution of $\T$ associated with $\boldsymbol{\Psi}^\g$ is $\g^{-1}\a\g$.
\end{enumerate}
\end{lemma}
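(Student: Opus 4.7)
The plan is to exploit the transitive and faithful action of $\Aut_\F(\T)$ on the set of $\T/\F$-lifts of~$\TT$ recalled at the end of Paragraph~\ref{simplechar}. Since duality preserves the tame parameter field and $\TT^\vee=\TT^\s$ by assumption, the tame parameter field of $\TT^\s$ is $\F$-isomor\-phic to~$\T$ as well. Realizing $\T$ inside an algebraic closure of $\F$ together with an extension of $\s$ to this closure, I would observe that $\s(\T)$ is a tame parameter field of $\TT^\s$, so there exists an $\F$-iso\-mor\-phism $\tau\colon\s(\T)\to\T$, and $\b=\tau\circ\s$ is an $\F_\so$-automor\-phism of $\T$ extending $\s$.

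With such a $\b$ in hand, both $\boldsymbol{\Psi}^\b\in\Ee(\T)$ and $\boldsymbol{\Psi}^\vee$ restrict to $\TT^\s=\TT^\vee$ over~$\F$. Transitivity of the $\Aut_\F(\T)$-action on $\T/\F$-lifts of $\TT^\vee$ then produces $\g\in\Aut_\F(\T)$ with $\boldsymbol{\Psi}^{\b\g}=\boldsymbol{\Psi}^\vee$, using the convention $\boldsymbol{\Psi}^{\b\g}=(\boldsymbol{\Psi}^\b)^\g$. Setting $\a=\b\g$ gives an $\F_\so$-auto\-morphism of $\T$ extending~$\s$ (since $\g$ is $\F$-linear) satisfying $\boldsymbol{\Psi}^\a=\boldsymbol{\Psi}^\vee$. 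Uniqueness follows from faithfulness: any other $\a'$ with these two properties yields $\a'\a^{-1}\in\Aut_\F(\T)$ fixing~$\boldsymbol{\Psi}$, hence $\a'=\a$.

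The involution property will come out by applying $\a$ a second time:
\[
\boldsymbol{\Psi}^{\a^2}=(\boldsymbol{\Psi}^\a)^\a=(\boldsymbol{\Psi}^\vee)^\a=(\boldsymbol{\Psi}^\a)^\vee=(\boldsymbol{\Psi}^\vee)^\vee=\boldsymbol{\Psi},
\]
where the third equality encodes the commutation of $\vee$ with the twisting action (clear on re\-pre\-sentative simple characters, as $\vee$ is inversion and twisting is pre-composition). Since $\a^2$ fixes~$\F$ pointwise, faithfulness gives $\a^2=1$. For part~(ii), the map $\g^{-1}\a\g$ is an involutive $\F_\so$-auto\-mor\-phism of $\T$ extending~$\s$, and a direct manipulation produces
\[
(\boldsymbol{\Psi}^\g)^{\g^{-1}\a\g}=\boldsymbol{\Psi}^{\a\g}=(\boldsymbol{\Psi}^\a)^\g=(\boldsymbol{\Psi}^\vee)^\g=(\boldsymbol{\Psi}^\g)^\vee,
\]
so the uniqueness in~(i) applied to the lift $\boldsymbol{\Psi}^\g$ of $\TT$ identifies its associated involution with $\g^{-1}\a\g$.

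The only nontrivial input is the commutation of $\vee$ with the $\Aut_{\F_\so}(\T)$-action on endo-classes, which I would confirm at the level of representative simple characters; the rest of the argument is formal bookkeeping with the transitive and faithful action of $\Aut_\F(\T)$ on fibers of the restriction map $\Ee(\T)\to\Ee(\F)$.
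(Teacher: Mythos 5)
Your proof is correct and follows essentially the same route as the paper's: both obtain an extension of $\s$ to $\T$ from the $\F$-isomorphism between the tame parameter fields of $\TT^\vee$ and $\TT^\s$, then use the transitive and faithful action of $\Aut_\F(\T)$ on $\T/\F$-lifts to get existence and uniqueness of $\a$, and deduce $\a^2=1$ from $\boldsymbol{\Psi}^{\vee\vee}=\boldsymbol{\Psi}$ together with faithfulness. The only cosmetic difference is that the paper packages existence and uniqueness into a single claimed bijection while you derive them separately from transitivity and faithfulness, and you make explicit the commutation of $\vee$ with the twisting action (and the formal verification of part (ii)), both of which the paper leaves implicit.
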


\begin{proof}
The tame parameter  field of $\TT^\vee$ is  $\T$, and that of  $\TT^\s$ is the
field $\T$ endowed with the map $x\mapsto\s(x)$ from $\F$ to $\T$. The 
assumption on $\TT$ implies that these tame parameter fields are 
$\F$-isomorphic. Thus there exists an $\F_\so$-automorphism of $\T$ whose 
restriction to $\F$ is $\s$. 

Let $\boldsymbol{\Psi}$  be a $\T/\F$-lift of  $\TT$ (see \S\ref{simplechar}).
Then  $\boldsymbol{\Psi}^\vee$  is  a  $\T/\F$-lift  of  $\TT^\vee$,  and  the
bijection $\a\mapsto\boldsymbol{\Psi}^\a$ between automorphisms of $\T/\F_\so$
and    $\T/\F_\so$-lifts    of    $\TT$   induces    a    bijection    between
$\F_\so$-automorphisms of  $\T$ extending $\s$ and  $\T/\F$-lifts of $\TT^\s$.
Thus there is a unique $\F_\so$-automorphism  $\a$ of $\T$ extending $\s$ such
that            $\boldsymbol{\Psi}^\vee=\boldsymbol{\Psi}^\a$.           Since
$\boldsymbol{\Psi}^{\vee\vee}=\boldsymbol{\Psi}$,      we     deduce      that
$\boldsymbol{\Psi}^{\a^2}=\boldsymbol{\Psi}$. That $\a^2$ is trivial follows 
from the fact that $\a^2$ is in $\Aut_\F(\T)$, which acts faithfully on the 
set of $\T/\F$-lifts of $\TT$. 
\end{proof}

\begin{remark}
{It is \emph{not} in general true that every involutive 
  $\F_\so$-automorphism $\a$ of $\T$  extending the $\F_\so$-automorphism $\s$
  of $\F$  has the  additional property required  by Lemma~\ref{L1}\ref{L1.i}.
  For example, if~$\F/\F_\so$ is unramified and~$\T/\F$ is ramified quadratic,
  then~$\T/\F_\so$ is a biquadratic extension and~the two automorphisms fixing 
  the ramified quadratic sub-extensions of~$\F_\so$ in~$\T$ are both 
  involutions~ex\-tending~$\s$; however, they are not conjugate so, by the
  uniqueness statement in Lemma~\ref{L1}, cannot both have the additional 
  property.} 
\end{remark}

Let $\a$ be an $\F_\so$-involution of $\T$ given by Lemma~\ref{L1}, and let 
$\T_\so$ be the fixed points of $\a$ in $\T$. Thus $\T_\so\cap\F=\F_\so$. 

\begin{lemma}
\label{L2}
The canonical homomorphism $\T_\so\otimes_{\F_\so}\F\to\T$ of $\T_\so\otimes_{\F_\so}\F$-modules is an isomorphism.
\end{lemma}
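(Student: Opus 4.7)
The plan is a short dimension count combined with a surjectivity argument, based on elementary Galois theory applied to the involution $\a$.

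First, I would check that $\a$ is a non-trivial involution of $\T$: its restriction to $\F$ equals $\s$, which is non-trivial since $\F\neq\F_\so$. Since $\a$ has order exactly $2$ and $\T_\so=\T^\a$, Galois theory gives $[\T:\T_\so]=2$, and hence, together with $[\F:\F_\so]=2$, the multiplicativity of degrees yields
\[
[\T_\so:\F_\so]=\frac{[\T:\F_\so]}{[\T:\T_\so]}=\frac{2\,[\T:\F]}{2}=[\T:\F].
\]
Consequently $\T_\so\otimes_{\F_\so}\F$ and $\T$ have the same $\F_\so$-dimension, namely $2[\T:\F]$.

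Next I would prove surjectivity of the canonical map $\mu:\T_\so\otimes_{\F_\so}\F\to\T$. Its image $\T_\so\cdot\F$ is a subring of the field $\T$, hence a subfield, containing $\T_\so$. Because $[\T:\T_\so]=2$, the only options are $\T_\so\cdot\F=\T_\so$ or $\T_\so\cdot\F=\T$. The first would force $\F\subseteq\T_\so$, contradicting $\T_\so\cap\F=\F_\so\subsetneq\F$ (an equality established in the paragraph preceding the lemma). Therefore $\mu$ is surjective, and by the dimension match it is an isomorphism.

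There is essentially no obstacle here; the only point requiring a small verification is that $\a$ is genuinely non-trivial on $\T$, which is immediate from the non-triviality of $\s=\a|_\F$. Everything else is pure linear algebra: a dimension count on one side, a ``subfield of index $2$'' argument on the other. Note that one does not need to analyse whether $\T_\so\otimes_{\F_\so}\F$ is itself a field; surjectivity onto a field combined with equality of dimensions automatically forces injectivity.
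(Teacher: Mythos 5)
Your proof is correct and follows essentially the same route as the paper: both arguments reduce the statement to the observation that failure of the isomorphism would force an $\F_\so$-embedding of $\F$ into $\T_\so$, whose image must be $\F$ itself (by Galois-ness of $\F/\F_\so$, or in your phrasing because $\T_\so\cdot\F=\T_\so$ would give $\F\subseteq\T_\so$), contradicting $\T_\so\cap\F=\F_\so$. The only difference is that you spell out, via Artin's lemma and the dimension count $[\T:\T_\so]=2$, the equivalence that the paper simply asserts in its first sentence, which makes your version slightly more self-contained but not a different proof.
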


\begin{proof}
The canonical homomorphism is an isomorphism if and only if $\F$ does not embed in $\T_\so$ as an $\F_\so$-algebra. Assume that there is such an embedding. Since $\F$ is Galois over $\F_\so$, its image is $\F$. Thus $\F$ is contained in $\T_\so$, which contradicts $\T_\so\cap\F=\F_\so$.
\end{proof}

Write $t$ for the degree of $\T$ over $\F$.

\begin{corollary}
\label{L3}
There is an embedding of $\F$-algebras $\iota:\T\hookrightarrow\Mat_{t}(\F)$ such that:
\begin{equation*}
\iota(\a(x))=\s(\iota(x))
\end{equation*}
for all $x\in\T$. In particular, the image of $\iota$ in $\Mat_t(\F)$ is $\s$-stable.
\end{corollary}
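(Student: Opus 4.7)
The plan is to build $\iota$ by first producing an $\F_\so$-embedding of $\T_\so$ into $\Mat_t(\F_\so)$ via the regular representation, then base changing to $\F$ and identifying the result with an embedding of $\T$ through the isomorphism of Lemma~\ref{L2}. Since $\s$ acts on $\Mat_t(\F)$ by applying $\s$ to each matrix entry, the $\F_\so$-entries of the matrices in the image of $\T_\so$ will be fixed by $\s$, and the equivariance $\iota\circ\a=\s\circ\iota$ will drop out on pure tensors.

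First, by Lemma~\ref{L2}, the canonical map $\T_\so\otimes_{\F_\so}\F\to\T$ is an isomorphism, so comparing dimensions gives $\dim_{\F_\so}(\T_\so)=\dim_\F(\T)=t$. Pick any $\F_\so$-basis of $\T_\so$, and let
\begin{equation*}
\iota_\so\colon\T_\so\hookrightarrow\End_{\F_\so}(\T_\so)\simeq\Mat_t(\F_\so)
\end{equation*}
be the injective $\F_\so$-algebra homomorphism given by left multiplication expressed in that basis. Tensoring with $\F$ over $\F_\so$ and composing with the inverse of the isomorphism of Lemma~\ref{L2}, I get an injective $\F$-algebra homomorphism
\begin{equation*}
\iota\colon\T\xrightarrow{\ \sim\ }\T_\so\otimes_{\F_\so}\F\xrightarrow{\ \iota_\so\otimes\mathrm{id}\ }\Mat_t(\F_\so)\otimes_{\F_\so}\F=\Mat_t(\F).
\end{equation*}

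To check the equivariance, note that $\a$ fixes $\T_\so$ pointwise and restricts to $\s$ on $\F$, so under the identification $\T\simeq\T_\so\otimes_{\F_\so}\F$ the involution $\a$ corresponds to $\mathrm{id}\otimes\s$. For any pure tensor $y\otimes a$ with $y\in\T_\so$ and $a\in\F$, the matrix $\iota_\so(y)$ lies in $\Mat_t(\F_\so)$ and is thus fixed by $\s$, so
\begin{equation*}
\iota(\a(y\otimes a))=\iota(y\otimes\s(a))=\s(a)\iota_\so(y)=\s\bigl(a\iota_\so(y)\bigr)=\s(\iota(y\otimes a)).
\end{equation*}
By $\F_\so$-linearity this extends to all of $\T$. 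In particular, for any $m=\iota(x)$ in the image of $\iota$ we have $\s(m)=\iota(\a(x))$, which again lies in the image, so $\iota(\T)$ is $\s$-stable in $\Mat_t(\F)$.

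The argument is essentially formal once Lemma~\ref{L2} is in hand; there is no serious obstacle, only the bookkeeping of making sure that the $\F_\so$-structure on $\T_\so$ lands in the $\s$-fixed subalgebra $\Mat_t(\F_\so)\subset\Mat_t(\F)$, which is precisely what forces $\s$ on the matrix side to match $\a$ on the $\T$ side.
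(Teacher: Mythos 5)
Your proof is correct and follows essentially the same approach as the paper: fix an $\F_\so$-embedding $\iota_\so$ of $\T_\so$ into $\Mat_t(\F_\so)$, then take $\iota=\iota_\so\otimes\F$ via Lemma~\ref{L2}. You have simply made explicit the dimension count, the choice of $\iota_\so$ (the regular representation), and the verification of $\sigma$-equivariance on pure tensors, all of which the paper leaves implicit.
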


\begin{proof}
Fix an $\F_\so$-embedding $\iota_\so$ of $\T_\so$ in $\Mat_{t}(\F_\so)$. Then $\iota=\iota_\so\otimes\F$ has the required property, thanks to Lemma~\ref{L2}. 
\end{proof}

\begin{remark}
The natural group homomorphism:
\begin{equation*}
\Aut_{\F_\so}(\T) \to \Aut_{\F_\so}(\T_\so) \rtimes \Gal(\F/\F_\so)
\end{equation*}
(where the semi-direct product is defined with respect to $\a$) is an isomorphism.
\end{remark}

%%%%%%%%%%%%%%%%%%%%%%%%%%%%%%%%%%%%%%%%%%%%%%%%
\subsection{The maximal and totally wild case}%\label{MTWC}

In this paragraph, we will assume that $d=n$ and $\T=\F$. 

\begin{proposition}
\label{SensenbrinkMTW}
Let $\t$ be a simple character in $\G$ with endo-class $\TT$. There is a simple character $\t'\in\Cc(\aa',\b')$ which is $\G$-conjugate to $\t$, such that $\aa'$ is $\s$-stable and $\t'\circ\s=\t'^{-1}$.
\end{proposition}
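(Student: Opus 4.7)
The plan is to normalise $\aa$ to be $\s$-stable, produce a canonical conjugator $k$ from the endo-class hypothesis, and then solve a cohomological equation along a pro-$p$ filtration of $\bJ$---an argument parallel to Stevens' treatment of the adjoint involution in \cite{StevensIntertwining}.

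I would first use maximality to normalise $\aa$. Since we are in the maximal case with $d=n$, the order $\bb=\aa\cap\E$ is the unique maximal order in the field $\B=\E$, so $\aa$ is itself a maximal hereditary order in $\Mat_n(\F)$. All such orders are $\G$-conjugate to $\Mat_n(\o)$, which is visibly $\s$-stable, so after replacing $\t$ by a suitable $\G$-conjugate I may assume $\s(\aa)=\aa$.

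Next, the characters $\t\circ\s\in\Cc(\aa,\s(\b))$ and $\t^{-1}\in\Cc(\aa,-\b)$ are two maximal simple characters on $\aa$ sharing the endo-class $\TT^\s=\TT^\vee$; by the intertwining-implies-conjugacy theorem for maximal simple characters (\cite[Theorem~3.5.11]{BK}) there exists $k$ in the normalizer $K(\aa)$ of $\aa$ with $(\t\circ\s)^k=\t^{-1}$. Applying this relation twice---once composing with $\s$, once conjugating again by $k$---one checks that $\tau:=\s(k)k$ normalises $\t$, hence lies in $\bJ=\bJ(\aa,\b)$, and that $k\bJ k^{-1}=\s(\bJ)$; consequently the formula $\phi(j):=\s(kjk^{-1})$ defines a group automorphism of $\bJ$ satisfying $\phi^{2}=\operatorname{Inn}_\tau$ and $\phi(\tau)=\tau$. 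A further direct calculation shows that the desired condition $\t^{g}\circ\s=(\t^{g})^{-1}$ is equivalent to $\s(g)g^{-1}$ lying in the double coset $S:=\s(\bJ)k\bJ\subseteq K(\aa)$, in which case $g^{-1}\aa g$ is automatically $\s$-stable. Since Hilbert~90 identifies $\{\s(g)g^{-1}:g\in\G\}$ with $\{c\in\G:\s(c)c=1\}$, the problem reduces to finding $c\in S$ with $\s(c)c=1$; writing $c=kj$ with $j\in\bJ$ and using the relations above, this becomes the twisted cocycle equation
\[
j\cdot\phi(j)=\tau^{-1}\quad\text{in }\bJ.
\]

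The hard part is the existence of such a $j$. I would use the canonical normal filtration $\bJ\supset\J\supset\J^{1}\supset\cdots$, in which $\bJ/\J$ is infinite cyclic generated by a prime element of $\E$, $\J/\J^{1}$ is finite cyclic of order prime to $p$, and $\J^{1}$ is further filtered by congruence subgroups whose successive quotients are $\FF_{p}$-vector spaces carrying a natural $\s$-action. The equation is first solved in the small abelian-by-cyclic quotient $\bJ/\J^{1}$ using $\phi(\tau)=\tau$, and the solution is then lifted through the pro-$p$ layers step by step; the obstruction to each lift lies in $H^{1}(\langle\s\rangle,V)$ for a finite $\FF_{p}[\s]$-module $V$, and such cohomology vanishes since $p\neq 2$. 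Once $c$ has been produced, Hilbert~90 provides a $g\in\G$ with $\s(g)g^{-1}=c$, and $\t':=\t^{g}$ satisfies all the conclusions of the proposition.
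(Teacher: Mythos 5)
Your overall strategy---normalize $\aa$ to be $\s$-stable, invoke intertwining-implies-conjugacy to get a conjugator $k$ with $(\t\circ\s)^k=\t^{-1}$, then solve the resulting twisted cocycle equation $j\cdot\phi(j)=\tau^{-1}$ in $\bJ$ by handling the prime-to-$p$ quotient first and lifting through the pro-$p$ filtration using $p\neq 2$---is exactly the route the paper takes, and your algebraic packaging (the automorphism $\phi$ with $\phi^2=\operatorname{Inn}_\tau$ and $\phi(\tau)=\tau$) is a clean reformulation of the recursive argument in Lemmas~\ref{pikotty}, \ref{FrancoisSturel} and \ref{marechal}.

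However, the opening step of your argument contains a genuine error. In this maximal, totally wild case we have $\B=\E$ and $\bb=\aa\cap\E=\o_\E$, but $\E/\F$ is totally (wildly) ramified of degree $n$; the unique hereditary order normalized by $\E^\times$ with $\aa\cap\E=\o_\E$ therefore has $\o$-period $e(\E/\F)=n$, so $\aa$ is a \emph{minimal} hereditary order---an Iwahori order---and is certainly not $\G$-conjugate to $\Mat_n(\o)$. The paper normalizes $\aa$ to the standard Iwahori position, which is indeed $\s$-stable, so your intended conclusion survives by a different (correct) justification. But the misidentification would matter if you filled in details: for the Iwahori order one has $\U(\aa)/\U^1(\aa)\simeq\kk^{\times n}$, abelian, which is the structure Lemma~\ref{pikotty} exploits, whereas for $\Mat_n(\o)$ that quotient would be the nonabelian group $\GL_n(\kk)$ and the first reduction step would read very differently. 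The lifting through the pro-$p$ layers is also left quite schematic; the paper's Lemmas~\ref{FrancoisSturel} and \ref{marechal} make precise the obstruction-vanishing claim (using the filtration by $\U^i$ together with $\J^i=\J\cap\U^i$, not only the filtration of $\bJ$), and proving those statements requires real work that your sketch compresses into one sentence.
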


Let $[\aa,\b]$  be a simple stratum  such that $\t\in\Cc(\aa,\b)$. We  may and
will assume that the principal order $\aa$ is standard (that is,~$\aa$ is made
of matrices with coefficients in $\o$ and its reduction mod $\p$
% the maximal ideal of $\o$ 
is made of up\-per block triangular matrices), thus $\s$-stable. The
extension $\F[\b]$ is totally wildly~ra\-mi\-fied over $\F$. In particular, $\aa$
is a minimal order in $\Mat_n(\F)$. 

Write $\U=\aa^\times$, which is the standard Iwahori subgroup of $\G$. For all
$i\>1$, write $\U^i=1+\p_{\aa}^i$, which is a normal subgroup of $\U$. 
Then $\U/\U^1\simeq\kk^{\times n}$ is abelian, of order prime to $p$, and
$\U^i/\U^{i+1}$ is an abelian $p$-group for all $i\>1$. 

Since $\TT^\s=\TT^\vee$ and $\aa$ is $\s$-stable, the characters
$\t\circ\s\in\Cc(\aa,\s(\b))$ and $\t^{-1}\in\Cc(\aa,-\b)$~inter\-twine. 
By Intertwining Implies Conjugacy for simple 
characters~\cite[Theorem~3.5.11]{BK}, there is a $u\in\U$ such that
$\H^1(\aa,\s(\b))=u^{-1}\H^1(\aa,-\b)u$  and   $\t\circ\s=(\t^{-1})^u$.  Since
$\s$ is involutive and the $\G$-normalizer of $\t$ is $\bJ$, this gives us: 
\begin{equation}
\label{condu}
u\s(u)\in\bJ\cap\U=\J.
\end{equation}
We search for an $x\in\G$ such that the character $\t'=\t^x\in\Cc(\aa^x,\b^x)$ has the desired proper\-ty. This amounts to the condi\-tion $u\s(x)x^{-1}\in\bJ$.

Note that $\J=\o^\times\J^1$ since $\F[\b]$ is totally ramified over $\F$. Thus the image of $\J$ in $\U/\U^1\simeq\kk^{\times n}$ is the image of the diagonal embedding of $\kk^\times$ in $\kk^{\times n}$. Let $\M$ be the torus made of all diagonal matrices of $\G$.

\begin{lemma}
\label{pikotty}
There is a $y\in\M$ such that $u\s(y)y^{-1}\in\J\U^1=\EuScript{O}^\times\U^1$.
\end{lemma}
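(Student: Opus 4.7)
The lemma is a question modulo $\U^1$: the condition $u\s(y)y^{-1}\in\J\U^1$ says exactly that the image of $u\s(y)y^{-1}$ in $\U/\U^1\simeq\kk^{\times n}$ lies in $\Delta(\kk^\times)$, the diagonal image of $\J$. My plan is to interpret this as a Galois-cohomological coboundary problem and solve it case by case according to whether $\F/\F_\so$ is ramified or unramified.

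First, let $\bar{u}\in\kk^{\times n}$ denote the image of $u$. The assumption \eqref{condu} gives $\bar{u}\cdot\s(\bar{u})\in\Delta(\kk^\times)$, so $\bar u$ represents a $\s$-cocycle in $A:=\kk^{\times n}/\Delta(\kk^\times)$. For $y=\diag(y_1,\ldots,y_n)\in\M$, the diagonal matrix $\s(y)y^{-1}=\diag(\s(y_i)/y_i)$ lies in $(\o^\times)^n\subseteq\U$ because $\s$ preserves valuations, so it has a well-defined image $\phi(y)\in\kk^{\times n}$. I want to show $\bar u\cdot \phi(y)\in\Delta(\kk^\times)$ for some $y\in\M$, i.e.\ that $\bar u$ is cohomologous to $0$ in $A$.

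In the \emph{unramified} case, $\s$ acts non-trivially on $\kk$. Hilbert~90 applied componentwise gives $H^1(\s,\kk^{\times n})=0$, and since the norm map $N_{\kk/\kk_\so}$ between finite fields is surjective, $H^2(\s,\kk^\times)=\hat H^0(\s,\kk^\times)=0$. The long exact sequence of $\s$-cohomology associated with
\[
1\to\Delta(\kk^\times)\to\kk^{\times n}\to A\to 1
\]
then gives $H^1(\s,A)=0$. Hence $\bar u=\s(\bar y)\bar y^{-1}$ in $A$ for some $\bar y\in\kk^{\times n}$, and I lift $\bar y$ to any $y\in\M\cap\U=(\o^\times)^n$.

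In the \emph{ramified} case, $\s$ acts trivially on $\kk$, so the cocycle condition reads $\bar u^2\in\Delta(\kk^\times)$: writing $\bar u=(\bar u_1,\ldots,\bar u_n)$ we obtain $\bar u_i^2=\bar u_j^2$, hence $\bar u_i=\epsilon_i\bar u_1$ with $\epsilon_i\in\{\pm1\}$. Since $p$ is odd, I can pick a uniformizer $\varpi$ of $\F$ with $\varpi^2\in\F_\so$, so $\s(\varpi)=-\varpi$. Taking $y=\diag(\varpi^{a_1},\ldots,\varpi^{a_n})$ with $a_i\in\{0,1\}$ chosen so that $(-1)^{a_i}=\epsilon_i^{-1}$ gives $\s(y)y^{-1}=\diag((-1)^{a_i})$, and then $u\s(y)y^{-1}$ has image $(\epsilon_i^{-1}\bar u_i)=(\bar u_1,\ldots,\bar u_1)$ in $\kk^{\times n}$, which lies in $\Delta(\kk^\times)$.

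The main obstacle is the ramified case: the naive cohomological argument fails because $\s$ is trivial on $\kk$, so elements $y\in\M\cap\U$ cannot produce any nontrivial coboundary. The key input is the existence (in odd residue characteristic) of a uniformizer $\varpi$ of $\F$ with $\s(\varpi)=-\varpi$, which supplies the single sign $-1\in\kk^\times$ one needs to kill the $\{\pm1\}^n/\Delta$-obstruction.
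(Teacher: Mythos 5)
Your proof is correct and follows essentially the same route as the paper: split into the ramified and unramified cases, apply Hilbert's Theorem~90 componentwise when $\kk/\kk_\so$ is quadratic, and use a uniformizer $\varpi$ with $\s(\varpi)=-\varpi$ to absorb the residual signs when $\F/\F_\so$ is ramified. The only cosmetic difference is that the paper first normalizes $u_1=1$ (replacing $u$ within $\o^\times u$) so that the cocycle lives in $\kk^{\times n}$ itself, whereas you work in the quotient $\kk^{\times n}/\Delta(\kk^\times)$ and invoke the vanishing of $H^2(\s,\kk^\times)$ via surjectivity of the finite-field norm; both are valid.
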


\begin{proof}
There are $u_1,\dots,u_n\in\mult\kk$ such that $u$ mod $\U^1$ is equal to $(u_1,\dots,u_n)$ in $\U/\U^1\simeq\kk^{\times n}$. Changing $u$ in the equivalence class $\EuScript{O}^\times u$, we may assume that $u_1=1$.

The condition~\eqref{condu} says  that $u\s(u)$ mod $\U^1$ is in  the image of
the diagonal embedding of $\kk^\times$ in $\kk^{\times n}$. Since $u_1=1$,
this gives us $u_i\s(u_i)=1$ for all $i\in\{1,\dots,n\}$.

Assume first that $\F$ is unramified over $\F_\so$. Then $\kk$ is quadratic over $\kk_\so$ and $\s$ induces the non-trivial $\kk_\so$-automorphism of $\kk$. We search for $y=(y_1,\dots,y_n)\in\kk^{\times n}$ such that $u\s(y)y^{-1}=1$ in $\kk^{\times n}$. This is possible by Hilbert's Theorem~90, since $u_i\s(u_i)=1$ for all $i$.

Assume  now  $\F$  is  ramified  over   $\F_\so$.  Then  $\s$  is  trivial  on
$\kk=\kk_\so$. We thus have $u_i^2=1$ which implies $u_i\in\{-1,1\}$. 
Let $\w$ be a uniformizer of $\F$ such that $\s(\w)=-\w$. 
Such a choice is possible since $p\neq2$.
We are searching
for a $y=(y_1,\dots,y_n)\in\F^{\times n}$ such that $\s(y)y^{-1}\in\U$ and
$u\s(y)y^{-1}=1$ in $\kk^{\times n}$. Let $y_i=1$ if $u_i=1$, and let $y_i=\w$
otherwise. This gives us a $y\in\M$ satisfying the required condition. 
\end{proof}

Let us write $zu\s(y)y^{-1}\in\U^1$ for some $y\in\M$ and $z\in\o^\times$
given by Lemma \ref{pikotty}. By replacing~the stratum $[\aa,\b]$ by
$[\aa^y,\b^y]$, the simple character  $\t$ by $\t^y\in\Cc(\aa^y,\b^y)$ and $u$
by  $y^{-1}zu\s(y)$,  which  does  not  affect the  fact  that  the  order  is
$\s$-stable, we  may and will assume  that $u\in\U^1$. We write  $\J^0=\J$ and
$\J^i=\J\cap\U^i$ for $i\>1$. 

\begin{lemma}
\label{FrancoisSturel}
Let $v\in\U^{i}$ for some $i\>1$, and assume that $v\s(v)\in\J^i$.
Then there are $j\in\J^i$ and $x\in\U^i$ such that 
$jv\s(x)x^{-1}\in\U^{i+1}$.
\end{lemma}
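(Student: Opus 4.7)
The plan is to reduce modulo $\U^{i+1}$ and exploit that the quotient $V_i = \U^i/\U^{i+1}$, being an abelian $p$-group with $p \neq 2$, has $2$ invertible, which kills the relevant cohomology.

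First, I would translate everything into additive language. Since $i \ge 1$, any product $ab$ with $a,b \in \p_{\aa}^i$ lies in $\p_{\aa}^{2i} \subseteq \p_{\aa}^{i+1}$, so the map $1+y \mapsto y$ identifies $V_i$ with $\p_{\aa}^i/\p_{\aa}^{i+1}$ as abelian groups, equipped with a $\s$-action (since $\s(\aa) = \aa$). Writing $v = 1+v'$, $x = 1+x'$ and $j = 1+j'$ with $v',x',j' \in \p_{\aa}^i$, a direct expansion gives
\[
jv\s(x)x^{-1} \equiv 1 + j' + v' + \s(x') - x' \pmod{\p_{\aa}^{i+1}},
\]
so the target condition $jv\s(x)x^{-1} \in \U^{i+1}$ becomes $\bar{v} = -\bar{j} + (1-\s)\bar{x}$ in $V_i$ (bars denoting images). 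The hypothesis $v\s(v) \in \J^i$ similarly becomes $(1+\s)\bar{v} \in \bar{\J^i}$, where $\bar{\J^i}$ is the image of $\J^i$ in $V_i$ — a subgroup, since $\J^i \subseteq \U^i$. Note that no $\s$-stability of $\J^i$ is required.

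The heart of the argument is then to show that any $\bar{v} \in V_i$ with $(1+\s)\bar{v} \in \bar{\J^i}$ lies in $\bar{\J^i} + (1-\s)V_i$. Since $V_i$ is an abelian $p$-group with $p$ odd, multiplication by $2$ is an automorphism of $V_i$ preserving the subgroup $\bar{\J^i}$. One decomposes
\[
\bar{v} = \tfrac{1}{2}(1-\s)\bar{v} + \tfrac{1}{2}(1+\s)\bar{v};
\]
the first summand equals $(1-\s)(\tfrac{1}{2}\bar{v}) \in (1-\s)V_i$, while the second lies in $\tfrac{1}{2}\bar{\J^i} = \bar{\J^i}$ by hypothesis. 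Choosing any lifts $x \in \U^i$ of $\tfrac{1}{2}\bar{v}$ and $j \in \J^i$ of $-\tfrac{1}{2}(1+\s)\bar{v}$ produces the desired pair.

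There is essentially no obstacle beyond this bookkeeping: the lemma is a shadow of the vanishing of $H^1(\Gal(\F/\F_\so), V_i)$, and this vanishing is immediate because $|\Gal(\F/\F_\so)| = 2$ is invertible in the $p$-group $V_i$. This fits the general theme announced in the introduction, in which the hypothesis $p \neq 2$ is precisely what trivializes first cohomology of $\Gal(\F/\F_\so)$ with coefficients in pro-$p$ groups.
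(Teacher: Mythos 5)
Your proof is correct. It rests on the same principle as the paper's -- the vanishing of $\CH^1(\Gal(\F/\F_\so),-)$ on abelian $p$-groups with $p$ odd -- but the execution is genuinely different in two respects. The paper does not linearize: it forms the $\s$-stable subgroup $\V=\J^i\U^{i+1}\cap\s(\J^i\U^{i+1})\supseteq\U^{i+1}$, checks that $v\s(v)\in\V$ (using $\s(h)\equiv h$ mod $\U^{i+1}$ for $h=v\s(v)$), and then invokes triviality of the first cohomology of the quotient $\W=\U^i/\V$ as a black box to write $v\equiv x\s(x)^{-1}$ mod $\V$, extracting $j$ from the containment $\V\subseteq\J^i\U^{i+1}$ afterwards. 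You instead work only modulo $\U^{i+1}$, pass to the additive picture $\U^i/\U^{i+1}\simeq\p_\aa^i/\p_\aa^{i+1}$, and make the cohomological vanishing explicit via the idempotent decomposition $1=\tfrac12(1-\s)+\tfrac12(1+\s)$, producing $x$ and $j$ simultaneously. What your route buys is that the auxiliary $\s$-stable subgroup $\V$ becomes unnecessary -- as you note, no $\s$-stability of $\J^i$ is used, whereas the paper's intersection with $\s(\J^i\U^{i+1})$ exists precisely to compensate for the possible failure of that stability (at this point of the induction $\J^i=\J^i(\aa,\b^{y_{i-1}})$ need not be $\s$-stable). What the paper's route buys is that it works verbatim in any $\s$-stable quotient, without needing the additive identification; but since $i\ge1$ guarantees $\p_\aa^{2i}\subseteq\p_\aa^{i+1}$, your identification is legitimate, and all your steps (the congruence $jv\s(x)x^{-1}\equiv1+j'+v'+\s(x')-x'$, the fact that $\overline{\J^i}$ is a subgroup stable under multiplication by $2$, and the choice of lifts) check out.
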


\begin{proof}
Recall that $\U^i/\U^{i+1}$ is abelian, and write $h=v\s(v)$. 
We have $\s(h)\equiv h$ mod $\U^{i+1}$. This implies that
$h\in\V=\J^i\U^{i+1}\cap\s(\J^i\U^{i+1})\supseteq\U^{i+1}$.
We thus have
$v\s(v)\equiv1$ mod $\V$. The quotient  $\W=\U^i/\V$ is an abelian, finite and
$\s$-stable $p$-group, and the first cohomology group of
{$\Gal(\F/\F_{\so})$}
in $\W$ is
trivial since $p\neq2$.  We thus have $v\equiv x\s(x)^{-1}$ mod  $\V$ for some
element $x\in\U^i$. This  gives us $v\s(x)x^{-1}\in\V\subseteq\J^i\U^{i+1}$ as
required. 
\end{proof}

\begin{lemma}
\label{marechal}
There is a sequence of triples~$(x_i,j_i,v_i)\in\U^i\times\J^i\times\U^{i+1}$, for $i\>0$, 
satisfying the following conditions: 
\begin{enumerate}
\item
$(x_0,j_0,v_0)=(1,1,u)$;
\item\label{Condition2}
for all $i\>0$, if we set $y_i=x_0x_1\dots x_{i}\in\U^1$, 
then the simple character $\t_i=\t^{y_i}\in\Cc(\aa,\b^{y_i})$ satisfies 
$\t_i\circ\s=(\mbox{$\t_i$}^{-1})^{v_{i}}$; 
\item\label{Condition3}
for all $i\>1$, we have 
{$y_iv_i=j_iy_{i-1}v_{i-1}\s(x_i)$.}
% $x_ij_iv_i=v_{i-1}\s(x_i)$.
\end{enumerate}
\end{lemma}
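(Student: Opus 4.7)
The proof goes by induction on $i\ge 0$. The base case is built into the setup: taking $(x_0,j_0,v_0)=(1,1,u)$ gives a triple in $\U^0\times\J^0\times\U^1$, and since $y_0=1$ and $\t_0=\t$, condition \ref{Condition2} reduces to the identity $\t\circ\s=(\t^{-1})^u$ established just above the statement of the lemma.

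For the inductive step, assume $(x_k,j_k,v_k)$ have been constructed for $k\le i$. The first key step is to extract from condition \ref{Condition2} at step $i$ the fact that $v_i\s(v_i)\in\J^{i+1}$. Indeed, composing the relation $\t_i\circ\s=(\t_i^{-1})^{v_i}$ with itself by applying $\s$ on both sides, and using the formal identity $(\chi^g)\circ\s=(\chi\circ\s)^{\s(g)}$ together with $\s^2=\mathrm{id}$, one obtains $\t_i=\t_i^{v_i\s(v_i)}$, so that $v_i\s(v_i)$ normalizes $\t_i$. Since $v_i\s(v_i)\in\U^{i+1}$ and the $\G$-normalizer of $\t_i$ meets $\U^{i+1}$ in a $y_i$-conjugate of $\J^{i+1}$, and since such a conjugation by $y_i\in\U^1$ acts trivially modulo $\U^{i+2}$ (as $[\U^1,\U^{i+1}]\subseteq\U^{i+2}$), one concludes $v_i\s(v_i)\in\J^{i+1}$. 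Applying Lemma \ref{FrancoisSturel} to $v_i$ with index $i+1$ then yields $j_{i+1}\in\J^{i+1}$ and $x_{i+1}\in\U^{i+1}$ with $j_{i+1}v_i\s(x_{i+1})x_{i+1}^{-1}\in\U^{i+2}$; setting $y_{i+1}=y_ix_{i+1}$, $\t_{i+1}=\t^{y_{i+1}}$, and $v_{i+1}=y_{i+1}^{-1}j_{i+1}y_iv_i\s(x_{i+1})$ makes condition \ref{Condition3} hold by construction.

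Two verifications then remain. First, $v_{i+1}\in\U^{i+2}$ follows from a commutator estimate: conjugation by $y_i\in\U^1$ is trivial on $\U^{i+1}$ modulo $\U^{i+2}$, and $\U^{i+1}/\U^{i+2}$ is abelian, so $v_{i+1}\equiv j_{i+1}v_i\s(x_{i+1})x_{i+1}^{-1}\pmod{\U^{i+2}}$, which lies in $\U^{i+2}$ by the choice of $j_{i+1},x_{i+1}$. Second, condition \ref{Condition2} at step $i+1$ reduces to a direct calculation: for $z$ in the appropriate intersection of domains, $(\t_{i+1}\circ\s)(z)=(\t_i\circ\s)(\s(x_{i+1})z\s(x_{i+1})^{-1})=\t_i^{-1}(v_i\s(x_{i+1})z\s(x_{i+1})^{-1}v_i^{-1})$ by the induction hypothesis, and then \ref{Condition3} together with the $\bJ$-invariance of $\t$ (i.e.\ $\t(jwj^{-1})=\t(w)$ for $j\in\bJ$ and $w\in\H^1$) absorbs the $j_{i+1}$-conjugation to yield $(\t_{i+1}^{-1})^{v_{i+1}}(z)$. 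The main delicacy is the first step — confirming that $v_i\s(v_i)$ sits in $\J^{i+1}$ rather than merely in its $y_i$-conjugate — together with the careful tracking of domains and $\t$-invariance in the final identity; both issues are controlled by the commutator filtration of the standard Iwahori subgroup.
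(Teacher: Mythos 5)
Your induction has the same overall shape as the paper's proof: the same base case, the same appeal to Lemma~\ref{FrancoisSturel}, the definition of $v_{i+1}$ forced by condition~\ref{Condition3}, and essentially the same conjugation computation for condition~\ref{Condition2}. However, the first step of your inductive argument, which you yourself flag as the main delicacy, contains a genuine error. From $\t_i\circ\s=(\t_i^{-1})^{v_i}$ one deduces that $v_i\s(v_i)$ normalizes $\t_i=\t^{y_i}$, hence lies in $\bJ^{y_i}\cap\U^{i+1}=\J^{i+1}(\aa,\b^{y_i})=(\J^{i+1})^{y_i}$, the \emph{conjugate} of $\J^{i+1}$. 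Your attempt to pass from there to $\J^{i+1}$ itself via $[\U^1,\U^{i+1}]\subseteq\U^{i+2}$ only yields $v_i\s(v_i)\in\J^{i+1}\U^{i+2}$, and $\J^{i+1}\U^{i+2}$ is strictly larger than $\J^{i+1}$ in general, since $\U^{i+2}\not\subseteq\J$. So the hypothesis of Lemma~\ref{FrancoisSturel} as literally stated is not verified, and the inference ``one concludes $v_i\s(v_i)\in\J^{i+1}$'' is unjustified.

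The gap is repairable in either of two ways. The paper's route is to apply Lemma~\ref{FrancoisSturel} with $\J^{i+1}$ replaced throughout by its conjugate $\J^{i+1}(\aa,\b^{y_i})$ (the proof of that lemma is insensitive to this replacement), obtaining $h_{i+1}\in\J^{i+1}(\aa,\b^{y_i})$ and $x_{i+1}\in\U^{i+1}$ with $h_{i+1}v_i\s(x_{i+1})x_{i+1}^{-1}\in\U^{i+2}$, and only then to set $j_{i+1}:=y_ih_{i+1}y_i^{-1}\in\J^{i+1}$; this is precisely why condition~\ref{Condition3} is phrased as $y_{i+1}v_{i+1}=j_{i+1}y_iv_i\s(x_{i+1})$, i.e.\ $x_{i+1}v_{i+1}=h_{i+1}v_i\s(x_{i+1})$. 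Alternatively, one can observe that the proof of Lemma~\ref{FrancoisSturel} uses the hypothesis $v\s(v)\in\J^i$ only to place $h=v\s(v)$ in $\V=\J^i\U^{i+1}\cap\s(\J^i\U^{i+1})$, so the weaker hypothesis $v\s(v)\in\J^i\U^{i+1}$ --- which your commutator estimate does deliver, with $i$ replaced by $i+1$ --- suffices. With either fix, the rest of your argument (that $v_{i+1}\in\U^{i+2}$, and the verification of condition~\ref{Condition2}, which rests on $j_{i+1}$, equivalently $h_{i+1}$, normalizing the relevant simple character) goes through as you describe.
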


\begin{proof}
Assume the triples  $(x_k,j_k,v_k)$ have been defined for all  $k<i$, for some
$i\>1$.  Applying   Lemma  \ref{FrancoisSturel}  to   $v_{i-1}\in\U^i$,  which
satisfies 
\begin{equation*}
v_{i-1}\s(v_{i-1})\in\bJ^{y_{i-1}}\cap\U^{i}=\J^i(\aa,\b^{y_{i-1}})
\end{equation*}
thanks to Condition~\ref{Condition2}, we obtain 
$h_{i}\in\J^{i}(\aa,\b^{y_{i-1}})$ and $x_{i}\in\U^{i}$ such that 
$h_{i}v_{i-1}\s(x_{i})\mbox{$x_{i}$}^{-1}\in\U^{i+1}$. Now define $j_i\in\J^i$ 
and $v_i\in\U^{i+1}$ by 
{$j_iy_{i-1}=y_{i-1}h_i$
and
$x_iv_i=h_iv_{i-1}\s(x_i)$.}
Setting $y_{i}=y_{i-1}x_i$ and $\t_{i}=\t^{y_{i}}$, we get: 
\begin{align*}
\t_{i}\circ\s 
&= (\t_{i-1}\circ\s)^{\s(x_i)} \\
&= (\t_{i-1}^{-1})^{v_{i-1}\s(x_i)} \\
&= (\t_{i-1}^{-1})^{x_iv_i} 
\end{align*}
since $h_i\in\J^i(\aa,\b^{y_{i-1}})$ normalizes $\t_{i-1}$. 
Since $\t_{i-1}^{x_i}$ is equal to $\t_{i}^{}$,
we get the expected result.
\end{proof}

Let $x\in\U^1$ be the limit of $y_i=x_0x_1\dots x_{i}$ and $h\in\J^1$ that of 
$j_i\dots j_1j_0$
when $i$ tends to infinity. We have:
\begin{equation*}
y_{i}v_{i}\mbox{$y_i$}^{-1} = (j_i\dots j_1j_0)u\s(y_i) \mbox{$y_i$}^{-1} \in \U^i.
\end{equation*}
Passing to the limit, we get $u\s(x)x^{-1}= h^{-1}\in\J$, as expected. 

%%%%%%%%%%%%%%%%%%%%%%%%%%%%%%%%%%%%%%%%%%%%%%%%
\subsection{The maximal case}%\label{S3P3}

In this paragraph, we assume that $d=n$ only. 
We generalize Proposition~\ref{SensenbrinkMTW} to this situation. 

\begin{proposition}
\label{SensenbrinkM}
Let $\t\in\Cc(\aa,\b)$ be a simple character in $\G$ of endo-class $\TT$. 
There is a simple character $\t'\in\Cc(\aa',\b')$ which is $\G$-conjugate to 
$\t$, such that $\aa'$ is $\s$-stable and $\t'\circ\s=\t'^{-1}$. 
\end{proposition}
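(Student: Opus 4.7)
The plan is to reduce Proposition~\ref{SensenbrinkM} to the totally wildly
ramified case (Proposition~\ref{SensenbrinkMTW}) applied to the quadratic
extension $\T/\T_\so$ in place of $\F/\F_\so$, where $\T$ is the tame
parameter field of $\TT$ and $\T_\so$ is the fixed field of the
$\F_\so$-involution $\a$ of $\T$ supplied by Lemma~\ref{L1}.

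First I would use Corollary~\ref{L3} to fix an $\F$-embedding
$\iota\colon\T\hookrightarrow\Mat_n(\F)$ with $\s\circ\iota=\iota\circ\a$,
so that $\iota(\T)$ is a $\s$-stable subalgebra of $\Mat_n(\F)$. Since
$\T$ is, up to $\F$-isomorphism, the maximal tamely ramified sub-extension
of $\F[\b]$, by Skolem--Noether I may replace $\t$ by a $\G$-conjugate and
assume that $\iota(\T)$ sits inside $\F[\b]$ as this maximal tame
sub-extension. The centralizer $\B'=Z_{\Mat_n(\F)}(\iota(\T))$ is then a
$\s$-stable $\T$-algebra isomorphic to $\Mat_{m'}(\T)$ with $m'=n/t$ and
$t=[\T:\F]$, and $\s|_{\B'}$ is an $\a$-semilinear involution on $\B'$.
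Using that every $\a$-semilinear involution on $\Mat_{m'}(\T)$ is
inner-conjugate to the standard componentwise one, a further
$\G$-conjugation of $\t$ by a well-chosen element of $\mult{\B'}$ allows me
to assume that $\s|_{\B'}$ acts componentwise in a fixed identification
$\B'\simeq\Mat_{m'}(\T)$.

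Next, $\aa'=\aa\cap\B'$ is a maximal hereditary $\o_\T$-order in $\B'$, and
$[\aa',\b]$ is a maximal simple stratum whose embedded extension
$\T[\b]=\F[\b]$ is totally wildly ramified over $\T$. By Bushnell--Henniart's
interior lifting \cite{BHLTL1,BHEffectiveLC}, the simple character $\t$
corresponds canonically to a simple character $\t_\T\in\Cc_\T(\aa',\b)$ in
$\GL_{m'}(\T)$, and the $\s$-equivariance of $\iota$ makes this
correspondence intertwine composition with $\s$ on the $\F$-side with
composition with $\a$ on the $\T$-side. I would then apply
Proposition~\ref{SensenbrinkMTW} to $\t_\T$ with respect to $\T/\T_\so$---the
proof of that proposition uses only the residual characteristic being
different from $2$, Hilbert~90 for the quadratic extension
$\kk_\T/\kk_{\T_\so}$ in the unramified case, and the existence of an
$\a$-negated uniformizer of $\T$ in the ramified case, all of which are
available---to produce $x\in\mult{\B'}\subseteq\G$ such that $\aa'^x$ is
$\a$-stable and $\t_\T^x\circ\a=(\t_\T^x)^{-1}$. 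Pulling back through
the interior lifting gives $\t^x\circ\s=(\t^x)^{-1}$, while the
$\s$-stability of $\aa^x$ follows from that of $\aa'^x$ together with the
$\s$-stability of $\iota(\T)$, since $\aa$ is determined by $\aa'$ and
$\iota(\T)$ via the bijection between hereditary $\o$-orders in
$\Mat_n(\F)$ normalized by $\iota(\T)^\times$ and hereditary
$\o_\T$-orders in $\B'$.

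The main obstacle I anticipate is the $\s$-equivariance of the interior
lifting correspondence: one must verify that the canonical bijection
$\t\leftrightarrow\t_\T$ intertwines $\s$ and $\a$, which reduces to an
unpacking of the definition of interior lifting in terms of transfer of
simple characters, together with the fact that transfer commutes with the
involutions when the underlying embeddings of tame parameter fields are
chosen $\s$-equivariantly. The preliminary inner conjugation that brings
$\s|_{\B'}$ into standard componentwise form is a classical consequence of
the classification of $\a$-semilinear involutions on matrix algebras, but
still requires care since it must be performed inside
$\mult{\B'}\subseteq\G$ in order not to disturb the simple character
structure.
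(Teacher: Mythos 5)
Your proposal follows essentially the same route as the paper's proof: embed the tame parameter field $\T$ $\s$-equivariantly via Corollary~\ref{L3} and Skolem--Noether, apply Proposition~\ref{SensenbrinkMTW} to the interior $\T/\F$-lift of $\t$ over the quadratic extension $\T/\T_\so$, and descend using the injectivity of interior lifting and of the correspondence between hereditary orders normalized by $\T^\times$ and their intersections with the centralizer of $\T$. The only slips are cosmetic: $\aa\cap\B'$ is a \emph{minimal} (not maximal) hereditary $\o_\T$-order, since $\F[\b]$ is totally wildly ramified over $\T$ (the \emph{stratum} is maximal in the sense of the paper), and the componentwise action of $\s$ on the centralizer comes for free from the explicit construction $\iota=\iota_\so\otimes\F$ of Corollary~\ref{L3} composed with the diagonal embedding, so no appeal to a classification of $\a$-semilinear involutions is needed.
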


\begin{proof}
Let $\E$ be the  field extension $\F[\b]$, and let $\T$  be the maximal tamely
ramified extension of $\F$ in $\E$. It is the tame parameter field for the
endo-class $\TT$. The simple character $\t$ determines~a $\T/\F$-lift
$\boldsymbol{\Psi}$ of $\TT$ as  in \cite[Section~9]{BHLTL1}. Namely, let $\C$
denote the centralizer of $\T$ in $\Mat_n(\F)$. The intersection
$\mathfrak{c}=\aa\cap\C$ is a  minimal order in $\C$, giving rise  to a simple
stratum $[\mathfrak{c},\b]$ in $\C$.~The~res\-triction of $\t$ to
$\H^1(\mathfrak{c},\b)$, denoted $\t_{\T}$, is a simple character associated
to this simple stratum, called the interior $\T/\F$-lift of $\t$ in
\cite{BHLTL1}. Its endo-class, denoted  $\boldsymbol{\Psi}$, is a $\T/\F$-lift
of $\TT$. 

Lemma~\ref{L1} gives  us a unique  $\F_\so$-involution $\a$ of $\T$  such that
$\a|_{\F}=\s$ and $\boldsymbol{\Psi}^\vee=\boldsymbol{\Psi}^{\a}$.  Let us fix
an $\F$-embedding $\iota$ of $\T$  in $\Mat_{t}(\F)$ as in Corollary~\ref{L3}.
Composing with the diagonal  em\-bed\-ding~of $\Mat_{t}(\F)$ in $\Mat_{n}(\F)$
gives us an $\F$-embedding of $\T$ in $\Mat_{n}(\F)$ such that: 
\[
\iota(\a(x))=\s(\iota(x)), \quad x\in\T.
\]
By the Skolem--Noether  theorem, this embedding is  implemented by conjugating
by some~$g\in\G$. Thus, conjugating $[\aa,\b]$ and  $\t$ by $g$, we may assume
that $\T$ is $\s$-stable and that the $\F_\so$-involution $\s$ of $\Mat_n(\F)$
induces $\a$ on $\T$. Note that $\C$ is $\s$-stable and~is cano\-ni\-cally
isomorphic to the $\T$-alge\-bra $\Mat_{n/t}(\T)$.  The restriction of $\s$ to
$\C$ identifies with the involution $\a$ acting componentwise. From now on, we
will abuse the notation and write $\s$ instead of $\a$. 

We now apply Proposition~\ref{SensenbrinkMTW} to the simple character
$\t_{\T}$ whose endo-class $\boldsymbol{\Psi}$ satisfies
$\boldsymbol{\Psi}^\vee=\boldsymbol{\Psi}^{\s}$. We thus get a $y\in\C^\times$
such   that  $\mathfrak{c}^y$   is  $\s$-stable   and  the   simple  character
$\vartheta=\t_\T^y$ satisfies $\vartheta\circ\s=\vartheta^{-1}$.
Since the map 
$\aa\mapsto\aa^\times\cap\C^\times$ is injective on hereditary orders of 
$\Mat_n(\F)$ normalized by $\mult\T$
{(see for instance \cite[Section 2]{BHLTL1}),}
we deduce that the order $\aa'=\aa^y$ is $\s$-stable.
Since interior $\T/\F$-lifting~is injective 
% simple characters of 
{from $\Cc(\aa^y,\b^y)$ to $\Cc(\mathfrak{c}^y,\b^y)$
  by \cite[Theorem 7.10]{BHLTL1}}, 
the simple character $\t'=\t^y$ satisfies the expected property $\t'\circ\s=\t'^{-1}$.
\end{proof}

%%%%%%%%%%%%%%%%%%%%%%%%%%%%%%%%%%%%%%%%%%%%%%%%
\subsection{The general case}%\label{Perutz}

In this paragraph, we prove Theorem~\ref{THETAMAINNONMAX} in the general case. Write $n=md$, with $m\>1$. 

Let $\t\in\Cc(\aa,\b)$ be a simple character of endo-class $\TT$. By conjugating in $\G$, we may assume that $\aa$ is $\s$-stable. 

Fix an $\F$-algebra homomorphism $\iota:\F[\b]\to\Mat_d(\F)$. Let $\aa_0$ denote the unique hereditary order in $\Mat_d(\F)$ normalized by $\F[\iota\b]^\times$ and $\t_0\in\Cc(\aa_0,\iota\b)$ denote the trans\-fer of $\t$. By Proposition~\ref{SensenbrinkM}, there are a maximal simple stratum $[\aa'_0,\b'_0]$ and a simple character $\t'_0\in\Cc(\aa'_0,\b'_0)$ such that: 
\begin{enumerate}
\item
the character $\t'_0$ is conjugate to $\t_0^{}$ under $\GL_d(\F)$,
\item
the group $\H^1(\aa'_0,\b'_0)$ is $\s$-stable and $\t'_0\circ\s=\t_0'^{-1}$, 
\item
the order $\aa_0$ is $\s$-stable.
\end{enumerate}
Proposition~\ref{Pompidou} implies that, without changing $\aa_0$, we may 
assume that $\s(\b'_0)=-\b'_0$. 

Let us now embed $\Mat_d(\F)$ diagonally in the $\F$-algebra $\Mat_n(\F)$. 
This gives us an $\F$-algebra~homomor\-phism 
$\iota':\F[\b'_0]\to\Mat_n(\F)$. Write $\b'=\iota'\b'_0$ and $\E'=\F[\b']$. 
Since $\s(\b')=-\b'$, the field $\E'$ is stable by $\s$. The centralizer $\B'$ 
of $\E'$ in $\Mat_n(\F)$ naturally identifies with $\Mat_m(\E')$. 

Let $\bb'$ be a standard hereditary order in $\B'$, and let $\aa'$ be the unique hereditary order in $\Mat_n(\F)$ normalized by $\E'^\times$ such that $\aa'\cap\B'=\bb'$. Then we have a simple stratum $[\aa',\b']$ in $\Mat_n(\F)$. Let $\t'\in\Cc(\aa',\b')$ be the transfer of $\t$. Since $\aa'$ is $\s$-stable and $\s(\b')=-\b'$, we have:
\begin{equation*}
\s(\H^1(\aa',\b')) 
= \H^1(\s(\aa'),\s(\b')) 
= \H^1(\aa',-\b')
= \H^1(\aa',\b').
\end{equation*}
Let   $\M$   be  the   standard   Levi   subgroup   of  $\G$   isomorphic   to
$\GL_d(\F)\times\dots\times\GL_d(\F)$. Write $\P$ for the stan\-dard parabolic
subgroup of $\G$ generated by $\M$ and upper triangular matrices, and $\N$ for
its unipotent  radical. Let $\N^-$ be  the unipotent radical of  the parabolic
subgroup opposite to $\P$ with respect to $\M$. By \cite[Paragraph~7.1]{BK}, 
we have: 
\begin{equation}
\label{ganascia}
\left.\begin{array}{rl}
\H^1(\aa',\b') &=\ (\H^1(\aa',\b')\cap\N^-)\cdot(\H^1(\aa',\b')\cap\M)
\cdot(\H^1(\aa',\b')\cap\N), \\[5pt]
\H^1(\aa',\b')\cap\M &=\ \H^1(\aa'_0,\b'_0)\times\dots\times\H^1(\aa'_0,\b'_0). 
\end{array}\right\}
\end{equation}
By  \cite[Proposition   7.1.19]{BK},  the   character  $\t'$  is   trivial  on
$\H^1(\aa',\b')\cap\N$  and $\H^1(\aa',\b')\cap\N^-$,  and the  restriction of
$\t'$ to  $\H^1(\aa',\b')\cap\M$ is equal  to $\t'_0\otimes\dots\otimes\t'_0$.
As $\M$, $\N$, $\N^-$ and $\H^1(\aa',\b')$ are $\s$-stable, 
and by
unique\-ness of the Iwahori decomposition~\eqref{ganascia}, we get
$\t'\circ\s=\t'^{-1}$. Finally, as $\F[\b]$ and $\E'$ have the same
ramification index over $\F$ (see~\S\ref{simplechar})  we may choose the order
$\bb'$  such that  $\aa$  and  $\aa'$ are  conjugate.  The  transfer map  from
$\Cc(\aa,\b)$ to $\Cc(\aa',\b')$ is thus  im\-ple\-mented by con\-jugacy by an
element of $\G$. It follows that $\t$ and $\t'$ are $\G$-conjugate. 

\begin{definition}
\label{sstdmss}
A maximal simple stratum $[\aa,\b]$ in $\Mat_n(\F)$ is said to be \emph{$\s$-standard} if:
\begin{enumerate}
\item
the hereditary order $\aa$ is $\s$-stable and $\s(\b)=-\b$;
\item
the element $\b$ has the block diagonal form:
\begin{equation*}
\b=
\begin{pmatrix}
\b_0&&\\
&\ddots&\\
&&\b_0\\
\end{pmatrix}
=\b_0\otimes1\in\Mat_d(\F)\otimes_\F\Mat_m(\F)=\Mat_n(\F)
\end{equation*}
for some $\b_0\in\Mat_d(\F)$, where $d={\rm deg}_\F(\b)$ and $n=md$; the centralizer $\B$ of $\E=\F[\b]$ in $\Mat_n(\F)$ is thus equal to $\Mat_m(\E)$, equipped with the invo\-lu\-tion $\s$ acting componentwise;
\item
the order $\bb=\aa\cap\B$ is the standard maximal order of $\Mat_m(\E)$.
\end{enumerate}
\end{definition}

In conclusion, the following corollary refines Theorem~\ref{PIMAIN}.

\begin{corollary}
\label{StandardStableType}
Let $\pi$ be a $\s$-self-dual cuspidal representation of $\G$. 
Then $\pi$~contains a $\s$-self-dual type attached to a $\s$-standard stratum. 
\end{corollary}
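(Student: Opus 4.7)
The plan is to revisit the construction carried out in the proof of Theorem~\ref{THETAMAINNONMAX} (which underlies Theorem~\ref{PIMAIN}) and check that, when applied to a cuspidal $\pi$, it can be arranged to produce a simple stratum satisfying all three conditions of Definition~\ref{sstdmss}. By Theorem~\ref{PIMAIN}, $\pi$ already contains a $\s$-self-dual extended maximal simple type $(\bJ,\bl)$; what is new is to track, through the proof of Theorem~\ref{THETAMAINNONMAX}, that the underlying simple stratum can be made $\s$-standard. Condition~(i) of Definition~\ref{sstdmss}---namely that $\aa$ is $\s$-stable and $\s(\b)=-\b$---is precisely what Theorem~\ref{THETAMAINNONMAX} furnishes, so it is handled automatically.

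Second, I would inspect the general case of Section~4.6. There $\b'$ is obtained by applying the diagonal embedding $\iota':\Mat_d(\F)\hookrightarrow\Mat_n(\F)$ to an element $\b'_0$ produced by the maximal case (Proposition~\ref{SensenbrinkM}). Hence $\b'=\iota'(\b'_0)=\b'_0\otimes 1$ automatically has the block diagonal form required by condition~(ii), and the centralizer $\B'$ identifies with $\Mat_m(\E')$ with $\s$ acting componentwise, as required. So condition~(ii) is obtained for free.

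It remains to secure condition~(iii), namely that $\bb'=\aa'\cap\B'$ is the standard maximal order of $\Mat_m(\E')$. The final paragraph of the general-case construction leaves us free to pick any standard hereditary order $\bb'\subset\B'$, subject only to $\aa$ and $\aa'$ being $\G$-conjugate. Since $\pi$ is cuspidal, the simple character $\t$ we start from is maximal by Proposition~\ref{GeorgesSaval}\ref{sibelius}, so $\bb=\aa\cap\B$ is a maximal $\o_\E$-order in $\B$; the conjugacy constraint therefore forces $\bb'$ to be maximal as well, and we may (and do) choose it to be the standard maximal order of $\Mat_m(\E')$. The transfer of $\t$ to $\Cc(\aa',\b')$ is then implemented by an element of $\G$, and the resulting simple stratum is $\s$-standard in the sense of Definition~\ref{sstdmss}.

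The only point requiring care---and what I expect to be the main (albeit minor) obstacle---is to confirm that this specific choice of $\bb'$ does not disturb the conjugacy argument at the end of the proof of Theorem~\ref{PIMAIN} that upgrades a $\s$-self-dual simple character to a $\s$-self-dual extended maximal simple type. Since that argument only uses the $\s$-stability of $\aa'$ together with the identity $\s(\b')=-\b'$, and not the internal structure of $\bb'$, the choice of standard maximal $\bb'$ is immaterial, and Corollary~\ref{StandardStableType} follows.
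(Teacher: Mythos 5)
Your proposal is correct and follows exactly the route the paper intends: the paper gives no separate proof of Corollary~\ref{StandardStableType}, merely observing that the general-case construction of Section on the general case, applied to a maximal simple character of $\pi$ (maximal by Proposition~\ref{GeorgesSaval}\ref{sibelius}), already yields a $\s$-standard stratum once $\bb'$ is taken to be the standard maximal order, and that the passage from Theorem~\ref{THETAMAINNONMAX} to Theorem~\ref{PIMAIN} is insensitive to that choice. Your verification of the three conditions of Definition~\ref{sstdmss} matches this reasoning point for point.
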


\begin{remark}
\label{Turron}
Let  $\pi$   be  a  $\s$-self-dual   cuspidal  representation  of   $\G$,  and
$\t\in\Cc(\aa,\b)$   be    a   simple    character   in   $\pi$    such   that
$\t\circ\s=\t^{-1}$  and $\s(\b)=-\b$.  Let  $\E$ denote  the field  extension
$\F[\b]$ and write $\E_\so=\E^\s$. Let $\T$ denote the maximal tamely ramified
sub-extension of $\E/\F$,  that is, the tame parameter field  of the endo-class
of $\pi$, and write $\T_\so=\T^\s$. 
\begin{enumerate}
\item
The canonical homomorphism $\E_\so\otimes_{\F_\so}\F\to\E$ of $\E_\so\otimes_{\F_\so}\F$-modules is an isomorphism.
\item%\label{Turron.ii}
The extensions $\E/\E_\so$ and $\T/\T_\so$ have the same ramification index. 
\end{enumerate}
For the first property, see Lemma~\ref{L2} and its proof. The second one 
follows from the fact that~$\E$ is totally wildly ramified over $\T$ and $p$ 
is odd, thus $[\E:\T]$ is odd. 
\end{remark}

%%%%%%%%%%%%%%%%%%%%%%%%%%%%%%%%%%%%%%%%%%%%%%%%
\subsection{Classification of $\s$-self-dual types}

From now on, we will abbreviate 
\emph{$\s$-self-dual extended maximal simple type} to 
\emph{$\s$-self-dual type}.~In this para\-graph, we determine the
$\G^\s$-orbits of $\s$-self-dual types in a $\s$-self-dual cuspidal 
representation of $\G$. 

\begin{lemma}
\label{machin34}
Let $\pi$ be a cuspidal representation of $\G$ containing a $\s$-self-dual type $(\bJ,\bl)$. The $\s$-self-dual types in $\pi$ are the $(\bJ^g,\bl^g)$ for $g\in\G$ such that $\s(g)g^{-1}\in\bJ$.
\end{lemma}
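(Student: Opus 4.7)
The starting point is Proposition~\ref{GeorgesSaval}: every extended maximal simple type in $\pi$ is $\G$-conjugate to $(\bJ,\bl)$, so any $\s$-self-dual type in $\pi$ is automatically of the form $(\bJ^g,\bl^g)$ for some $g\in\G$. The plan is therefore to characterize exactly those $g$ for which $(\bJ^g,\bl^g)$ is $\s$-self-dual, by translating each of the two $\s$-self-duality conditions into a condition on $h:=\s(g)g^{-1}$ and then combining them using the fact that the intertwining of an extended maximal simple type in $\G$ equals the group itself.

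First I would handle the $\s$-stability of $\bJ^g$. A direct computation with the convention $\bJ^g=g^{-1}\bJ g$ gives $\s(\bJ^g)=\bJ^{\s(g)}$, so, since $\s(\bJ)=\bJ$, the subgroup $\bJ^g$ is $\s$-stable if and only if $h$ normalizes $\bJ$. Assuming this, I would then analyse the representation-theoretic condition. For $y\in\bJ^g$, one has $(\bl^g)^\s(y)=\bl(g\s(y)g^{-1})$, while $(\bl^g)^\vee=(\bl^\vee)^g$, via a fixed isomorphism $\bl^\vee\simeq\bl^\s$, is identified with the representation $y\mapsto \bl(\s(g)\s(y)\s(g)^{-1})$. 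Writing $u=g\s(y)g^{-1}\in\bJ$, the latter equals $\bl(huh^{-1})$, so as $y$ varies over $\bJ^g$ and $u$ varies over $\bJ$, the condition $(\bl^g)^\s\simeq(\bl^g)^\vee$ is equivalent to $\bl^h\simeq\bl$ as representations of $\bJ$.

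Combining the two steps, $(\bJ^g,\bl^g)$ is $\s$-self-dual if and only if $h$ normalizes $\bJ$ and $\bl^h\simeq\bl$, i.e.\ $h$ lies in the $\G$-normalizer of the pair $(\bJ,\bl)$. The conclusion then follows from the fact that this normalizer equals $\bJ$: indeed, the intertwining of $\bl$ in $\G$ is $\bJ$, which is precisely the property making the compact induction $\pi\simeq\cind_\bJ^\G(\bl)$ irreducible, recalled in Section~3 from \cite{BK,MSt}. Since every manipulation above is reversible, both directions of the equivalence come out at once, giving the desired condition $h=\s(g)g^{-1}\in\bJ$. The main obstacle is really only the change-of-variables bookkeeping in the second step -- transporting the isomorphism $\bl^\s\simeq\bl^\vee$ through conjugation by $g$ and repackaging it as an intertwining condition for $h$; the substantive structural input (intertwining equals $\bJ$) is quoted from the theory of extended maximal simple types and requires no new argument.
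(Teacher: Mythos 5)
Your proof is correct and follows essentially the same strategy as the paper's: reduce to conjugates $(\bJ^g,\bl^g)$ via Proposition~\ref{GeorgesSaval}, translate the two $\s$-self-duality conditions into the single statement that $h=\s(g)g^{-1}$ normalizes the pair $(\bJ,\bl)$, and then invoke the fact that the $\G$-intertwining (hence normalizer) of $\bl$ is $\bJ$. The paper condenses the bookkeeping into the two identities $(\bl^g)^\s=(\bl^\s)^{\s(g)}$ and $(\bl^g)^\vee=(\bl^\vee)^g$ rather than tracing through elements, but the content is the same.
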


\begin{proof}
By Proposition~\ref{GeorgesSaval}, any (extended maximal simple) 
type contained in $\pi$ is $\G$-conjugate to $(\bJ,\bl)$. 
Given $g\in\G$, we have $(\bl^g)^\s=(\bl^\s)^{\s(g)}$ and 
$(\bl^g)^\vee=(\bl^\vee)^g$. Thus $(\bJ^g,\bl^g)$ is $\s$-self-dual if and 
only if $\s(g)g^{-1}$ normalizes $\bl$, that is $\s(g)g^{-1}\in\bJ$. 
\end{proof}

\begin{corollary}
Let $(\bJ,\bl)$ be a $\s$-self-dual type in $\G$. 
There is a maximal simple stratum $[\aa,\b]$~in $\Mat_n(\F)$
such that:
\begin{enumerate}
\item $\aa$ is $\s$-stable and $\s(\b)=-\b$, 
\item $\bJ=\bJ(\aa,\b)$ and the simple character $\t$ associated to $\bl$ belongs to 
$\Cc(\aa,\b)$.
\end{enumerate}
\end{corollary}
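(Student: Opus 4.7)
The strategy is to start from the given $\s$-self-dual type $(\bJ,\bl)$, produce a nice stratum by applying the existence result in Corollary~\ref{StandardStableType} to the cuspidal representation it generates, transfer it back by a $\G$-conjugation using Lemma~\ref{machin34}, and finally correct the defining element using Proposition~\ref{Pompidou}.

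Concretely, since $(\bJ,\bl)$ is $\s$-self-dual, the cuspidal representation $\pi:=\cind_{\bJ}^{\G}(\bl)$ is itself $\s$-self-dual. Corollary~\ref{StandardStableType} produces a $\s$-self-dual type $(\bJ_1,\bl_1)$ in $\pi$ attached to a $\s$-standard stratum $[\aa_1,\b_1]$; in particular $\aa_1$ is $\s$-stable, $\s(\b_1)=-\b_1$, and the attached simple character $\t_1\in\Cc(\aa_1,\b_1)$ satisfies $\t_1\circ\s=\t_1^{-1}$. By Lemma~\ref{machin34}, there is $g\in\G$ with $h:=\s(g)g^{-1}\in\bJ_1$ such that $(\bJ,\bl)=(\bJ_1^g,\bl_1^g)$. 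I then transport the stratum: set $\aa:=\aa_1^g$ and $\t:=\t_1^g\in\Cc(\aa,g^{-1}\b_1 g)$, which is the simple character attached to $\bl$ relative to this stratum. Since $h\in\bJ_1$ and $\bJ_1$ normalizes each of $\aa_1$, $\H^1(\aa_1,\b_1)$ and $\t_1$, it follows that $\aa$ and $\H^1(\aa_1,\b_1)^g$ are $\s$-stable. A brief manipulation with $y:=gxg^{-1}$ and the identity $g\s(x)g^{-1}=\s(hyh^{-1})$, combined with $\t_1\circ\s=\t_1^{-1}$ and $\t_1^{h}=\t_1$, then gives $\t\circ\s=\t^{-1}$.

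The main obstacle is that $g^{-1}\b_1 g$ does not in general satisfy $\s(g^{-1}\b_1 g)=-(g^{-1}\b_1 g)$, because $h$ need not commute with $\b_1$. To remedy this I apply Proposition~\ref{Pompidou} to the simple stratum $[\aa,g^{-1}\b_1 g]$ together with the simple character $\t$: its hypotheses are precisely what was established above, and it delivers a simple stratum $[\aa,\b]$ with $\t\in\Cc(\aa,\b)$ and $\s(\b)=-\b$. Finally the $\G$-normalizer of $\t$ equals $\bJ(\aa,\b)$ and also equals $\bJ$ (since $\t$ is attached to the type $(\bJ,\bl)$), giving $\bJ=\bJ(\aa,\b)$ as required.
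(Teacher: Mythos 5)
Your proof is correct and follows essentially the same route as the paper: induce $(\bJ,\bl)$ to a $\s$-self-dual cuspidal $\pi$, take the well-positioned $\s$-self-dual type from Corollary~\ref{StandardStableType}, conjugate back via Lemma~\ref{machin34} using $\s(g)g^{-1}\in\bJ_1$ to see that the transported order is $\s$-stable, and finish with Proposition~\ref{Pompidou}. The only difference is that you spell out the verification of $\t\circ\s=\t^{-1}$, which the paper leaves implicit.
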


\begin{proof}
Let $(\bJ,\bl)$ be a $\s$-self-dual type in $\G$. It induces to a $\s$-self-dual cuspidal~repre\-sen\-ta\-tion $\pi$ of $\G$. Let $(\bJ_0,\bl_0)$ be a $\s$-self-dual type in $\pi$ defined with respect to a simple stratum $[\aa_0,\b_0]$ such that $\aa_0$ is $\s$-stable and $\s(\b_0)=-\b_0$. Then $(\bJ,\bl)=(\bJ_0^g,\bl_0^g)$ for some $g\in\G$~such that $\g=\s(g)g^{-1}\in\bJ_0$. We thus may assume that $(\bJ,\bl)$ is defined with respect to the maximal simple stratum $[\aa_0^g,\b_0^g]$. We have $\s(\aa_0^g)=(\aa_0^\g)^g$ which is equal to $\aa_0^g$ since $\bJ_0$ is contained in the normalizer of $\aa_0$. The result now follows from Proposition~\ref{Pompidou}.
\end{proof}

\begin{lemma}
\label{parcimonie}
Let $[\aa,\b]$ be a $\s$-standard maximal simple~stra\-tum in $\Mat_n(\F)$
in the sense of Definition {\rm\ref{sstdmss}}.
Write $\E=\F[\b]$ and $\E_\so=\E^\s$. Let $g\in\G$ and suppose that 
$\s(g)g^{-1}\in\bJ=\bJ(\aa,\b)$. 
\begin{enumerate}
\item
If $\E$ is unramified over $\E_\so$, then $g\in\bJ\G^\s$.
\item
If $\E$ is ramified over $\E_\so$, and $\w_\E$ is a uniformizer of $\E$, then:
\begin{enumerate}
\item
there is a unique integer $i$ such that
$0\<2i\<m$ and $g\in\bJ t_i\G^\s$, where
\begin{equation}
\label{defti}
t_i={\rm diag}(\w_\E,\dots,\w_\E,1,\dots,1)\in\B^\times=\GL_m(\E)
\end{equation} 
with $\w_\E$ occurring $i$ times;
\item 
the double cosets $\bJ t_i\G^\s$, $0\<i\<\lfloor m/2\rfloor$, are all distinct.
\end{enumerate}
\end{enumerate}
\end{lemma}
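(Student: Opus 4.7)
The plan is to identify the double cosets $\bJ\backslash\{g\in\G:\s(g)g^{-1}\in\bJ\}/\G^\s$ with the non-abelian cohomology pointed set $H^1(\s,\bJ)$ via $g\mapsto x=\s(g)g^{-1}$: the cocycle condition is $x\s(x)=1$, and two cocycles $x,x'$ are cohomologous iff $x'=\s(j)xj^{-1}$ for some $j\in\bJ$. Hilbert~90 for $\G$ shows this map is a bijection, reducing the lemma to computing $H^1(\s,\bJ)$ in each ramification case.

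The first step is to peel off $\J$. The quotient $\bJ/\J$ is infinite cyclic, generated by the image of $\w_\E I$; in the unramified case $\s(\w_\E I)=\w_\E I$, while in the ramified case $\s(\w_\E I)=-\w_\E I$ and $-I\in\J$, so $\s$ acts trivially on $\bJ/\J\simeq\ZZ$ in both cases. The cocycle relation $x\s(x)=1$ then forces $2\bar{x}=0$ in $\ZZ$, so $\bar{x}=0$ and $x\in\J$. Next, since $\J^1$ is a pro-$p$-group and $p\neq 2$, the cohomology $H^1(\s,\J^1)$ is trivial (along with the cohomologies of all its twists), and a standard non-abelian twisting argument shows that the map $H^1(\s,\J)\to H^1(\s,\J/\J^1)$ is a bijection.

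Using~\eqref{nonoc} to identify $\J/\J^1\simeq\GL_m(\ee)$, the $\s$-action on $\ee$ is non-trivial in the unramified case and trivial in the ramified case. In the unramified case, Hilbert~90 gives $H^1(\s,\GL_m(\ee))=1$, so $x$ is cohomologous to $1$ and $g\in\bJ\G^\s$, proving~(1). In the ramified case, $H^1(\s,\GL_m(\ee))$ classifies $\GL_m(\ee)$-conjugacy classes of involutions, parametrised by the dimension $i\in\{0,\dots,m\}$ of the $(-1)$-eigenspace.

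Finally I would account for the additional $\bJ$-coboundary coming from $\w_\E I$. Since $\w_\E I$ is central in $\B^\times$, its conjugation on $\J/\J^1\simeq\GL_m(\ee)$ is trivial; in the ramified case $\s(\w_\E I)=-\w_\E I$ then introduces the extra relation $x\sim -x$ on $H^1$, which swaps $\pm 1$-eigenspaces and identifies $i\leftrightarrow m-i$. Thus the orbits are indexed by $i\in\{0,\dots,\lfloor m/2\rfloor\}$. A direct computation gives $\s(t_i)t_i^{-1}=\diag(-1,\dots,-1,1,\dots,1)$ with $i$ minus signs, representing precisely the class with $(-1)$-eigenspace of dimension~$i$; this establishes existence of $i$ in~(2a) and distinctness of the cosets in~(2b), from which uniqueness of $i$ in~(2a) follows. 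The main obstacle will be executing the non-abelian lifting from $H^1(\s,\J/\J^1)$ to $H^1(\s,\J)$ to $H^1(\s,\bJ)$ rigorously, together with verifying that the conjugation of $\w_\E I$ on $\J/\J^1$ is indeed trivial despite $\w_\E I$ not centralising all of $\J$.
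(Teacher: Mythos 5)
Your proof is correct and follows essentially the same route as the paper's: the valuation argument on $\bJ/\J\simeq\ZZ$ forces the cocycle into $\J$, reduction modulo $\J^1$ lands in $\GL_m(\ee)$ where Hilbert~90 (unramified case) or the classification of involutions by the dimension of the $(-1)$-eigenspace (ramified case) applies, triviality of $\CH^1$ of pro-$p$ groups for $p$ odd lifts the conclusion back, and the sign relation coming from $\s(\w_\E)=-\w_\E$ produces exactly the identification $i\leftrightarrow m-i$ used for distinctness. The only difference is presentational: you package the argument as a computation of non-abelian $\CH^1(\s,\bJ)$ along the filtration $\J^1\subseteq\J\subseteq\bJ$, whereas the paper manipulates the element $g$ directly and thereby sidesteps the twisted-cohomology step needed for injectivity of $\CH^1(\s,\J)\to\CH^1(\s,\J/\J^1)$, which in your setup is supplied, as you note, by the triviality of $\CH^1$ of the twisted forms of $\J^1$.
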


\begin{proof}
For  any  group  $\Ga$  equipped  with  an  action  of  $\s$,  we  will  write
$\CH^1(\s,\Ga)$ for the first~co\-homology set of ${\Gal(\F/\F_{\so})}$
in $\Ga$.
Write
$\g=\s(g)g^{-1}$.   The   identity    $\s(\g)=\g^{-1}$   implies   that   $\g$
has~valua\-tion  $0$  in  $\bJ$.  We  thus  have  $\g\in\J=\J(\aa,\b)$.  Write
$\J^1=\J^1(\aa,\b)$ and identify $\J/\J^1$ with $\GL_m(\ee)$, denoted $\GG$, 
as in \eqref{nonoc}.
Let $x$ denote the image of $\g$ in $\GG$. It satisfies $x\s(x)=1$. 

If $\E$ is unramified over $\E_\so$, then $x=\s(y)y^{-1}$ for some $y\in\GG$, thus:
\begin{equation}
\label{trainpoitiers}
\s(a^{-1})\g a = \s(a^{-1}g)g^{-1}a \in \J^1
\end{equation} 
for some  $a\in\J$. Since $\J^1$ is  a pro-$p$-group and $p\neq  2$, the first
cohomology set $\CH^1(\s,\J^{1})$
is tri\-vial.  The left hand side of
\eqref{trainpoitiers} can thus be written $\s(j)j^{-1}$ for some $j\in\J^{1}$, 
thus we have $g\in\J\G^\s$. 

Suppose now that  $\E$ is ramified over $\E_\so$, so  that $\s$ acts trivially
on $\ee$. 
We may and will assume that $\w_\E$ has been chosen such that 
$\s(\w_\E)=-\w_\E$.
Then $x$ is conjugate in $\GG$ to a class $\d\J^1$ where: 
\begin{equation*}
\d=\d_i={\rm diag}(-1,\dots,-1,1,\dots,1)\in\bb^\times\subseteq\GL_m(\E)
\end{equation*}
with $-1$ occurring $i$ times for some $i\in\{0,\dots,m\}$. We thus have
$\s(a)\g a^{-1}\in\d\J^1$ for some $a\in\J$. 
Notice that $\d t_i=\s(t_i)$. If
we write $h=t_i^{-1}xg$, we get $\s(h)h^{-1}\in \J^{1 t_i}$. 
Since $\J^{1 t_i}$ is a $\s$-stable pro-$p$-group,
the set $\CH^1(\s,\J^{1 t_i})$ is trivial, 
thus $h\in\J^{1 t_i}\G^\s$, which implies that $g\in\J t_i\G^\s$.

Now suppose that $\bJ t_i\G^\s=\bJ t_k\G^\s$ for some integers 
$0\<i,k\<m$.
Then $\d_k=\s(a)\d_i a^{-1}$ for some $a\in\bJ$.
If we write $a=ut^r$~for some $r\in\ZZ$ and $u\in\J$, 
then the images of $\d_k$ and $(-1)^r\d_i$ in $\GG$ are conjugate, 
thus either $r$ is even and $k=i$, or $r$ is odd and $k=m-i$.

Finally, we have $\bJ t_i\G^\s=\bJ t_{m-i}\G^\s$ since 
$t_{m}\in\bJ$, $t_i^2\in\G^\s$ and the group 
of permutation matrices in $\B^\times=\GL_m(\E)$
is contained in $\bJ\cap\G^\s$.
\end{proof}

\begin{remark}\label{rmk:index}
If $\E$ is ramified over $\E_\so$, then the pairs
\begin{equation*}
(\bJ^{t_i},\bl^{t_i}),
\quad
i\in\{0,\dots,\lfloor m/2\rfloor\},
\end{equation*}
where $t_i$ is defined by~\eqref{defti}, form a set of representatives of the 
$\G^\s$-conjugacy classes of $\s$-self-dual types in $\pi$. 
The integer $i$ is called the  \emph{index} of the $\G^\s$-conjugacy class. If
one identifies the quotient $\J(\aa,\b)^{t_i}/\J^1(\aa,\b)^{t_i}$ with 
$\GL_m(\ee)$ via 
\begin{equation*}
\J(\aa,\b)^{t_i}/\J^1(\aa,\b)^{t_i} 
\simeq
\J(\aa,\b)/\J^1(\aa,\b)
\simeq
\U(\bb)/\U^1(\bb)
\simeq
\GL_m(\ee),
\end{equation*}
then $\s$ acts on $\GL_m(\ee)$ by conjugacy by the diagonal element
\begin{equation*}
\d_i = {\rm diag}(-1,\dots,-1,1,\dots,1),
\end{equation*}
where $-1$ occurs $i$ times, and 
the group $(\J(\aa,\b)^{t_i}\cap\G^\s)/(\J^1(\aa,\b)^{t_i}\cap\G^\s)$ of 
$\s$-fixed points
identifies with the Levi subgroup 
$(\GL_i\times\GL_{m-i})(\ee)$ of $\GL_m(\ee)$. 
\end{remark}

The inconvenience of the extension $\E/\E_\so$ 
is that it is not canonically determined by $\pi$. 
We remedy this in the next paragraph.

%%%%%%%%%%%%%%%%%%%%%%%%%%%%%%%%%%%%%%%%%%%%%%%%

\subsection{The quadratic extension $\T/\T_{\so}$} 
% \label{adeleairota}

Let $\TT\in\Ee(\F)$ be an endo-class of degree $d$, such that $\TT^\s=\TT^\vee$. 
By Theorem \ref{THETAMAINNONMAX}, given any~multi\-ple $n$ of $d$,
there are a maximal simple stratum $[\aa,\b]$ in $\Mat_n(\F)$ and 
a simple~char\-ac\-ter $\t\in\Cc(\aa,\b)$ of endo-class $\TT$ 
such that $\t\circ\s=\t^{-1}$,
the order $\aa$ is $\s$-stable and $\s(\b)=-\b$.
Thus $\E=\F[\b]$, its centralizer $\B$ 
and the maximal order $\bb=\aa\cap\B$ are stable by $\s$.

Denote by $\E_{\so}$ the field of $\s$-fixed points in $\E$,
by $\T$ the maximal tamely ramified sub-ex\-ten\-sion of~$\E$ over $\F$, 
and set $\T_{\so}=\T\cap\E_{\so}$.
Note that $\T$ is the~tame parameter field of $\TT$, 
and that $d$ is the degree $[\E:\F]$. 
We also write $n=md$.

\begin{lemma}
% \label{TT0canoniquel}
The $\F_{\so}$-isomorphism class of the extension 
$\T/\T_{\so}$ only depends on $\TT$. 
Namely, if $\T'/\T_{\so}'$ is another extension obtained 
from $\TT$ as above, then there is an isomorphism
$\phi:\T\to\T'$ of $\F_{\so}$-algebras 
such that $\phi(\T_{\so}^{})=\T_{\so}'$.
\end{lemma}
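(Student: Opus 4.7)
The strategy is to identify $\s|_{\T}$ with the canonical $\F_\so$-involution of $\T$ singled out by Lemma~\ref{L1}(i), and then use the uniqueness part of that lemma to transport the structure across any $\F$-isomorphism $\T \to \T'$.

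First, starting from $[\aa,\b]$ and $\t \in \Cc(\aa,\b)$ as in the paragraph, I would introduce the interior $\T/\F$-lift $\t_{\T}$ of $\t$, exactly as in the proof of Proposition~\ref{SensenbrinkM}, and let $\boldsymbol{\Psi} \in \Ee(\T)$ be its endo-class. Since $\aa$, $\b$ and $\T$ are $\s$-stable, the centralizer $\C$ of $\T$ in $\Mat_n(\F)$ and the order $\mathfrak{c} = \aa \cap \C$ are also $\s$-stable, so $\s$ maps $\Cc(\mathfrak{c},\b)$ into itself. The condition $\t \circ \s = \t^{-1}$, restricted to $\H^1(\mathfrak{c},\b)$, gives $\t_{\T} \circ \s = \t_{\T}^{-1}$, hence $\boldsymbol{\Psi}^{\s|_{\T}} = \boldsymbol{\Psi}^\vee$. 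The uniqueness statement in Lemma~\ref{L1}(i) then forces $\s|_{\T}$ to coincide with the canonical involution $\a$ attached to $\boldsymbol{\Psi}$. In particular $\T_\so$ is the fixed field of this canonical involution, and the analogous statement holds for $\T'/\T_\so'$ with its lift $\boldsymbol{\Psi}'$.

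Second, I would pick any $\F_\so$-algebra isomorphism $\phi_0 : \T \to \T'$ (it is an $\F$-algebra isomorphism with respect to the $\F$-structure on $\T$ and the appropriate $\F$-structure on $\T'$); such a $\phi_0$ exists because $\T$ and $\T'$ are both tame parameter fields of $\TT$, hence $\F$-isomorphic. Transporting $\boldsymbol{\Psi}'$ along $\phi_0$ yields another $\T/\F$-lift of $\TT$, and by the transitivity part of the description of the fibers of the restriction map $\Ee(\T) \to \Ee(\F)$ (recalled in \S\ref{simplechar}), there exists $\g \in \Aut_\F(\T)$ with $\phi_0^* \boldsymbol{\Psi}' = \boldsymbol{\Psi}^\g$. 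Replacing $\phi_0$ by $\phi = \phi_0 \circ \g^{-1}$, I obtain an $\F$-isomorphism $\phi : \T \to \T'$ satisfying $\phi^* \boldsymbol{\Psi}' = \boldsymbol{\Psi}$.

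Third, I would compare the two involutions through $\phi$. Set $\tilde{\a} = \phi^{-1} \circ \s|_{\T'} \circ \phi$; this is an $\F_\so$-involution of $\T$ extending $\s|_{\F}$. Applying $\phi^*$ to the identity $\boldsymbol{\Psi}'{}^{\s|_{\T'}} = \boldsymbol{\Psi}'{}^\vee$ and using $\phi^* \boldsymbol{\Psi}' = \boldsymbol{\Psi}$ gives $\boldsymbol{\Psi}^{\tilde{\a}} = \boldsymbol{\Psi}^\vee$. The uniqueness in Lemma~\ref{L1}(i) then yields $\tilde{\a} = \s|_{\T}$, that is $\phi \circ \s|_{\T} = \s|_{\T'} \circ \phi$, so $\phi$ carries $\T_\so$ onto $\T_\so'$.

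The only real subtlety is the first step: checking that the interior lift construction is compatible with $\s$, i.e.\ that $\t_{\T} \circ \s = \t_{\T}^{-1}$ follows from $\t \circ \s = \t^{-1}$ together with the $\s$-stability of $\aa$ and $\T$; once this is in place, everything else is formal from Lemma~\ref{L1} and the transitivity of $\Aut_\F(\T)$ on the set of $\T/\F$-lifts of $\TT$.
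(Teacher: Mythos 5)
Your argument is correct, but it takes a genuinely different route from the paper's. You work entirely at the level of endo-classes: you observe that $\t_\T\circ\s=\t_\T^{-1}$ (which does follow from the $\s$-stability of $\T$, $\C$ and $\mathfrak c=\aa\cap\C$ together with $\H^1(\mathfrak c,-\b)=\H^1(\mathfrak c,\b)$ --- this is the same compatibility the paper uses in the forward direction in the proof of Proposition~\ref{SensenbrinkM}), deduce from the uniqueness in Lemma~\ref{L1} that $\s|_{\T}$ \emph{is} the canonical involution attached to the lift $\boldsymbol{\Psi}$, and then transport everything through an $\F$-isomorphism $\T\to\T'$ normalized, via the simply transitive action of $\Aut_\F(\T)$ on the $\T/\F$-lifts of $\TT$, so that it matches $\boldsymbol{\Psi}$ with $\boldsymbol{\Psi}'$; Lemma~\ref{L1} (in particular part (ii)) then forces this isomorphism to intertwine the two involutions. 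The paper instead proves the stronger, geometric statement that $\T'$ is $\G^\s$-conjugate to $\T$, by a three-step reduction (same simple character, then same $n$, then the general case via $\s$-standard strata and transfer), using the uniqueness of the tame parameter field up to $\J^1$-conjugacy \cite[Proposition 2.6]{BHEffectiveLC}, the vanishing of $\CH^1(\s,-)$ on pro-$p$ groups and Hilbert 90, and Lemma~\ref{parcimonie}. Your approach is more uniform --- it makes no case distinction and handles $n\ne n'$ with no extra work --- and it isolates the real content as ``$\s|_\T$ is the involution of Lemma~\ref{L1}'', which is conceptually clarifying; what it does not give is the $\G^\s$-conjugacy of the embedded fields, which the paper's argument provides and which sits naturally alongside the classification of $\s$-self-dual types in Lemma~\ref{parcimonie} and Proposition~\ref{classesdetypesstables7}.
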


\begin{proof}
Let $[\aa',\b']$ be a maximal simple stratum in $\Mat_{n'}(\F)$ for some 
multiple $n'$ of $d$, and let
$\t'$~be a simple~char\-ac\-ter in $\Cc(\aa',\b')$
of endo-class $\TT$ such that $\t'\circ\s=\t'^{-1}$,
the order $\aa'$ is $\s$-stable and $\s(\b')=-\b'$.
Associated with this, there are a tamely ramified extension $\T'$ of $\F$
and its $\s$-fixed points $\T'_{0}$.

Suppose first that $\t'=\t$.
Write $\J^1$ for the maximal normal 
compact open pro-$p$-subgroup of the $\G$-normalizer of $\t$. 
By \cite[Proposition 2.6]{BHEffectiveLC}, 
one has $\T'=\T^x$ for some $x\in\J^1$. 
Since $\T'$ is~sta\-ble by~$\s$, the element $y=\s(x)x^{-1}\in\J^1$ 
normalizes $\T$, thus centralizes it by \cite[Proposition~2.6]{BHEffectiveLC}.
Applying Hilbert's Theorem 90 to the element $y$~in the centralizer 
$\G_\T$ of $\T$ in $\G$ implies that $x\in\G_\T\G^\s$.
It follows that $\T'$ is $\G^\s$-conjugate to $\T$.
The $\F_{\so}$-isomorphism class~of $\T/\T_{\so}$ thus only depends on 
$\t$, not on the simple stratum $[\aa,\b]$ such that $\t\in\Cc(\aa,\b)$. 

Suppose now that $n'=n$.
Since $\t$, $\t'$ have the same endo-class, 
we have $\t'=\t^g$ for some~$g\in\G$.
Since they are both $\s$-self-dual, we have $\s(g)g^{-1}\in\bJ$,
where $\bJ$ is the $\G$-normalizer of $\t$.
By Lemma \ref{parcimonie}, we may even assume, up to $\G^\s$-conjugacy, 
that $g\in\B^\times$, thus $\s(g)g^{-1}\in\B^\times$ centralizes $\T$.
Thanks to the first case, we may also assume that $\aa'=\aa^g$ and 
$\b'=\b^g$. 
We thus have $\T'=\T^g$ with $\s(g)g^{-1}\in\G_\T$.
By the same cohomological argument as above, we deduce that 
$\T'$ is $\G^\s$-conjugate to $\T$.

We now consider the general case.
Thanks to the first two cases and Corol\-la\-ry \ref{StandardStableType},
we may assume,
replacing $\t$,~$\t'$~by $\G$-conjugate characters if necessary, 
that $[\aa,\b]$ and $[\aa',\b']$ are $\s$-standard.
We~thus may transfer $\t$~and $\t'$ to $\GL_{d}(\F)$ without changing the 
$\F_{\so}$-isomorphism classes of $\T/\T_{\so}$ and $\T'/\T'_{\so}$.
We are thus reduced to the previous case.
\end{proof}

Now let $\pi$ be a $\s$-self-dual cuspidal representation of $\G$.
Its endo-class, denoted~$\TT$, has degree 
dividing $n$ and satis\-fies $\TT^\s=\TT^\vee$. 
Associated~with it, there is thus~a~qua\-dra\-tic extension 
$\T/\T_{\so}$,~uni\-que\-ly determined up to $\F_{\so}$-isomorphism.
Let~us record this fact for~fu\-tu\-re reference.

\begin{proposition}
\label{TT0canonique}
The $\F_{\so}$-isomorphism class of $\T/\T_{\so}$ depends only 
on the endo-class of $\pi$.
\end{proposition}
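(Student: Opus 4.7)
The plan is to reduce the proposition directly to the lemma immediately preceding it, which already establishes the stronger statement that $\T/\T_\so$ is well defined (up to $\F_\so$-isomorphism) by the endo-class $\TT$ alone, independently of the choice of $n$, of the simple stratum $[\aa,\b]$, and of the $\s$-self-dual simple character~$\t$ with endo-class~$\TT$.

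First I would record that the endo-class $\TT$ of $\pi$ is $\s$-self-dual, i.e.\ $\TT^\s=\TT^\vee$: this follows from the hypothesis $\pi^\s\simeq\pi^\vee$ together with the fact that the endo-class of $\pi^\vee$ (resp.\ $\pi^\s$) is $\TT^\vee$ (resp.\ $\TT^\s$), by the discussion in Paragraph~\ref{tec}. Moreover, by Proposition~\ref{GeorgesSaval}, the degree $d=\deg(\TT)$ divides $n$, so $\TT$ satisfies the hypotheses under which the construction of the extension $\T/\T_\so$ was made.

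Next I would produce a concrete $\s$-self-dual simple character in $\pi$ to which the lemma applies. By Corollary~\ref{StandardStableType}, $\pi$ contains a $\s$-self-dual type attached to a $\s$-standard maximal simple stratum $[\aa,\b]$ in $\Mat_n(\F)$; in particular $\aa$ is $\s$-stable, $\s(\b)=-\b$, and the associated simple character $\t\in\Cc(\aa,\b)$ satisfies $\t\circ\s=\t^{-1}$. By Proposition~\ref{GeorgesSaval}\ref{sibelius}, the endo-class of $\t$ is precisely $\TT$. Thus the quadratic extension $\T/\T_\so$ built from $\pi$ (through any such choice of $[\aa,\b]$ and $\t$) is one of the extensions to which the preceding lemma applies.

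Finally, the conclusion follows immediately: if $\pi'$ is any other $\s$-self-dual cuspidal representation of $\GL_{n'}(\F)$ with the same endo-class $\TT$ (for any admissible $n'$), then making the same construction for $\pi'$ produces an extension $\T'/\T'_\so$ of the type considered in the lemma, with the same endo-class $\TT$, and the lemma supplies an $\F_\so$-algebra isomorphism $\phi:\T\to\T'$ with $\phi(\T_\so)=\T'_\so$. There is no real obstacle here: the only thing to check is that the data feeding into the construction of $\T/\T_\so$ for $\pi$ falls within the scope of the lemma, which is ensured by Corollary~\ref{StandardStableType}.
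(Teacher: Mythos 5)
Your proof is correct and follows exactly the route the paper takes: the proposition is recorded there as an immediate consequence of the preceding lemma, once one notes (as you do) that the endo-class of a $\s$-self-dual cuspidal $\pi$ is $\s$-self-dual of degree dividing $n$ and that Theorem~\ref{THETAMAINNONMAX} (or Corollary~\ref{StandardStableType}) supplies a $\s$-self-dual simple character of that endo-class in $\pi$. Your write-up just makes explicit the verification of the lemma's hypotheses, which the paper leaves implicit.
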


Unlike $\E/\E_{\so}$, the quadratic extension $\T/\T_{\so}$ is canonically attached to 
$\pi$. 
By applying Lemmas~\ref{machin34} and~\ref{parcimonie}
together with Remarks \ref{Turron} and \ref{rmk:index}, 
we get the following proposition.

\begin{proposition}
\label{classesdetypesstables7}
Let $\pi$ be a $\s$-self-dual cuspidal representation of $\G$,
and $\T/\T_{\so}$ be the quadratic extension canonically attached to it. 
\begin{enumerate}
\item
If $\T$ is unramified over $\T_{\so}$, 
the $\s$-self-dual types contained in $\pi$ 
form a single $\G^\s$-conjugacy class.
\item
If $\T$ is ramified over $\T_{\so}$, 
the $\s$-self-dual types contained in $\pi$ 
form exactly $\lfloor m/2\rfloor+1$ different $\G^\s$-conjugacy classes, 
characterized by their index. 
\end{enumerate}
\end{proposition}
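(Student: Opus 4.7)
The plan is to combine the results already established in this section. I would use Corollary~\ref{StandardStableType} to fix a concrete $\s$-self-dual type attached to a $\s$-standard stratum, parametrise all other $\s$-self-dual types in $\pi$ by Lemma~\ref{machin34}, cut out the $\G^\s$-orbits via Lemma~\ref{parcimonie}, and finally replace the non-canonical extension $\E/\E_\so$ by the canonical one $\T/\T_\so$ via Remark~\ref{Turron}.

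More precisely, I would first invoke Corollary~\ref{StandardStableType} to obtain a $\s$-self-dual type $(\bJ,\bl)$ in $\pi$ with $\bJ=\bJ(\aa,\b)$ for some $\s$-standard maximal simple stratum $[\aa,\b]$, and set $\E=\F[\b]$ and $\E_\so=\E^\s$. By Lemma~\ref{machin34}, the $\s$-self-dual types in $\pi$ are precisely the pairs $(\bJ^g,\bl^g)$ for $g\in\G$ with $\s(g)g^{-1}\in\bJ$, and two such pairs are $\G^\s$-conjugate if and only if the corresponding elements $g$ lie in the same double coset of $\bJ\backslash\G/\G^\s$; this last equivalence relies on the fact that the $\G$-normaliser of the pair $(\bJ,\bl)$ is exactly $\bJ$, which is part of Proposition~\ref{GeorgesSaval}.

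Next I would apply Lemma~\ref{parcimonie} to these admissible $g$'s. If $\E/\E_\so$ is unramified, every admissible $g$ lies in $\bJ\G^\s$, so there is a single $\G^\s$-orbit. If $\E/\E_\so$ is ramified, the admissible double cosets are exactly the $\bJ t_i\G^\s$ for $0\le i\le \lfloor m/2\rfloor$, with $t_i$ defined by~\eqref{defti}, and these cosets are pairwise distinct; this yields the claimed $\lfloor m/2\rfloor+1$ classes, indexed as in Remark~\ref{rmk:index}.

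Finally I would transport the dichotomy from $\E/\E_\so$, which depends on the chosen stratum, to the canonical quadratic extension $\T/\T_\so$, which by Proposition~\ref{TT0canonique} depends only on $\pi$. Remark~\ref{Turron}\,(ii) asserts that $\E/\E_\so$ and $\T/\T_\so$ share the same ramification index, so ``$\E/\E_\so$ unramified'' coincides with ``$\T/\T_\so$ unramified'' and likewise in the ramified case, giving the intrinsic form stated in the proposition. There is no serious obstacle in this plan; the one subtle point worth verifying carefully is the translation from $\G^\s$-conjugacy of $\s$-self-dual types to $\bJ$--$\G^\s$ double cosets, which is what makes Lemma~\ref{parcimonie} directly applicable.
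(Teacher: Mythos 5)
Your proposal is correct and follows essentially the same route as the paper, which obtains the proposition precisely by combining Lemma~\ref{machin34}, Lemma~\ref{parcimonie}, Remark~\ref{Turron} and Remark~\ref{rmk:index}; you have merely made explicit the (correct) translation between $\G^\s$-conjugacy of the types $(\bJ^g,\bl^g)$ and double cosets $\bJ g\G^\s$, which rests on the fact that the intertwining of an extended maximal simple type is $\bJ$ itself.
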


%%%%%%%%%%%%%%%%%%%%%%%%%%%%%%%%%%%%%%%%%%%%%%%%
\subsection{A counterexample in the semisimple case}
\label{CESST}
We  end this  section by  looking at  a natural  question which  lies slightly
outside the main thrust of this paper but which we find intriguing: namely, is
there,   for   \emph{any}~$\s$-self-dual   irreducible   representation~$\pi$,
a~$\s$-self-dual type  contained in~$\pi$. If  one requires  the type to  be a
\emph{semisimple} type (in the sense of~\cite{BKsemi,MSt}) then the answer is 
\emph{no}, as the following example shows. 

Let  $\chi$ be  a  tamely  ramified character  of  $\F^\times$  such that  the
character $\chi(\chi\circ\s)$ is ramified. 
We~consider the repre\-sentation $\pi$ of $\GL_2(\F)$ obtained by applying 
the functor or normalized parabolic induction to the character 
$\chi\otimes(\chi^{-1}\circ\s)$ 
of the Levi subgroup $\F^\times\times\F^\times$.
This is an ir\-reducible and $\s$-self-dual 
representation of level $0$. 
By looking at its cuspidal support, 
one deduces that any semisimple type in $\pi$ is conjugate to 
one of the following: 
\begin{enumerate}
\item 
the pair $(\I,\l)$ where $\I$ is the standard Iwahori subgroup 
(the one whose reduction mod $\p_\F$ is made of upper triangular matrices) 
and $\l$ is the character:
\begin{equation*}
\begin{pmatrix}
a&b\\ c&d 
\end{pmatrix}
\mapsto\chi(a)\chi(\s(d))^{-1},
\end{equation*}
\item
the pair $(\I,\l')$ where $\l'$ is the character:
\begin{equation*}
\begin{pmatrix}
a&b\\ c&d 
\end{pmatrix}
\mapsto\chi(\s(a))^{-1}\chi(d).
\end{equation*}
\end{enumerate}
Note that the latter one is conjugate to the first one by the element:
\begin{equation*}
h=
\begin{pmatrix}
0&1\\ \w&0
\end{pmatrix}
\in\GL_2(\F)
\end{equation*}
% which normalizes $\I$. 
where $\w$ is a uniformizer of $\F$.
Thus any semisimple type in $\pi$ is conjugate to $(\I,\l)$.

Now assume that $\pi$ contains a~$\s$-self-dual semisimple type. 
There is then a $g\in\GL_2(\F)$ such that:
\begin{equation*}
\s(\I^g)=\I^g,
\quad
\l^g\circ\s=(\l^g)^{-1}.
\end{equation*}
The first condition says that $\g=\s(g)g^{-1}$ normalizes $\I$. 
The second one gives us $(\l\circ{\s})^\g=\l^{-1}$. 
But $(\l\circ{\s})^{-1}=\l'=\l^h$,
thus $h\g$ normalizes $\l$. 
Let us write $\N$ for the normalizer of $\I$ in $\GL_2(\F)$. 
It is generated by $\I$ and $h$,
and carries a valuation homomorphism $v:\N\to\ZZ$
with kernel $\I$.
Since $\I$ is $\s$-stable, we have $v\circ\s=v$. 
Since $\s(\g)=\g^{-1}$ we have $v(\g)=0$,
thus $\g\in\I$. Since $h\g$ and $\I$ normalize $\l$, this implies that $h$ 
normalizes $\l$: a contradiction. 

\begin{remark}
The example above shows that there is no~$\s$-self-dual semisimple type for
$\pi$.
This also implies that there is
no~$\s$-self-dual type for~$\pi$ which is a cover of type for its cuspidal 
support (in the sense of \cite{BKviatypes}). 
However, writing~$\K=\GL_2(\o)$  for  the  standard maximal  compact
subgroup of~$\GL_2(\F)$ and using the other notation above, the
pair~$(\K,\cind_{\I}^{\K}\l)$ is a type for~$\pi$, which is 
$\s$-self-dual. Thus the question of whether or not all
irreducible~$\s$-self-dual representations of~$\G$~possess 
a~$\s$-self-dual
type remains as an interesting open question. 
\end{remark}

%%%%%%%%%%%%%%%%%%%%%%%%%%%%%%%%%%%%%%%%%%%%%%%%
\section{Generic $\s$-self-dual types} % \label{exWhitt}
%%%%%%%%%%%%%%%%%%%%%%%%%%%%%%%%%%%%%%%%%%%%%%%%

For this section, we place ourselves in a slightly more general setting. 
We again take~$\F_\so$ to~be~a non\-ar\-chi\-medean local field of odd residual 
characteristic~$p$, but we allow~$\G$ to be the group of~ratio\-nal points 
of any {connected} reductive group defined over~$\F_\so$ equipped with a 
non-trivial involution~$\s$ defi\-ned over~$\F_\so$. 

\subsection{}

Let~$\N$ be a~$\s$-stable unipotent subgroup of~$\G$. 

\begin{lemma}\label{lem:Uunion}
The group~$\N$ is a union of~$\s$-stable pro-$p$ subgroups.
\end{lemma}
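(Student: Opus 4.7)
The plan is to show that every element $u\in\N$ lies in some $\s$-stable pro-$p$ subgroup of $\N$; the lemma then follows immediately by taking the union over all $u\in\N$.

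The key structural input I would invoke is the standard fact that a unipotent subgroup of a connected reductive $p$-adic group with residual characteristic $p$ is locally profinite and is the directed union of its compact open subgroups, and that every compact open subgroup of such a unipotent group is pro-$p$. In particular, any finite (or compact) subset of $\N$ is contained in a single compact open pro-$p$ subgroup of $\N$.

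Given this, the argument is short. First I would fix $u\in\N$ and choose a compact open subgroup $K\subseteq\N$ containing both $u$ and $\s(u)$. Since $\N$ is $\s$-stable, $\s(K)$ is again a compact open subgroup of $\N$, and so is $L=K\cap\s(K)$. From $\s^2=\mathrm{id}$ we obtain $\s(L)=\s(K)\cap K=L$, so $L$ is $\s$-stable; and the inclusions $u\in K$ together with $u=\s(\s(u))\in\s(K)$ give $u\in L$. As a closed subgroup of the pro-$p$ group $K$, the subgroup $L$ is itself pro-$p$, which is what we wanted.

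The only genuinely non-formal point is the structural statement recalled above, and this is where I expect whatever difficulty there is to lie. In the generality stated (a unipotent subgroup of $\G$, not necessarily the $\F_\so$-points of a closed algebraic unipotent subgroup), one typically reduces to the algebraic case by observing that any unipotent subgroup is contained in the unipotent radical of some parabolic of the ambient reductive group; one then uses a central filtration of that unipotent radical with vector-group sections, each of which is visibly a directed union of $\o_\so$-lattices, to exhibit the required directed system of compact open pro-$p$ subgroups.
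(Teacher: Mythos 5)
Your proposal is correct and matches the paper's proof in essence: the paper writes $\N$ as a nested union of compact open (hence pro-$p$) subgroups $\N_k$, picks $k$ large enough that both $u$ and $\s(u)$ lie in $\N_k$, and observes that $u\in\N_k\cap\s(\N_k)$, which is $\s$-stable and pro-$p$ — precisely your $L=K\cap\s(K)$ argument. The structural input you worry about is taken for granted in the paper (as $\N$ is a $\s$-stable unipotent $\F_\so$-subgroup in the algebraic sense), so your extra justification, while harmless, is not needed.
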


\begin{proof}
We   write~$\N=\bigcup_{i\ge  0}   \N_i$  as   a  nested   union  of   compact
subgroups~$\N_i$   which  are   open  in~$\N$,   so  that~$\N_i\subseteq\N_j$,
for~$0\le   i\le   j$.  For   any~$u\in\N$,   there   exist~$i,j\ge  0$   such
that~$u\in\N_i$   and~$\s(u)\in   \N_j$.  Then,   taking~$k=\max\{i,j\}$,   we
have~$u\in\N_k\cap\s(\N_k)$. Thus~$\N=\bigcup_{k\ge  0} \(\N_k\cap\s(\N_k)\)$,
as required. 
\end{proof}

\begin{lemma}[{cf.~\cite[Lemma~2.1]{S2}}]
\label{lem:coset}
Let~$\K$ be a~$\s$-stable open subgroup of~$\G$, and let~$g\in\G$.
\begin{enumerate}
\item\label{lem:coset.i} 
If the double coset~$\N g\K$ is~$\s$-stable then it contains a~$\s$-stable left~$\K$-coset.
\item\label{lem:coset.ii} 
If~$g\K$ is~$\s$-stable then every~$\s$-stable left~$\K$-coset in~$\N g\K$ lies in~$\N^\s g\K$.
\item\label{lem:coset.iii} 
$(\N\K)^\s=\N^\s \K^\s$.
\end{enumerate}
\end{lemma}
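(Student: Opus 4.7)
The plan is to reduce each part, via Lemma~\ref{lem:Uunion}, to an analogous statement inside a $\s$-stable pro-$p$ open subgroup of $\N$, where the odd cardinality (for~\ref{lem:coset.i}) or the vanishing of non-abelian $H^1(\langle\s\rangle,\cdot)$ (for~\ref{lem:coset.ii}) provides a fixed point or a coboundary.

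For part~\ref{lem:coset.i}, I would first use that $\s(\N g\K)=\N\s(g)\K$ to write $\s(g)=u_0 g k_0$ with $u_0\in\N$ and $k_0\in\K$, then apply Lemma~\ref{lem:Uunion} to choose a $\s$-stable pro-$p$ open subgroup $\N_0\subseteq\N$ containing $u_0$. The subset $\N_0 g\K$ is then $\s$-stable, and its partition into left $\K$-cosets is parameterised by the finite set $\N_0/(\N_0\cap g\K g^{-1})$. As an open subgroup of a pro-$p$ group, $\N_0\cap g\K g^{-1}$ has index a power of $p$, hence odd; an involution on an odd-cardinality set must have a fixed point, which is the desired $\s$-stable $\K$-coset.

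For part~\ref{lem:coset.ii}, I write the given $\s$-stable coset in $\N g\K$ as $u g\K$ with $u\in\N$. Using that $g\K$ is $\s$-stable one checks that $\H:=\N\cap g\K g^{-1}$ is $\s$-stable and that $v:=u^{-1}\s(u)$ lies in $\H$ with $v\s(v)=1$, i.e.\ $v$ is a $1$-cocycle of $\langle\s\rangle$ in $\H$. Choosing a $\s$-stable pro-$p$ open $\N_0\subseteq\N$ containing $u$, the cocycle $v$ lies in the $\s$-stable pro-$p$ group $\H_0:=\H\cap\N_0$. Triviality of $H^1(\langle\s\rangle,\H_0)$ when $p\neq2$ (the same ingredient exploited in Lemma~\ref{FrancoisSturel}) yields $w\in\H_0$ with $v=w^{-1}\s(w)$; then $uw^{-1}\in\N^\s$ and $u g\K=(uw^{-1})g\K\subseteq\N^\s g\K$. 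Part~\ref{lem:coset.iii} is then immediate by applying~\ref{lem:coset.ii} with $g=1$: the inclusion $\N^\s\K^\s\subseteq(\N\K)^\s$ is obvious, and if $x\in(\N\K)^\s$ then $x\K$ is a $\s$-stable $\K$-coset in $\N\K$, so $x=u'k'$ with $u'\in\N^\s$ and $k'\in\K$; comparing with $x=\s(x)=u'\s(k')$ forces $k'\in\K^\s$.

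The main obstacle is the non-abelian $H^1$ vanishing invoked in~\ref{lem:coset.ii}. Although standard, it deserves care: one realises $\H_0$ as an inverse limit of finite $p$-groups and proceeds along a $\s$-stable central filtration so that on each successive abelian subquotient the operator $1+\s$ acts invertibly (since $2$ is invertible in $\ZZ_p$), producing the required coboundary by the classical averaging argument and passing to the limit.
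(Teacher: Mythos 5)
Your proof is correct and takes essentially the same approach as the paper's: odd-order counting on $\N_0/(\N_0\cap g\K g^{-1})$ for part~\ref{lem:coset.i}, vanishing of $\CH^1(\langle\s\rangle,\cdot)$ in a $\s$-stable pro-$p$ group (with $p$ odd) for part~\ref{lem:coset.ii}, and deduction of part~\ref{lem:coset.iii} from part~\ref{lem:coset.ii} with $g=1$. Your one added refinement in part~\ref{lem:coset.ii} --- cutting down from $\H=\N\cap g\K g^{-1}$ to the $\s$-stable pro-$p$ group $\H\cap\N_0$ before invoking the cohomological vanishing --- is sound and in fact makes the step rigorous for an arbitrary open $\K$ (for which $\H$ need not be compact, hence not obviously pro-$p$), a point the paper elides by asserting the vanishing for $\H$ directly.
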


\begin{proof}
\ref{lem:coset.i}~Suppose~$\N g\K$ is~$\s$-stable, so that~$\s(g)=ugk$, for 
some~$u\in\N$ and~$k\in\K$. By Lemma~\ref{lem:Uunion}, there is a~$\s$-stable 
pro-$p$ subgroup~$\N_0$ of~$\N$ containing~$u$, so that~$\s(g)\in \N_0g\K$. In 
particular, the double coset~$\N_0g\K$ is~$\s$-stable. 

Now we decompose~$\N_0g\K$ as a union of~$\K$-cosets. 
Since~$\N_0 g\K/\K$ is
in bijection with~the quotient $\N_0/(\N_0\cap g\K g^{-1})$, which is finite of order a
power of~$p$ (odd), there is some coset~$h\K \subset \N_0g\K$ which 
is~$\s$-stable. 

\ref{lem:coset.ii}~Suppose~$g\K$ is~$\s$-stable so that~$g^{-1}\s(g)=k\in\K$. 
If~$ug\K$ is~$\s$-stable, then
\begin{equation*}
u^{-1}\s(u)
=u^{-1}\s(ug)\s(g^{-1})
=gk_1k^{-1}g^{-1}
\end{equation*}
for some $k_1\in\K$.
Thus the map~$\tau\mapsto u^{-1}\tau(u)$ 
defines a~$1$-cocycle in~$\CH^1(\langle\s\rangle,g\K g^{-1}\cap 
\N)$, which~is trivial, so there exists~$v\in g\K g^{-1}\cap \N$ such 
that~$u^{-1}\s(u)=v\s(v^{-1})$. Then~$ug\K=uvg\K$ and~$uv\in\N^\s$. 

\ref{lem:coset.iii}~Suppose $h\in (\N\K)^\s$. Then certainly $h\K$ is
$\s$-stable. On the other hand, $\N h\K=\N\K$ and $\K$ itself is also
$\s$-stable so applying \ref{lem:coset.ii} with $g=1$ we get that every
$\s$-stable left coset in $\N\K$ lies in $\N^\s \K$; thus $h$ is in $\N^\s
\K$. Writing $h=uk$ with $u\in \N^\s$ and $k\in \K$, the fact that $h$ is
$\s$-invariant implies $k\in \K^\s$, so $h\in \N^\s \K^\s$. 
\end{proof}

\subsection{}

We suppose from now on that~$\G$ is quasi-split. As before, a 
pair~$(\K,\tau)$, consisting of an open~sub\-group~$\K$ of~$\G$ and an 
irreducible representation~$\tau$ of~$\K$, is called~\emph{$\s$-self-dual} 
if~$\s(\K)=\K$ and~$\tau^{\s}\simeq \tau^{\vee}$. 

A \emph{Whittaker datum} for~$\G$ is a pair~$(\N,\psi)$ consisting of (the
$\F_\so$-points of) the unipotent radical~$\N$ of an $\F_\so$-Borel subgroup
of~$\G$ and a character~$\psi$ of~$\N$ such that the stabilizer of~$\psi$
in~$\G$ is~$\Z\N$, where~$\Z$ denotes the $\F_\so$-points of the centre
of~$\G$. If a Whittaker datum~$(\N,\psi)$ is~$\s$-self-dual then,
since~$\F_\so$ is not of characteristic two,~$\psi$ is trivial on~$\N^\s$. 

\begin{proposition}[{cf.~\cite[Proposition~1.6]{BH98}}]
\label{prop:Upsi}
Suppose that $\G$ is quasi-split.
Let~$(\N,\psi)$ be  a~$\s$-self-dual Whittaker datum in~$\G$  and let~$\pi$ be
an irreducible~$\s$-self-dual cuspidal representation of~$\G$ such that the 
space~$\Hom_\N(\pi,\psi)$~is one-dimensional. Suppose that~$(\bJ,\rho)$ is 
a~$\s$-self-dual pair, with~$\bJ$ a compact-mod-centre open subgroup of~$\G$, 
such that~$\pi\simeq\cind_\bJ^\G\rho$. 
\begin{enumerate}
\item\label{prop:Upsi.i} 
There exists a~$\s$-self-dual pair~$(\bJ',\rho')$ conjugate to~$(\bJ,\rho)$ such that
\[
\Hom_{\bJ'\cap\N}(\rho',\psi) \ne 0.
\]
\item\label{prop:Upsi.ii} 
The pair~$(\bJ',\rho')$ as in~\ref{prop:Upsi.i} is uniquely determined up to conjugacy by~$\N^\s$.
\item\label{prop:Upsi.iii} 
For any pair~$(\bJ',\rho')$ as in~\ref{prop:Upsi.i}, the space~$\Hom_{\bJ'\cap\N}(\rho',\psi)$ is one-dimensional.
\end{enumerate}
\end{proposition}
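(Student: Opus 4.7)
The approach follows the pattern of~\cite[Proposition~1.6]{BH98}, combining Mackey theory for compact induction with the $\s$-self-duality hypotheses and Lemma~\ref{lem:coset}.

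Since $\pi\simeq\cind_\bJ^\G\rho$, Frobenius reciprocity and the Mackey decomposition of $\pi|_\N$ yield
\[
\Hom_\N(\pi,\psi)\ \simeq\ \prod_{g\in\N\backslash\G/\bJ}\,\Hom_{\N\cap g\bJ g^{-1}}\bigl(\rho^{g^{-1}},\psi\bigr).
\]
As the left-hand side is one-dimensional by hypothesis, exactly one double coset $\N g_0\bJ$ contributes, and its contribution is one-dimensional. The key step will be to show this coset is $\s$-stable. The $\s$-self-duality of $(\N,\psi)$ is, by definition, $\psi\circ\s=\psi^{-1}$; together with a $\s$-self-duality intertwiner $\rho^\s\simeq\rho^\vee$ for $(\bJ,\rho)$, this lets me send any nonzero element of $\Hom_{\N\cap g\bJ g^{-1}}(\rho^{g^{-1}},\psi)$ to a nonzero element of $\Hom_{\N\cap\s(g)\bJ\s(g)^{-1}}(\rho^{\s(g)^{-1}},\psi)$---by dualising, applying the intertwiner, and precomposing with $\s$. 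Hence $\s$ preserves the set of contributing double cosets, and by uniqueness $\N g_0\bJ$ must be $\s$-stable.

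Applying Lemma~\ref{lem:coset}\ref{lem:coset.i} then produces $h\in\N g_0\bJ$ with $h\bJ$ $\s$-stable, i.e.\ $k:=h^{-1}\s(h)\in\bJ$. Setting $(\bJ',\rho')=(h\bJ h^{-1},\rho^{h^{-1}})$, the group $\bJ'$ is $\s$-stable and the given intertwiner descends (after an inner twist by $\rho^\vee(k)$) to $(\rho')^\s\simeq(\rho')^\vee$, so $(\bJ',\rho')$ is $\s$-self-dual; since $h$ and $g_0$ lie in the same $(\N,\bJ)$-double coset, the Mackey term at $h$ is isomorphic to the contributing one, giving~\ref{prop:Upsi.i}. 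For~\ref{prop:Upsi.ii}, Mackey uniqueness forces any other such pair $(\bJ^{g''},\rho^{g''})$ to have $g''\in\bJ h^{-1}\N$; writing $g''=jh^{-1}n$ with $j\in\bJ$ and $n\in\N$ reduces matters to showing that $(\bJ'^n,\rho'^n)$ is $\N^\s$-conjugate to $(\bJ',\rho')$. The $\s$-stability of $\bJ'^n$ forces the cocycle $n\s(n)^{-1}$ to lie in $\bJ'\cap\N$; the vanishing of first $\s$-cohomology on this union of $\s$-stable open pro-$p$ subgroups of $\N$ (via Lemma~\ref{lem:Uunion}) splits $n=n_1 n_0$ with $n_1\in\bJ'\cap\N$ and $n_0\in\N^\s$, whence the claim. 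Part~\ref{prop:Upsi.iii} is then automatic: the trivial double coset in the Mackey decomposition relative to $(\bJ',\rho')$ contributes $\Hom_{\bJ'\cap\N}(\rho',\psi)$, which must exhaust the one-dimensional target.

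The hardest part will be the $\s$-stability of the distinguished double coset: coordinating the three self-duality hypotheses via Mackey--Frobenius demands careful tracking of the conventions for smooth duals and $\s$-twists. The hypothesis $p\neq 2$ is essential for the cohomological splitting underlying the uniqueness assertion.
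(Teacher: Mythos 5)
Your route is essentially the paper's: Mackey--Frobenius on $\cind_\bJ^\G\rho$ (the paper phrases it via Kutzko's Hecke function-space $\Hh(\G,\rho,\psi)\simeq\Hom_\G(\cind_\bJ^\G\rho,\Ind_\N^\G\psi)$, but this is the same argument), uniqueness of the contributing double coset, showing that coset is $\s$-stable, and then Lemma~\ref{lem:coset} for the cohomological splitting. Your treatment of~\ref{prop:Upsi.ii} redoes the $\CH^1$ splitting by hand rather than quoting Lemma~\ref{lem:coset}\ref{lem:coset.ii}, which is fine.

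There is, however, a concrete gap in the $\s$-stability step. Dualising $\lambda\in\Hom_{\N\cap\bJ^{g^{-1}}}(\rho^{g^{-1}},\psi)$, applying the self-duality intertwiner $\rho^\s\simeq\rho^\vee$, and precomposing with~$\s$ produces a nonzero element of $\Hom_{\N\cap\bJ^{\s(g)^{-1}}}(\psi,\rho^{\s(g)^{-1}})$, \emph{not} of $\Hom_{\N\cap\bJ^{\s(g)^{-1}}}(\rho^{\s(g)^{-1}},\psi)$ as you claim --- the arrows point the wrong way. To conclude that the latter space is also nonzero, and hence that $\s(g)$ lies in the distinguished double coset, you must observe that $\N\cap\bJ^{\s(g)^{-1}}$ is a compact subgroup of the unipotent group $\N$, hence pro-$p$, and that since $p$ is invertible in $\FC$ the restriction of $\rho^{\s(g)^{-1}}$ to it is \emph{semisimple}; semisimplicity lets you reverse the Hom. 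This is precisely the point the paper flags explicitly, and it is where the standing hypothesis $\chara\FC\neq p$ (not merely $p\neq 2$, which you cite for the cocycle splitting) enters. Your closing caveat about ``careful tracking of the conventions for smooth duals and $\s$-twists'' correctly identifies this as the delicate step, but the missing ingredient is the semisimplicity argument, not a bookkeeping convention.
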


\begin{proof}
We follow the proof of~\cite[Proposition~1.6]{BH98} which, although it is 
written only for~$\G=\GL_n(\F)$, is valid more generally. 
{Let us write $\Vv_\rho$ for the space of $\rho$, 
and $\Hh(\G,\rho,\psi)$ for the space of 
func\-tions $\varphi:\G\to\Hom_\FC(\Vv_\rho,\FC)$ such 
that~$\varphi(ugk)=\psi(u)\varphi(g)\circ\rho(k)$, 
for all~$u\in\N$, $g\in\G$ and $k\in\bJ$.}
By the main Theorem of~\cite{Kutzko77}
(which is valid also for~$\FC$-representations), 
we have a natural~$\G$-isomorphism
\[
\Hh(\G,\rho,\psi) \simeq \Hom_\G(\cind_\bJ^\G \rho, \Ind_\N^\G \psi).
\]
In particular, we see 
that~$\dim_\FC\Hh(\G,\rho,\psi)=1$, whence (cf.~\cite[(1.8)]{BH98}) there is a
unique   double   coset~$\N   g\bJ$   which   supports   a   non-zero   element
of~$\Hh(\G,\rho,\psi)$ (that is, intertwines~$\psi$ with~$\rho$), and moreover
the   space  of~$\varphi\in\Hh(\G,\rho,\psi)$   supported   on~$\N  g\bJ$   is
one-dimensional   --   that   is,~$\Hom_{\N^g\cap   \bJ}   (\rho,\psi^g)$   is
one-dimensional. Note that~$\N^g\cap\bJ$ is a compact subgroup of~$\N^g$ so is
pro-$p$; in particular, the restriction of~$\rho$ to~$\N^g\cap\bJ$ is
semisimple. 

Applying~$\s$ and taking contragredients, we see 
that~$\Hom_{\N^{\s(g)}\cap\bJ} (\psi^{\s(g)},\rho)$ is also non-zero; 
by~semi\-sim\-pli\-ci\-ty, the same is true of~$\Hom_{\N^{\s(g)}\cap\bJ}
(\rho,\psi^{\s(g)})$ so,  by uniqueness,~$\s(g)$ lies in~$\N  g\bJ$. Since the
double coset~$\N g\bJ$ is then~$\s$-stable, Lemma~\ref{lem:coset} implies that
it contains  a~$\s$-stable coset~$h\bJ$, and  that any~$\s$-stable~$\bJ$-coset
in~$\N  g\bJ$   lies  in~$\N^\s  h\bJ$.  Then   the  pair~$({}^h\bJ,{}^h\rho)$
satisfies the hypotheses of~\ref{prop:Upsi.i}, while the uniqueness statements
in~\ref{prop:Upsi.ii} and~\ref{prop:Upsi.iii} also follow. 
\end{proof}

\subsection{}
\label{p53}

Finally in this subsection, we specialize to the case~$\G=\GL_n(\F)$, 
where~$\F/\F_\so$ is a quadratic extension and~$\s$ the Galois involution as 
in the rest of the paper. 
By the $\s$-self-dual type Theorem~\ref{PIMAIN}~to\-ge\-ther 
with~\cite[Corollary~1]{GK} (or~\cite[III.5.10]{Vig96} in the modular case), 
the hypotheses~of~Pro\-po\-si\-tion~\ref{prop:Upsi} are satisfied for any 
irreducible~$\s$-self-dual cuspidal representation~$\pi$ of~$\GL_n(\F)$. 

\begin{remark}
\label{guderian}
Note that \cite[III.5.10]{Vig96} is for cuspidal representations with 
coefficients in an algebraic closure $\overline{\FF}_\ell$ of a finite field 
of characteristic $\ell\neq p$ only, 
but one can easily extend it to representations with coefficients 
in a general $\FC$ of characteristic $\ell$.
Indeed, if $\pi$ is a cuspidal $\FC$-representation, 
then, by twisting it by a character, 
we may assume that its central character has values in 
$\overline{\FF}_\ell\subseteq\FC$. 
Then by 
\cite[II.4]{Vig96} there is a cuspidal $\overline{\FF}_\ell$-representation $\pi_1$ 
such that $\pi$ is isomorphic to $\pi_1\otimes_{\overline{\FF}_\ell}\FC$.
It now follows that the hypotheses~of~Pro\-po\-si\-tion~\ref{prop:Upsi} are 
satisfied by $\pi$, since they are satisfied by $\pi_1$.
\end{remark}

\begin{proposition}
\label{brunau}
Let $\pi$ be a~$\s$-self-dual cuspidal representation 
of~$\GL_n(\F)$,
and let~$\T/\T_\so$ be the quadratic extension 
associated with it by Proposition {\rm\ref{TT0canonique}}.
Let $d$ be the degree of the endo-class of $\pi$,
and write $n=md$. 
\begin{enumerate}
\item
Let $(\N,\psi)$ be a $\s$-self-dual Whittaker datum in~$\GL_n(\F)$. 
Then the representation $\pi$ contains a $\s$-self-dual type $(\bJ,\bl)$
such that 
\begin{equation}
\label{rathenau}
\Hom_{\bJ\cap\N}(\bl,\psi) \ne 0.
\end{equation}
The pair $(\bJ,\bl)$ is uniquely determined up to conjugacy by~$\N^\s$
and~$\Hom_{\bJ\cap\N}(\bl,\psi)$ has dimension $1$. 
\item
\label{gsst}
The set of all $\s$-self-dual types contained in $\pi$ 
and satisfying \eqref{rathenau}
for some $\s$-self-dual Whittaker datum $(\N,\psi)$ is a single 
$\GL_n(\F_\so)$-conjugacy class. 
\item
If $\T$ is unramified over $\T_\so$,
the conjugacy class in \ref{gsst} is the unique $\GL_n(\F_\so)$-conjugacy 
class of $\s$-self-dual types in $\pi$.
\item
If $\T$ is ramified over $\T_\so$,
the conjugacy class in \ref{gsst} is the unique $\GL_n(\F_\so)$-conjugacy 
class of $\s$-self-dual types in $\pi$ of  
index $\lfloor m/2\rfloor$
(see Remark~\ref{rmk:index}). 
\end{enumerate}
\end{proposition}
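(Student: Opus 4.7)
The plan is to derive (1) as an application of Proposition~\ref{prop:Upsi}, to obtain (2) from (1) combined with the $\G^\s$-transitivity of $\s$-self-dual Whittaker data, and then to use Proposition~\ref{classesdetypesstables7} to treat (3) and (4). Identifying the correct index in~(4) is the main obstacle.

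For (1), the hypotheses of Proposition~\ref{prop:Upsi} are precisely those verified in paragraph~\ref{p53}: Theorem~\ref{PIMAIN} supplies a $\s$-self-dual pair $(\bJ,\bl)$ with $\pi\simeq\cind_{\bJ}^{\G}\bl$, and one-dimensionality of $\Hom_\N(\pi,\psi)$ comes from \cite{GK} (or \cite{Vig96} in the modular case, cf.~Remark~\ref{guderian}). Proposition~\ref{prop:Upsi} then yields all three assertions of~(1) at once.

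For (2), I first establish that the $\s$-self-dual Whittaker data in $\G=\GL_n(\F)$ form a single $\G^\s$-orbit: any $\s$-stable $\F_\so$-Borel subgroup is $\G^\s$-conjugate to the standard one, and after fixing $\N$ the diagonal torus in $\G^\s$ acts transitively on the non-degenerate characters of $\N$ satisfying $\psi\circ\s=\psi^{-1}$. Given two generic $\s$-self-dual types $(\bJ_i,\bl_i)$ with respect to data $(\N_i,\psi_i)$, $i=1,2$, a $\G^\s$-conjugation brings the data to coincide; the uniqueness part of (1) then forces $\N^\s$-conjugacy of the two types, hence $\G^\s$-conjugacy. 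Part (3) is then immediate, since Proposition~\ref{classesdetypesstables7}\,(i) provides only one $\G^\s$-class in the unramified case.

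For (4), the main obstacle, one must identify among the $\lfloor m/2\rfloor+1$ $\G^\s$-orbits of Remark~\ref{rmk:index} the one that contains the generic type produced by~(2). My plan is to fix a $\s$-self-dual type $(\bJ,\bl)$ attached to a $\s$-standard maximal simple stratum (Corollary~\ref{StandardStableType}) and determine, for each $i\in\{0,\ldots,\lfloor m/2\rfloor\}$, whether $\Hom_{\bJ^{t_i}\cap\N}(\bl^{t_i},\psi)$ is non-zero for some $\s$-self-dual Whittaker datum $(\N,\psi)$. Using the Iwahori decomposition of $\bJ^{t_i}$ relative to a block-diagonal parabolic of $\G$ compatible with $\N$, together with the Paskunas--Stevens construction~\cite{StPa} of generic vectors in $\bl$, the question reduces to a Whittaker-model problem for a cuspidal representation of the finite reductive quotient $\GL_m(\ee)$ on which $\s$ acts by conjugation by $\d_i=\diag(-1,\ldots,-1,1,\ldots,1)$. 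The balance imposed by compatibility with a $\s$-self-dual character forces the extremal index $i=\lfloor m/2\rfloor$; in that case an explicit Whittaker functional can be produced from the Paskunas--Stevens vector. Combined with the single-orbit statement of~(2), this pins down the generic class and completes the proof.
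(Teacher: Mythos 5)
Your plan for parts (1)--(3) matches the paper and is correct. Part (1) is indeed an immediate specialisation of Proposition~\ref{prop:Upsi} once the hypotheses are verified as in Paragraph~\ref{p53}. For (2), the paper uses a single cohomological argument (writing $\s(g)g^{-1}=zu$ with $z$ central and $u\in\N$, and applying Hilbert~90 to each factor); your two-step argument -- conjugate the $\s$-stable Borel, then conjugate the character by the diagonal torus of $\G^\s$ -- reaches the same conclusion and is fine. Part (3) is, as you say, a restatement of Proposition~\ref{classesdetypesstables7}(i).

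For (4), however, there is a genuine gap in the step you describe as ``the balance imposed by compatibility with a $\s$-self-dual character forces the extremal index.'' The image $\EuScript{U}$ of $\J\cap\N$ in $\GL_m(\ee)$ is a $\s$-stable maximal unipotent carrying a $\s$-self-dual non-degenerate character $\overline\psi$ (non-degeneracy is from \cite[Remark~4.15, Theorem~3.3]{StPa}), but $\EuScript{U}$ is \emph{not} the standard maximal unipotent $\EuScript{N}$ in general, and the involution on $\GL_m(\ee)$ is conjugation by $\d_i$ only in a fixed identification. You cannot read the alternating-sign condition off $\d_i$ directly. One must conjugate $\EuScript{U}$ to $\EuScript{N}$ by some $g\in\GL_m(\ee)$; then $\g=\s(g)g^{-1}$ normalises $\EuScript{N}$ and can be written $n_0t$ with $n_0\in\EuScript{N}$ and $t$ diagonal involutive. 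Using $p\neq2$ (so the first cohomology of $\langle\s\rangle$ in the $p$-group $\EuScript{N}$ vanishes) one can modify $g$ to absorb $n_0$, after which $\s$ acts on $\EuScript{N}$ by conjugation by $\d'=t\d_i$. Only then does the explicit formula $\psi'(n)=\varphi(a_1 n_{1,2}+\cdots+a_{m-1}n_{m-1,m})$ for a $\s'$-self-dual non-degenerate character force $\d'_{k+1}=-\d'_k$, so that $\d'$ has $\lfloor m/2\rfloor$ or $\lceil m/2\rceil$ entries $-1$; combined with $i\le\lfloor m/2\rfloor$ this gives $i=\lfloor m/2\rfloor$. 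This normalisation lemma is the crux of the paper's proof of (4) and your sketch does not supply it. Also note that the final step of your plan (producing an explicit Whittaker functional for $i=\lfloor m/2\rfloor$) is unnecessary: part~(1) already guarantees a generic $\s$-self-dual type exists with \emph{some} index, and the above argument shows any generic index must be $\lfloor m/2\rfloor$, so existence is automatic.
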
 

\begin{proof}
Assertion (i) follows from Proposition \ref{prop:Upsi}
and Assertion (ii) follows from (i) together with~the fact that any two $\s$-self-dual 
Whittaker data in~$\GL_n(\F)$ are $\GL_n(\F_\so)$-conjugate.
Indeed, if $(\N',\psi')$~is a $\s$-self-dual Whittaker datum, 
it can be written $(\N^g,\psi^g)$ for some $g\in\GL_n(\F)$ such that 
$\s(g)g^{-1}$~is in $\Z\N$.
Writing $\s(g)g^{-1}=zu$ with $z\in\Z\simeq\F^\times$ and $u\in\N$,
we get $z\s(z)=u\s(u)=1$.
The result now follows from a simple cohomological argument. 
% using Hilbert's Theorem 90 and the fact that $\N$ is locally pro-$p$.

Assertion (iii) follows from Proposition \ref{classesdetypesstables7}. 

We now prove (iv).
By Proposition \ref{classesdetypesstables7}, 
there are~$\lfloor m/2\rfloor+1$ conjugacy class of~$\s$-self-dual~types~in 
$\pi$
and each conjugacy class has an
\emph{index}~$i$ as in Remark~\ref{rmk:index}. 
If~$(\bJ,\bl)$ is a
$\s$-self-dual type~with index~$i$ then, identifying~$\J/\J^1$
with~$\GL_m(\ee)$, the involution~$\s$ acts via conjugation 
by the diagonal~element 
\begin{equation*}
\d=\d_i=\diag(-1,\ldots,-1,1\ldots,1)
\end{equation*} 
with~$-1$ occurring~$i$ times. 

If~$(\bJ,\bl)$ is as in (ii),
then the image $\EuScript{U}$ of~{$\J\cap\N$}
in~$\GL_m(\ee)$ is a~$\s$-stable maximal unipotent~sub\-group on which~$\psi$ 
induces a~$\s$-{self-dual} 
character $\overline{\psi}$. 
By~\cite[Remark~4.15 and Theorem~3.3]{StPa}, the character 
$\overline{\psi}$ is non-degenerate.

Now there is a $g\in\GL_m(\kk_\E)$ such that $\EuScript{U}^g$ is equal to 
$\EuScript{N}$, the standard maximal unipotent subgroup.
Since $\EuScript{U}$ and $\EuScript{N}$ are $\s$-stable, 
the element $\g=\s(g)g^{-1}$ normalizes $\EuScript{N}$.
It thus can be written 
$\g= n_0t$ with $n_0\in\EuScript{N}$ and $t$ diagonal.
Since $\g^{-1}=\s(\g)=\d\g\d^{-1}$, we have $t^{-1}=t$.
Write $\d'=t\d$ and let $\s'$ be the involution of $\GL_m(\kk_\E)$ 
given by conjugacy by $\d'$.
Then 
\begin{equation*}
n_0\s'(n_0) 
= n_0t\d n_0 \d^{-1}t^{-1}
= \g\d\g\d^{-1} = 1.
\end{equation*}
Since $\EuScript{N}$ is a $p$-group with $p$ odd, 
there is $n_1\in\EuScript{N}$ such that $n_0=\d' n_1^{}\d'^{-1} n_1^{-1}$.
Write $h=n_1^{-1}g$.
Then $\EuScript{U}^h=\EuScript{N}$ and $\s(h)h^{-1}=t$.
Thus, replacing $g$ by $h$, we may assume that $n_0=1$.
Moreover, if we identify $\EuScript{U}$ with $\EuScript{N}$,
then $\s$ is replaced by $\s'$, that is, conjugacy by the diagonal matrix $\d'$.

Now consider the $\s'$-self-dual non-degenerate character 
$\psi'=(\overline{\psi})^g$ 
of $\EuScript{N}$.
There are $a_1,\dots,a_{m-1}\in\kk_\E^\times$ such that
\begin{equation*}
\psi'(n) = \varphi(a_1n_{1,2}+\dots+a_{m-1}n_{m-1,m})
\end{equation*}
for all $n\in\EuScript{N}$,
where $\varphi$ is a fixed non-trivial character of $\kk_\E$.
The fact that $\psi'$ is $\s'$-self-dual implies that $\d'_{k+1}=-\d'_k$ 
for all $k=1,\dots,m-1$.
Since the number of $-1$ and $1$ differ by at most $1$, 
and since $i\<\lfloor m/2\rfloor$ by definition,
it follows that $i=\lfloor m/2\rfloor$.
\end{proof}

\begin{definition}
{We call a type in the conjugacy class of Proposition \ref{brunau}\ref{gsst} 
a \textit{generic $\s$-self-dual type} for~$\pi$.}
\end{definition} 

{Proposition \ref{brunau} thus says that,
when $\T$ is unramified over $\T_\so$,
any $\s$-self-dual type contained in $\pi$ is generic, and,
when $\T$ is ramified over $\T_\so$,
a $\s$-self-dual type contained in $\pi$ is generic if and only if its index 
is $\lfloor m/2\rfloor$.}

%%%%%%%%%%%%%%%%%%%%%%%%%%%%%%%%%%%%%%%%%%%%%%%%%
\subsection{}
% {Invariants of generic~$\boldsymbol\s$-self-dual types} 
\label{sec:inv}

{We continue with the notation of Paragraph \ref{p53}.
The main result of this paragraph is Lemma \ref{lem:e00},
which will be useful in Sections \ref{exWhitt} and \ref{S7VS}.} 

We assume, in this paragraph, that $\pi$ is a $\s$-self-dual 
{\textit{supercuspidal}} representation of $\G=\GL_n(\F)$.
{Recall that a cuspidal representation of $\G$ is supercuspidal 
if it does not occur as a subquotient of the parabolic induction 
of an irreducible representation of a proper Levi subgroup of $\G$.}

By Proposition \ref{brunau},
this representation 
con\-tains a ge\-ne\-ric $\s$-self-dual type~$(\bJ,\bl)$,
uniquely determined up to $\G^\s$-conjugacy.
Fix a maximal~sim\-ple stra\-tum $[\aa,\b]$ such that 
$\bJ=\bJ(\aa,\b)$ 
with~$\aa$~a $\s$-stable hereditary order and~$\s(\b)=-\b$.
Let~$\E$ denote the $\F$-extension $\F[\b]$.
Let $\T$ be the~maxi\-mal tamely ramified sub-ex\-ten\-sion of~$\E$ over 
$\F$, and let $\T_0$ denote its $\s$-fixed points.
We also~write $m=n/\deg_\F(\b)$.

\begin{proposition}[\cite{VS} Proposition {8.1}] % and 8.14]
\label{polonius}
Let $\pi$ be a $\s$-self-dual {supercuspidal} 
representation of the group $\GL_n(\F)$. 
If $\T/\T_\so$ is ramified, then either~$m=1$ or~$m$ is even.
% \begin{enumerate}
% \item If~$\T/\T_\so$ is unramified, then~$m$ is odd.
% \item If~$\T/\T_\so$ is ramified, then either~$m=1$ or~$m$ is even.
% \end{enumerate}
\end{proposition}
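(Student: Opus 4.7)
The strategy is to pass from $\pi$ to its attached supercuspidal representation of $\GL_m(\ee)$ via a generic $\s$-self-dual type, and then to exploit the specific shape of the $\s$-action in the ramified case to force a parity condition.

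By Proposition~\ref{brunau}, $\pi$ contains a generic $\s$-self-dual type $(\bJ,\bl)$ whose attached simple stratum $[\aa,\b]$ may be chosen with $\s(\aa)=\aa$ and $\s(\b)=-\b$; write $\J=\J(\aa,\b)$. Choosing a $\s$-self-dual $\b$-extension $\kappa$ of the Heisenberg representation attached to the underlying simple character, one obtains a decomposition $\bl|_{\J}\simeq\kappa\otimes\rho$ in which $\rho$ is inflated from an irreducible representation $\overline{\rho}$ of $\J/\J^{1}\simeq\GL_{m}(\ee)$. Supercuspidality of $\pi$ forces $\overline{\rho}$ to be supercuspidal, and the generic condition on $(\bJ,\bl)$ translates into genericity of $\overline{\rho}$ with respect to a non-degenerate character $\overline{\psi}$ of the standard upper unipotent subgroup of $\GL_{m}(\ee)$.

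Next I would describe the $\s$-action on $\GL_m(\ee)$ under the hypothesis that $\T/\T_{\so}$ is ramified. In this case Remark~\ref{Turron} forces $\ee=\kk_{\E_{\so}}$, so $\s$ acts trivially on $\ee$ and therefore on $\GL_m(\ee)$ by an inner automorphism. Repeating the computation in the proof of Proposition~\ref{brunau}\ref{gsst}, the compatibility with the $\s$-self-dual Whittaker character $\overline{\psi}$ forces this inner automorphism to be conjugation by a diagonal matrix $\d'\in\GL_{m}(\ee)$ whose entries alternate in sign. The $\s$-self-duality $\bl^{\s}\simeq\bl^{\vee}$ then descends, via the corresponding $\s$-self-duality of $\kappa$, to a compatibility condition between $\overline{\rho}$, its contragredient, and the Whittaker functional that depends on the combinatorial structure of $\d'$.

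The main obstacle, and the heart of the proposition, is to extract the parity assertion from this compatibility when $\overline{\rho}$ is supercuspidal. For $m=1$ the claim is trivial, and for $m$ even the alternating pattern of $\d'$ has equal numbers of $+1$ and $-1$ on the diagonal, so the Levi of $\s$-fixed points is balanced and no obstruction arises. For $m\ge 3$ odd, however, the centralizer of $\d'$ is an unbalanced Levi of type $(\lceil m/2\rceil,\lfloor m/2\rfloor)$, and the plan is to show that the compatibility above --- essentially an identity between a distinguished self-intertwiner of $\overline{\rho}$ and the inner conjugation by $\d'$ evaluated against the Whittaker model --- forces $\overline{\rho}$ to appear as a subquotient of a parabolic induction from this Levi, contradicting its supercuspidality. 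The most delicate step is to track the scalar ambiguities in the decomposition $\bl\simeq\kappa\otimes\rho$ so that the $\s$-self-duality of $\bl$ genuinely factors as the tensor of the $\s$-self-dualities of $\kappa$ and $\rho$ separately; once this bookkeeping is in place, the parity conclusion follows from the character-theoretic incompatibility just described.
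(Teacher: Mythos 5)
Your reduction to the finite group $\GL_m(\ee)$ is sound and matches the setup of the cited source: pass to a generic $\sigma$-self-dual type $(\bJ,\bl)$, decompose $\bl|_\J\simeq\kappa\otimes\rho$ with $\kappa$ a $\sigma$-self-dual $\beta$-extension, observe that supercuspidality of $\pi$ forces the representation $\overline\rho$ of $\J/\J^1\simeq\GL_m(\ee)$ to be supercuspidal, and note that when $\T/\T_\so$ is ramified the involution $\sigma$ acts on $\GL_m(\ee)$ by the inner automorphism $\operatorname{Ad}(\d_i)$ (Remark~\ref{rmk:index}).

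The gap is in the crux. You assert that for odd $m\ge3$ the $\sigma$-self-duality of $\overline\rho$, together with the diagonal shape of $\d_i$, would force $\overline\rho$ to be a subquotient of parabolic induction from the Levi $\GL_{\lceil m/2\rceil}(\ee)\times\GL_{\lfloor m/2\rfloor}(\ee)$. This step is not substantiated, and no such parabolic-induction constraint arises from the Whittaker/Levi picture you describe. Indeed, since $\operatorname{Ad}(\d_i)$ is \emph{inner}, its first consequence is simply $\overline\rho^\sigma\simeq\overline\rho$: the $\sigma$-self-duality of $\overline\rho$ collapses to ordinary self-duality $\overline\rho\simeq\overline\rho^\vee$, and no trace of $\d_i$ survives in the isomorphism class of $\overline\rho$. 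There is therefore no way to read off an unbalanced Levi from the diagonal pattern of $\d_i$ as you propose. The actual source of the parity constraint is the Green parametrization of supercuspidal representations of $\GL_m(\ee)$ by Galois orbits of regular characters $\theta$ of the degree-$m$ extension of $\ee$: contragredient corresponds to $\theta\mapsto\theta^{-1}$ and Galois conjugation to $\theta\mapsto\theta^{q_\E^{\,j}}$, where $q_\E$ is the cardinality of $\ee$. Self-duality gives $\theta^{-1}=\theta^{q_\E^{\,j}}$ for some $0\le j<m$, whence $\theta^{q_\E^{\,2j}}=\theta$; regularity of $\theta$ forces $m\mid2j$. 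If $m$ is odd this gives $j=0$, so $\theta^2=\mathbf1$, and since $q_\E$ is odd (as $p\ne2$) also $\theta^{q_\E}=\theta$, contradicting regularity unless $m=1$. If $m$ is even, the choice $j=m/2$ is available and no obstruction appears. Note that this is exactly where \emph{super}cuspidality (as opposed to mere cuspidality) of $\overline\rho$ is indispensable: over a coefficient field of positive characteristic there exist self-dual cuspidal non-supercuspidal representations of $\GL_m(\ee)$ for odd $m>1$, which is precisely why the statement fails for merely cuspidal $\pi$ (cf.\ \cite[Remark~7.5]{VS}).
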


\begin{remark}
\begin{enumerate}
\item
Note that Proposition \ref{polonius} does not hold if $\pi$ is only assumed to 
be $\s$-self-dual cuspidal: see \cite[Remark 7.5]{VS}.
\item
In the situation of Proposition \ref{polonius}, 
but with $\T/\T_\so$ unramified instead of ramified,
it is proved in \cite[Proposition 8.14]{VS} that $m$ is odd,
but we will not need this result.
\end{enumerate}
\end{remark}

The parahoric subgroup~$\aa^\times$ of $\G$ is $\s$-stable; 
thus~$\aa^\times\cap\G^\s$ 
is a parahoric subgroup of~$\G^\s$ and has the form $\aa_\so^\times$, 
for some~$\o_\so$-hereditary order~$\aa_\so$ in~$\Mat_n(\F_\so)$. 
Let~$e_\so$ denote the~$\o_\so$-period of~$\aa_\so$.
As~usual, we also write $\B$ for the centralizer of $\E$ in $\Mat_n(\F)$.
Then $\bb=\aa\cap\B$ is an $\o_\E$-hereditary order in $\B$,
and $\bb_\so=\bb\cap\aa_\so$ is an $\o_{\E_\so}$-hereditary order in 
$\B^\s\simeq\Mat_m(\E_\so)$.

\begin{lemma}\label{lem:e00}
Let $\pi$ be a $\s$-self-dual supercuspidal representation of $\GL_n(\F)$.
The orders $\aa_\so$ and~$\bb_\so$ {defined as above} are principal.
Moreover:
\begin{enumerate}
\item
If $\T/\T_\so$ is ramified and $m\ne 1$, 
then $e_\so=2e(\E_\so/\F_\so)$
and $\bb_\so$ has $\o_{\E_\so}$-period $2$.
\item
Otherwise, 
we have $e_\so=e(\E_\so/\F_\so)$
and $\bb_\so$ is maximal.
\end{enumerate}
\end{lemma}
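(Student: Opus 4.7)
My plan is to read off both $e_\so$ and the $\o_{\E_\so}$-period of $\bb_\so$ from the structure of the reductive quotient of $\bb_\so^\times$, and then to deduce the structure of $\aa_\so$ from that of $\bb_\so$ via the standard relation for hereditary orders. Throughout, the key arithmetic input is that $p$ is odd, so that first cohomology of $\langle\s\rangle$ with values in any $\s$-stable pro-$p$-group vanishes by the Hilbert-90-style argument already used in the proofs of Lemma \ref{FrancoisSturel} and Lemma \ref{parcimonie}.

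First, I would apply this cohomological vanishing to the $\s$-stable pro-$p$-group $\U^1(\bb)$: taking $\s$-fixed points in the short exact sequence
\[
1\to\U^1(\bb)\to\bb^\times\to\bb^\times/\U^1(\bb)\to 1
\]
yields $(\bb^\times)^\s/(\U^1(\bb))^\s\simeq(\bb^\times/\U^1(\bb))^\s$. Since $\bb^\times\cap\G^\s=\bb_\so^\times$ and $\U^1(\bb)\cap\G^\s=\U^1(\bb_\so)$, this identifies $\bb_\so^\times/\U^1(\bb_\so)$ with the $\s$-fixed subgroup of $\bb^\times/\U^1(\bb)\simeq\GL_m(\ee)$.

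Next, I would pin down the $\s$-action on $\GL_m(\ee)$ using the combined information of Proposition \ref{brunau}, Remark \ref{rmk:index}, and Proposition \ref{polonius}. Since $(\bJ,\bl)$ is a \emph{generic} $\s$-self-dual type, its $\G^\s$-conjugacy class has index $0$ when $\T/\T_\so$ is unramified, and index $\lfloor m/2\rfloor$ when $\T/\T_\so$ is ramified. In the unramified case, the Galois automorphism of $\ee/\kk_{\E_\so}$ is non-trivial and $\s$ acts on $\GL_m(\ee)$ by this entrywise Galois action; its fixed subgroup is $\GL_m(\kk_{\E_\so})$. In the ramified case, one has $\ee=\kk_{\E_\so}$ (the Galois action is trivial) and $\s$ acts by conjugation by $\d_i$. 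When $m=1$ this fixes everything, giving $\kk_{\E_\so}^\times$. When $m\ne 1$, Proposition \ref{polonius} forces $m$ to be even, $i=m/2$, and the fixed subgroup is the Levi $(\GL_{m/2}\times\GL_{m/2})(\kk_{\E_\so})$.

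From the identification of $\bb_\so^\times/\U^1(\bb_\so)$ with these fixed subgroups, I would conclude that $\bb_\so$ is a principal hereditary $\o_{\E_\so}$-order in $\Mat_m(\E_\so)$, maximal in the first two cases and of $\o_{\E_\so}$-period $2$ (the standard $(m/2,m/2)$ parahoric) in the last. Finally, to recover $e_\so$, I would invoke the standard relation for a principal hereditary $\o_\so$-order normalized by $\E_\so^\times$ with $\bb_\so=\aa_\so\cap\B^\s$, namely that the $\o_\so$-period of $\aa_\so$ equals $e(\E_\so/\F_\so)$ times the $\o_{\E_\so}$-period of $\bb_\so$; principality of $\aa_\so$ is inherited through the same formula, since $\E_\so^\times\subseteq\G^\s$ normalizes $\aa_\so=\aa\cap\G^\s$. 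Combining with the periods of $\bb_\so$ computed above yields exactly the two cases of the lemma.

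The main obstacle I anticipate is the bookkeeping around the $\s$-fixed reductive quotient: one must verify cleanly that the cohomological vanishing argument gives the identification $\bb_\so^\times/\U^1(\bb_\so)\simeq(\GL_m(\ee))^\s$ as algebraic groups over $\kk_{\E_\so}$ (so that one can genuinely recognize the parahoric type), and that the Levi $(\GL_{m/2}\times\GL_{m/2})(\kk_{\E_\so})$ really corresponds to the $(m/2,m/2)$ parahoric of period $2$ in $\Mat_m(\E_\so)$ and not, for example, to some exotic non-principal parahoric. Once this identification is made cleanly, the rest is a direct translation.
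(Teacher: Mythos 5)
Your argument is correct, but it takes a genuinely different route from the paper's. The paper works entirely with explicit block matrices: in the ramified case with $m=2r$ it writes down $\bb$ (and then $\aa$, in terms of the order $\aa_{\E/\F}$) for the index-$r$ conjugate of the standard maximal order, intersects with the $\s$-fixed points using $\p_\E^{-1}\cap\E_\so=\o_{\E_\so}$, and reads off principality and the periods of $\bb_\so$ and $\aa_\so$ directly. You instead determine the parahoric type of $\bb_\so$ from its reductive quotient, identified with $\bigl(\GL_m(\ee)\bigr)^\s$ via the vanishing of $\CH^1(\s,\U^1(\bb))$, and then transfer to $\aa_\so$ by the Bushnell--Kutzko period/principality relation for hereditary orders normalized by $\E_\so^\times$ applied over $\F_\so$. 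Both approaches use the same external inputs (Proposition~\ref{polonius} and the index $\lfloor m/2\rfloor$ from Proposition~\ref{brunau}, with the description of the $\s$-action on $\GL_m(\ee)$ being exactly Remark~\ref{rmk:index}). What yours buys is the avoidance of any matrix computation; what it costs is the descent bookkeeping you rightly flag --- one must check that $\U^1(\bb)\cap\G^\s=\U^1(\bb_\so)$ (cleanest via maximal normal pro-$p$ subgroups of $\bb_\so^\times$), that $\B^\s$ is the $\F_\so$-centralizer of $\E_\so$ so that the relation $e_\so=e(\E_\so/\F_\so)\cdot e(\bb_\so\mid\o_{\E_\so})$ and the equivalence of principality genuinely apply, and that the abstract reductive quotient pins down the composition $(r,r)$ (it does, e.g.\ by comparing $p$-parts of orders). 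One cosmetic slip: in the unramified case there is no ``index $0$''; the relevant fact is just that the generic type may be attached to a $\s$-standard stratum, which is what you actually use.
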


\begin{proof}
Note that $\aa_\so$ is a hereditary order of $\Mat_n(\F_\so)$ 
normalized by $\E_\so^\times$.
Suppose first that $\T/\T_\so$ is unramified.
Then one may assume 
that $(\bJ,\bl)$ is attached to a $\s$-standard stratum
(Definition~\ref{sstdmss}), 
thus $\bb$ is the standard maximal order of $\Mat_m(\E)$.
It follows that $\bb_\so$ is the standard maximal order of $\Mat_m(\E_\so)$, 
thus $\aa_\so$ is the unique hereditary order of $\Mat_n(\F_\so)$ 
normalized by $\E_\so^\times$. 
It is thus principal, and its period is equal to $e(\E_\so/\F_\so)$.

Suppose now that $\T/\T_\so$ is ramified.
By Proposition \ref{polonius},
the integer $m$ is either $1$ or even,
and~Pro\-po\-si\-tion \ref{brunau} says that 
the index of~$(\bJ,\bl)$ is $\lfloor m/2\rfloor$.
Suppose first that $m=1$.
Then $\bb_\so$ identifies with $\o_{\E_\so}$.
It is thus maximal and $\aa_\so$ is principal of 
period $e(\E_\so/\F_\so)$, as in the unramified case.

Suppose now that $m=2r$ for some $r\>1$.
Since $(\bJ,\bl)$ has index $r$, we may assume that 
\begin{equation*}
\bb = 
\begin{pmatrix}
\o_{\E} &\cdots& \o_{\E} & \p_{\E}^{-1} &\cdots& \p_{\E}^{-1} 
\\
\vdots & & \vdots & \vdots & & \vdots \\
\o_{\E} &\cdots& \o_{\E} & \p_{\E}^{-1} &\cdots& \p_{\E}^{-1} 
\\
\p_{\E} &\cdots& \p_{\E} & \o_{\E} &\cdots& \o_{\E} \\
\vdots & & \vdots & \vdots & & \vdots \\
\p_{\E} &\cdots& \p_{\E} & \o_{\E} &\cdots& \o_{\E} 
\end{pmatrix}
\end{equation*}
where each block has size $r\times r$.
Since $\E$ is ramified over $\E_\so$, 
a simple calculation based on the fact that $\p_\E^{-1}\cap\E_\so=\o_{\E_\so}$ 
shows that $\bb_\so$ is principal of period $2$. 
We have a similar description of $\aa$;~if~we 
let $\aa_{\E/\F}$ denote the unique $\o$-order of $\End_\F(\E)$ 
normalized by $\E^\times$, then we have
\begin{equation*}
\aa = 
\begin{pmatrix}
\aa_{\E/\F} &\cdots& \aa_{\E/\F} & \p_{\E/\F}^{-1} &\cdots& \p_{\E/\F}^{-1}
\\
\vdots & & \vdots & \vdots & & \vdots \\
\aa_{\E/\F} &\cdots& \aa_{\E/\F} & \p_{\E/\F}^{-1} &\cdots& \p_{\E/\F}^{-1}
\\
\p_{\E/\F} &\cdots& \p_{\E/\F} & \aa_{\E/\F} &\cdots& \aa_{\E/\F} \\
\vdots & & \vdots & \vdots & & \vdots \\
\p_{\E/\F}&\cdots& \p_{\E/\F} & \aa_{\E/\F} &\cdots& \aa_{\E/\F} 
\end{pmatrix}
\end{equation*}
where $\p_{\E/\F}=\p_{\E}\aa_{\E/\F}$ is the Jacobson radical of 
$\aa_{\E/\F}$.
As in the $m=1$ case, $(\aa_{\E/\F})_\so$ is principal of period 
$e(\E_\so/\F_\so)$.
A simple calculation shows that $\aa_\so$ is principal of period 
$2e(\E_\so/\F_\so)=e(\E/\F_\so)$.
\end{proof}

\begin{remark}\label{rmk:pilambda}
Let~$\varpi_\bl$ be an element of the principal order $\bb_\so$ 
generating its Jacobson radical.~We 
get that~$\bJ\cap\G^\s$ is generated by 
$\varpi_\bl$ and $\J\cap\G^\s$. 
\end{remark}

%%%%%%%%%%%%%%%%%%%%%%%%%%%%%%%%%%%%%%%%%%%%%%%%
\section{Distinction and Whittaker functions}
\label{exWhitt}
%%%%%%%%%%%%%%%%%%%%%%%%%%%%%%%%%%%%%%%%%%%%%%%%

{We return to the notation of~the rest of the paper so that~$\F/\F_\so$ is a 
quadratic extension,~$\G=\GL_n(\F)$ for some $n\>1$ 
and~$\s$ is the involution~on $\G$ induced by the Galois involution.}

%%%%%%%%%%%%%%%%%%%%%%%%%%%%%%%%%%%%
\subsection{Distinguished linear forms and Whittaker functions}%\label{sec:mirabolic}
%%%%%%%%%%%%%%%%%%%%%%%%%%%%%%%%%%%%

In this subsection we begin to look at the question of distinction. 
Recalling that~$\P=\P_n(\F)$ denotes the standard
mirabolic~sub\-group of~$\G$, we will prove the following analogue of a result 
of Ok~\cite{Ok}.
% {(see also Proposition \ref{Ok in general}).}

\begin{proposition}\label{Ok for types}
Let $(\bJ,{\bl})$ be a $\s$-self-dual 
type such that~the compactly induced representation 
$\pi=\cind_{\mathbf{J}}^\G{\bl}$ is distinguished. Then 
\[
\Hom_{\bJ^\s\cap \P}({\bl},\one)=\Hom_{\bJ^\s}({\bl},\one).
\]
\end{proposition}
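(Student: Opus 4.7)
The plan is to deduce the non-trivial inclusion $\Hom_{\bJ^\s\cap\P}(\bl,\one)\subseteq\Hom_{\bJ^\s}(\bl,\one)$ (the reverse inclusion is immediate from $\bJ^\s\cap\P\subseteq\bJ^\s$) by reducing to Ok's theorem for the cuspidal representation $\pi=\cind_{\bJ}^{\G}\bl$, which is available in Appendix \ref{AppB} and says that every $\P^\s$-invariant linear form on $\pi$ is automatically $\G^\s$-invariant.

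Fix $\phi\in\Hom_{\bJ^\s\cap\P}(\bl,\one)$. First, I would promote $\phi$ to an element of $\Hom_{\P^\s}(\pi,\one)$. Mackey decomposition of $\pi$ as a $\P^\s$-module yields
\[
\pi\big|_{\P^\s}\;=\;\bigoplus_{y\in\bJ\backslash\G/\P^\s}V_y,
\]
where $V_y$ is the $\P^\s$-stable subspace of functions supported in the double coset $\bJ y\P^\s$. Since $\bJ\cap\P^\s=\bJ^\s\cap\P$, the $y=e$ summand is identified with $\cind_{\bJ^\s\cap\P}^{\P^\s}(\bl|_{\bJ^\s\cap\P})$. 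Frobenius reciprocity then converts $\phi$ into a $\P^\s$-invariant form $\tilde L_0$ on $V_e$, and I would extend $\tilde L_0$ by zero across the remaining Mackey components to obtain $L\in\Hom_{\P^\s}(\pi,\one)$. Ok's theorem then upgrades $L$ to an element of $\Hom_{\G^\s}(\pi,\one)$.

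Finally, I would evaluate $L$ on the canonical embedding $\bl\hookrightarrow\pi$ given by $v\mapsto f_v$, where $f_v(g)=\bl(g)v$ for $g\in\bJ$ and $f_v(g)=0$ otherwise. A direct computation shows $L(f_v)=\phi(v)$, and for every $j\in\bJ^\s\subseteq\G^\s$ the right-translation action satisfies $j\cdot f_v=f_{\bl(j)v}$. Applying the $\G^\s$-invariance of $L$ then yields $\phi(\bl(j)v)=\phi(v)$ for all $v$ and all $j\in\bJ^\s$, which is exactly the required $\bJ^\s$-invariance of $\phi$. The whole argument has no serious obstacle beyond the input of Ok's theorem: the only delicate bookkeeping is the verification that extension-by-zero is $\P^\s$-equivariant (which follows from $\P^\s$-stability of each $V_y$) and the identification of the trivial Mackey summand.
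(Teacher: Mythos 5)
Your proof is correct and follows essentially the same path as the paper: both reduce the statement to Ok's theorem for the cuspidal representation $\pi$ via the Mackey decomposition of $\pi|_{\P^\s}$ (respectively $\pi|_{\G^\s}$), Frobenius reciprocity for compact open induction, and extension of linear forms by zero off the identity double coset. The paper packages this bookkeeping once and for all as an abstract lemma (Lemma~\ref{lem:bijection}) about a commutative square of Hom-spaces and then applies it with $\GGG=\G$, $\KKK=\bJ$, $\HHH=\G^\s$, $\HHH'=\P^\s$, $\rho=\bl$, $\tau=\one$; you instead unroll the argument in this specific instance, producing the lift $L$ explicitly, invoking Ok, and pulling the $\G^\s$-invariance back along the canonical embedding $v\mapsto f_v$ (using $j\cdot f_v=f_{\bl(j)v}$ for $j\in\bJ^\s$). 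The two routes are mathematically identical in content; the paper's formulation has the minor advantage of isolating the purely formal part of the argument into a reusable lemma, while your version is somewhat more concrete and self-contained.
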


Recall that saying that~$(\bJ,{\bl})$ is distinguished means that the space on 
the right hand side is non-zero.
The condition in the proposition that~the $\s$-self-dual cuspidal 
representation $\pi$ is dis\-tinguished is \emph{a priori} 
weaker than this; {however, see Remark \ref{coincoin}.}

In order to prove this proposition, we  need a small lemma which again applies
in a more general~set\-ting. Let~$\GGG$ be a locally profinite group, let~$\KKK$
be an open subgroup of~$\GGG$ and let~$\HHH'\subseteq\HHH$ 
be closed sub\-groups
of~$\GGG$. Let~$\rho$ be a smooth representation of~$\KKK$ and let~$\tau$ be a
smooth representation~of~$\HHH$. 
For~$g\in\GGG$, we write~$\cind_\KKK^{\KKK g\HHH}\rho$ for the subspace 
of~$\cind_\KKK^\GGG\rho$ consisting of
functions with support con\-tained in~$\KKK g\HHH$. Then the Mackey
decomposition gives 
\[
\cind_{\KKK\cap\HHH}^\HHH \Res^\KKK_{\KKK\cap\HHH} \rho \simeq 
\cind_\KKK^{\KKK\HHH}\rho \subseteq \bigoplus_{\KKK\backslash\GGG/\HHH} 
\cind_\KKK^{\KKK g\HHH}\rho = \Res^\GGG_\HHH \cind_\KKK^\GGG \rho
\]
and,  by  Frobenius reciprocity  applied  to  the  natural projection  in  the
opposite direction, we get natural maps 
\[
\Hom_{\KKK\cap\HHH}(\rho,\tau) \simeq 
\Hom_\HHH(\cind_\KKK^{\KKK\HHH}\rho,\tau) 
\hookrightarrow \Hom_\HHH(\cind_\KKK^\GGG\rho,\tau).
\]
We get similar maps with~$\HHH$ replaced by~$\HHH'$ and the following diagram
commutes: 
\[
\xymatrix{
\Hom_{\KKK\cap\HHH}(\rho,\tau)\ar[r]^{\sim\quad}\ar[d]_{{\iota_0}} & 
\Hom_{\HHH}(\cind_\KKK^{\KKK\HHH} \rho,\tau)\ar[r]\ar[d] & \Hom_\HHH(\cind_\KKK^\GGG\rho,\tau)\ar[d]^{\iota_1} \\
\Hom_{\KKK\cap\HHH'}(\rho,\tau)\ar[r]^{\sim\quad} & \Hom_{\HHH'}(\cind_\KKK^{\KKK\HHH'} \rho,\tau)\ar[r] & \Hom_{\HHH'}(\cind_\KKK^\GGG\rho,\tau)
}
\]
where the vertical maps are given by natural inclusion. 

\begin{lemma}\label{lem:bijection}
Suppose, in the situation above, that the inclusion~$\iota_1$ is an equality. 
Then the~inclu\-sion~$\iota_0$ is also an equality. 
\end{lemma}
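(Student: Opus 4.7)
The plan is a diagram chase. Given $\phi\in\Hom_{\KKK\cap\HHH'}(\rho,\tau)$, I would produce a preimage $\phi_1\in\Hom_{\KKK\cap\HHH}(\rho,\tau)$ under $\iota_0$ by pushing $\phi$ through the bottom row to the right-most space, invoking the hypothesis that $\iota_1$ is an equality, and then pulling back through the top row.

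First I would send $\phi$ along the bottom row: by Frobenius reciprocity it corresponds to some $\tilde\phi\in\Hom_{\HHH'}(\cind_\KKK^{\KKK\HHH'}\rho,\tau)$, and using the $\HHH'$-Mackey decomposition $\cind_\KKK^\GGG\rho=\bigoplus_{\KKK\backslash\GGG/\HHH'}\cind_\KKK^{\KKK g\HHH'}\rho$ I extend $\tilde\phi$ by zero on the other summands to obtain $\Phi\in\Hom_{\HHH'}(\cind_\KKK^\GGG\rho,\tau)$. By the hypothesis of the lemma, $\Phi$ is automatically $\HHH$-equivariant.

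The main step is to show that $\Phi$ vanishes on $\cind_\KKK^{\KKK g\HHH}\rho$ for every $g\notin\KKK\HHH$. Since $\HHH'\subseteq\HHH$, the $\HHH'$-Mackey decomposition refines the $\HHH$-decomposition, so each $\cind_\KKK^{\KKK g\HHH}\rho$ further breaks up as a direct sum of spaces $\cind_\KKK^{\KKK gh\HHH'}\rho$ for $h$ ranging over a suitable set of representatives. For $g\notin\KKK\HHH$, none of the cosets $\KKK gh\HHH'$ equals $\KKK\HHH'$, so $\Phi$ is zero on each such summand by construction and therefore on $\cind_\KKK^{\KKK g\HHH}\rho$. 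Hence $\Phi$ factors through the $\HHH$-equivariant projection onto $\cind_\KKK^{\KKK\HHH}\rho$; that is, $\Phi$ is the extension by zero of some $\Phi_1\in\Hom_\HHH(\cind_\KKK^{\KKK\HHH}\rho,\tau)$.

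The top-row Frobenius isomorphism then yields $\phi_1\in\Hom_{\KKK\cap\HHH}(\rho,\tau)$ corresponding to $\Phi_1$. Commutativity of the diagram, together with the observation that extending $\Phi_1$ by zero recovers $\Phi$, identifies the bottom-row image of $\iota_0(\phi_1)$ with $\Phi$, which is also the bottom-row image of $\phi$. Since the bottom row is injective (Frobenius is an isomorphism and extension by zero is injective), I conclude $\iota_0(\phi_1)=\phi$, so $\iota_0$ is surjective; as it is already injective, being an inclusion of Hom-sets, this gives the desired equality. The only delicate point is the compatibility of the two Mackey decompositions used in the key step, but since $\HHH'\subseteq\HHH$ this compatibility is essentially formal and presents no real obstacle.
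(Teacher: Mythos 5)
Your argument is correct and is essentially the paper's own proof: push $\phi$ to an $\HHH'$-map $\Phi$ on $\cind_\KKK^\GGG\rho$ supported on $\cind_\KKK^{\KKK\HHH'}\rho$, use the hypothesis on $\iota_1$ to promote it to an $\HHH$-map, and observe that it kills every summand $\cind_\KKK^{\KKK g\HHH}\rho$ with $g\notin\KKK\HHH$ because the $\HHH'$-Mackey decomposition refines the $\HHH$-one. The only difference is that you spell out this refinement and the final diagram chase more explicitly than the paper does; no gap.
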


\begin{proof}
Certainly the inclusion~$\iota_0$ is an injection. Conversely, 
any~$\varphi\in\Hom_{\KKK\cap\HHH'}(\rho,\tau)$ corresponds~to a 
map~$\Phi\in\Hom_{\HHH'}(\cind_\KKK^\GGG\rho,\tau)$ which is trivial on all 
the summands~$\cind_{\KKK g\HHH'}\rho$ with~$g\not\in\KKK\HHH'$. Then, 
since~$\iota_1$ is an equality,~$\Phi\in\Hom_{\HHH}(\cind_\KKK^\GGG\rho,\tau)$; 
moreover, it is trivial on all~$\HHH$-submodules of~$\cind_\KKK^\GGG\rho$ 
which do not contain~$\cind_{\KKK\HHH'}\rho$, whence trivial on all 
summands~$\cind_{\KKK g\HHH}\rho$ with~$g\not\in\KKK\HHH$. In particular, we 
see that~$\Phi\in\Hom_{\HHH}(\cind_{\KKK\HHH} \rho,\tau)$ so 
that~$\phi\in\Hom_{\KKK\cap\HHH}(\rho,\tau)$, as required. 
\end{proof}

\begin{proof}[Proof of Proposition~\ref{Ok for types}]
We apply the lemma to our situation, where we recall that~$\G=\GL_n(\F)$, 
$\P=\P_n(\F)$ is a~$\s$-stable mirabolic subgroup, and~$(\bJ,{\bl})$ is a 
$\s$-self-dual type -- by which, we recall, we mean a
$\s$-self-dual extended 
maximal simple type -- with~$\pi=\cind_{\mathbf{J}}^\G{\bl}$ an
irreducible distingui\-shed $\s$-self-dual cuspidal representation of~$\G$. 
{The result of Ok~\cite[Theorem~3.1.2]{Ok} 
(see also~\cite[Proposition~2.1]{MatringeManuscripta}), 
proved for any \textit{irreducible complex} representation of $\G$
and which we generalize to any \textit{cuspidal} representation of $\G$ 
\textit{with coefficients in $\FC$ }in Appendix \ref{AppB} 
(see Proposition \ref{Ok in general}), says that, in this 
situation, we have an equality 
\begin{equation*}
\Hom_{\P^\s}(\pi,\one)=\Hom_{\G^\s}(\pi,\one).
\end{equation*}
We set~$\GGG=\G$,~$\HHH=\G^\s$ and~$\HHH'=\P^\s$,~with $\tau=\one$ the trivial 
representation of~$\HHH$, and~$(\KKK,\rho)=(\bJ,{\bl})$. 
Then the result follows at once from Lemma~\ref{lem:bijection}.}
\end{proof}

We turn now to Whittaker functions. 
Let~$\N=\N_n(\F)$ denote the standard maximal
unipotent subgroup (consisting of upper triangular unipotent matrices) and
let~$\psi$ be  a~$\s$-self-dual non-degenerate  character of~$\N$.  If~$\pi$ is
any generic irreducible representation of~$\G$, recall also that 
its~\textit{Whittaker~mo\-del (with respect to~$\psi$)} is the
subspace~$\Ww(\pi,\psi)$ of~$\Ind_\N^\G\psi$ which is the image of~$\pi$ under
any non-zero map in the one-dimensional space~$\Hom_\G(\pi,\Ind_\N^\G\psi)$. 

Now let~$\pi$ be an irreducible~$\s$-self-dual cuspidal representation
of~$\G$. By Theorem~\ref{PIMAIN} and Proposition~\ref{brunau}, 
it contains a~$\s$-self-dual type~$(\bJ,\bl)$ such that
$\Hom_{\bJ\cap\N}(\bl,\psi)\ne 0$. 
We use the usual~no\-ta\-tion for data associated to this type; in particular, we 
have the unique maximal simple character~$\t$ contained in~$\bl$ and 
normalized by $\bJ$, defined on 
the normal subgroup~$\H^1$ of~$\bJ$, as well as the normal 
subgroups~$\J\supseteq\J^1$ of~$\bJ$. 

Let $\Uu=(\N\cap \J)\H^1$ and extend $\psi$ to a character~$\psi_\bl$ of $\Uu$ 
as in~\cite[Definition~4.2]{StPa}: 
\[
\psi_\bl(uh) = \psi(u)\t(h),\qquad\text{for }u\in \N\cap\J, h\in\H^1.
\]

We fix a normal compact open subgroup $\Nn$ of $\Uu$ contained in 
$\ker(\psi_\bl)$ and define the \emph{Bessel function} $\Jj_{\bl}:\bJ\to \FC$ 
of $\bl$ by 
\[
\Jj_{\bl}(j)=
{\frac{1}{(\Uu:\Nn)}}
\sum_{u\in\Uu/\Nn}\psi_\bl(u)^{-1}\tr\bl(ju), 
\qquad\text{for }j\in\bJ,
\]
where~$\tr\bl$ is the trace character of $\bl$. 
This is independent of the choice of $\Nn$.
{Note that this~de\-finition makes sense over $\FC$,
since $\Uu$ is a pro-$p$-group.}

We then define a function~$\W_{\bl}:\G\to \FC$ supported in $\N\bJ$ by
\begin{equation}
\label{numrapideW}
\W_{\bl}(nj)=\psi(n)\Jj_{\bl}(j), \qquad\text{ for }n\in\N, j\in\bJ.
\end{equation}
{One checks that the function $\W_{\bl}$ is well defined,
and that $\W_{\bl}(ng)=\psi(n)\W_{\bl}(g)$ for all $n\in\N$ and $g\in\G$.}

We set~$\Mm=(\P\cap\J)\J^1$ and note that, by~\cite[Corollary~4.8]{StPa}, 
the subgroup $\P\cap\bJ = \P\cap\J$~is~contained in $\Mm$.
Let $\Ss_\bl$ denote the 
space of functions~$f:\Mm\to\FC$ 
such that~$f(um)=\psi_\bl(u)f(m)$~for~all~$u\in\Uu$ and~$m\in\Mm$. 
For each $j\in\bJ$, we define an operator $\L(j)$ on $\Ss_\bl$ by
\begin{equation}
\label{numrapide}
\L(j)f : m \mapsto \sum\limits_{x\in\Mm/\Uu} \Jj_{\bl}(mjx)f(x^{-1})
\end{equation}
for all $f\in\Ss_\bl$ and $m\in\Mm$.
This defines a representation $\L$ of $\bJ$ on $\Ss_\bl$.
We claim that this~repre\-sen\-ta\-tion is isomorphic to $\bl$.
When $\FC$ is the field of complex numbers, 
or more generally when $\FC$ has characteristic $0$,
this is \cite[Theo\-rem~5.4]{StPa}.
Let us explain briefly how to deduce the modular case from the 
characteristic $0$ case. 
Assume that $\FC$ has characteristic $\ell>0$.
First, by the same~ar\-gu\-ment as in Remark \ref{guderian},
it is enough to prove the result when $\FC$ is the field 
$\overline{\FF}_\ell$. 
Then fix~an~ex\-ten\-ded maximal simple type $\widetilde{\bl}$ with 
coefficients in $\overline{\QQ}_\ell$ whose reduction mod $\ell$ 
is isomorphic to $\bl$
(which is possible by \cite[Proposition 2.39]{MSt}).
We thus have an isomorphism between $\widetilde{\bl}$ and the 
representation on $\Ss_{\widetilde{\bl}}$ defined as 
in \eqref{numrapide}.
Reducing mod $\ell$, we get the claimed result. 
In the sequel, we will identify the space of~$\bl$ with~$\Ss_\bl$.
It follows as in \cite[Section~5.2]{StPa} that the function $\W_{\bl}$ 
defined by \eqref{numrapideW} 
belongs to the Whittaker model~$\Ww(\pi,\psi)$ of $\pi$.
Note also (see~\cite[Proposition~5.3(iii)]{StPa}) 
that the restriction of $\Jj_\bl$ to $\Mm$ lies in~$\Ss_\bl$.

\begin{proposition}
\label{invariant integral} 
{Let~$\pi$ be a~$\s$-self-dual cuspidal representation
of~$\G$,
and let $(\bJ,\bl)$ be a generic $\s$-self-dual type contained in $\pi$.} 
\begin{enumerate}
\item 
Let $dm$ be a right invariant measure on $(\J\cap\N^\s)\backslash (\J\cap\P^\s)$. 
The linear form on~$\bl$ defined by 
\[
\LL_{\bl}(f)=\int_{(\J\cap\N^\s)\backslash (\J\cap\P^\s)} 
f(m)dm
\] 
for any $f\in\Ss_\bl$ is~$\J\cap\P^\s$-invariant, 
and~$\LL_{\bl}(\Jj_{\bl})$ is non-zero. 
\item
Moreover, if $\pi$ is distinguished, 
then~$\LL_{\bl}$ is~$\bJ^\s$-invariant.
\end{enumerate}
\end{proposition}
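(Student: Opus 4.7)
The plan is to prove (i) in three steps---well-definedness, $\J\cap\P^\s$-invariance, and non-vanishing of $\LL_\bl(\Jj_\bl)$---and then deduce (ii) from Proposition~\ref{Ok for types}.

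For well-definedness, the key observation is that $\psi_\bl$ is trivial on $\Uu\cap\G^\s$: the $\s$-self-duality of $(\N,\psi)$ gives $\psi|_{\N^\s}=1$, while the identity $\t\circ\s=\t^{-1}$ combined with the fact that $\H^1$ is a pro-$p$-group with $p\neq 2$ forces $\t$ to be trivial on $\H^1\cap\G^\s$. The integrand is then left $\J\cap\N^\s$-invariant and the quotient is compact, so the integral defining $\LL_\bl(f)$ converges. For the $\J\cap\P^\s$-invariance, I would first show that for every $g\in\Mm$ the operator $\L(g)$ is simply right translation by $g$: substituting $y=gx$ in \eqref{numrapide} yields
\[
\L(g)f(m)=\sum_{y\in\Mm/\Uu}\Jj_\bl(my)f(y^{-1}g),
\]
and applying the reproducing identity $\sum_y\Jj_\bl(my)h(y^{-1})=h(m)$, valid for $h\in\Ss_\bl$ and encoding the fact that $\L$ realizes $\bl$ on $\Ss_\bl$, to $h(y)=f(yg)$ gives $\L(g)f(m)=f(mg)$. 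Since $\J\cap\P^\s\subseteq\P\cap\J\subseteq\Mm$, invariance of $\LL_\bl$ under $\J\cap\P^\s$ then follows from the right-invariance of $dm$ by a change of variables.

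The non-vanishing $\LL_\bl(\Jj_\bl)\neq 0$ is the main obstacle. My approach is to push the integral down to the finite reductive quotient $\J/\J^1\simeq\GL_m(\ee)$. On the one hand, the restriction of $\Jj_\bl$ to $\J$ is inflated from the Bessel function $\Jj_{\overline\bl}$ of the cuspidal representation $\overline\bl$ of $\GL_m(\ee)$ cut out by $\bl$. On the other hand, Lemma~\ref{lem:e00} and Remark~\ref{rmk:index}, together with the fact from Proposition~\ref{brunau} that a generic $\s$-self-dual type has index $\lfloor m/2\rfloor$ in the ramified case, allow one to identify the domain $(\J\cap\N^\s)\backslash(\J\cap\P^\s)$, up to a pro-$p$ fiber of known positive volume, with the corresponding mirabolic-type quotient in the $\s$-fixed subgroup of $\GL_m(\ee)$. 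The integral thus reduces to a finite sum of values of $\Jj_{\overline\bl}$, whose non-vanishing is a classical property of Bessel functions of cuspidal representations of finite general linear groups; the analysis must be carried out separately in the ramified and unramified cases, with careful bookkeeping of volumes on the pro-$p$ fibers.

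Finally, for (ii), note that $\bJ\cap\P=\J\cap\P$---since $\Z\cap\P=\{1\}$, the group $\bJ\cap\P$ is compact and therefore contained in the unique maximal compact subgroup $\J$ of $\bJ$---so $\bJ^\s\cap\P=\J\cap\P^\s$. If $\pi$ is distinguished, Proposition~\ref{Ok for types} then gives $\Hom_{\J\cap\P^\s}(\bl,\one)=\Hom_{\bJ^\s}(\bl,\one)$, and since by (i) the linear form $\LL_\bl$ lies in the left-hand space, it is automatically $\bJ^\s$-invariant.
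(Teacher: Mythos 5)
Your treatment of well-definedness, of the $\J\cap\P^\s$-invariance via the identification of $\L(g)$ with right translation for $g\in\Mm$, and of part (ii) --- where $\bJ\cap\P=\J\cap\P$ together with Proposition~\ref{Ok for types} is exactly the paper's argument --- is correct. The gap is in the non-vanishing of $\LL_\bl(\Jj_\bl)$, which is the heart of the proposition.

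First, the claim that $\Jj_\bl|_{\J}$ is inflated from the Bessel function $\Jj_{\overline{\bl}}$ of the cuspidal representation of $\GL_m(\ee)$ cut out by $\bl$ is false in positive level: since $\Jj_\bl(uj)=\psi_\bl(u)\Jj_\bl(j)$ for $u\in\Uu$, one has $\Jj_\bl(h)=\t(h)$ for $h\in\H^1$, and the simple character $\t$ is in general non-trivial on $\H^1\subseteq\J^1$, so $\Jj_\bl$ does not factor through $\J/\J^1$. Second, even granting some reduction to the finite level, the assertion that the resulting finite sum of values of $\Jj_{\overline{\bl}}$ over a mirabolic-type subset of the fixed-point group is non-zero ``by a classical property of Bessel functions'' is precisely the point at issue: a sum of Bessel values over a subgroup is not automatically non-zero, and what is actually needed is a \emph{support} statement. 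The paper argues as follows: by \cite[Proposition~5.3(iv)]{StPa}, $\Jj_\bl$ vanishes on the complement of $\Uu^\s$ in $\Mm^\s$; on $\Uu^\s$ it equals $\psi_\bl$, which is trivial there because $\psi_\bl$ is a $\s$-self-dual character of the pro-$p$ group $\Uu$ and $p$ is odd (the same cohomological observation you make in your well-definedness step). Hence $\LL_\bl(\Jj_\bl)$ equals the measure of $(\J\cap\N^\s)\backslash(\J\cap\N^\s)(\H^1\cap\P^\s)$, a quotient of pro-$p$ groups, whose volume is a power of $p$ and hence non-zero in $\FC$. Without this support property (or an equivalent substitute), your computation cannot be completed.
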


\begin{proof}
The form~$\LL_\bl$ is clearly~$\J\cap\P^\s$-invariant by its definition. 
By~\cite[Proposition 5.3(iv)]{StPa},~{the 
proof of which is written for complex representations but still works
in the modular case,} 
the~func\-tion $\Jj_{\bl}$ is identically
zero on {the complement of $\Uu^\s$ in $\Mm^\s$}. 
On the other hand, for~$u\in\Uu^\s$, 
we have~$\Jj_\bl(u)=\psi_\bl(u)=1$, since~$\psi_\bl$ is a~$\s$-self-dual
character of a pro-$p$ group~$\Uu$ and~$p$ is odd.~Hence the value 
% \red{
% \begin{equation*}
% \LL_{\bl}(\Jj_{\bl})=dm((\J\cap\N^\s)\backslash(\J\cap\N^\s)(\H^1\cap\P^\s))
% \end{equation*}
% is non-zero, since $\H^1$ is a pro-$p$-group.}
{$\LL_{\bl}(\Jj_{\bl})=dm((\J\cap\N^\s)\backslash(\J\cap\N^\s)(\H^1\cap\P^\s))$
is non-zero, since $\H^1$ is pro-$p$.}
The final~sta\-te\-ment follows immediately from 
the fact that 
$\bJ\cap\P=\J\cap\P$ together with Proposition~\ref{Ok for types}. 
\end{proof}

We deduce the following corollary from Proposition \ref{invariant integral}.

\begin{corollary}
\label{aloi}
Let~$\pi$ be a~$\s$-self-dual cuspidal 
representation of~$\G$.
Then $\pi$ is distingui\-shed if and only if any of its generic $\s$-self-dual 
types is distinguished.
\end{corollary}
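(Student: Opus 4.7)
The plan is to deduce the corollary as an essentially immediate consequence of Proposition~\ref{invariant integral}, combined with a standard Frobenius-reciprocity argument for compact induction.

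For the forward direction, suppose that $\pi$ is distinguished and let $(\bJ,\bl)$ be any generic $\s$-self-dual type contained in $\pi$. Proposition~\ref{invariant integral}(i) produces a linear form $\LL_\bl$ on the space of $\bl$ with $\LL_\bl(\Jj_\bl)\neq 0$; in particular $\LL_\bl$ is non-zero. Proposition~\ref{invariant integral}(ii) then upgrades $\LL_\bl$ to a $\bJ^\s$-invariant linear form, exhibiting a non-zero element of $\Hom_{\bJ^\s}(\bl,\one)$. Hence $(\bJ,\bl)$ is distinguished.

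For the converse, assume that some generic $\s$-self-dual type $(\bJ,\bl)$ of $\pi$ is distinguished, and pick a non-zero $\mu\in\Hom_{\bJ^\s}(\bl,\one)$. Since $\pi=\cind_\bJ^\G\bl$ and $\bJ^\s=\bJ\cap\G^\s$, the standard construction of a $\G^\s$-invariant linear form on a compactly induced representation --- namely, integrating the function $g\mapsto\mu(f(g))$ on $\G^\s$ (which is left $\bJ^\s$-invariant) over $\bJ^\s\backslash\G^\s$, both groups being unimodular --- produces a $\G^\s$-invariant linear form $L$ on $\pi$. Evaluating on a function $f$ supported on $\bJ$ with $f(1)=v$ satisfying $\mu(v)\neq 0$, one finds that $L(f)$ is a positive multiple of $\mu(v)$, so $L\neq 0$ and $\pi$ is distinguished. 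By Proposition~\ref{brunau}(ii), all generic $\s$-self-dual types in $\pi$ are $\G^\s$-conjugate, so they are either all distinguished or all non-distinguished, which gives the ``any/every'' form of the equivalence.

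There is essentially no remaining obstacle: the real work is encapsulated in Proposition~\ref{invariant integral}, which in turn depends crucially on Ok's theorem (applied via Proposition~\ref{Ok for types}) to upgrade the obvious $\J\cap\P^\s$-invariance of $\LL_\bl$ to full $\bJ^\s$-invariance when $\pi$ is distinguished. Worth noting is that the converse direction requires neither the genericity nor the $\s$-self-duality of the type: any extended maximal simple type in $\pi$ that is distinguished would force $\pi$ to be distinguished by the same argument.
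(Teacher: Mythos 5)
Your proof is correct and follows the paper's route exactly: the paper derives Corollary~\ref{aloi} precisely as an immediate consequence of Proposition~\ref{invariant integral} (whose part (ii) encapsulates the appeal to Ok's theorem via Proposition~\ref{Ok for types}), with the converse being the standard Frobenius reciprocity/Mackey injection $\Hom_{\bJ\cap\G^\s}(\bl,\one)\hookrightarrow\Hom_{\G^\s}(\cind_\bJ^\G\bl,\one)$ that you make explicit by integration over $\bJ^\s\backslash\G^\s$. Your closing observations (that Proposition~\ref{brunau}\ref{gsst} reduces ``any'' to a single conjugacy class, and that the converse needs neither genericity nor $\s$-self-duality) are also accurate.
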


\begin{remark}
\label{coincoin}
Putting Corollary \ref{aloi} and Proposition \ref{brunau} together,
we obtain a different proof of~a result of \cite{VS} saying that 
a $\s$-self-dual cuspidal representation~$\pi$ of $\G$
is distinguished if and only if it contains a distinguished $\s$-self-dual 
type, 
and that, if the~quadratic extension $\T/\T_\so$ associated 
with $\pi$ by Proposition \ref{TT0canonique} is ramified, 
any distinguished $\s$-self-dual type contained in $\pi$ has
index $\lfloor m/2\rfloor$, where $n=md$ and $d$ is the degree of the 
endo-class of $\pi$.
\end{remark}

%%%%%%%%%%%%%%%%%%%%%%%%%%%%%%%%%%%%%%%%%%%%
\subsection{Explicit Whittaker functions and restriction to $\GL_n(\F_\so)$}

We continue with the same notation, 
{and write $\K=\GL_n(\o)$ and $\K^\s=\GL_n(\o_\so)$}.
In order to make computations, we need to 
be somewhat more careful with our choice of non-degenerate character~$\psi$ to 
ensure that corresponding generic $\s$-self-dual type is well-positioned with 
respect to the standard maximal compact subgroup~$\K^\s$ of~$\G^\s$. 

Let $\mathfrak{S}_n$ denote the group of permutation matrices in $\G^\s$. 
The Bruhat decomposition in the finite quo\-tient of $\K^\s$ by its
pro-$p$ unipotent radical, together with the Iwasawa decomposition of $\G^\s$,
yields the \emph{Bruwasawa decomposition} 
$\G^\s=\B^\s\mathfrak{S}_n\I_\so$, where~$\B$ is
the standard Borel subgroup of~$\G$ and~$\I_\so$ is the standard Iwahori
subgroup of~$\G^\s$. In particular, this decomposition implies that any
parahoric subgroup of $\G^\s$ is conjugate by $\N^\s$ to a parahoric subgroup
in the standard apartment, where~$\N$ is the unipotent radical of~$\B$. 

If~$(\bJ,\bl)$ is a~$\s$-self-dual type in $\G$ 
then we can write~$\bJ=\bJ(\aa,\b)$, 
with~$\aa$ a~$\s$-stable hereditary order and~$\s(\b)=-\b$. As in
Paragraph~\ref{sec:inv}, we have~$\aa^\times\cap\G^\s=\aa_\so^\times$, 
for some $\o_\so$-hereditary order~$\aa_\so$~in $\Mat_n(\F_\so)$.
Write~$e_\so$ for the~$\o_\so$-period of~$\aa_\so$, 
and~$\La_\so$
for the~$\o_\so$-lattice chain 
in the vector space~$\F_\so^n$ consisting of $\aa_\so$-lat\-ti\-ces. 
These depend only on the pair~$(\bJ,\bl)$. 

Writing~$\be_1,\ldots,\be_n$ for the standard basis of~$\F^n$, and using the 
notation above, we get the following. 

\begin{lemma}\label{basis}
Let $\pi$ be a~$\s$-self-dual cuspidal representation of $\G$. 
There are a~$\s$-self-dual Whittaker datum~$(\N,\psi)$ and a generic~$\s$-self-dual 
type $(\bJ,\bl)$ in $\pi$ such that 
\begin{enumerate}
\item\label{part1basis} 
the space $\Hom_{\bJ\cap\N}(\bl,\psi)$ is non-zero;
\item\label{part2basis} there is a numbering on the~$\o_\so$-lattice 
chain~$\La_\so$ associated to~$(\bJ,\bl)$ such that
\[
\La_\so(k)=\bigoplus_{i=1}^n \p_\so^{a_i(k)} \be_i, \qquad\text{ for }k\in\ZZ, 
\]
where the $a_i:\ZZ\to\ZZ$ are increasing functions satisfying 
\begin{enumerate}
\item 
$a_i(k+e_\so)=a_i(k)+1$ for all $k\in\ZZ$ and $a_i(0)=0$, 
for $i=1,\ldots,n$, 
\item
$a_n(0)=\cdots=a_n(e_\so-1)=0$.
\end{enumerate}
\end{enumerate}
\end{lemma}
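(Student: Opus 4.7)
The plan is to start from the generic $\s$-self-dual type furnished by Proposition~\ref{brunau}, and then to conjugate the data by an element of~$\G^\s$ so that the attached $\o_\so$-hereditary order lies in the standard apartment of~$\G^\s$ with a convenient composition ordering. Because conjugation by $g\in\G^\s$ commutes with~$\s$, it preserves $\s$-self-duality of a Whittaker datum as well as $\s$-self-duality and genericity of a type, so throughout the procedure we remain inside the setting of the lemma.

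Concretely, fix by Proposition~\ref{brunau} a $\s$-self-dual Whittaker datum $(\N_0,\psi_0)$ together with a generic $\s$-self-dual type $(\bJ_0,\bl_0)$ contained in~$\pi$. Let $\aa_0$ denote the $\s$-stable hereditary order attached to~$\bJ_0$ and let $\aa_{0,\so}\subset\Mat_n(\F_\so)$ be the $\o_\so$-hereditary order with $\aa_{0,\so}^\times=\aa_0^\times\cap\G^\s$; its $\o_\so$-period is~$e_\so$, and its $\GL_n(\F_\so)$-conjugacy class corresponds, via the standard classification of $\o_\so$-hereditary orders in $\Mat_n(\F_\so)$, to an unordered partition of~$n$ into $e_\so$ positive parts. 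Pick any ordering $(n_1,\dots,n_{e_\so})$ of this partition and let $\aa_\so$ be the hereditary order in the standard apartment of~$\G^\s$ whose lattice chain is
\[
\Lambda_\so(k) = \bigoplus_{i=1}^n \p_\so^{a_i(k)}\be_i, \qquad k\in\ZZ,
\]
with $a_i:\ZZ\to\ZZ$ the unique increasing function satisfying $a_i(0)=0$, $a_i(k+e_\so)=a_i(k)+1$, and whose jump from~$0$ to~$1$ occurs at $k=j$ exactly when $n_1+\dots+n_{j-1}<i\le n_1+\dots+n_j$. Condition~(a) is then immediate, and since $n_{e_\so}\ge 1$ the index~$n$ sits in the last block, giving $a_n(0)=\dots=a_n(e_\so-1)=0$, which is~(b).

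By the classification just invoked there exists $g\in\G^\s$ with $g^{-1}\aa_{0,\so}g=\aa_\so$, and we take $(\N,\psi,\bJ,\bl)=(\N_0^g,\psi_0^g,\bJ_0^g,\bl_0^g)$. Using $\s(g)=g$, one checks routinely that $(\N,\psi)$ remains a $\s$-self-dual Whittaker datum, that $(\bJ,\bl)$ remains a generic $\s$-self-dual type in~$\pi$, and that the $\s$-fixed part of $g^{-1}\aa_0 g$ equals $\aa_\so$, whose lattice chain has the form required by the lemma. The only substantive input is the transitivity of~$\G^\s$ on $\o_\so$-hereditary orders of fixed partition type; everything else is bookkeeping together with the elementary combinatorics of lattice chains in the standard apartment.
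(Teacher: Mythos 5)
Your strategy of choosing a convenient target order in the standard apartment and invoking $\G^\s$-transitivity to move the type onto it is close, but it misses a constraint that is the actual content of the paper's proof: the conjugating element cannot be an arbitrary $g\in\G^\s$. After you set $(\N,\psi,\bJ,\bl)=(\N_0^g,\psi_0^g,\bJ_0^g,\bl_0^g)$, the group $\N_0^g$ is in general no longer the standard maximal unipotent $\N_n(\F)$. The existential quantifier in the statement might make this look harmless, but everything that follows in the paper -- Lemma~\ref{unramramiwas}, Proposition~\ref{Supportwhittakers}, and the Asai integrals of Section~\ref{S7VS} -- juxtaposes $\N^\s$, $\P^\s$ and $\K^\s=\GL_n(\o_\so)$ all with respect to the \emph{same} standard flag, and the Asai integral is over $\N^\s\backslash\G^\s$ with $\N=\N_n(\F)$. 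A conclusion in which the lattice chain sits in the standard apartment but the compatible Whittaker datum lives on some other maximal unipotent subgroup does not feed into those computations. The two conditions of the lemma are easy to satisfy separately; the whole point is to achieve them simultaneously relative to the fixed standard data, which is exactly what your choice of $g$ destroys.

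The paper gets around this by restricting the conjugating elements to the standard Borel. It first moves $\aa_\so$ into the standard apartment by an element $u\in\N^\s$; that this is possible (and not merely by an element of $\G^\s$, which is the classification fact you invoke) is the content of the ``Bruwasawa decomposition'' $\G^\s=\B^\s\mathfrak{S}_n\I_\so$ recorded just before the lemma, which shows that every parahoric subgroup of $\G^\s$ is $\N^\s$-conjugate to one in the standard apartment. It then renumbers the lattice chain and conjugates by the diagonal matrix $t=\diag(t_1,\dots,t_{n-1},1)\in\T^\s$ to achieve $\La_\so(0)=\o_\so^n$. Both $u$ and $t$ lie in $\B^\s$, hence normalize $\N_n(\F)$, so the Whittaker datum retains the form $(\N_n(\F),\psi')$ throughout, with only $\psi$ changing. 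To repair your argument you would need to replace ``there exists $g\in\G^\s$ with $g^{-1}\aa_{0,\so}g=\aa_\so$'' by an assertion that such a $g$ may be taken in $\B^\s$ (or at least in the normalizer of $\N_n(\F)$), and that stronger assertion is not supplied by the classification of hereditary orders alone.
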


Note that condition~\ref{part2basis} implies in particular 
that~$\J^\s\subseteq\K^\s$ (though it is not equivalent to this). It is also 
worth noting that it is \emph{not} in general possible to find~$(\bJ,\bl)$ 
satisfying condition~\ref{part1basis} and the stronger 
condition~$\J\subseteq\K$ (see Remark \ref{zaza}).

\begin{proof}
We pick a~$\s$-self-dual Whittaker datum~$(\N,\psi)$ 
where~$\N=\N_n(\F)$ is the standard maximal uni\-potent subgroup. 
By
% Theorem~\ref{PIMAIN} and 
Proposition~\ref{brunau}, we have a 
$\s$-self-dual type~$(\bJ,\bl)$ satisfy\-ing~\ref{part1basis}. 
Fix~a~maxi\-mal simple stratum $[\aa,\b]$ as above, denote by~$\aa_\so$
the~$\o_\so$-hereditary order~associa\-ted to it and by~$e_\so$ its 
period.
There is an element
$u\in \N^\s$ which sends $\aa_\so$ to a point in the standard 
apartment.~Con\-jugating 
by $u$, we assume~$\aa_\so$ is itself in the standard apartment. 

Writing~$\La_\so$ for the~$\o_\so$-lattice chain in~$\F_\so^n$ consisting 
of~$\aa_\so$-lattices, we can number the lattices such that 
\[
\Lambda_\so(0)\cap \F_\so \be_n = \o_{\so}\be_n,\qquad \Lambda_\so(-1)\cap \F_\so \be_n= \p_{\so}^{-1} \be_n.
\] 
Since $\aa_\so$ lies in the standard apartment, we can find $t_1,\dots,t_{n-1}\in 
\F_\so^\times$ such that 
\[
\Lambda_\so(0) = \o_\so t_1 \be_1 \oplus\cdots\oplus \o_\so t_{n-1} \be_{n-1} \oplus \o_\so \be_n.
\]
Conjugating both~$(\bJ,\bl)$ and the Whittaker datum~$(\N,\psi)$ 
by~$t=\diag(t_1,..,t_{n-1},1)$ (which is in the diagonal torus of~$\G^\s$), we 
obtain the result. 
\end{proof}

\begin{remark}
\label{zaza}
Suppose that $\F/\F_\so$ is ramified, $n=2$ and $\pi$ is a $\s$-self-dual depth 
zero cuspidal~re\-pre\-sen\-tation of $\GL_2(\F)$.
Then any generic $\s$-self-dual type $(\bJ,\bl)$ in $\pi$ has index $1$ so 
$\bJ$ is $\GL_2(\F_\so)$-conjugate to $t_1^{}\K t_1^{-1}$ where $t_1=\diag(\w,1)$ 
and $\w$ is a uniformizer of $\F$.
In particular, the group 
$\bJ$ is not $\GL_2(\F_\so)$-conjugate to (any subgroup of) $\K$.
\end{remark}

\subsection{}

Suppose now that $\pi$ is a~$\s$-self-dual {supercuspidal}
representation {(see Paragraph \ref{sec:inv})}
and choose our non-degenerate character~$\psi$ and 
generic~$\s$-self-dual type $(\bJ,\bl)$ as in Lemma~\ref{basis}. 
We have an order $\aa_\so$ as above. 
By Lemma~\ref{lem:e00}, it is a principal order. 
We choose~$\w_\bl\in\bJ^\s$ as
in Re\-mark~\ref{rmk:pilambda},~so 
that~$\bJ^\s$ is generated by $ \varpi_\bl$ and $\J^\s$. 

The following lemma shows a useful property of the Iwasawa decomposition of 
$\w_{\bl}$ in $\G^\s$, which will be key to our computation to come. 

\begin{lemma}\label{unramramiwas}
Let $i\in\ZZ$.
We have~$\w_{\bl}^i\in \P^\s \K^\s$ if and only if $i\in\{0,\ldots,e_\so-1\}$.
In that case,~if~we choose $p_i\in \P^\s$ and $k_i\in \K^\s$ such that 
$\w_{\bl}^i=p_ik_i$, then 
$\left|\det (p_i)\right|_{\so}=\left|\det (\w_{\bl}^i)\right|_\so=q_\so^{-in/e_\so}$. 
\end{lemma}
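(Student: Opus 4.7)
The strategy is to understand $\varpi_\bl$ through its action on the $\o_\so$-lattice chain $\La_\so$. The pivotal claim, which I would establish first, is that $\varpi_\bl$ shifts $\La_\so$ by exactly one step: $\varpi_\bl\La_\so(k)=\La_\so(k+1)$ for all $k\in\ZZ$, equivalently $\val_\so(\det(\varpi_\bl))=n/e_\so$. Since $\bJ$ normalizes $\aa$, the group $\bJ^\s$ normalizes $\aa_\so=\aa\cap\Mat_n(\F_\so)$, so $\varpi_\bl$ shifts $\La_\so$ by some positive integer $s$; the $e_\so$-fold iterate then shifts by a full period, which is multiplication by a uniformizer of $\F_\so$, giving $\val_\so(\det(\varpi_\bl^{e_\so}))=sn$ and hence $\val_\so(\det(\varpi_\bl))=sn/e_\so$. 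To pin down $s$, I would split according to Lemma~\ref{lem:e00}: either $\varpi_\bl=\w_{\E_\so}u$ with $u\in\bb_\so^\times$ (when $\bb_\so$ is maximal, so $e_\so=e(\E_\so/\F_\so)$), or $\varpi_\bl^2=\w_{\E_\so}u$ with $u\in\bb_\so^\times$ (the period-two case, where $e_\so=2e(\E_\so/\F_\so)$). In either case, applying $\det_{\F_\so}=\Nm_{\E_\so/\F_\so}\circ\det_{\E_\so}$ on $\B^\s\simeq\Mat_m(\E_\so)$, using $\det_{\E_\so}(u)\in\o_{\E_\so}^\times$, $\val_\so(\Nm_{\E_\so/\F_\so}(\w_{\E_\so}))=f(\E_\so/\F_\so)$, and $[\E_\so:\F_\so]=d=n/m$ from Remark~\ref{Turron}, a short calculation yields $\val_\so(\det(\varpi_\bl))=n/e_\so$, hence $s=1$ and $\varpi_\bl^i\o_\so^n=\La_\so(i)$ for every $i\in\ZZ$.

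Next I set up the double coset machinery. Since $\P$ is by definition the right stabilizer in $\G$ of the row vector $\be_n^T=(0,\ldots,0,1)$, the map $g\mapsto\be_n^T g$, which extracts the last row of $g$, induces a bijection $\P^\s\backslash\G^\s\to\F_\so^n\setminus\{0\}$; elementary divisors then identify the right $\K^\s$-orbits on $\F_\so^n\setminus\{0\}$ with $\ZZ$ via $v\mapsto\min_j\val_\so(v_j)$. Consequently, $g\in\P^\s\K^\s$ precisely when the $\o_\so$-ideal generated by the entries of the last row of $g$ equals $\o_\so$.

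The lemma then follows directly. The $\o_\so$-ideal spanned by the entries of the last row of $\varpi_\bl^i$ coincides with the projection onto the $\be_n$-coordinate of $\varpi_\bl^i\o_\so^n=\La_\so(i)$, which by Lemma~\ref{basis} equals $\p_\so^{a_n(i)}$; the listed properties of $a_n$ (non-decreasing, vanishing on $\{0,\ldots,e_\so-1\}$, and satisfying $a_n(k+e_\so)=a_n(k)+1$) force $a_n(i)=0$ exactly when $i\in\{0,\ldots,e_\so-1\}$. For such $i$, writing $\varpi_\bl^i=p_ik_i$ and using $|\det(k_i)|_\so=1$ yields $|\det(p_i)|_\so=|\det(\varpi_\bl^i)|_\so=q_\so^{-in/e_\so}$, from the determinant formula in the first step.

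The main obstacle will be the uniformizer-shift identity $s=1$: it requires a unified treatment of the two period cases of Lemma~\ref{lem:e00}, with care that the factor of $2$ in the period-two formula $e_\so=2e(\E_\so/\F_\so)$ cancels correctly against the exponent $2$ in $\varpi_\bl^2=\w_{\E_\so}u$. Everything downstream of this is an essentially transparent reading of Lemma~\ref{basis} through the mirabolic double coset decomposition.
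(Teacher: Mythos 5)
Your proof is correct and follows essentially the same route as the paper's: characterize $\P^\s\K^\s$ by the condition that the last row generates $\o_\so$, and read off the valuation of the last row of $\w_\bl^i$ from the action of $\w_\bl$ on the lattice chain $\La_\so$ normalized in Lemma~\ref{basis}. The only difference is that you make explicit (via the determinant computation through $\Nm_{\E_\so/\F_\so}\circ\det_{\E_\so}$ in the two cases of Lemma~\ref{lem:e00}) the fact that $\w_\bl$ shifts $\La_\so$ by exactly one step and that $|\det(\w_\bl)|_\so=q_\so^{-n/e_\so}$, which the paper treats as immediate.
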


\begin{proof}
Note first that~$\P^\s \K^\s$ consists  precisely of those matrices whose last
row  lies in  $(\o_\so,\ldots,\o_\so)$  but  not in  $(\p_\so,\ldots,\p_\so)$.
Considering the action of $\w_{\bl}$ on the lattice chain $\Lambda_\so$, 
it follows at~once~from the previous lemma that the last row of $\w_{\bl}^i$
belongs to 
$(\p_\so^{\lfloor i/e_\so\rfloor},\ldots,\p_\so^{\lfloor i/e_\so\rfloor})$ -- 
that is, the entries of this row all have valuation 
$\>\lfloor i/e_\so\rfloor$ -- 
but does not belong to 
$(\p_\so^{\lceil i/e_\so\rceil},\ldots,\p_\so^{\lceil i/e_\so\rceil})$, 
which implies the first statement. 
The second is
immediate, since~$\left|\det (k)\right|_\so=1$, for all~$k\in\K^\s$. 
\end{proof}

For $i\in\{0,\ldots,e_\so-1\}$, we fix from now $p_i\in \P^\s$ and~$k_i\in 
\K^\s$ such that~$\w_{\bl}^i=p_ik_i$, as in Lemma~\ref{unramramiwas}. 

We recall from the previous section that we have an explicit Whittaker
function $\W_{\bl}\in\Ww(\pi,\psi)$~with support~$\N\bJ$. 

\begin{proposition}\label{Supportwhittakers}
{For each~$l\in\ZZ$, let~$\W^l_\bl$ denote the function from $\G^\s$ to $\FC$ 
supported on the~sub\-set
$\{g\in \G^\s\cap\N\bJ\mid|\det(g)|_\so=q_\so^{-l}\}$
and coinciding with $\W_\bl$ on it.}
\begin{enumerate}
\item\label{point1} 
The function~$\W^l_{\bl}|_{\P^\s\K^\s}$ is zero unless~$l=in/e_\so$, 
with~$i\in\{0,\dots,e_\so-1\}$, in which case 
\[
\supp\(\W^l_{\bl}|_{\P^\s\K^\s}\) \subseteq \N^\s\w_{\bl}^i \J^\s.
\]
\item\label{point2} 
If~$\W_{\bl}^{in/e_\so}(pk)\neq 0$, with $p\in \P^\s$, $k\in \K^\s$ 
and~$i\in\{0,\dots,e_\so-1\}$, then~$k\in (\P^\s\cap \K^\s)k_i\J^\s$. 
\item\label{point3} 
If~$\W_{\bl}^{in/e_\so}(p\w_{\bl}^i j)\neq 0$ with~$p\in\P^\s$,~$j\in\J^\s$ 
and $i\in\{0,\dots,e_\so-1\}$, then~$p\in \N^\s(\P^\s\cap \J^\s)$.
\end{enumerate}
\end{proposition}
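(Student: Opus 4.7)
My plan is to prove \ref{point1} first and then deduce \ref{point2} and \ref{point3} as consequences.

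For \ref{point1}, suppose $g$ lies in the support of $\W^l_\bl|_{\P^\s\K^\s}$. Then $g \in \G^\s \cap \N\bJ = (\N\bJ)^\s$. The first step is to apply Lemma \ref{lem:coset}\ref{lem:coset.iii} to the $\s$-stable open subgroup $\bJ$, yielding $(\N\bJ)^\s = \N^\s \bJ^\s$. This requires $\bJ \cap \N$ to be pro-$p$, which I would verify by observing that the quotient $\bJ/\J$ is generated by elements with non-trivial valuation of the determinant, whereas elements of $\N$ have trivial determinant; consequently $\bJ \cap \N = \J \cap \N$, a compact subgroup of the unipotent group $\N$, and hence pro-$p$. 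Combining with Remark \ref{rmk:pilambda}, I would write $g = n' \w_\bl^k u$ with $n' \in \N^\s$, $k \in \ZZ$, $u \in \J^\s$. Since $\J^\s$ is compact, a determinant computation yields $|\det g|_\so = q_\so^{-kn/e_\so}$, forcing $l = kn/e_\so$. Since $\N^\s \subseteq \P^\s$ and $\J^\s \subseteq \K^\s$, the hypothesis $g \in \P^\s \K^\s$ gives $\w_\bl^k \in \P^\s \K^\s$, and Lemma \ref{unramramiwas} then pins down $k \in \{0, \ldots, e_\so-1\}$. The containment $g \in \N^\s \w_\bl^k \J^\s$ follows at once.

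For \ref{point2}, part \ref{point1} together with $\W^{in/e_\so}_\bl(pk) \neq 0$ yields a factorisation $pk = n_0 \w_\bl^i u_0 = n_0 p_i k_i u_0$ for some $n_0 \in \N^\s$ and $u_0 \in \J^\s$. Solving for $k$ gives $k = (p^{-1} n_0 p_i)(k_i u_0)$, with $p^{-1} n_0 p_i \in \P^\s$ and $k_i u_0 \in \K^\s$. Since $k \in \K^\s$, the first factor must lie in $\P^\s \cap \K^\s$, whence $k \in (\P^\s \cap \K^\s) k_i \J^\s$. For \ref{point3}, part \ref{point1} again yields $p \w_\bl^i j = n_0 \w_\bl^i u_0$, so $p = n_0 (\w_\bl^i u_0 j^{-1} \w_\bl^{-i})$. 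Since $\w_\bl \in \bJ$ normalises $\bJ$ and hence its unique maximal compact subgroup $\J$, conjugation by $\w_\bl^i$ preserves $\J^\s$, so the bracketed factor lies in $\J^\s$; as it equals $n_0^{-1} p \in \P^\s$, it actually lies in $\P^\s \cap \J^\s$, and $p \in \N^\s(\P^\s \cap \J^\s)$ as required.

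The main obstacle is the opening reduction in \ref{point1}, namely the application of Lemma \ref{lem:coset}\ref{lem:coset.iii} to the open subgroup $\bJ$, which is only compact modulo its centre; once the structural fact $\bJ \cap \N = \J \cap \N$ is in hand, the rest of the argument is mechanical unpacking using the decomposition $\bJ^\s = \langle \w_\bl \rangle \J^\s$ of Remark \ref{rmk:pilambda} and the Iwasawa factorisation $\w_\bl^i = p_i k_i$ of Lemma \ref{unramramiwas}.
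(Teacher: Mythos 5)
Your proof is correct and follows essentially the same route as the paper: part \ref{point1} goes through Lemma~\ref{lem:coset}\ref{lem:coset.iii} to identify $\G^\s\cap\N\bJ$ with $\N^\s\bJ^\s$, then uses Remark~\ref{rmk:pilambda} to split $\bJ^\s$ into cosets $\w_\bl^k\J^\s$ and Lemma~\ref{unramramiwas} to locate the $\P^\s\K^\s$-relevant exponents. For parts \ref{point2} and \ref{point3} the paper simply cites \cite[Proposition 8.4]{RKNM}; you have instead unpacked the Iwasawa-factorisation bookkeeping directly, which matches what that reference does, and your observation that $\w_\bl^i$ normalises $\J^\s$ (being in $\bJ^\s$ and normalising the unique maximal compact $\J$) is exactly the point needed in \ref{point3}. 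Your preliminary remark that $\bJ\cap\N=\J\cap\N$ is pro-$p$ is a harmless extra justification, though strictly the hypotheses of Lemma~\ref{lem:coset} are already met once one knows $\bJ$ is $\s$-stable and open.
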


\begin{proof}
Note that $\W_\bl|_{\G^\s}$ is supported in $\G^\s\cap\N\bJ$, 
which is equal to~$\N^\s\bJ^\s$ by Lemma~\ref{lem:coset}\ref{lem:coset.iii}. 
By definition of~$\w_{\bl}$, the set~$\N^\s\bJ^\s$ is the disjoint union of
the $\N^\s\w_{\bl}^i\J^\s$ for $i\in\ZZ$, 
and then~\ref{point1} follows from Lemma~\ref{unramramiwas}. 
The  remaining  parts follow  exactly  as  in the  proof  of~\cite[Proposition
8.4]{RKNM}. 
\end{proof}

{Finally,
using that $\J^\s\subseteq\K^\s$ thanks to our choice of basis, 
as in \cite[Lemma~7.2]{RKNM}, we have the~fol\-lowing lemma, which 
we will use in Section~\ref{S7VS}.}

% A volume 
% which appears in our computation follows from this, using that $\J^\s\subseteq 
% \K^\s$ thanks to our choice of basis, as in \cite[Lemma~7.2]{RKNM}.

\begin{lemma}
\label{LemmaVolume}
There is a unique right invariant {complex valued}
measure $dk$ on $(\P^\s\cap \K^\s)\backslash\K^\s$ such 
that we have
\[
dk((\P^\s\cap \K^\s)\backslash (\P^\s\cap \K^\s)k_i\J^\s)=  q_\so^{-in/e_\so}.
\]
for all $i\in \{0,\ldots,e_\so-1\}$.
\end{lemma}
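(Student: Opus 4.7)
The plan is as follows. The group $\K^\s$ is compact hence unimodular, and its closed subgroup $\P^\s\cap\K^\s$ is also compact and thus unimodular; so a non-zero right $\K^\s$-invariant complex measure on $(\P^\s\cap\K^\s)\backslash\K^\s$ exists and is unique up to a positive scalar. Consequently, the lemma reduces to showing that for any such measure $dk$, the ratio
\[
\frac{dk((\P^\s\cap\K^\s)\backslash(\P^\s\cap\K^\s)k_i\J^\s)}{dk((\P^\s\cap\K^\s)\backslash(\P^\s\cap\K^\s)\J^\s)}
\]
equals $q_\so^{-in/e_\so}$ for every $i\in\{0,\ldots,e_\so-1\}$; normalising $dk$ so that the denominator equals $1$ (which handles $i=0$ since $k_0=1$) then gives the lemma.

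Setting $H=\P^\s\cap\K^\s$, a standard Haar-measure computation yields $dk(H\backslash HgJ)=\mu(HgJ)/\mu(H)$ for any Haar measure $\mu$ on $\K^\s$, and combining with $\mu(HgJ)=\mu(J)\cdot[H:H\cap gJg^{-1}]$ (valid because $J=\J^\s$ is open and $H$ is compact), the ratio of interest equals $[H:H\cap k_i\J^\s k_i^{-1}]/[H:H\cap\J^\s]$. Using $k_i=p_i^{-1}\w_\bl^i$ with $p_i\in\P^\s$ and the fact that $\w_\bl$ normalises $\J^\s$ (as $\w_\bl\in\bJ^\s$, see Remark~\ref{rmk:pilambda}), one computes $k_i\J^\s k_i^{-1}=p_i^{-1}\J^\s p_i$, so the identity to prove becomes
\[
[H:H\cap p_i^{-1}\J^\s p_i]\;=\;q_\so^{-in/e_\so}\cdot[H:H\cap\J^\s].
\]

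The main obstacle is verifying this index identity. By Lemma~\ref{unramramiwas}, $p_i$ satisfies $|\det p_i|_\so=q_\so^{-in/e_\so}$, and its conjugation action on $H$ is controlled by the explicit lattice-chain description of $\aa_\so$ given in Lemma~\ref{basis}. The crucial numbering condition $a_n(0)=\cdots=a_n(e_\so-1)=0$ ensures that conjugation by $p_i$ shifts the integrality conditions defining $H$ by exactly $in/e_\so$ steps in the $\p_\so$-filtration which, combined with the containment $\J^\s\subseteq\K^\s$ coming from our basis choice, yields the desired factor $q_\so^{-in/e_\so}$. The calculation is identical in form to that of~\cite[Lemma~7.2]{RKNM} and adapts essentially verbatim to the present setting thanks to the properties of $(\bJ,\bl)$ and $\w_\bl$ established in Paragraph~\ref{sec:inv} and Lemma~\ref{basis}; the bookkeeping is mechanical but delicate, and is the only non-formal part of the argument.
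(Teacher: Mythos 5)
Your argument is correct and follows essentially the same route as the paper: reduce to showing that any right invariant measure satisfies the ratio identity $dk(H\backslash Hk_i\J^\s)=q_\so^{-in/e_\so}\,dk(H\backslash H\J^\s)$ with $H=\P^\s\cap\K^\s$, establish this by the computation of \cite[Lemma~7.2]{RKNM} combined with Lemma~\ref{unramramiwas}, and then normalise so that the $i=0$ coset has volume $1$. Your explicit repackaging of the ratio as $[H:H\cap p_i^{-1}\J^\s p_i]/[H:H\cap\J^\s]$ (using that $\varpi_\bl$ normalises $\J^\s$) is a slightly more detailed presentation of the same step, and the remaining bookkeeping is indeed exactly what the cited lemma supplies.
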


\begin{proof}
Let $dk$ be any right invariant measure on $(\P^\s\cap \K^\s)\backslash\K^\s$. 
Following the first part of the proof of \cite[Lemma~7.2]{RKNM},
and thanks to Lemma \ref{unramramiwas}, we have:
\begin{equation*}
dk((\P^\s\cap \K^\s)\backslash (\P^\s\cap \K^\s)k_i\J^\s) =
q_\so^{-in/e_\so} \cdot dk((\P^\s\cap \K^\s)\backslash (\P^\s\cap \K^\s)\J^\s)
\end{equation*}
for all $i\in \{0,\ldots,e_\so-1\}$.
{Thus the required measure is that for which
$\K^\s\cap\P^\s)\backslash (\K^\s\cap\P^\s)\J^\s$
has volume $1$.}
% It thus remains to prove that $dk$ can be chosen so that the measure of the 
% quotient
% $(\P^\s\cap \K^\s)\backslash (\P^\s\cap \K^\s)\J^\s$ is $1$,
% which follows from \cite[I.2.8]{Vig96}.
\end{proof}

%%%%%%%%%%%%%%%%%%%%%%%%%%%%%%%%%%%%%%%%%%%%
\section{Asai~$\mathbf{L}$-functions and test vectors}
\label{S7VS}
%%%%%%%%%%%%%%%%%%%%%%%%%%%%%%%%%%%%%%%%%%%%

\textit{From now on, until the end of the paper,
all representations are complex,}
that is, $\FC$ is now the field $\CC$ of complex numbers.

%%%%%%%%%%%%%%%%%%%%%%%%%%%%%%%%%%%%%%%%%%%%%%%%
\subsection{Distinction and dichotomy}
\label{Dichotomy49}

We will need two further key results on distinction of~$\s$-self-dual 
cuspidal complex representations, which we recall from \cite{VS}. 
Recall that~$\ef$ denotes the non-trivial character of~$\F_\so^\times$ 
which is trivial on $\Nm_{\F/\F_\so}(\F^\times)$. 
The first result is \emph{dichotomy}.
It is proved for discrete series representations when~$\F$ has 
characteristic~$0$ by Flicker~\cite{Flicker}, Kable~\cite{Kable}
and Anandavardhanan, Kable and Tandon~\cite{MR2063106},
and we prove in Appendix \ref{section positive char}
(see Theorem \ref{global proof of dichotomy} below) 
that the global arguments of \cite{Kable} and \cite{MR2063106} remain 
valid when $\F$ has characteristic $p$.
It is also proved by S\'echerre \cite{VS} for {cuspidal} 
representations,
in a purely local way, 
with no assumption on the characteristic of $\F$
(see also Remark \ref{alabama}).

\begin{theorem}[\cite{VS} Theorem {10.8}]
\label{dichotomy}
Let~$\pi$ be a cuspidal (complex) representation of~$\GL_n(\F)$, $n\>1$. 
\begin{enumerate}
\item 
$\pi$ is~$\s$-self-dual 
if and only if it is either distinguished or $\ef$-distinguished. 
\item
$\pi$ cannot be both distinguished and $\ef$-distinguished. 
\end{enumerate} 
\end{theorem}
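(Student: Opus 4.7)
My plan is to attack this by a global-to-local argument, reducing to the known behaviour of Asai $\L$-functions of automorphic representations; this is the route pursued by Flicker--Kable--Anandavardhanan--Kable--Tandon in characteristic~$0$, and one first checks (as promised in Appendix~\ref{section positive char}) that their methods carry over to positive characteristic.

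The easy direction of (i) is that if $\pi$ carries a non-zero $\GL_n(\F_\so)$-invariant linear form (possibly twisted by $\ef\circ\det$), then $\pi^\s \simeq \pi^\vee$: this is the local statement of Prasad--Flicker recalled in the introduction. For the hard direction, I would globalize. Choose a global quadratic extension $k/k_\so$ of function fields having a place $v_0$ with $k_{v_0}/k_{\so,v_0}\simeq \F/\F_\so$, and realize $\pi$ as the local component at $v_0$ of a cuspidal automorphic representation $\Pi$ of $\GL_n(\AA)$ which is $\s$-invariant, $\Pi^\s \simeq \Pi^\vee$, and whose ramification at other non-split inert places is controlled (for example, taking the other inert places to carry unramified principal series). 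This globalization step, and the accompanying verification that the standard machinery (existence of poles, functional equations) works for function fields of odd residual characteristic, is the main obstacle; it is essentially the content of what needs to be checked in Appendix~\ref{section positive char}.

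Once globalization is in place, the argument runs as follows. The Rankin--Selberg $\L$-function factorizes as
\[
\L(s,\Pi\times\Pi^\s) \;=\; \L_\As(s,\Pi)\cdot \L_\As(s,\Pi\otimes(\ef\circ\Nm_{k/k_\so}\circ\det)),
\]
and since $\Pi$ is $\s$-self-dual, the left hand side has a simple pole at $s=1$. Hence exactly one of the two Asai $\L$-functions on the right has a pole at $s=1$. By the global theorem of Flicker (extended to the function field case in the appendix), a pole at $s=1$ of $\L_\As(s,\Pi)$ is equivalent to $\Pi$ being $\GL_n(\AA_\so)$-distinguished, and likewise for the twisted version with $\ef$. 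Distinction is an Eulerian property in the sense that if $\Pi$ is globally distinguished, then each local component is locally distinguished by the corresponding character; this lets me descend the global distinction of $\Pi$ to local distinction of $\pi=\Pi_{v_0}$.

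This already gives part (i). For part (ii), the same factorization argument yields disjointness: a \emph{simple} pole on the left forces exactly one of the two Asai factors to have a pole, so $\pi$ cannot be simultaneously distinguished and $\ef$-distinguished globally, and hence---by choosing the globalization so that the other inert places do not distinguish by the opposite character---cannot be so locally either. An alternative and more elegant way to obtain (ii) purely locally is to use the $\s$-self-dual type theorem: by Corollary~\ref{aloi}, distinction (resp.\ $\ef$-distinction) of $\pi$ translates into distinction (resp.\ $\ef$-distinction) of a generic $\s$-self-dual type $(\bJ,\bl)$, and one checks on the finite reductive quotient $\J/\J^1$ that the two cannot occur simultaneously (essentially because the corresponding linear characters of $\bJ^\s$ on the determinant of $\bl$ would differ by a non-trivial character). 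The main obstacle in the plan is really the globalization with prescribed local behaviour in positive characteristic; once that is in hand, the rest is bookkeeping with functional equations and the factorization above.
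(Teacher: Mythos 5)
Your overall instinct---that the dichotomy should come from the factorization
\[
\L(s,\pi\times\pi^\s) \;=\; \L_\As(s,\pi)\cdot\L_\As(s,\ef\otimes\pi)
\]
plus a simple-pole argument---is correct, but the route you propose to get there diverges from the paper's and has a real gap. The appendix proof (Theorem~\ref{global proof of dichotomy}) is essentially \emph{local}: the factorization above holds at the level of Weil--Deligne representations via the identity $\Ind'_{\F/\F_\so}(\rho(\pi)\otimes\rho(\pi)^\s)=\M'_{\F/\F_\so}(\rho(\pi))\oplus\om_{\F/\F_\so}\M'_{\F/\F_\so}(\rho(\pi))$ and the equality $\L_\As^\FK=\L_\As^{\Gal}$ (Theorem~\ref{manciniA1}, where the global input resides); then JPSS Proposition~8.1 and Theorem~8.2 give a \emph{simple} pole of the local Rankin--Selberg factor $\L^\RS(s,\pi,\pi^\s)$ at $s=0$ whenever $\pi^\s\simeq\pi^\vee$, so exactly one of $\L_\As(s,\pi)$, $\L_\As(s,\ef\pi)$ has a pole there; and \cite[Proposition~3.4]{MatringeManuscripta} converts that pole into distinction. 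Both (i) and (ii) fall out simultaneously and purely locally once the $\L$-function identities are in place. (The paper additionally cites \cite{VS}, Theorem 10.8, which gives a second, fully local proof via the $\s$-self-dual type theorem.)

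The gap in your proposal is the globalization step. You assume you can realize a $\s$-self-dual local cuspidal $\pi$ as the local component of a \emph{$\s$-self-dual} cuspidal automorphic representation $\Pi$ with $\Pi^\s\simeq\Pi^\vee$. The globalization established in the paper (Section~\ref{section global}) controls the central character and the ramification outside two places but gives no control over conjugate self-duality of $\Pi$, and prescribing this is a genuinely nontrivial problem---it is not a matter of ``bookkeeping'' and is not achieved by the weak-approximation-plus-trace-formula argument used here. Without it, your global simple-pole argument cannot start. Moreover, even granting a $\s$-self-dual $\Pi$, your argument for (ii) is underdeveloped: global non-distinction of $\ef\otimes\Pi$ does not by itself preclude \emph{local} $\ef$-distinction of $\pi=\Pi_{v_0}$, and ``choosing the other inert places so they do not distinguish by the opposite character'' is not a construction you have carried out. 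Your suggested alternative for (ii) via Corollary~\ref{aloi} and the finite quotient $\J/\J^1$ is in fact close to the route of \cite{VS}, but as written it is a hope rather than an argument---Corollary~\ref{aloi} treats only the trivial character, and the statement that the two characters ``would differ by a non-trivial character'' is precisely the non-trivial content that \cite{VS} establishes by a careful analysis on $\GL_m(\ee)$.
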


Given~a $\s$-self-dual cuspidal representation $\pi$ of~$\GL_n(\F)$, 
we denote by $\T/\T_\so$ the~quadratic ex\-ten\-sion associated to $\pi$ by 
Proposition \ref{TT0canonique}.
Let $d$ denote the degree of the endo-class of $\pi$.
It is a divisor of $n$, and we write $n=md$.

\begin{proposition}[\cite{VS} Proposition {10.12}]
\label{prop:lemma16}
Let $\pi$ be a distinguished cuspidal (complex) 
representa\-tion of~$\GL_n(\F)$. 
Then $\pi$ has an $\ef$-distinguished unramified twist if and only if 
either~$\T/\T_{\so}$~is un\-ra\-mi\-fied or $m>1$.
\end{proposition}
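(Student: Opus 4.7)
The plan breaks into three steps: a reduction via dichotomy to a single ``other'' unramified twist, a translation of the question to the generic $\s$-self-dual type, and an explicit valuation computation that splits into three cases according to Lemma~\ref{lem:e00}.

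First, by Theorem~\ref{dichotomy} every $\s$-self-dual unramified twist of $\pi$ is either distinguished or $\ef$-distinguished, exclusively. Parametrising unramified characters of $\F^\times$ by $\chi(\w_\F)\in\CC^\times$ for a fixed uniformiser~$\w_\F$, and noting that $\s$ preserves the valuation so $\chi^\s=\chi$, one has that $\pi\chi$ is $\s$-self-dual iff $\chi^2\in X(\pi)$, where $X(\pi)=\{\mu\text{ unramified}:\pi\mu\simeq\pi\}$. Using the Bushnell--Kutzko description $\pi\simeq\cind_\bJ^\G\bl$ and the fact that $\chi\circ\det$ is automatically trivial on $\J$, one identifies $X(\pi)$ with the group of $mf$-th roots of unity via $\chi\mapsto\chi(\w_\F)$, because the image of $\w_\E$ under $v_\F\circ\det$ equals $mf=n/e(\E/\F)$. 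Consequently the $\s$-self-dual unramified twists of $\pi$ fall into exactly two isomorphism classes: that of $\pi$ itself (distinguished by hypothesis) and that of $\pi\chi_0$ for any unramified $\chi_0$ with $\chi_0(\w_\F)^{mf}=-1$. The proposition is equivalent to deciding when $\pi\chi_0$ is $\ef$-distinguished, which by dichotomy is the same as when $\pi\chi_0$ is not distinguished.

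Second, we pass to types. Choose a generic $\s$-self-dual type $(\bJ,\bl)$ for~$\pi$ (Proposition~\ref{brunau}); since $\chi_0\circ\det$ is trivial on $\J$ and on $\N\cap\bJ$, the pair $(\bJ,\bl\otimes\chi_0\circ\det)$ is a generic $\s$-self-dual type for $\pi\chi_0$, and by Corollary~\ref{aloi} the twist is distinguished iff $\Hom_{\bJ^\s}(\bl,\chi_0^{-1}\circ\det)\ne 0$. Flicker--Rallis multiplicity one, transferred to the type, supplies a unique (up to scalar) non-zero $\bJ^\s$-invariant form $L$ on $\bl$ witnessing the distinction of $\pi$. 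By Remark~\ref{rmk:pilambda} the group $\bJ^\s$ is generated by $\J^\s$ and the element $\w_\bl$ of $\bb_\so$, and since $\chi_0^{-1}\circ\det$ is trivial on $\J^\s$ the required twisted invariance condition collapses to the scalar identity $\chi_0(\det\w_\bl)=1$. Thus $\pi\chi_0$ is $\ef$-distinguished iff $\chi_0(\det\w_\bl)\ne 1$, and the task reduces to computing $v_\F(\det\w_\bl)$ modulo $2mf$.

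Third, we compute case by case. Using $\w_\bl\in\bb_\so\subseteq\Mat_m(\E_\so)$ together with the isomorphism $\E\simeq\E_\so\otimes_{\F_\so}\F$ (Remark~\ref{Turron}) one obtains $\det_\F\w_\bl=\Nm_{\E_\so/\F_\so}(\det_{\E_\so}\w_\bl)$. A preliminary observation clarifies the case split: if $\F/\F_\so$ is unramified then $\F=\F_\so(\eta)$ for some root of unity $\eta$ of order prime to $p$, so $\T=\T_\so(\eta)$ is unramified over $\T_\so$; contrapositively, $\T/\T_\so$ ramified forces $\F/\F_\so$ ramified, whence $e(\F/\F_\so)=2$ and $f(\E/\F)=f(\E_\so/\F_\so)$ by Remark~\ref{Turron}. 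Lemma~\ref{lem:e00} then dictates the explicit shape of $\w_\bl$ in each of the three cases: when $\T/\T_\so$ is unramified, $\bb_\so$ is maximal and $\w_\bl=\w_{\E_\so}I_m$, giving $v_\F(\det\w_\bl)=m\,f(\E_\so/\F_\so)\,e(\F/\F_\so)=mf$; when $\T/\T_\so$ is ramified and $m=2r>1$ (the odd case $m>1$ being excluded by Proposition~\ref{polonius}), $\bb_\so$ has period two and $\w_\bl$ can be taken in block antidiagonal form with $\det_{\E_\so}\w_\bl=\pm\w_{\E_\so}^r$, so that $v_\F(\det\w_\bl)=2rf=mf$; finally when $\T/\T_\so$ is ramified and $m=1$, one has $\w_\bl=\w_{\E_\so}$ and $v_\F(\det\w_\bl)=2f=2mf$. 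Writing $\zeta=\chi_0(\w_\F)$, so $\zeta^{mf}=-1$, the first two cases yield $\chi_0(\det\w_\bl)=\zeta^{mf}=-1$, whence $\pi\chi_0$ is $\ef$-distinguished, whereas the third yields $\chi_0(\det\w_\bl)=\zeta^{2mf}=1$, whence $\pi\chi_0$ remains distinguished and $\pi$ admits no $\ef$-distinguished unramified twist.

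The main obstacle is the last step: the element $\w_\bl$ must be identified precisely inside $\bb_\so$ in each case of Lemma~\ref{lem:e00}, and the ramification and residue degrees of the four extensions $\F/\F_\so$, $\E/\E_\so$, $\E/\F$ and $\T/\T_\so$ must be tracked coherently using Remark~\ref{Turron}. The whole numerology collapses to the single modular check whether $v_\F(\det\w_\bl)\equiv mf$ or $\equiv 0\pmod{2mf}$, and the failure of the first congruence precisely when $\T/\T_\so$ is ramified and $m=1$ is the unique obstruction to the existence of an $\ef$-distinguished unramified twist.
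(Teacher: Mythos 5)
Your overall strategy is sound, and it is worth noting that the paper itself does not prove this proposition but imports it from \cite{VS}; your reduction (dichotomy cuts the $\s$-self-dual unramified twists down to the two classes $\pi$ and $\pi\chi_0$ with $\chi_0(\w_\F)^{mf}=-1$, then Corollary~\ref{aloi} converts the question into $\Hom_{\bJ^\s}(\bl,\chi_0^{-1}\circ\det)\ne 0$, then Lemma~\ref{lem:e00} and Remark~\ref{Turron} give $v_\F(\det\w_\bl)\equiv mf$ or $0\pmod{2mf}$ according to the three cases) is exactly in the spirit of the surrounding material, and I checked the valuation computations in all three cases: they are correct.

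There is, however, one genuine gap, at the step where "the required twisted invariance condition collapses to the scalar identity $\chi_0(\det\w_\bl)=1$". What you justify is that $\Hom_{\bJ^\s}(\bl,\one)$ is one-dimensional, spanned by $L$. But an element $L'$ of $\Hom_{\bJ^\s}(\bl,\chi_0^{-1}\circ\det)$ is a priori only an element of the possibly larger space $\Hom_{\J^\s}(\bl,\one)$ on which $\bl(\w_\bl)$ acts; unless that space is itself one-dimensional, you cannot conclude that $L'$ is proportional to $L$, and hence cannot deduce $\chi_0^{-1}(\det\w_\bl)=1$ from $L'\ne 0$. The collapse is therefore only proved in the direction $\chi_0(\det\w_\bl)=1\Rightarrow\Hom_{\bJ^\s}(\bl,\chi_0^{-1}\circ\det)=\Hom_{\bJ^\s}(\bl,\one)\ne 0$, which handles the case $\T/\T_\so$ ramified and $m=1$; the harder implication, that $\chi_0(\det\w_\bl)=-1$ forces $\pi\chi_0$ to be non-distinguished (needed when $\T/\T_\so$ is unramified or $m>1$), is exactly where the missing multiplicity one is used. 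The gap can be filled with tools already in the paper: by Proposition~\ref{Ok in general} the space $\Hom_{\P^\s}(\pi,\one)$ is one-dimensional, and Mackey restriction of $\cind_\bJ^\G\bl$ to $\P^\s$ together with $\bJ\cap\P=\J\cap\P$ and Proposition~\ref{invariant integral} gives $\dim\Hom_{\J\cap\P^\s}(\l,\one)=1$; since $\J\cap\P^\s\subseteq\J^\s$, this forces $\dim\Hom_{\J^\s}(\l,\one)\le 1$, so $\bl(\w_\bl)$ acts on it by the scalar $1$ and your eigenvalue argument goes through. As written, though, the proof establishes only one of the two implications.
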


\begin{remark}
\label{alabama}
These two results are proved in \cite{VS} in a more general setting: 
$\pi$ is a \textit{supercuspidal} representation of $\GL_n(\F)$ 
with coefficients in $\FC$,
where $\FC$ has characteristic different from $p$.
Note that, when $\FC$ has characteristic $0$,
any cuspidal representation is supercuspidal.
\end{remark}

%%%%%%%%%%%%%%%%%%%%%%%%%%%%%%%%%%%%%%%%%%%%
\subsection{Definition of the integrals}

As before, we suppose that $\psi$ is a $\s$-self-dual non-degenerate character 
of $\N$. % trivial on $\N^\s$. 
Let $\pi$ be a generic~irre\-du\-cible representation of $\G$. 
For $\W$ a function 
in the Whittaker model $\Ww(\pi,\psi)$ of $\pi$ and $\Phi$ in~the 
space $\Cc_c^\infty(\F_\so^n)$ of locally constant functions on~$\F_\so^n$ 
with compact support, define the local Asai~in\-te\-gral 
\begin{equation}
\label{Iasai}
\I_\As(s,\Phi,\W)= \int_{\N^\s\backslash \G^\s} \W(g)
\Phi(\etavec g)|\det(g)|_\so^s \ dg,
\end{equation}
where $\etavec$ is the row vector
$\begin{pmatrix}0&\dots&0&1\end{pmatrix}$
and $dg$ is a right invariant 
measure on $\N^\s\backslash \G^\s$ which will be fixed later 
in Paragraph \ref{tvp73}. 
It turns out (see~\cite[Theorem 2]{Kable})
that, for $s\in\CC$ with suf\-fi\-cient\-ly large real part, 
the integral \eqref{Iasai} is a rational function in
$q_\so^{-s}$; moreover, as $\W$ varies in $\Ww(\pi,\psi)$ and $\Phi$ varies in
$\Cc_c^\infty(\F_\so^n)$, these functions generate a fractional ideal of
$\CC[q_\so^{s},q_\so^{-s}]$ which has a  unique generator $\L_\As(s,\pi)$ which is an
Euler factor (i.e.~of the form $1/\P(q_\so^{-s})$ where $\P$ is a polynomial
with constant term $1$). 

Now let $\pi$ be a cuspidal representation of $\G$ and let 
$\X(\pi)$ denote the set of unramified characters~$\chi$ of $\G^\s$ such that 
$\pi$ is $\chi$-distinguished. We recall the following description of the Asai 
$\L$-function~of a cuspidal representation,
the proof of which is valid 
(as the rest of \cite{MatringeManuscripta}) 
when $\F$ has positive~cha\-rac\-teristic as well: 

\begin{proposition}[{\cite[Proposition 3.6]{MatringeManuscripta}}]\label{MatMan}
Let $\pi$ be a cuspidal representation of $\G$. Then 
\[
\L_\As(s,\pi)=\prod_{\chi\in\X(\pi)} \(1-\chi(\w_\so)q_\so^{-s}\)^{-1}
\]
where $\w_\so$ is a fixed uniformizer of $\F_\so$.
\end{proposition}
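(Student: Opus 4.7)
The plan is to compute the fractional ideal generated by the Asai integrals $\I_\As(s,\Phi,\W)$ directly, exploiting the cuspidality of $\pi$. The starting point is to unfold the Asai integral via the Iwasawa decomposition $\G^\s=\N^\s \A^\s \K^\s$, where $\A^\s$ is the diagonal torus of $\G^\s$ and $\K^\s=\GL_n(\o_\so)$. Using the $\psi$-equivariance of $\W$ under $\N^\s$, the integral becomes a double integral over $\K^\s\times \A^\s$ of $\W(ak)\Phi(\etavec ak)|\det(a)|_\so^s$ against an appropriate measure coming from the modulus character.

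The crucial cuspidality input is that, for each $k\in\K^\s$, the function $a\mapsto\W(ak)$ on $\A^\s$ has support which is compact in the simple-root coordinates $a_i/a_{i+1}$ for $1\le i<n$, leaving only the central direction $z=\det(a)^{1/n}$ free. Consequently, after carrying out the compact $\K^\s$-integration and the compactly supported torus integrals in the $(n-1)$ non-central directions, the Asai integral collapses to a finite $\CC$-linear combination of Tate-type integrals of the form
\[
\int_{\F_\so^\times} f(z)\,\om_\pi(z)\,|z|_\so^{ns}\,d^\times z,\qquad f\in\Cc_c^\infty(\F_\so^\times).
\]
By classical Tate theory, the fractional ideal generated by such integrals has possible simple poles only at points $s$ with $q_\so^{-s}=\chi(\w_\so)^{-1}$ for some unramified character $\chi$ of $\F_\so^\times$, which we view as an unramified character of $\G^\s$ via $\det$.

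The second step is to identify which $\chi$ actually contribute poles. A simple pole of $\I_\As(s,\Phi,\W)$ at $s_0$ produces, by taking residues, a non-zero linear form on the space of $\pi$ which transforms under $\G^\s$ by $\chi\circ\det$, forcing $\chi\in\X(\pi)$. Conversely, for each $\chi\in\X(\pi)$, starting from a non-zero $\chi$-distinguishing form $\ell$ on $\pi$, one exhibits a choice of $(\W,\Phi)$ for which $\ell$ appears (up to a non-zero scalar) as the residue of $\I_\As(s,\Phi,\W)$ at the corresponding $s_0$, so the pole is effectively attained. Combining both directions yields the formula $\L_\As(s,\pi)^{-1}=\prod_{\chi\in\X(\pi)}(1-\chi(\w_\so)q_\so^{-s})$.

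The main technical obstacle is the converse direction of the second step: explicitly constructing, for each $\chi\in\X(\pi)$, Whittaker data realizing the corresponding residue. This relies on the multiplicity-one property of $\chi$-distinguished linear forms on cuspidal representations (ultimately a consequence of Ok's theorem, proved for cuspidals in Appendix \ref{AppB}) together with a careful study of the asymptotic behavior of Whittaker functions of $\pi$ along the central direction of $\A^\s$, in order to ensure that distinct characters in $\X(\pi)$ produce independent simple poles rather than collapsing together or generating higher-order poles.
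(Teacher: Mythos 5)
This proposition is cited from \cite[Proposition 3.6]{MatringeManuscripta} without a proof in the present paper, so the comparison is with that reference's argument. Your reconstruction captures what that argument must essentially be, and the structure is correct: reduce via cuspidal compact support to a rank-one Tate integral in the central direction, then match poles with $\chi$-distinguishing linear forms by a residue computation, using Ok's identity $\Hom_{\P^\s}(\pi,\chi)=\Hom_{\G^\s}(\pi,\chi)$ together with multiplicity one for the converse implication. (The paper itself uses the same decomposition in a closely related form in~\eqref{noumea}: $\G^\s=\P^\s\Z^\s\K^\s$ rather than the full Iwasawa decomposition through $\A^\s$, but this is only a choice of coordinates.) I have a few remarks.

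\emph{A slip in the pole-localization step.} You write that the fractional ideal "has possible simple poles only at points $s$ with $q_\so^{-s}=\chi(\w_\so)^{-1}$ for some unramified character $\chi$," which is vacuous as stated --- every $s$ has this form. The actual content of the Tate step is that poles can occur only where $\om_\pi(\w_\so)q_\so^{-ns}=1$; that is, $q_\so^{-s}$ must be one of the $n$ values $\chi(\w_\so)^{-1}$ with $\chi$ unramified and $\chi^n\cdot\om_\pi|_{\F_\so^\times}=\one$. This constraint on $\chi$ is exactly the one that produces the finite candidate set containing $\X(\pi)$, and it should be stated.

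\emph{On the residue form being $\G^\s$-equivariant.} You assert that the residue at $s_0$ "produces a non-zero linear form ... which transforms under $\G^\s$ by $\chi\circ\det$." This is correct but not automatic from the definition; it follows from the identity
\[
\I_\As(s,\Phi,g\cdot\W)=|\det g|_\so^{-s}\,\I_\As(s,\Phi_g,\W),
\qquad \Phi_g(x)=\Phi(xg^{-1}),\ g\in\G^\s,
\]
together with the observation that the residue of a simple pole coming solely from the central Tate factor depends on $\Phi$ only through $\Phi(0)$ (which is $\G^\s$-invariant). You should make this explicit, since it is the reason this direction does \emph{not} require Ok's theorem --- Ok is only needed for the converse.

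\emph{The worry about higher-order or collapsing poles is unfounded.} Since $\L_\As(s,\pi)^{-1}$ divides $1-\om_\pi(\w_\so)q_\so^{-ns}$, whose roots in $q_\so^{-s}$ are distinct and simple, all poles are automatically simple and distinct, without any further asymptotic analysis of Whittaker functions. You may drop that caveat.

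\emph{On the compact-support claim over the torus.} Your assertion that $a\mapsto\W(ak)$ has compact support in the simple-root coordinates on $\A^\s$, for each fixed $k\in\K^\s$, is correct but deserves a word of justification: one writes $a=a'z$ with $z$ central and $a'\in\A^\s\cap\P^\s$, notes that $k\cdot\W$ is again a Whittaker function, and applies the Bernstein--Zelevinsky compact-support theorem for cuspidal $\pi$ to $(k\cdot\W)|_{\P}$. Phrased via the decomposition $\G^\s=\P^\s\Z^\s\K^\s$ (as in the body of the paper), this step is somewhat more transparent because the compactness is directly the statement for $\P^\s$.
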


Let $t(\pi)$ denote the \textit{torsion number} of $\pi$,
that is the number of unramified characters of $\mult\F$~such 
that $\pi(\chi\circ\det)$ is isomorphic to $\pi$.
Thanks to Theorem~\ref{dichotomy}, we deduce the following formula. 

\begin{corollary}
\label{cor:equation}
Let $\pi$ be a distinguished cuspidal representation of $\G$. 
Then
\[
\L_\As(s,\pi)=\begin{cases}
\frac{1}{1-q_\so^{-st(\pi)}}&\text{if $\F/\F_\so$ is unramified;}\\[10pt]
\frac{1}{1-q_\so^{-st(\pi)}}&\text{if $\F/\F_\so$ is ramified and no unramified twist of $\pi$ is $\ef$-distinguished;}\\[10pt]
\frac{1}{1-q_\so^{-st(\pi)/2}}&\text{if $\F/\F_\so$ is ramified and an unramified twist of $\pi$ is $\ef$-distinguished.}
\end{cases}
\]
\end{corollary}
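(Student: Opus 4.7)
The plan is to deduce the three formulas from Proposition \ref{MatMan} by analyzing the structure of $\X(\pi)$ via its preimage in $\Xx_{\rm ur}(\F^\times)$. Identifying $\Xx_{\rm ur}(\F^\times)\simeq\CC^\times$ via evaluation at a uniformizer $\w$ of $\F$, set
\[
S = \{\chi \in \Xx_{\rm ur}(\F^\times) \mid \pi\chi \text{ is distinguished}\}.
\]
Since $\Hom_{\G^\s}(\pi\chi,\one)=\Hom_{\G^\s}(\pi,\chi^{-1}|_{\F_\so^\times}\circ\det)$, the set $\X(\pi)$ is exactly the image of $S$ under the restriction map $\Xx_{\rm ur}(\F^\times)\to\Xx_{\rm ur}(\F_\so^\times)$. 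I would first observe that $S$ contains the torsion subgroup $\mu_{t(\pi)}\subset\CC^\times$ (since $\pi\tau\simeq\pi$ for $\tau\in\mu_{t(\pi)}$) and is contained in the subgroup $H=\mu_{2t(\pi)}$ of those $\chi$ for which $\pi\chi$ is $\s$-self-dual (for unramified $\chi$ we have $\chi^\s=\chi$, so $\pi\chi$ is $\s$-self-dual iff $\chi^2\in\mu_{t(\pi)}$). Moreover $S$ is a union of $\mu_{t(\pi)}$-cosets and $[H:\mu_{t(\pi)}]=2$, so either $S=\mu_{t(\pi)}$ or $S=H$. By the dichotomy theorem (Theorem \ref{dichotomy}), $S=\mu_{t(\pi)}$ precisely when $\pi$ admits an $\ef$-distinguished unramified twist.

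Combining this with the identity $\prod_{\zeta^k=1}(1-\zeta q_\so^{-s})=1-q_\so^{-sk}$ reduces the corollary to computing $|\X(\pi)|$ in each case. When $\F/\F_\so$ is unramified, restriction is an isomorphism of character groups, so $|\X(\pi)|=|S|$. I would then verify that $\T/\T_\so$ is automatically unramified in this situation via the identification $\T\simeq\T_\so\otimes_{\F_\so}\F$ from Lemma \ref{L2} together with the preservation of unramifiedness under base change. Proposition \ref{prop:lemma16} then guarantees that $\pi$ has an $\ef$-distinguished unramified twist, forcing $S=\mu_{t(\pi)}$ and $|\X(\pi)|=t(\pi)$, which yields the unramified formula.

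When $\F/\F_\so$ is ramified, choose $\w$ so that $\s(\w)=-\w$; then restriction becomes $\chi\mapsto\chi^2$ on $\CC^\times$, with kernel $\{\one,\chi_-\}$ where $\chi_-(\w)=-1$. A crucial point is that $\chi_-|_{\F_\so^\times}=\one$, so $\pi\chi_-$ and $\pi$ agree as representations of $\G^\s$, whence $\chi_-\in S$ automatically. Thus the kernel of restriction is contained in $S$, giving $|\X(\pi)|=|S|/2$; the two possibilities $S=H$ (no $\ef$-distinguished unramified twist, $|\X(\pi)|=t(\pi)$) and $S=\mu_{t(\pi)}$ (such a twist exists, $|\X(\pi)|=t(\pi)/2$) match the two ramified formulas, and in the latter case $t(\pi)$ is necessarily even, consistent with $\chi_-\in\mu_{t(\pi)}$.

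The main obstacle is the careful bookkeeping relating the subgroup $S\subset\Xx_{\rm ur}(\F^\times)$ to $\X(\pi)\subset\Xx_{\rm ur}(\F_\so^\times)$ in the ramified case, and in particular the observation that $\chi_-\in S$ automatically, so the kernel of restriction produces a genuine reduction of the size by $2$ rather than being invisible to $\X(\pi)$. The companion subtlety is the base-change argument establishing that $\T/\T_\so$ is unramified whenever $\F/\F_\so$ is; without this, Proposition \ref{prop:lemma16} would not force $S=\mu_{t(\pi)}$ in the unramified case, and the larger value $|\X(\pi)|=2t(\pi)$ could not be excluded.
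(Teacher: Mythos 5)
Your argument is correct, and it reaches the same three formulas by a route that is essentially dual to the paper's. The paper works downstairs: it introduces the group $\U(\pi)$ of unramified characters of $\F_\so^\times$ whose unramified extensions give $\s$-self-dual twists, computes its order ($2t(\pi)$ or $t(\pi)$ according as $\F/\F_\so$ is unramified or ramified), and uses the dichotomy theorem to split $\U(\pi)$ as the disjoint union of $\X(\pi)$ and the analogous set $\Y(\pi)$ for $\ef$-distinction, comparing the two pieces by translation ($\Y(\pi)=\ef\X(\pi)$ in the unramified case; $\Y(\pi)$ empty or a translate of $\X(\pi)$ in the ramified case). You instead work upstairs with the set $S$ of unramified characters of $\F^\times$ giving distinguished twists, sandwich it between $\mu_{t(\pi)}$ and $\mu_{2t(\pi)}$, decide which via the same dichotomy, and push forward along restriction to $\F_\so^\times$; your observation that the kernel $\{\one,\chi_-\}$ of restriction is contained in $S$ in the ramified case plays exactly the role of the paper's translation argument, and correctly produces the halving of the index. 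The one place where your route is heavier than necessary is the unramified case: you import Proposition \ref{prop:lemma16} together with the (correct) base-change argument that $\T/\T_\so$ is unramified in order to rule out $S=\mu_{2t(\pi)}$, whereas the paper gets this for free from the fact that $\ef$ is itself unramified when $\F/\F_\so$ is — in your language, any unramified extension $\widetilde{\ef}$ of $\ef$ to $\F^\times$ makes $\pi\widetilde{\ef}$ an $\ef$-distinguished unramified twist of the distinguished representation $\pi$, so such a twist always exists and $S=\mu_{t(\pi)}$ without any appeal to the structure of $\T/\T_\so$.
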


As the Rankin--Selberg and Langlands--Shahidi Asai local $\L$-functions 
agree (see Theorem \ref{manciniA1}),
one can deduce 
Corollary~\ref{cor:equation} from~\cite[Theorem 1.1]{AnandMondal}.
We give another proof, based on Proposition~\ref{MatMan} and 
Theorem~\ref{dichotomy}.

\begin{proof}
Let $\R(\pi)$ denote the group of unramified characters of $\mult\F$ such~that 
$\pi(\chi\circ\det)$ is isomorphic to $\pi$.
It is cyclic and has order $t(\pi)$.
Let us fix uniformizers $\w$ and $\w_\so$ of $\F$ and $\F_\so$, 
respectively.
Since $\pi$ is distinguished, it~is $\s$-self-dual.
Let $\U(\pi)$ denote the subgroup of~un\-rami\-fied characters $\chi$ of 
$\mult\F_\so$ such that 
$\pi (\widetilde{\chi}\circ\det)$~is $\s$-self-dual
for any unramified character $\widetilde{\chi}$~of~$\F^\times$ ex\-tending $\chi$.
An un\-ra\-mi\-fied 
character $\chi$ belongs to $\U(\pi)$ if and only if 
\[
\chi\(\Nm_{\F/\F_\so}(\w)\)^{t(\pi)}=1.
\]
Note that we have $\ef\in\U(\pi)$.

Let $\Y(\pi)$ denote the set of unramified characters $\chi$ of 
$\mult\F_\so$ such that $\pi$ is $\ef\chi$-distinguished.
Then Theorem~\ref{dichotomy} says that $\U(\pi)$ decomposes as the 
disjoint union of $\X(\pi)$ and $\Y(\pi)$.

We first treat the case where $\F/\F_\so$ unramified. 
If $\chi$ is an 
unramified character of $\mult\F_\so$, then $\chi\in\U(\pi)$ if and only if 
$\chi(\w_\so)^{2t(\pi)}=1$, hence $\U(\pi)$ is cyclic of order $2t(\pi)$. 
But we have $\ef\in\U(\pi)$, hence $\Y(\pi)=\ef\X(\pi)$, and $\X(\pi)$ is of
order $t(\pi)$, this proves the expected equality in the first case. 

We now suppose that $\F/\F_\so$ is ramified, hence for an unramified character
$\chi$  of   $\mult\F_\so$,  one  has   $\chi\in  \U(\pi)$  if  and   only  if
$\chi(\w_\so)^{t(\pi)}=1$ so $\U(\pi)$ is cyclic of order $t(\pi)$. If no unramified
twist of $\pi$ is $\ef$-dis\-tinguished, then $\Y(\pi)$ is empty, hence
$\X(\pi)=\U(\pi)$ and $\X(\pi)$ is of order $t(\pi)$, whereas if an~unra\-mi\-fied 
twist $\pi\mu$ of $\pi$ is $\ef$-distinguished, then setting
$\chi=\mu|_{\mult\F_\so}$, one has $\Y(\pi)=\chi  \X(\pi)$,~thus $\X(\pi)$ is
of order $t(\pi)/2$. The last two equalities follow immediately. 
\end{proof}

\begin{remark}
By \cite[6.2.5]{BK}, the torsion number $t(\pi)$ is equal to $n/e$, 
where $e$ is a divisor of $n$ equal to the ramification index of the 
endo-class of $\pi$ (see Paragraph \ref{simplechar}),
that is $e(\E/\F)$ with the notation of Paragraph \ref{sec:inv}.
Using the invariant $e_\so$ introduced in Paragraph \ref{sec:inv}
and computed in Lemma \ref{lem:e00},
together with~Pro\-po\-sition \ref{prop:lemma16}, 
we deduce that 
Corollary \ref{cor:equation} is equivalent to the equality 
\begin{equation}
\label{panait}
\L_\As(s,\pi)=\frac{1}{1-q_\so^{-s\frac{n}{e_\so}}}.
\end{equation}
\end{remark}

%%%%%%%%%%%%%%%%%%%%%%%%%%%%%%%%%%%%%%%%%%%%
\subsection{A decomposition of the integral}

We continue with $\pi$ a cuspidal (complex)
representation of $\G$. 
For computational convenience, we introduce~a second integral: for $\W$ in the 
Whittaker model $\Ww(\pi,\psi)$ of $\pi$, we put 
\begin{equation}
\label{izero}
\I^{(0)}_\As(s,\W)=\int_{\N^\s\backslash\P^\s} \W(p)|\det(p)|_\so^{s-1}\ dp
\end{equation}
where $dp$ is a right invariant
measure on $\N^\s\backslash\P^\s$ which will be fixed later 
in Proposition \ref{coef}.
Again, if $s\in\CC$ has suf\-ficiently large real part, 
$\I^{(0)}_\As(s,\W)$ is a rational function in $q_\so^{-s}$. 

Now let $dk$ be the measure on $(\P^\s\cap \K^\s)\backslash\K^\s$ given by 
Lemma \ref{LemmaVolume} and $d^\times a$ be the Haar measure 
on~$\F^\times_\so$ giving measure $1$ to $\o_\so^\times$.
Then, as noticed in \cite[Section~4]{Flicker}, 
if $s$ has a sufficiently large real part and if the function
$\Phi\in\Cc_c^\infty(\F_\so^n)$ is chosen to be $\K^\s$-invariant, 
there is a unique right invariant measure $dg$ on $\N^\s\backslash\G^\s$, 
depending only on the choice of $dp$,
such that: 
\begin{equation}
\label{noumea}
\I_\As(s,\Phi,\W)= \int_{\mult\F_\so}\Phi(\etavec a)\om_{\pi}(a)|a|_\so^{ns}\ d^\times a
\int_{(\K^\s\cap\P^\s)\backslash\K^\s}\I^{(0)}_\As(s,k\cdot\W) \ dk
\end{equation}
where $\om_{\pi}$ denotes the central character of $\pi$
and $g\cdot\W$ denotes the action of $g\in\G$ on $\Ww(\pi,\psi)$, 
that is $(g\cdot\W)(x)=\W(xg)$ for $x\in\G$.
From now on, we will assume that $dg$ is chosen with respect to $dp$ 
so that \eqref{noumea} holds. 

Suppose that $\omega_\pi$ is trivial when restricted to $\mult\F_\so$, 
which is the case when $\pi$ is distinguished. 
If $\Phi$ is the 
characteristic function~$\one_{\o_\so^n}$ of $\o_\so^n$, then we have 
\[
\int_{\mult\F_\so}\one_{\o_\so^n}(\etavec a)\om_{\pi}(a)|a|_\so^{ns}\ d^\times a
=\int_{\o_\so\setminus\{0\}}|a|_\so^{ns}\ d^\times a
=\frac{1}{1- q_\so^{-ns}},
\]
by Tate's thesis \cite{CasselsFrohlich}.
Therefore, we have the following decomposition which we record as a lemma:

\begin{lemma}\label{Lemma1}
Let $\pi$ be a distinguished cuspidal complex 
representation of $\G$. 
Then, for~all~func\-tions $\W\in \Ww(\pi,\psi)$, we have 
\[
\I_\As\left(s,\one_{\o_\so^n},\W\right)= \frac{1}{1- q_\so^{-ns}}
\int_{(\K^\s\cap\P^\s)\backslash\K^\s}\I^{(0)}_\As\left(s,k\cdot\W\right)\ dk.
\]
\end{lemma}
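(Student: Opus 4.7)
The plan is to combine directly the three ingredients assembled in the paragraphs immediately preceding the statement: the general decomposition~\eqref{noumea}, the~$\K^\s$-invariance of the test function~$\one_{\o_\so^n}$, and the Tate integral computation. First I would observe that, since~$\pi$ is distinguished, there is a non-zero~$\G^\s$-invariant linear form on~$\pi$; restricting this form to the centre~$\mult\F_\so$ of~$\G^\s$ shows that the central character~$\omega_\pi$ is trivial on~$\mult\F_\so$, which is the hypothesis needed to apply the Tate calculation quoted just before the lemma.

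Next I would verify that~$\Phi=\one_{\o_\so^n}$ meets the hypotheses of the identity~\eqref{noumea}: the lattice~$\o_\so^n$ is stable under the right action of~$\K^\s=\GL_n(\o_\so)$, so~$\one_{\o_\so^n}$ is~$\K^\s$-invariant. Therefore, for~$s$ with sufficiently large real part, formula~\eqref{noumea} applies with this choice of~$\Phi$ and gives
\[
\I_\As\!\left(s,\one_{\o_\so^n},\W\right) = \left(\int_{\mult\F_\so}\one_{\o_\so^n}(\etavec a)\,\omega_\pi(a)|a|_\so^{ns}\,d^\times a\right)\cdot\int_{(\K^\s\cap\P^\s)\backslash\K^\s}\I^{(0)}_\As(s,k\cdot\W)\,dk.
\]

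Then I would evaluate the first factor. Since~$\etavec a=(0,\dots,0,a)$, the function~$a\mapsto\one_{\o_\so^n}(\etavec a)$ is the characteristic function of~$\o_\so$, and since~$\omega_\pi$ is trivial on~$\mult\F_\so$, the integral becomes the standard Tate integral
\[
\int_{\o_\so\setminus\{0\}}|a|_\so^{ns}\,d^\times a = \sum_{k\ge 0}q_\so^{-nsk} = \frac{1}{1-q_\so^{-ns}},
\]
using that~$d^\times a$ assigns measure~$1$ to~$\o_\so^\times$. Substituting this value into the previous display yields the claimed formula, first for~$\Re(s)$ large and then for all~$s$ by analytic continuation in~$q_\so^{-s}$. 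There is no real obstacle here: the statement is essentially a repackaging of the identity~\eqref{noumea} with a specific~$\K^\s$-invariant Schwartz function, the only point requiring a brief justification being the vanishing of~$\omega_\pi$ on~$\mult\F_\so$, which follows immediately from distinction.
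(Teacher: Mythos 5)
Your argument is correct and follows the same route as the paper: verify that distinction forces $\omega_\pi$ to be trivial on $\F_\so^\times$, note that $\one_{\o_\so^n}$ is $\K^\s$-invariant so formula~\eqref{noumea} applies, and evaluate the resulting Tate integral to obtain $1/(1-q_\so^{-ns})$. The paper cites Tate's thesis for the last step where you spell out the geometric series, but the content is identical.
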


For $\W\in\Ww(\pi,\psi)$ and $l\in\ZZ$, we write $\W_\so^l$ for the 
function from $\G^\s$ to $\CC$ supported on the subset 
$\{g\in \G^\s\mid|\det(g)|_\so=q_\so^{-l}\}$
and coinciding with $\W$ on it.
Finally we decompose the integral given~in Lemma~\ref{Lemma1} by the absolute 
value: 
\begin{align*}
\int_{(\K^\s\cap\P^\s)\backslash\K^\s}\I_\As^{(0)}(s,k\cdot\W)\ dk 
&=\sum_{l\in\ZZ}\int_{(\K^\s\cap\P^\s)\backslash\K^\s}\int_{\N^\s\backslash\P^\s}
  \W_\so^l(pk)|\det(pk)|_\so^{s-1}\ dp\ dk\\ 
&=\sum_{l\in\ZZ}q_\so^{-l(s-1)}\int_{(\K^\s\cap\P^\s)\backslash\K^\s}\int_{\N^\s\backslash\P^\s}
  \W_\so^l(pk)\ dp\ dk.
\end{align*}
{Since $\pi$ is cuspidal,
the right hand term of the equality above is a finite sum~\cite{BZ76}.}
We call 
\[
\CCC_l(\W)=\int_{(\K^\s\cap\P^\s)\backslash\K^\s}\int_{\N^\s\backslash\P^\s}
\W_\so^l(pk)\ dp\ dk
\]
the $l^{\text{th}}$ \emph{coefficient} of the integral, and we record that:

\begin{lemma}\label{Lemma2}
Let $\pi$ be a distinguished cuspidal complex 
representation of $\G$. 
Then, for~all~func\-tions $\W\in \Ww(\pi,\psi)$, we have 
\[
\I_\As(s,\one_{\o_\so^n},\W) = \frac{1}{1- q_\so^{-ns}} 
\( \sum_{l\in\ZZ} \CCC_l(\W) q_\so^{-l(s-1)} \).
\]
\end{lemma}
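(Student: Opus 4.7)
The plan is that the lemma is essentially a bookkeeping statement assembling the chain of identities already laid out in the paragraph preceding it, so the proof is a matter of carefully putting the pieces together. First I would invoke Lemma~\ref{Lemma1} to reduce the problem to
\[
\I_\As(s,\one_{\o_\so^n},\W) = \frac{1}{1-q_\so^{-ns}}\int_{(\K^\s\cap\P^\s)\backslash\K^\s}\I^{(0)}_\As(s,k\cdot\W)\,dk.
\]
Then I would unfold the definition \eqref{izero} of $\I^{(0)}_\As$ and decompose any Whittaker function as $\W = \sum_{l\in\ZZ}\W_\so^l$ according to the absolute value of the determinant on $\G^\s$. Since $|\det(k)|_\so = 1$ for every $k\in\K^\s$, the weight factor $|\det(pk)|_\so^{s-1}$ agrees with $q_\so^{-l(s-1)}$ on the support of $\W_\so^l$ and can therefore be pulled outside both integrations, producing precisely the factor $q_\so^{-l(s-1)}\CCC_l(\W)$ for each $l$.

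The only genuine point requiring attention is the interchange of sum and integral, which is valid because the sum is actually finite. This is exactly the cuspidality hypothesis: by the Bernstein--Zelevinsky theory of derivatives \cite{BZ76}, any Whittaker function of a cuspidal representation of $\G$ has restriction to $\P$ which is compactly supported modulo $\N$, so the set of $l\in\ZZ$ for which $\W_\so^l$ is nonzero on $\P^\s$ is finite. The authors have in fact already flagged this point explicitly in the paragraph immediately before the lemma, so there is no hard step left: summing the identities for each $l$ and multiplying by the prefactor $(1-q_\so^{-ns})^{-1}$ yields the claimed formula. In this sense the statement has no real obstacle; it is a summary that isolates the decomposition to be exploited in the subsequent computation of the test vector.
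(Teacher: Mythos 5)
Your proof is correct and follows the same route as the paper: invoke Lemma~\ref{Lemma1}, decompose $\W|_{\G^\s}$ by the value of $|\det|_\so$, use $|\det(k)|_\so=1$ for $k\in\K^\s$ to pull the factor $q_\so^{-l(s-1)}$ out of the double integral, and appeal to cuspidality via Bernstein--Zelevinsky for the finiteness of the sum of the $\CCC_l(\W)$. The only slight imprecision is that finiteness requires considering the values of $|\det|_\so$ on the support of the integrand over $\P^\s\K^\s$, not just on $\P^\s$; since $\K^\s$ is compact and $|\det|_\so$ is trivial on it, this immediately reduces to your claim, so the argument stands.
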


%%%%%%%%%%%%%%%%%%%%%%%%%%%%%%%%%%%%%%%%%%%%
\subsection{Test vectors}
\label{tvp73}

Until the end of this section, 
$\pi$ is a distinguished cuspidal representation of $\G$
and $(\bJ,\bl)$ is a generic $\sigma$-self-dual type 
as in Lemma \ref{basis}. 
Now we compute the Asai integral of the 
explicit Whittaker vector $\W_{\bl}$ showing it is a test vector, making use of 
the decomposition of Lemma~\ref{Lemma2}. 

\begin{proposition}\label{coef}
Let~$l\in \ZZ$.
\begin{enumerate}
\item
The~$l^{\text{th}}$ coefficient~$\CCC_l(\W_\bl)$ is zero unless we 
have~$l=in/e_\so$ for some $i\in \{0,\dots,e_\so-1\}$. 
\item
There is a unique right invariant measure $dp$ on $\N^\s\backslash\P^\s$ 
such that $\CCC_{in/e_\so}(\W_\bl)=q_\so^{-in/e_\so}$
for $i\in \{0,\dots,e_\so-1\}$.
\end{enumerate}
\end{proposition}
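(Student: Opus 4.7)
Part (i) is immediate from Proposition~\ref{Supportwhittakers}\ref{point1}: the integrand in the definition of $\CCC_l(\W_\bl)$ only involves values $\W_\bl^l(pk)$ with $pk\in\P^\s\K^\s$, so it vanishes identically unless $l$ has the form $in/e_\so$ with $i\in\{0,\ldots,e_\so-1\}$.

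For part (ii), fix such an $i$ and set $l=in/e_\so$. The plan is to use the support constraints of Proposition~\ref{Supportwhittakers} to cut the $k$- and $p$-integrals down to small subsets, and then absorb all remaining volume factors into a single normalization of $dp$. By~\ref{point2} of that proposition, the outer integration reduces to $k$ ranging over the image of the double coset $(\P^\s\cap\K^\s)k_i\J^\s$. Writing such a $k$ as $k_0k_ij$ with $k_0\in\P^\s\cap\K^\s$ and $j\in\J^\s$, the factorization $\w_\bl^i=p_ik_i$ combined with the right $\P^\s$-invariance of $dp$ allows the substitution $p\mapsto pp_ik_0^{-1}$ to transform $\W_\bl^l(pk)$ into $\W_\bl^l(p\w_\bl^ij)$. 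By~\ref{point3}, this is supported on $p\in\N^\s(\P^\s\cap\J^\s)$; writing $p=um$ there and using that $\w_\bl\in\bJ$ normalizes $\J$ (since $\J$ is the maximal compact subgroup of $\bJ$), one has $m\w_\bl^i=\w_\bl^im'$ with $m'\in\J^\s$, so
\[
\W_\bl(p\w_\bl^ij)=\psi(u)\,\Jj_\bl(\w_\bl^im'j),
\]
and the $u$-factor disappears as $\psi$ is trivial on $\N^\s$. The resulting $p$-integral should compute an invariant pairing of the Bessel function $\Jj_\bl$ against the linear form $\LL_\bl$ from Proposition~\ref{invariant integral}, yielding a constant $C$ independent of $i$ (and of $j$). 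Combined with the volume identity
\[
dk\bigl((\P^\s\cap\K^\s)\backslash(\P^\s\cap\K^\s)k_i\J^\s\bigr)=q_\so^{-in/e_\so}
\]
from Lemma~\ref{LemmaVolume}, this gives $\CCC_{in/e_\so}(\W_\bl)=C\cdot q_\so^{-in/e_\so}$ with $C\neq 0$ by Proposition~\ref{invariant integral}(ii), whose hypothesis ($\pi$ distinguished) is in force. The measure $dp$ is then uniquely normalized so that $C=1$.

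The main obstacle will be to verify that the inner $p$-integral of $\W_\bl^l(p\w_\bl^ij)$ is \emph{genuinely} independent of $j\in\J^\s$, so that the outer $k$-integration cleanly factors out as the volume supplied by Lemma~\ref{LemmaVolume}. This independence rests on the $\bJ^\s$-invariance of $\LL_\bl$ on the Paskunas--Stevens model $\Ss_\bl$ (Proposition~\ref{invariant integral}(ii), which is exactly where the distinction of $\pi$ enters in an essential way), combined with the explicit coset parametrization $k=k_0k_ij$ and the equivariance of $\Jj_\bl$ under $\Uu$. This is precisely the pattern of the proof of Proposition~8.4 in \cite{RKNM}, to which the authors already refer in the proof of Proposition~\ref{Supportwhittakers}.
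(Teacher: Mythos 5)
Your proposal is correct and follows essentially the same route as the paper: reduce both integrals via the support statements of Proposition~\ref{Supportwhittakers}, identify the inner integral with $\LL_\bl$ evaluated on the $\bJ^\s$-translate of $\Jj_\bl$ (constant in $i$ and $j$ by the $\bJ^\s$-invariance of Proposition~\ref{invariant integral}(ii)), extract the volume from Lemma~\ref{LemmaVolume}, and normalize $dp$ so the constant is $1$. The only quibble is a citation slip: the non-vanishing of the constant is Proposition~\ref{invariant integral}(i), while (ii) supplies the invariance.
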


\begin{proof}
By definition, we have
\begin{equation*}
\CCC_l(\W_\bl)=\int_{(\K^\s\cap\P^\s)\backslash\K^\s}\int_{\N^\s\backslash\P^\s}
\W_\bl^l(pk)\ dp\ dk
\end{equation*}
where $dk$ is the measure given by Lemma \ref{LemmaVolume} 
and $dp$ is a right invariant measure on $\N^\s\backslash\P^\s$.

Part (i) of the proposition follows from Proposition 
\ref{Supportwhittakers}(i). 

Now assume that $l=in/e_\so$ for some $i\in \{0,\dots,e_\so-1\}$. 
We recall that we have our fixed decompo\-si\-tions~$\w_\bl^i=p_i k_i$, 
with~$p_i\in\P^\s$, $k_i\in\K^\s$.
Then it follows from Proposition \ref{Supportwhittakers}\ref{point2} that
\begin{align*}
\CCC_{in/e_\so}(\W_\bl) 
&=
\int_{(\K^\s\cap\P^\s)\backslash(\K^\s\cap\P^\s)k_i\J^\s}
\int_{\N^\s\backslash\P^\s}\W_\bl^{in/e_\so}(pk)\ dp\ dk \\
&=
\int_{(\J^\s\cap(\K^\s\cap\P^\s)^{k_i})\backslash\J^\s}
\int_{\N^\s\backslash\P^\s}\W_\bl^{in/e_\so}(pk_{i}j)\ dp\ dj
\end{align*}
where $dj$ is the right invariant measure on 
$(\J^\s\cap(\K^\s\cap\P^\s)^{k_i})\backslash\J^\s$ 
corresponding to $dk$.

Let us compute the inner integral.
By applying the change of variable $p\mapsto pp_i^{-1}$ 
and then~Pro\-po\-sition \ref{Supportwhittakers}\ref{point3}, 
we get 
\begin{align*}
\int_{\N^\s\backslash\P^\s}\W_\bl^{in/e_\so}(pk_{i}j)\ dp
&=
\int_{\N^\s\backslash\P^\s}\W_\bl^{in/e_\so}(p\w_{\bl}^{i}j)\ dp \\
&= 
\int_{\N^\s\backslash\N^\s(\P^\s\cap\J^\s)} 
\W_\bl^{in/e_\so}(p\w_{\bl}^{i}j)\ dp \\
&=
\int_{(\N^\s\cap\J^\s)\backslash(\P^\s\cap\J^\s)} 
\W_\bl^{in/e_\so}(m\w_{\bl}^{i}j)\ dm
\end{align*}
where $dm$ is the right invariant measure on 
$(\N^\s\cap\J^\s)\backslash(\P^\s\cap\J^\s)$ corresponding to $dp$.
Since $\w_{\bl}^{i}j\in\bJ^\s$ by Lemma \ref{unramramiwas}, 
and thanks to Proposition \ref{invariant integral}(ii),
this is equal to $\LL_{\bl}(\Jj_{\bl})$. 

Now let us fix $dm$ so that $\LL_{\bl}(\Jj_{\bl})=1$,
which is possible thanks to Proposition \ref{invariant integral}(i).
This defines $dp$ uniquely. 
Then our choice of $dk$ gives us
\begin{equation*}
\CCC_{in/e_\so}(\W_\bl) 
= dk((\P^\s\cap \K^\s)\backslash (\P^\s\cap \K^\s)k_i\J^\s)
=  q_\so^{-in/e_\so}
\end{equation*}
as expected.
\end{proof}

We now prove our main result on test vectors for Asai $\L$-functions.

\begin{theorem}\label{Testvectortheorem}
Suppose $\pi$ is a distinguished cuspidal representation of $\G$. Then
\[
\I_\As(s,\one_{\o_\so^n},\W_{\bl})=\frac{1}{1-q_\so^{-s\frac{n}{e_\so}}}=\L_\As(s,\pi)
\]
where the right invariant measure $dg$ defining the left hand side is chosen so that 
\eqref{noumea} holds 
and~the measure $dp$ defining \eqref{izero}
is the one given by Proposition \ref{coef}.
\end{theorem}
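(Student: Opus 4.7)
The plan is essentially to assemble the pieces already built in the section. First I would apply Lemma~\ref{Lemma2} to the explicit Whittaker function $\W_\bl$, which expresses the Asai integral as
\[
\I_\As(s,\one_{\o_\so^n},\W_\bl)=\frac{1}{1-q_\so^{-ns}}\sum_{l\in\ZZ}\CCC_l(\W_\bl)\,q_\so^{-l(s-1)}.
\]
The content of the theorem is then reduced to computing the sum on the right.

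Next I would invoke Proposition~\ref{coef}, using the measure $dp$ on $\N^\s\backslash\P^\s$ prescribed there (which is precisely the choice that makes the statement clean). That proposition tells us two things at once: the coefficient $\CCC_l(\W_\bl)$ vanishes unless $l$ is of the form $in/e_\so$ with $i\in\{0,\dots,e_\so-1\}$, and in that case $\CCC_{in/e_\so}(\W_\bl)=q_\so^{-in/e_\so}$. Substituting these values gives a finite geometric sum
\[
\sum_{i=0}^{e_\so-1}q_\so^{-in/e_\so}\cdot q_\so^{-(in/e_\so)(s-1)}
=\sum_{i=0}^{e_\so-1}\(q_\so^{-ns/e_\so}\)^{i}
=\frac{1-q_\so^{-ns}}{1-q_\so^{-ns/e_\so}},
\]
so that the factor $1-q_\so^{-ns}$ cancels cleanly against the $\frac{1}{1-q_\so^{-ns}}$ coming from Lemma~\ref{Lemma2}, leaving $\frac{1}{1-q_\so^{-ns/e_\so}}$.

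Finally, to identify this with $\L_\As(s,\pi)$, I would appeal to the reformulation~\eqref{panait} of Corollary~\ref{cor:equation}, which gives exactly $\L_\As(s,\pi)=(1-q_\so^{-sn/e_\so})^{-1}$; recall this uses the torsion-number formula $t(\pi)=n/e$, Lemma~\ref{lem:e00} relating $e_\so$ to $e(\E/\F_\so)$, and Proposition~\ref{prop:lemma16} to detect when an $\ef$-distinguished unramified twist of $\pi$ exists. Since all the real work is already done in Proposition~\ref{coef} (the geometric progression of coefficients is what makes the cancellation work) and in the dichotomy input feeding~\eqref{panait}, there is no genuine obstacle left here: the proof is a one-line assembly, with the only thing to verify being that the measure $dg$ in the definition of $\I_\As$ is the one compatible with the choice of $dp$ from Proposition~\ref{coef} via~\eqref{noumea}, which is exactly the normalization asserted in the statement.
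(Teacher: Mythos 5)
Your argument is correct and is essentially the paper's own proof: both combine Lemma~\ref{Lemma2} (equivalently, Lemma~\ref{Lemma1} followed by the coefficient decomposition) with Proposition~\ref{coef}, sum the resulting geometric progression to cancel the factor $1-q_\so^{-ns}$, and conclude by identifying the remaining Euler factor with $\L_\As(s,\pi)$ via~\eqref{panait}. The assembly, including the remark about the compatible normalization of $dg$ and $dp$, matches the paper's reasoning step for step.
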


\begin{proof}
By Lemma~\ref{Lemma1}, we have
\[
\I_\As(s,\mathbf{1}_{\o_\so^n},\W_{\bl})= \frac{1}{1-
  q_\so^{-sn}}\int_{(\K^\s\cap\P^\s)\backslash
  \K^\s}\I^{(0)}_\As(s,k\cdot\W_{\bl})\ dk.
\]
By Proposition~\ref{coef}, we have
\begin{align*}
\I_\As(s,\one_{\o_\so^n},\W_{\bl})&=\frac{1}{1- q_\so^{-sn}}\sum_{i=0}^{e_\so-1}q_\so^{\frac{-in}{e_\so}}q_\so^{-\frac{in}{e_\so}(s-1)}=\frac{1}{1-q_\so^{-s\frac{n}{e_\so}}}.
\end{align*}
The result then follows immediately from
\eqref{panait}. 
\end{proof}

\begin{corollary} \label{Testvectorcorollary}
Let~$\pi$ be a cuspidal representation of~$\G$ such that~$\L_{\As}(s,\pi)$ is 
not $1$.
Let~$\chi$ be an unramified character of~$\F_\so^\times$ such 
that~$\pi$ is $\chi$-distinguished. 
Then 
\[
\I_\As(s,\one_{\o_\so^n},(\widetilde{\chi}\circ\det)\W_{\bl})=\L_\As(s,\pi)
\]
for any unramified character $\widetilde{\chi}$ of $\F^\times$ extending 
$\chi$. 
\end{corollary}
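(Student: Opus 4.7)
The plan is to deduce the identity from Theorem \ref{Testvectortheorem} by passing to a distinguished twist of $\pi$. Set $\pi_0 := \pi \otimes (\widetilde{\chi}^{-1}\circ\det)$. Because $\widetilde{\chi}|_{\F_\so^\times}=\chi$, the natural identification $\Hom_{\G^\s}(\pi_0,\one)\simeq\Hom_{\G^\s}(\pi,\chi\circ\det)$ combined with the hypothesis on $\pi$ shows that $\pi_0$ is distinguished. The dichotomy Theorem \ref{dichotomy} then forces $\pi_0$ to be $\s$-self-dual, so by Proposition \ref{brunau} it admits a generic $\s$-self-dual type $(\bJ,\bl)$; let $\W_\bl\in\Ww(\pi_0,\psi)$ be the explicit Whittaker function attached to this type via \eqref{numrapideW}. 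Theorem \ref{Testvectortheorem} applied to the distinguished cuspidal representation $\pi_0$ then yields
\[
\I_\As(s,\one_{\o_\so^n},\W_\bl)=\L_\As(s,\pi_0).
\]

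Next I would view $(\widetilde{\chi}\circ\det)\W_\bl$ as an element of $\Ww(\pi_0\otimes(\widetilde{\chi}\circ\det),\psi)=\Ww(\pi,\psi)$ and absorb the twist into the spectral parameter. For $g\in\G^\s$ with $|\det g|_\so=q_\so^{-m}$, the fact that $\det g\in\F_\so^\times$ and $\widetilde{\chi}$ is unramified gives $\widetilde{\chi}(\det g)=\chi(\w_\so)^m$, whence
\[
\widetilde{\chi}(\det g)\,|\det g|_\so^s=\bigl(\chi(\w_\so)q_\so^{-s}\bigr)^m=|\det g|_\so^{s'},
\]
where $s'$ is determined by $q_\so^{-s'}=\chi(\w_\so)q_\so^{-s}$. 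Plugging this into the defining integral and appealing to the previous display gives
\[
\I_\As(s,\one_{\o_\so^n},(\widetilde{\chi}\circ\det)\W_\bl)=\I_\As(s',\one_{\o_\so^n},\W_\bl)=\L_\As(s',\pi_0).
\]

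To close the argument I would invoke Proposition \ref{MatMan}. A direct unfolding of the definition of $\X$ gives $\X(\pi_0)=\chi^{-1}\cdot\X(\pi)$, i.e.\ $\X(\pi)=\chi\cdot\X(\pi_0)$. Substituting $\xi=\chi\eta$ with $\eta$ ranging over $\X(\pi_0)$ in the product formula for $\L_\As(s,\pi)$ converts each factor $(1-\xi(\w_\so)q_\so^{-s})^{-1}$ into $(1-\eta(\w_\so)q_\so^{-s'})^{-1}$, so that $\L_\As(s,\pi)=\L_\As(s',\pi_0)$. Combined with the display above, this yields the desired equality. The one point worth explicit verification is the independence of the statement on the choice of unramified extension $\widetilde{\chi}$ of $\chi$; this is automatic because two such extensions differ by an unramified character of $\F^\times$ trivial on $\F_\so^\times$, and only the values on $\det g\in\F_\so^\times$ enter the Asai integral. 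I expect no serious obstacle beyond this routine book-keeping between $\pi$ and its distinguished twist.
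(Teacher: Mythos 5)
Your proposal is correct and is exactly the argument the paper intends (the corollary is stated without proof as an immediate consequence of Theorem~\ref{Testvectortheorem}): twist $\pi$ by $\widetilde{\chi}^{-1}$ to a distinguished representation $\pi_0$, apply the theorem to $\pi_0$, absorb the unramified twist into the shift $q_\so^{-s}\mapsto\chi(\w_\so)q_\so^{-s}$ of the spectral parameter, and match the two $\L$-factors via Proposition~\ref{MatMan} using $\X(\pi)=\chi\,\X(\pi_0)$. All the bookkeeping checks out.
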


%%%%%%%%%%%%%%%%%%%%%%%%%%%%%%%%%%%%%%%%%%%%
\section{Flicker--Kable root numbers for cuspidal representations}
% \label{section:Asairoot}
%%%%%%%%%%%%%%%%%%%%%%%%%%%%%%%%%%%%%%%%%%%%

In this section, using Theorem \ref{Testvectortheorem}, we compute the local 
Asai root number, as defined by Flicker and Kable, of a cuspidal distinguished 
representation of $\G=\GL_n(\F)$. 
Our methods here are purely local. 

Let us fix once and for all a non-trivial complex character $\psi_\so$ of 
$\F_\so$, 
and a non-zero element $\d\in \mult\F$ such that $\tr_{\F/\F_\so}(\d)=0$. 
We consider the character
\begin{equation}
\label{psifd}
\psi_\so^{\F,\d} : x\mapsto \psi_\so(\tr_{\F/\F_\so}(\d x))
\end{equation}
of $\F$.
As the characteristic of $\F_\so$ is not $2$, this character is 
a non-trivial character of $\F$ trivial on~$\F_\so$. 
Conversely, a non-trivial character of $\F$ trivial on $\F_\so$ is of the 
form $\psi_\so^{\F,t\d}$ for a unique $t\in\F_\so^\times$.

We denote by $\psi=\psi^{\d}$ the standard $\s$-self-dual non-de\-ge\-ne\-rate 
character of~$\N$ attached to \eqref{psifd}, namely
\begin{equation}
\label{Muller}
\psi=\psi^{\d} : u\mapsto 
\psi_\so(\tr_{\F/\F_\so}(\d (u_{1,2}+\dots+u_{n-1,n})).
\end{equation}
Given a generic irreducible complex representation $\pi$ of $\G$, 
its Asai integrals satisfy a local func\-tional equa\-tion
(see the appendix of \cite{Flicker-zeroes} and \cite[Theorem 3]{Kable}): 
there is a unique element $\epsilon_{\As}^{\FK}(s,\pi,\psi_\so,\d)$ in the 
units of $\CC[q_{\so}^{s},q_\so^{-s}]$,
called the local Asai epsilon factor, 
such that, for all functions $\W\in\Ww(\pi,\psi^\d)$ and 
$\Phi\in\Cc_c^\infty(\F_\so^n)$, we have
\begin{equation}
\label{FE}
\frac{\I_\As(1-s,\widehat{\Phi},\widetilde{\W})}{\L_\As(1-s, \pi^{\vee})} =
\epsilon_\As^{\FK}(s,\pi,\psi_\so,\d) \cdot \frac{\I_\As(s,\Phi,\W)}{\L_\As(s,\pi)}
\end{equation}
where:
\begin{enumerate}
\item
$\widehat{\Phi}=\widehat{\Phi}^{\psi_{\so}}$ 
denotes the Fourier transform of $\Phi$ with respect 
to the character $\psi_\so\otimes\dots \otimes \psi_\so$ 
of $\F_\so^n$ and its associated self-dual Haar measure, 
and
\item
$\widetilde{\W}$ is the function in $\Ww(\pi^\vee,\psi^{-\d})$
defined by
\begin{equation*}
\widetilde{\W}(g)=\W(w_0g^*),
\quad g\in\G,
\end{equation*}
where $w_0$ is the antidiagonal permutation matrix of maximal length
and $g^*$ is the transpose of $g^{-1}$.
\end{enumerate}
Notice that the epsilon factor defined above is the one used 
in \cite{Kable}; it differs by a sign from the one defined in 
\cite{Flicker-zeroes}. 
In the next section we will address the question of proper normalization.

Before stating the main result of this section, let us make one observation
on Asai root numbers~of distinguished generic representations of $\G$. If
$\pi$ is such a representation, then applying the functional equation for 
$\I_\As(s,\Psi,\W)$ and $\I_\As(1-s,\widehat{\Phi},\widetilde{\W})$ gives us 
\begin{equation*}
\epsilon_\As^{\FK}(s,\pi,\psi_\so,\d) \cdot
\epsilon_\As^{\FK}(1-s,\pi^{\vee},\psi_\so,-\d) 
= \omega_{\pi}(-1) 
\end{equation*}
as in \cite[Theorem 3]{Kable}.
Since $\pi$ is distinguished, its central character is trivial on
$\F_\so^\times$ and $\pi^\vee\simeq\pi^\s$.
Since $\pi$ and $\pi^\s$ have the same local Asai $\L$-factor 
and 
$\epsilon_\As^{\FK}(s,\pi,\psi_\so,\d)=\epsilon_\As^{\FK}(s,\pi^\s,\psi_\so,-\d)$, 
we get
\[\epsilon_\As^{\FK}\left(\frac 1 2,\pi,\psi_\so,\d\right)
\in\{-1,1\}.
% =\pm 1.
\] 
It is expected that this number is $1$ (cf. \cite[Remark 4.4]{AnandRoot}). 
Here we prove it when $\pi$ is a distingui\-shed cus\-pidal representation. 

\begin{theorem}
\label{distinguished cuspidal Asairootnumber}
Let $\pi$ be a distinguished cuspidal representation of $\G$.
Then 
\begin{equation*}
\epsilon_\As^{\FK}\left(\frac 1 2,\pi,\psi_\so,\d\right)=1.
\end{equation*}
\end{theorem}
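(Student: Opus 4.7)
The plan is to insert the explicit test vectors from Theorem~\ref{Testvectortheorem} into the functional equation \eqref{FE} at $s=1/2$ and show that both sides evaluate to the same positive real number, forcing the root number to be $+1$.

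First I would fix a generic $\s$-self-dual type $(\bJ,\bl)$ in $\pi$, which exists by Proposition~\ref{brunau} since $\pi$ is $\s$-self-dual, and take $\Phi_0=\mathbf{1}_{\o_\so^n}$ and $\W_\bl\in\Ww(\pi,\psi^\d)$ as test vectors. By Theorem~\ref{Testvectortheorem},
\begin{equation*}
\I_\As(s,\Phi_0,\W_\bl)=\L_\As(s,\pi)=\frac{1}{1-q_\so^{-sn/e_\so}},
\end{equation*}
which at $s=1/2$ is a positive real number. Since $\pi$ is distinguished, $\pi^\vee\simeq\pi^\s$ is distinguished as well and $\L_\As(1/2,\pi^\vee)=\L_\As(1/2,\pi)$. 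Plugging into \eqref{FE} at $s=1/2$ yields
\begin{equation*}
\I_\As(1/2,\widehat{\Phi_0},\widetilde{\W_\bl})=\epsilon_\As^\FK(1/2,\pi,\psi_\so,\d)\cdot\L_\As(1/2,\pi),
\end{equation*}
so, knowing that the root number is $\pm1$ a priori, the theorem reduces to showing that the left-hand integral is a positive real number.

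The key step is to identify this left-hand side with an analogous test-vector integral for $\pi^\vee$, to which we apply Theorem~\ref{Testvectortheorem} a second time. If $(\bJ,\bl)$ is generic $\s$-self-dual in $\pi$ with respect to $(\N,\psi^\d)$, then $(\bJ,\bl^\vee)$ is generic $\s$-self-dual in $\pi^\vee$ with respect to $(\N,\psi^{-\d})$. I would then analyze the contragredient map $\W\mapsto\widetilde{\W}$ on the Paskunas--Stevens Bessel-function description of $\W_\bl$ and, using the $\s$-self-duality of $(\bJ,\bl)$ and of $\psi^\d$, verify that $\widetilde{\W_\bl}$ equals $\W_{\bl^\vee}$ up to a positive real scalar coming from measure normalizations. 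Since $\widehat{\Phi_0}$ differs from the characteristic function of a standard $\o_\so$-lattice only by a positive power of $q_\so$ and a translation by a scalar in $\F_\so^\times$ (on which the central character of $\pi$ is trivial by distinction), Theorem~\ref{Testvectortheorem} applied to $\pi^\vee$ then implies that $\I_\As(1/2,\widehat{\Phi_0},\widetilde{\W_\bl})$ is a positive real multiple of $\L_\As(1/2,\pi^\vee)$, completing the argument.

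The principal obstacle is the identification of $\widetilde{\W_\bl}$ with a positive real multiple of $\W_{\bl^\vee}$: one must verify that the antipode $g\mapsto w_0g^*$ interacts with the explicit Bessel function in a sign-preserving way. The $\s$-self-duality of the type $(\bJ,\bl)$, the $\s$-self-duality of $\psi^\d$ (which requires the hypothesis $p\ne2$), and the triviality of Galois cohomology of pro-$p$ subgroups under $\s$ should be the essential ingredients that rule out an unwanted sign.
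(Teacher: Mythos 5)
Your reduction is sound up to the point where positivity must be shown: since $\e_\As^\FK(s,\pi,\psi_\so,\d)$ is a unit in $\CC[q_\so^{s},q_\so^{-s}]$ and the root number is a priori $\pm1$, it indeed suffices to prove that the dual-side quantity is a positive real number at some real $s$ (the paper does this at $s=0$, you at $s=1/2$; either is fine), and your treatment of $\widehat{\Phi}_0$ is harmless. The gap is in your key step: the claim that $\widetilde{\W}_{\bl}$ equals the explicit Whittaker function of a type for $\pi^\vee$ ``up to a positive real scalar coming from measure normalizations.'' No measure enters there. Both functions are supported on a single double coset $\N g_0\bJ'$ and correspond, by Frobenius reciprocity, to vectors in the one-dimensional space $\Hom_{\bJ'\cap\N}(\bl',\psi^{-\d})$, so they are proportional; but the constant of proportionality is a value of the Bessel function, roughly $\W_{\bl}(w_0g_0^{*})=\Jj_{\bl}(\,\cdot\,)$ at a specific element of $\bJ$. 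Such Bessel values are normalized Gauss sums, and their phase is exactly the quantity that the epsilon-factor computations of \cite{StPa} are designed to extract; nothing in the $\s$-self-duality of $(\bJ,\bl)$, of $\psi^\d$, or in the vanishing of $\CH^1(\s,\cdot)$ on pro-$p$ groups forces this phase to be $+1$. Pinning it down is essentially equivalent to the theorem itself, so as written the argument is circular at its decisive point. (A secondary issue: you would also need to check that the relevant conjugate of $(\bJ,\bl^\vee)$ is the \emph{generic} $\s$-self-dual type of $\pi^\vee$ positioned as in Lemma \ref{basis}, since Theorem \ref{Testvectortheorem} only applies to such a type; but this is minor next to the sign problem.)

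The paper's proof sidesteps any comparison of $\widetilde{\W}_{\bl}$ with a test vector for $\pi^\vee$. It works at the pole $s=0$ of $\L_\As(s,\pi)$, where by the residue computation of \cite{MR2063106} one has $\I_\As(1,\widehat{\Phi}_0,\widetilde{\W}_0)\sim\Phi_0(0)\,l'(\W_0)$ and $\L_\As(1,\pi^\vee)\sim\I_\As(1,\Phi_0,\W_0)\sim\widehat{\Phi}_0(0)\,l(\W_0)$, with $l$ and $l'$ the $\N^\s\backslash\P^\s$-period integrals of $\W$ and of $\widetilde{\W}$. These are $\G^\s$-invariant by Ok's theorem, hence proportional by multiplicity one, and the positivity of the ratio $l'/l$ is imported from \cite[Theorem 7.2]{AnandMatringe}, where it rests on a unitarity (inner-product) argument. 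That external positivity input is precisely what replaces the unproven sign claim in your proposal; some substitute for it is indispensable for your route to succeed.
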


\begin{proof}
Since we have already observed that the possible values for this epsilon 
factor are $-1$ and $1$, we just need to show that
$\epsilon_\As^\FK(1/2,\pi,\psi_0,\d)$ is positive. 
To show this it is sufficient to show that 
$\epsilon_\As^\FK(0,\pi,\psi_\so,\d)$ is positive since
\begin{equation*}
\epsilon_\As^\FK(s,\pi,\psi_\so,\d)= 
q_\so^{m(s-1/2)}\cdot\epsilon_\As^{\FK}\left(\frac 1 2,\pi,\psi_\so,\d\right)
\end{equation*} 
for some $m\in\ZZ$ as $\epsilon_\As^\FK(s,\pi,\psi_\so,\d)$ is just a unit in 
$\CC[q_{\so}^{s},q_\so^{-s}]$. 

Fix a Whittaker datum $(\N,\psi_1)$ and a $\s$-self-dual type $(\bJ,\bl)$ 
as in Lemma \ref{basis}. 
The symbol $\sim$ will stand for equality up to a positive constant. By
Theorem \ref{Testvectortheorem}, there is 
$\W_{\bl}\in\Ww(\pi,\psi_1)$ such that 
\[\I_\As(s,\Phi_0, \W_{\bl})\sim \L_\As(s,\pi)\] 
where $\Phi_0$ is the characteristic function of 
$\o_\so^n$ in $\F_\so^n$.
As $\psi_1(u)=\psi(tut^{-1})$ for some diagonal matrix $t$ with 
coefficients in $\F_\so^\times$, the function 
\[\W_0 : g \mapsto \W_{\bl}(t^{-1}g)\] belongs to $\Ww(\pi,\psi)$.
We may (and will) 
even assume that the bottom coefficient on the diagonal~of $t$ is $1$. 
Applying the change of variable $g\mapsto t^{-1}g$, 
we check that 
\begin{equation*}
\I_\As(s,\Phi_0,\W_0)\sim \I_\As(s,\Phi_0,\W_{\bl}).
\end{equation*}
Applying the functional equation, we get 
\begin{equation*}
\frac{\I_\As(1,\widehat{\Phi}_0,\widetilde{\W}_0)}{\L_\As(1,\pi^\vee)} 
\sim \epsilon^\FK_\As(0,\pi,\psi_\so,\d).
\end{equation*}
Let $l$ and $l'$ denote the linear forms on $\Ww (\pi,\psi)$ defined by 
\[l:\W\mapsto \int_{\N^\sigma\backslash \P^\sigma} \W(h)\ dh
\quad 
\text{and}
\quad
l':\W\mapsto
  \int_{\N^\sigma\backslash \P^\sigma} \widetilde{\W}(h)\ dh.\] 
Both these linear forms are defined by convergent integrals: 
by \cite[Corollary 5.19]{BZ76}
the supports in $\G^\s$ of the
integrands~are compact mod $\N^\sigma$ on $\G^\sigma$. 
They are $\G^\s$-invariant by \cite[Theorem 3.1.2]{Ok}; 
by multiplicity $1$, 
they thus differ by a scalar,
which is positive by \cite[Theorem 7.2]{AnandMatringe}. 
By the proof~of \cite[{Theorem 1.4}]{MR2063106}, 
we have 
\[\I_\As(1,\widehat{\Phi}_0,\widetilde{\W}_0)
\sim 
\Phi_0(0)l'(\W_0).\] 
On the other hand, we have 
\[\L_\As(1,\pi^\vee)=\L_\As(1,\pi^\s)=\L_\As(1,\pi)\sim
  \I_\As(1,\Phi_0,\W_0)\sim \widehat{\Phi}_0(0)l(\W_0),\] 
the last equality by \cite{MR2063106} again. 
In particular we get 
\begin{equation*}
\frac{\I_\As(1,\widehat{\Phi}_0,\widetilde{\W}_0)}{\L_\As(1,\pi^\vee)}
\sim
\Phi_0(0)\widehat{\Phi}_0(0)^{-1}
\end{equation*} 
and the right hand side is positive thanks to our choice of $\Phi_0$.
Hence we have $\epsilon_\As^\FK(0,\pi,\psi_\so,\d)>0$, which implies 
that $\epsilon_\As^\FK(1/2,\pi,\psi_\so,\d)=1$. 
\end{proof}

\begin{remark}
In the proof above, we used results written in 
characteristic $0$ only.
Let us explain why they are valid in characteristic $p$ as well. 
First notice that, 
{as $\Hom_{\G^\s}(\pi,\one)$ 
and $\Hom_{\P^\s}(\pi,\one)$ are equal by Ok \cite[Theorem 3.1.2]{Ok},}
the computation borrowed from~the proof of
\cite[{Theorem~1.4}]{MR2063106} holds for $\F$ of arbitrary 
characteristic.
In \cite[Theorem 7.2]{AnandMatringe}, 
and more ge\-ne\-rally in \cite{AnandMatringe}, 
the field~$\F$~is assumed to have~cha\-rac\-teristic $0$. 
In fact, appealing to \cite[Theorem 6.3]{AnandMatringe} 
is enough in the cuspidal~ca\-se,
since a distinguished cuspidal representation of $\G$ is always 
unitary (as its central character is). 
Now the only ingredient in the proof of \cite[Theorem 6.3]{AnandMatringe} 
which uses this rectriction on the~charac\-teristic of $\F$ is that the
{Godement--Jacquet}
ep\-si\-lon 
factor $\e(1/2,\pi,\psi)$ is equal to $1$, for which 
\cite{AnandMatringe}~re\-fers 
to \cite{MatringeOffen}, but the cuspidal case of this~re\-sult is
already in \cite{Ok} and this reference does not assume the 
characteristic of $\F$ to be $0$.
\end{remark}

\section{Comparing Asai epsilon factors}
\label{section Asai RS epsilon factors}

In this section, we
compare, for $\pi$ a generic unramified representation of $\G=\GL_n(\F)$ 
(not neces\-sarily distinguished), the Flicker--Kable Asai epsilon factor 
{to the Asai epsilon factor of $\pi$ defined via the
Langlands--Shahidi method}.
This naturally leads to the normalization 
we give in De\-fi\-ni\-tion \ref{paulveyne}.
Beuzart-Ples\-sis came up to the same normalization in \cite{B-P18}.
Then, we show by a global~ar\-gument that,~for cuspidal~re\-presentations, 
all these definitions of the Asai epsilon factor coincide.~In
par\-ti\-cu\-lar
we answer some
questions posed in \cite[Remark 4.4]{AnandRoot}.

\subsection{Changing the additive character}
\label{not91}

We denote by $\WW_\F$ the Weil group of $\F$ 
with repect to a given separable closure $\overline{\F}$ of $\F$, 
and by $\WW'_\F$ the cor\-responding Weil--Deligne 
group, that is, its direct product by ${\rm SL}(2,\CC)$. 
We use a similar~no\-tation for $\F_{\so}$.
We will write $\Ind'_{\F/\F_\so}$ and $\M'_{\F/\F_\so}$ for induction and 
multiplicative induction 
{(de\-fi\-ned for instance in \cite[Section 7]{Pra92})}
from $\WW'_{\F}$ to $\WW'_{\F_\so}$.
We will also write $\Ind_{\F/\F_\so}$ and $\M_{\F/\F_\so}$ 
for induction and multiplicative induction from $\WW_\F$ to 
$\WW_{\F_\so}$.

Given an irreducible~re\-pre\-sen\-tation $\pi$ of $\G$,
we denote by $\rho(\pi)$ its Langlands parameter, which is a 
finite dimensional representation of $\WW'_\F$.
Then, using local class field theory to identify characters of $\WW'_{\F}$ and of 
$\F^\times$, we have:
\begin{equation}
\label{jeanjacques}
\det\left(\M'_{\F/\F_{\so}}(\rho(\pi))\right)
=\om_{\F/\F_\so}^{n(n-1)/2}\cdot
{\om_\pi^n}|_{\F_\so^\times}.
\end{equation}
When $\pi=\chi$ is a character of $\F^\times$, 
which we identify with $\rho(\chi)$, 
this tells us that $\M'_{\F/\F_{\so}}(\chi)$ is the restriction of 
$\chi$ to ${\F_\so^\times}$.

Given a generic irreducible representation $\pi$ of $\G$,
we denote 
\begin{enumerate}
\item
by $\e_\As^\LS(s,\pi,\psi_\so)$ and $\L_\As^\LS(s,\pi)$ 
the Asai local factors attached 
to $\pi$ via the Langlands--Shahidi method 
(see \cite{Sh-Plancherel90}
when $\F$ has characteristic $0$ and \cite{LomeliLfunctions} 
when $\F$ has characteristic $p$), 
\item
by $\e_\As^{\Gal}(s,\pi,\psi_\so)$ and $\L_\As^{\Gal}(s,\pi)$
the Langlands--Deligne local constants 
of the local Asai transfer $\M'_{\F/\F_{\so}}(\rho(\pi))$
of the Langlands parameter of $\pi$ 
(see \cite[Section 7]{Pra92}). 
\end{enumerate}
When $\F$ has characteristic $0$,
the Asai local $\L$-functions 
$\L_\As^{}(s,\pi)$, $\L_\As^{\LS}(s,\pi)$ and $\L_\As^{\Gal}(s,\pi)$
are known to be all equal.
We will see in the appendix (Theorem \ref{manciniA1})
that this still holds in characteristic $p$.

By \cite{Sh-Plancherel90} when $\F$ has characteristic $0$ 
and by \cite{HenniartLomeli} when $\F$ has characteristic $p$, 
when $\pi$ is unramified and generic we have: 
\[\e_\As^{\LS}(s,\pi,\psi_\so)=\e_\As^{\Gal}(s,\pi,\psi_\so)\] 
whereas by \cite{Henn-L} when $\F$ has characteristic $0$ 
and \cite{HenniartLomeli} when $\F$ has characteristic $p$, 
when $\pi$ is generic we have: 
\[\e_\As^{\LS}(s,\pi,\psi_\so)=\zeta\cdot\e_\As^{\Gal}(s,\pi,\psi_\so)\] 
where $\zeta$ is a root of unity independent from $\psi_\so$, 
which is expected to be $1$, and known to be $1$ when $\F$ has characteristic 
$p$. 

We first describe how all these epsilon factors depend on 
$\psi_\so$. 
Given $t\in \F_\so^\times$, we write $\psi_{\so,t}$ for the character
$x\mapsto\psi_\so(tx)$ of $\F_\so$.

\begin{lemma}\label{lemma twisting the additive character}
Let $\pi$ be generic irreducible representation of $\G$ and $t\in \mult\F_\so$.
Then we have
\begin{enumerate}
\item
$\epsilon_\As^\FK(s,\pi,\psi_{\so,t},\d) 
= \om_\pi(t)^{n} \cdot |t|_\so^{n^2(s-1/2)} \cdot \epsilon_\As^\FK(s,\pi,\psi_\so,\d)$, 
\item
$\epsilon_\As^\LS(s,\pi,\psi_{\so,t})
= \om_\pi(t)^n\cdot |t|_\so^{n^2(s-1/2)} \cdot \om_{\F/\F_\so}(t)^{n(n-1)/2} \cdot 
\epsilon_\As^\LS(s,\pi,\psi_\so)$,
\item
$\e_\As^{\Gal}(s,\pi,\psi_{\so,t})
= \om_\pi(t)^n \cdot |t|_\so^{n^2(s-1/2)} \cdot \om_{\F/\F_\so}(t)^{n(n-1)/2} \cdot 
\e_\As^{\Gal}(s,\pi,\psi_{\so})$.
\end{enumerate}
\end{lemma}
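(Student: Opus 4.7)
Part~(iii) is a direct consequence of the standard transformation law for Langlands--Deligne local constants: for any Weil--Deligne representation $\rho$ of $\WW'_{\F_\so}$ one has $\epsilon(s,\rho,\psi_{\so,t})=\det\rho(t)\cdot|t|_\so^{\dim\rho\,(s-1/2)}\cdot\epsilon(s,\rho,\psi_\so)$. Applied to $\rho=\M'_{\F/\F_\so}(\rho(\pi))$, whose dimension is $n^2$ and whose determinant is $\om_{\F/\F_\so}^{n(n-1)/2}\cdot\om_\pi^n|_{\F_\so^\times}$ by~\eqref{jeanjacques}, this yields~(iii). Part~(ii) then follows immediately, because by \cite{Henn-L,HenniartLomeli}, as recalled in Paragraph~\ref{not91}, the ratio $\e_\As^\LS(s,\pi,\psi_\so)/\e_\As^{\Gal}(s,\pi,\psi_\so)$ is a root of unity independent of $\psi_\so$, and hence cancels when one compares the behaviour of $\e_\As^\LS$ and $\e_\As^{\Gal}$ under $\psi_\so\mapsto\psi_{\so,t}$.

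The plan for~(i) is to apply the Flicker--Kable functional equation~\eqref{FE} twice, once for each additive character, and to compare. By~\eqref{Muller}, the standard non-degenerate character of $\N$ attached to $(\psi_{\so,t},\d)$ is $\psi^{t\d}$. Set $a_t=\diag(t^{n-1},t^{n-2},\dots,t,1)$. Since $a_t$ normalises $\N$ and conjugation by $a_t$ scales each super-diagonal entry of a unipotent matrix by~$t$, the assignment $\W\mapsto\W_t$ with $\W_t(g)=\W(a_t g)$ defines a bijection $\Ww(\pi,\psi^\d)\to\Ww(\pi,\psi^{t\d})$. Substituting $g\mapsto a_t^{-1}g$ in the defining integral, using $\etavec a_t^{-1}=\etavec$ (because the last diagonal entry of $a_t$ is~$1$) together with the standard change-of-measure identity $\int f(a_tg)\,dg=\delta_B(a_t)\int f(g)\,dg$, where $\delta_B$ is the modular character of the standard Borel of $\G^\s$, one obtains
\begin{equation*}
\I_\As(s,\Phi,\W_t)=\delta_B(a_t)^{-1}\cdot|t|_\so^{-sn(n-1)/2}\cdot\I_\As(s,\Phi,\W).
\end{equation*}

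For the dual side of~\eqref{FE}, the identity $a_t w_0=w_0 a_t'$ with $a_t'=\diag(1,t,\dots,t^{n-1})=t^{n-1}a_t^{-1}$ gives $\widetilde{\W_t}(g)=\om_\pi(t)^{n-1}(\widetilde\W)_t(g)$, where $(\widetilde\W)_t(g)=\widetilde\W(a_t g)$. Applying the same substitution $g\mapsto a_t^{-1}g$ to $\I_\As(1-s,\widehat{\Phi}^{\psi_{\so,t}},(\widetilde\W)_t)$, then invoking the Fourier scaling identity $\widehat{\Phi}^{\psi_{\so,t}}(y)=|t|_\so^{n/2}\widehat{\Phi}^{\psi_\so}(ty)$ together with the central substitution $g\mapsto t^{-1}g$ (whose modular effect is trivial since $\delta_B(t^{-1}I)=1$, and which contributes a central character factor $\om_{\pi^\vee}(t^{-1})=\om_\pi(t)$), one reduces everything to $\I_\As(1-s,\widehat{\Phi}^{\psi_\so},\widetilde\W)$. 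Taking the ratio of the two functional equations, the factors $\delta_B(a_t)^{-1}$ cancel between numerator and denominator, the exponents of $|t|_\so$ add up to $n(n-1)(s-1/2)+n(s-1/2)=n^2(s-1/2)$, and the central character factors combine to $\om_\pi(t)^n$, establishing~(i). The main technical obstacle is precisely this bookkeeping of cancellations, in particular the cancellation of the modular factors $\delta_B(a_t)$, which is the structural reason why~(i) carries no $\om_{\F/\F_\so}$ factor while~(ii) and~(iii) do.
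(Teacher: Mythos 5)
Your proposal is correct and follows essentially the same route as the paper: for part (i) you use the same diagonal matrix $a_t=\diag(t^{n-1},\ldots,t,1)$, the same conjugation identity $w_0 a_t w_0 = t^{n-1}a_t^{-1}$, the same Fourier scaling relation, and the same observation that the positive Jacobian/modular factor cancels between the two sides of the functional equation (the paper simply calls this factor $\mu(t)$ without identifying it, whereas you identify it as $\delta_B(a_t)$, but that extra precision is immaterial since the point is cancellation). Parts (ii) and (iii) are handled exactly as in the paper, via the Langlands--Deligne transformation law together with \eqref{jeanjacques}, and then deducing (ii) from (iii) through the $\psi_\so$-independent root of unity relating $\e_\As^\LS$ and $\e_\As^{\Gal}$.
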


\begin{proof}
We first give the proof of (i) for convenience of the reader; 
it follows verbatim the analogue for Rankin--Selberg $\L$-factors 
in p. 7 of \cite{JacquetRSarchi}. 
As before we set $\psi=\psi^\d$. 
We introduce the matrix \[a=\diag(t^{n-1},\dots,t,1).\] 
Thus we have $\W\in \Ww(\pi,\psi^\d)$ if and only if the function
$\W_{a}:g\mapsto\W(ag)$ is in $\Ww(\pi,\psi^{t\d})$. 
Now take 
$\W\in \Ww(\pi,\psi)$ and $\Phi\in \Cc_c^\infty (\F_\so^n)$, 
and notice that 
$\Phi(\etavec a^{-1} h)=\Phi(\etavec h)$ for all $h\in \G^\sigma$. 
Then
\begin{eqnarray*}
\I_\As(s,\Phi,\W_{a}) 
&=& 
\int_{\N^\s\backslash \G^\s}\W(ah)\Phi(\etavec h)|\det(h)|_\so^s\ dh \\ 
&=& 
\mu(t)\cdot\int_{\N^\s\backslash \G^\s}\W(h)\Phi(\etavec h)
|\det(a^{-1}h)|_\so^s\ dh \\
&=& 
\mu(t)\cdot|t|_\so^{-n(n-1)s/2}\cdot\I_\As(s,\Phi,\W)
\end{eqnarray*}
for some positive character $\mu$ of $\F_\so^\times$.
On the other hand, for all $h\in\G^\s$, we have 
\begin{equation*}
\W(aw_0h^*) 
= \W(w_0(a^{* w_0}h)^*)
= \widetilde{\W}(a^{* w_0} h)
= \widetilde{\W}(t^{1-n} ah).
\end{equation*}
It follows that
\begin{eqnarray*}
\I_\As(1-s,\widehat{\Phi}^{\psi_{\so,t}},\widetilde{\W}_{a})
&=& 
\int_{\N^\s\backslash \G^\s}\widetilde{\W}(t^{1-n} ah)
\widehat{\Phi}^{\psi_{\so,t}}(\etavec h) 
|\det(h)|_\so^{1-s}\ dh \\
&=& 
\om_\pi(t)^{n-1}\cdot
\int_{\N^\s\backslash 
  \G^\s}\widetilde{\W}(ah)\widehat{\Phi}^{\psi_{\so,t}}(\etavec h) 
|\det(h)|_\so^{1-s}\ dh \\
&=& 
\om_\pi(t)^{n-1}\cdot\mu(t)\cdot\int_{\N^\s\backslash \G^\s}
\widetilde{\W}(h)\widehat{\Phi}^{\psi_{\so,t}}(\etavec h) 
|\det(a^{-1}h)|_\so^{1-s}\ dh \\
&=&
\om_\pi(t)^{n-1}\cdot\mu(t)\cdot|t|_\so^{n(n-1)(s-1)/2}\cdot
\int_{\N^\s\backslash \G^\s}
\widetilde{\W}(h)\widehat{\Phi}^{\psi_{\so,t}}(\etavec h) |\det(h)|_\so^{1-s}\ dh.
\end{eqnarray*}
Now we use the relation 
\begin{equation}
\label{changeFourier}
\widehat{\Phi}^{\psi_{\so,t}}(x)=|t|_\so^{n/2}\cdot\widehat{\Phi}(tx),
\quad
x\in\F_\so^n,
\end{equation}
to get 
\begin{eqnarray*}
\int_{\N^\s\backslash \G^\s}
\widetilde{\W}(h)\widehat{\Phi}^{\psi_{\so,t}}(\etavec h) |\det(h)|_\so^{1-s}\ dh
&=& 
|t|_\so^{n/2}\cdot
\int_{\N^\s\backslash \G^\s}\widetilde{\W}(h)\widehat{\Phi}(\etavec t h) 
|\det(h)|_\so^{1-s}\ dh \\
&=& 
|t|_\so^{n/2}\cdot\int_{\N^\s\backslash \G^\s}
\widetilde{\W}(t^{-1}h)\widehat{\Phi}(\etavec h) 
|\det(t^{-1}h)|_\so^{1-s}\ dh \\
&=& 
|t|_\so^{n/2} \cdot
|t|_\so^{n(s-1)} \cdot
\om_\pi(t) \cdot
\I_\As(1-s,\widehat{\Phi},\widetilde{\W}).
\end{eqnarray*}
We thus get the relation:
\begin{equation*}
\e_{\As}^{\FK}(s,\pi,\psi_{\so,t},\d) =
\frac{\om_\pi(t)^{n} \cdot \mu(t) \cdot |t|_\so^{n(n-1)(s-1)/2 + n(s-1/2)}}
{\mu(t) \cdot |t|_\so^{-n(n-1)s/2}}
\cdot \e_{\As}^{\FK}(s,\pi,\psi_\so,\d) 
\end{equation*}
which gives us the expected result.

Now, as we noticed that $\e_\As^{\Gal}(s,\pi,\psi_\so)$ 
and $\e_\As^\LS(s,\pi,\psi_\so)$ are equal up to a non-zero constant 
which does not depend on $\psi_\so$, it is enough to prove (iii).
Then by the properties of the Langlands-Deligne constants in 
\cite{TateCorvallis}, one has 
\[\e_\As^{\Gal}(s,\pi,\psi_{0,t})=|t|_\so^{n^2(s-1/2)}\cdot
\det\left(\M'_{\F/\F_{\so}}(\rho(\pi))\right)\cdot\e_\As^{\Gal}(s,\pi,\psi_\so)\]
which, together with \eqref{jeanjacques}, 
gives the expected result.
\end{proof}

We will also need the following relation satisfied by 
$\e_{\As}^{\FK}(s,\pi,\psi_{\so},\d)$. 
Note that though 
$\psi_{\so,t}^{\F,\d}=\psi_{\so\phantom{,t}}^{\F, t\d}$, it is not true that
$\e_{\As}^{\FK}(s,\pi,\psi_{\so,t},\d)=\e_{\As}^{\FK}(s,\pi,\psi_{\so},t\d)$ 
since changing the character $\psi_\so$ changes the~Fou\-rier transform 
in the functional equation. 
Here is what happens when one changes $\delta$.

\begin{lemma}\label{lemma twisting delta}
Let $\pi$ be a generic irreducible representation of $\G$ and 
$t\in\F_\so^\times$. 
Then we have
\[\epsilon_\As^\FK(s,\pi,\psi_\so,t\d) = 
\om_\pi(t)^{n-1} \cdot |t|_\so^{n(n-1)(s-1/2)} \cdot \epsilon_\As^\FK(s,\pi,\psi_\so,\d).\] 
\end{lemma}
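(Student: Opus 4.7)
The plan is to mimic the method of Lemma~\ref{lemma twisting the additive character}(i), with the crucial difference that here we are \emph{not} changing~$\psi_\so$, so the Fourier transform in the functional equation is unaffected and only the Whittaker model changes. The key observation, exactly as at the start of the proof of Lemma~\ref{lemma twisting the additive character}, is that if we set
\[
a=\diag(t^{n-1},t^{n-2},\dots,t,1)\in\G^\s,
\]
then conjugation by $a$ scales each superdiagonal entry of a unipotent matrix by $t$, so $\psi^{t\d}(u)=\psi^{\d}(aua^{-1})$ for all $u\in\N$. Consequently the map $\W\mapsto \W_a$, with $\W_a(g)=\W(ag)$, is a bijection $\Ww(\pi,\psi^{\d})\to\Ww(\pi,\psi^{t\d})$.

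I would then compute, for $\W\in\Ww(\pi,\psi^{\d})$ and $\Phi\in\Cc_c^\infty(\F_\so^n)$, the two integrals appearing in the functional equation \eqref{FE} attached to $\psi^{t\d}$ applied to $\W_a$ and $\Phi$. The computation of $\I_\As(s,\Phi,\W_a)$ is literally the one done in the proof of Lemma~\ref{lemma twisting the additive character}(i): a change of variables $h\mapsto a^{-1}h$, combined with the fact that $\etavec a^{-1}=\etavec$, yields
\[
\I_\As(s,\Phi,\W_a)=\mu(t)\cdot|t|_\so^{-n(n-1)s/2}\cdot\I_\As(s,\Phi,\W)
\]
for a suitable positive character $\mu$ of $\F_\so^\times$ (coming from the modular factor of the change of variables). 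For the dual side, the identity $a^{*w_0}=t^{1-n}a$ recalled in that same proof gives $\widetilde{\W_a}(h)=\widetilde{\W}(t^{1-n}ah)$, and the same change of variables now yields
\[
\I_\As(1-s,\widehat{\Phi},\widetilde{\W_a})=\om_\pi(t)^{n-1}\cdot\mu(t)\cdot|t|_\so^{n(n-1)(s-1)/2}\cdot\I_\As(1-s,\widehat{\Phi},\widetilde{\W}),
\]
where, crucially, the Fourier transform $\widehat{\Phi}$ is the one attached to $\psi_\so$ itself (no shift by $t$ as in \eqref{changeFourier}).

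Finally, I would write the functional equation \eqref{FE} at $\psi^{t\d}$ with the pair $(\W_a,\Phi)$ and at $\psi^{\d}$ with the pair $(\W,\Phi)$, noting that the $\L$-factors $\L_\As(s,\pi)$ and $\L_\As(1-s,\pi^\vee)$ depend only on $\pi$ and not on the choice of $\d$, so they cancel when one takes the ratio of the two functional equations. The factor $\mu(t)$ also cancels, and the remaining powers combine as
\[
\frac{\epsilon_\As^{\FK}(s,\pi,\psi_\so,t\d)}{\epsilon_\As^{\FK}(s,\pi,\psi_\so,\d)}=\om_\pi(t)^{n-1}\cdot|t|_\so^{n(n-1)(s-1)/2+n(n-1)s/2}=\om_\pi(t)^{n-1}\cdot|t|_\so^{n(n-1)(s-1/2)},
\]
which is the desired identity. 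No step looks like a genuine obstacle; the only thing to be careful about is \emph{not} applying \eqref{changeFourier}, i.e.\ to remember that scaling $\d$ does not rescale~$\psi_\so$, which is precisely what produces the exponent $n(n-1)(s-1/2)$ here instead of the exponent $n^2(s-1/2)$ appearing in Lemma~\ref{lemma twisting the additive character}.
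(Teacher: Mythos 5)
Your proposal is correct and is essentially the paper's own argument: the paper likewise refers back to the computations of Lemma~\ref{lemma twisting the additive character}, keeps the Fourier transform attached to $\psi_\so$ (so \eqref{changeFourier} is not invoked), and obtains exactly your two intermediate identities before dividing the two functional equations. The exponent bookkeeping $n(n-1)(s-1)/2+n(n-1)s/2=n(n-1)(s-1/2)$ and the cancellation of $\mu(t)$ and the $\L$-factors are all as in the paper.
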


\begin{proof}
Going through the exact same computations as in the proof of Lemma 
\ref{lemma twisting the additive character}, 
but taking the Fourier transform of $\Phi$ with respect to $\psi_\so$ 
rather than $\psi_{\so,t}$ in the computations, we arrive at  
\begin{equation*}
\I_\As(1-s,\widehat{\Phi},\widetilde{\W}_{a})
= 
\om_\pi(t)^{n-1}\cdot \mu(t)\cdot|t|_\so^{n(n-1)(s-1)/2}\cdot 
\I_\As(1-s,\widehat{\Phi},\widetilde{\W})
\end{equation*}
whereas the relation 
\begin{equation*}
\I_\As(s,\Phi,\W_{a})
= \mu(t)\cdot|t|_\so^{-n(n-1)s/2}\cdot\I_\As(s,\Phi,\W)
\end{equation*} 
does not change. 
From this we obtain: 
\begin{eqnarray*}
\e_{\As}^{\FK}(s,\pi,\psi_\so,\d)
&=& \left(|t|_\so^{-n(n-1)(1-s)/2}\cdot\mu(t)\cdot\om_\pi(t)^{n-1}\right)^{-1}
\cdot\mu(t)\cdot|t|_\so^{-n(n-1)s/2}\cdot\e_{\As}^{\FK}(s,\pi,\psi_\so,t\d) \\
&=& \om_\pi(t)^{1-n}\cdot|t|_\so^{n(n-1)(1-2s)/2}\cdot\e_{\As}^{\FK}(s,\pi,\psi_\so,t\d)
\end{eqnarray*}
which gives us the expected result.
\end{proof}

\subsection{Unramified representations}\label{section unramified}

We are going to compute $\epsilon_\As^\FK(s,\pi,\psi_\so,\d)$ and
$\epsilon_\As^\LS(s,\pi,\psi_\so,\d)$ 
when $\pi$ is generic and unramified.
From now on, 
Haar measures on any closed subgroup $\H$ of $\G$ will be 
normalized so that they give volume $1$ to $\H\cap\K$. 
This also normalizes all right invariant measures on quotients of the type 
$\U\backslash\H$ when\-ever $\U$ is an unimodular closed subgroup of $\H$. 

First, we 
{perform a test vector computation similar to that done by Flicker 
when $\F/\F_\so$ is unramified.} 
% which as we~ex\-plain remains true when $\F/\F_\so$ is ramified. 
We~sup\-po\-se that $\pi$ is a generic unramified
repre\-sen\-ta\-tion of $\G$; we denote by $\W_0$ the normalized spherical 
vector in $\Ww(\pi,\psi)$ and by 
$\Phi_0$ the characteristic function of $\o_\so^n$.

Recall that the \textit{conductor} of an additive character of a finite
extension $\E$ of $\F_\so$ is the largest integer $i$ such that it is 
trivial on $\p_\E^{-i}$. 

\begin{proposition}\label{prop test flicker}
Let $\pi$ be a generic unramified representation of $\G$ and suppose that 
the character $\psi_\so^{\F,\d}$ defined by \eqref{psifd} has conductor $0$.
Then
\[\I_\As(s,\Phi_0,\W_0)=\L_\As(s,\pi).\]
\end{proposition}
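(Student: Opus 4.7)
The strategy is the classical spherical test vector computation, following Flicker's Macdonald-type argument. First, by the Iwasawa decomposition $\G^\s = \N^\s \M^\s \K^\s$ for the diagonal torus $\M^\s$ of $\G^\s$, together with the $\K^\s$-invariance of both $\W_0$ (the spherical vector is fixed by $\K$, hence by $\K^\s \subseteq \K$) and $\Phi_0 = \one_{\o_\so^n}$, the $\K^\s$-variable integrates out. With Haar measures normalized so that $\vol(\K^\s) = 1$, the Asai integral reduces to a discrete sum over the cocharacter lattice of $\M^\s$: writing $t_\lambda = \diag(\w_\so^{\lambda_1},\ldots,\w_\so^{\lambda_n})$ for $\lambda = (\lambda_1,\ldots,\lambda_n)\in\ZZ^n$, the support condition $\Phi_0(\etavec t_\lambda)\ne 0$ forces $\lambda_n\ge 0$, and the integral takes the form
\[
\I_\As(s,\Phi_0,\W_0) \;=\; \sum_{\lambda_n \ge 0} \W_0(t_\lambda)\, q_\so^{-s(\lambda_1+\cdots+\lambda_n)}\, \delta_{\B^\s}^{-1}(t_\lambda),
\]
where $\B^\s$ is the standard Borel of $\G^\s$ and $\delta_{\B^\s}$ is its modulus.

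Second, I would invoke the Casselman--Shalika formula for the normalized spherical Whittaker function. The hypothesis that $\psi_\so^{\F,\d}$ has conductor $0$ is precisely what ensures that the formula applies in its standard form, with no auxiliary torus shift: $\W_0(t_\lambda) = 0$ unless $\lambda$ is dominant ($\lambda_1 \ge \cdots \ge \lambda_n$), and for such $\lambda$,
\[
\W_0(t_\lambda) \;=\; \delta_{\B^\s}^{1/2}(t_\lambda)\cdot s_\lambda(\alpha_1,\ldots,\alpha_n),
\]
where $(\alpha_1,\ldots,\alpha_n)$ are the Satake parameters of $\pi$ and $s_\lambda$ is the Schur polynomial.

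Third, the resulting sum over partitions of length at most $n$ is a classical generating series. By a Cauchy-type identity for Schur polynomials it evaluates to an explicit rational function in the $\alpha_i$ and $q_\so^{-s}$, expressible as the reciprocal of a product of linear factors. On the other hand, by the compatibility of the Flicker Asai $\L$-function with the Langlands--Deligne $\L$-factor $\L_\As^{\Gal}(s,\pi)$ of the multiplicative induction $\M'_{\F/\F_\so}(\rho(\pi))$, as recalled in Paragraph~\ref{not91} (classical when $\F/\F_\so$ is unramified and established in the appendix for general $\F/\F_\so$), the same product formula computes $\L_\As(s,\pi)$. Matching the two expressions concludes.

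The main obstacle is carrying out the computation uniformly for both unramified and ramified quadratic $\F/\F_\so$: Flicker's original argument handles only the unramified case, while in the ramified case one must carefully identify the Satake parameters and the shape of $\M'_{\F/\F_\so}(\rho(\pi))$ as a representation of $\WW'_{\F_\so}$ in order to match the Schur-series identity with the correct Euler product. Tracking the conductor zero hypothesis on $\psi_\so^{\F,\d}$ throughout is essential, since a nontrivial conductor would force a translation of $\W_0$ in the Casselman--Shalika formula and shift the summation range.
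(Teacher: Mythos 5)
Your outline correctly identifies the shape of the argument (Iwasawa reduction, spherical Whittaker formula, Schur‐series identity, comparison with the Galois‐side $\L$-factor), but the Casselman--Shalika step is written down incorrectly, and the error is exactly at the point where the ramified case requires care.

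The spherical vector $\W_0$ lives in the Whittaker model of a representation of $\G=\GL_n(\F)$, so the Casselman--Shalika formula must be applied over $\F$, not over $\F_\so$. Evaluating at $t=\diag(\w^{m_1},\dots,\w^{m_n})$ for a uniformizer $\w$ of $\F$, the formula reads $\W_0(t)=\delta_{\B}^{1/2}(t)\,s_m(\alpha)$ with $\delta_\B$ the modulus of the Borel of $\GL_n(\F)$. Since $|x|_\F=|x|_\so^2$ on $\F_\so^\times$ (as $e(\F/\F_\so)f(\F/\F_\so)=2$), one has $\delta_\B(t_\l)=\delta_{\B^\s}(t_\l)^2$, hence $\delta_\B^{1/2}(t_\l)=\delta_{\B^\s}(t_\l)$, not $\delta_{\B^\s}^{1/2}(t_\l)$ as you wrote. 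Plugging your formula into your Iwasawa expansion leaves a stray $\delta_{\B^\s}^{-1/2}(t_\l)$ which does not cancel and destroys the Schur identity in both the unramified and ramified cases. More importantly, in the ramified case the element $t_\l=\diag(\w_\so^{\l_1},\dots,\w_\so^{\l_n})$ has $\F$-valuation vector $2\l$ (since $\w_\so=\w^2$ up to units), so the Schur function that appears is $s_{2\l}$, not $s_\l$. This doubling is the entire content of the ramified computation: after it, the sum $\sum_\l s_{2\l}(z_1 q^{-s/2},\dots,z_n q^{-s/2})$ is closed by a \emph{different} Macdonald identity (\cite[Example 5a, p.~77]{Ronald}, not the Cauchy identity), and the resulting Euler product matches the shape of $\M_{\F/\F_\so}(\mu_1\oplus\cdots\oplus\mu_n)$ computed via \cite[Lemma~7.1]{Pra92}. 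Your proposal flags the ramified case as an ``obstacle'' but then proposes a formula that fails to handle it: as written, the sum you would obtain does not produce the correct $\L$-factor. The paper's actual route is to quote Flicker directly in the unramified case and to perform the $\lambda\mapsto 2\lambda$ substitution explicitly in the ramified case; you need to incorporate that substitution and replace the Cauchy identity by the $\sum_\lambda s_{2\lambda}$ identity to make the argument go through.
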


\begin{proof}
When $\F/\F_\so$ is unramified, 
this is proved in \cite[Section 3]{Flicker} where
the unitarity assumption~is unnecessary. 
In the ramified case, we have $q=q_\so$.~We write 
$\pi=\mu_1\times \dots \times \mu_n$ where the product notation stands for 
parabolic induction, and the characters 
$\mu_1,\dots,\mu_n$ of $\mult \F$ are unramified. 
Let~us fix a uniformizer $\w$ of~$\F$ such that $\w_\so=\w^2$ is a uniformizer 
of~$\F_\so$. 
For $i=1,\dots,n$, set $z_i=\mu_i(\w)$.
With notations as at p. 306 of \cite{Flicker}, as $\w_\so^\l=\w^{2\l}$, we find 
\[\I_\As(s,\Phi_0,\W_0)=\sum_{\l} q^{-s\cdot\tr(\l)}s_{2\l}(z_1,\dots,z_n)
=\sum_{\l} s_{2\l}(z_1q^{-s/2},\dots,z_nq^{-s/2}) \] 
where the sum ranges over all partitions of length $\<n$ 
and $s_{2\lambda}$ is the Schur function 
(see \cite[(3.1) p.~40]{Ronald})
asso\-cia\-ted to 
the partition $2\lambda$ obtained by multiplying the entries 
of $\l$ by $2$.
By~\cite[Exam\-ple 5a, p.~77]{Ronald}, 
the sum above is equal to 
\begin{equation}
\label{lineament}
\prod\limits_{1\<i\<n} (1-z_i^2q^{-s})\cdot
\prod\limits_{1\<k<l\<n} (1-z_k z_lq^{-s})
\end{equation}
% which is the Asai $\L$-function of $\pi$
Now the Langlands parameter $\rho(\pi)$ is the direct sum 
$\mu_1\oplus\dots\oplus\mu_n$.
Since it is trivial on ${\rm SL}_2(\CC)$ we consider it as a representation of 
$\WW_\F$ only. 
Since $\mu_i\circ\s=\mu_i$ for all~$i$,
we have 
\begin{equation*}
\label{garnement}
\M_{\F/\F_\so}(\mu_1\oplus \dots \oplus \mu_n) 
= \bigoplus\limits_{1\<i\<n} {\mu_i}|_{\F_\so^\times} \oplus 
\bigoplus\limits_{1\<k<l\<n} \Ind_{\F/\F_\so} (\mu_k\mu_l)
\end{equation*}
by \cite[Lemma 7.1]{Pra92}. 
Thus
$\L_\As^{\Gal}(s,\pi)$ is equal to \eqref{lineament}.
The result follows from Theorem \ref{manciniA1}.
\end{proof}

\begin{remark}
% \label{gap}
At this point, we note that the authors of \cite{AnandMatringe} appeal to 
Flicker's unramified computation even when $\F/\F_\so$ is ramified, however 
Proposition \ref{prop test flicker} shows that there is no harm in doing that. 
\end{remark}

When $\F/\F_\so$ is unramified, one can choose $\d$ to be a unit, whereas 
when $\F/\F_\so$ is ramified, one can choose $\d$ to have valuation $-1$.
In both cases, the character $\psi_\so^{\F,\d}$ has conductor $0$ 
if $\psi_\so$ has conductor $0$. 
In this case, the functional equation, 
together with Proposition \ref{prop test flicker}, 
the fact that $\widehat{\Phi}_0=\Phi_0$ and that 
$\widetilde{\W}_0$ is the normalized spherical vector in 
$\mathcal{W}(\pi^\vee,\psi^{-1})$, 
tells us that: 

\begin{corollary}\label{cor unramified Flicker-Asai}
Suppose that $\pi$ is a generic unramified representation of $\G$, 
that $\psi_\so$ has conductor $0$ and 
$\d$ has valuation $1-e(\F/\F_\so)$.
Then $\e_\As^\FK(s,\pi,\psi_\so,\d)=1$.
\end{corollary}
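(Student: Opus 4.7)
The plan is to deduce the corollary directly from Proposition~\ref{prop test flicker} and the functional equation~\eqref{FE}. First, I would verify that the hypotheses put us in the situation of Proposition~\ref{prop test flicker}, namely that $\psi_\so^{\F,\d}$ has conductor $0$. Since $p$ is odd, the extension $\F/\F_\so$ is tame, so the different $\mathfrak{d}_{\F/\F_\so}$ has valuation $e(\F/\F_\so)-1$. The conductor of $x\mapsto\psi_\so(\tr_{\F/\F_\so}(\d x))$ is then $v_\F(\d)+e(\F/\F_\so)-1$ when $\psi_\so$ has conductor $0$, which under our hypotheses equals $0$; by the same computation this also holds with $\d$ replaced by $-\d$.

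Next, I would compute each side of the functional equation~\eqref{FE} applied to $\W=\W_0$ and $\Phi=\Phi_0$. On the right-hand side, Proposition~\ref{prop test flicker} gives $\I_\As(s,\Phi_0,\W_0)/\L_\As(s,\pi)=1$. For the left-hand side, two identifications are needed: (a) since $\psi_\so$ has conductor $0$, the self-dual Haar measure on $\F_\so$ gives $\o_\so$ volume $1$, so $\widehat{\Phi}_0=\Phi_0$; and (b) the vector $\widetilde{\W}_0$ defined by $\widetilde{\W}_0(g)=\W_0(w_0g^*)$ is the normalized spherical Whittaker vector for $(\pi^\vee,\psi^{-\d})$. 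For (b) one checks right $\K$-invariance via the automorphism $k\mapsto w_0 k^{-t}w_0^{-1}$ of $\K$, and the normalization $\widetilde{\W}_0(1)=1$ follows from the Casselman--Shalika formula. Granted these, Proposition~\ref{prop test flicker} applied to the unramified representation $\pi^\vee$ with the scalar $-\d$ (which also satisfies the conductor condition by the first paragraph) gives $\I_\As(1-s,\widehat{\Phi}_0,\widetilde{\W}_0)/\L_\As(1-s,\pi^\vee)=1$.

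Substituting both equalities into~\eqref{FE} forces $\e_\As^{\FK}(s,\pi,\psi_\so,\d)=1$, which is the desired conclusion. The only step requiring any care is the identification of $\widetilde{\W}_0$ as the normalized spherical vector for $(\pi^\vee,\psi^{-\d})$; all other ingredients are straightforward verifications or immediate applications of previously established results.
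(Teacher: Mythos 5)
Your proposal is correct and is essentially the paper's own argument: verify that $\psi_\so^{\F,\pm\d}$ has conductor $0$, apply Proposition~\ref{prop test flicker} to both sides of the functional equation~\eqref{FE} using $\widehat{\Phi}_0=\Phi_0$ and the identification of $\widetilde{\W}_0$ as the normalized spherical vector in $\Ww(\pi^\vee,\psi^{-\d})$. (One small simplification: $\widetilde{\W}_0(1)=\W_0(w_0)=1$ follows directly from the right $\K$-invariance of $\W_0$ since $w_0\in\K$; no appeal to Casselman--Shalika is needed.)
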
 

Let us compare this with the unramified situation for the Asai constant 
defined 
via the Langlands-Shahidi method. 
To do this we introduce the local 
Langlands constant $\l(\F/\F_\so,\psi_\so)$
(see for instance \cite[(30.4.1)]{BHbook} for a definition).
We note that $\l(\F/\F_\so,\psi_\so)$ is equal to 
$\e(1/2,\om_{\F/\F_\so},\psi_\so)$,
the Tate~root number of the quadratic character $\om_{\F/\F_\so}$.
We will freely use the relation \cite[(30.4.2)]{BHbook}: 
\[\e(s,\Ind_{\F/\F_\so} \rho,\psi_\so)=
\l(\F/\F_\so,\psi_\so)^{\dim(\rho)}\cdot\e(s,\rho,\psi_\so\circ \tr_{\F/\F0})\] 
where $\rho$ a semi-simple representation of $\WW_\F$ 
and $\Ind_{\F/\F_\so}$ denotes induction from $\WW_\F$ to $\WW_{\F_\so}$.
We will also use the fact that if $\chi$ is an unramified character of
$\E^\times$ for any finite extension $\E$ of $\F_\so$ and $\psi_\E$ 
is a character of $\E$ of conductor $0$, 
then $\e(s,\chi,\psi_\E)=1$ 
(see the remark after (3.2.6.1)~in~\cite{TateCorvallis}). 
More generally, we refer to \cite{TateCorvallis} for the basic facts and 
relations concerning epsilon factors of characters that we will use in this 
section without necessarily recalling. 

\begin{proposition}\label{prop unramified LS-Asai}
Suppose that $\pi$ is a generic unramified representation of $\G$, 
that $\psi_\so$ has conductor $0$ and $\d$ has valuation $1-e(\F/\F_\so)$.
Then 
\[\e_\As^\LS(s,\pi,\psi_\so)=\e_\As^{\Gal}(s,\pi,\psi_0)
=\om_\pi(\d)^{1-n}\cdot|\d|^{-n(n-1)(s-1/2)/2}\cdot\l(\F/\F_\so,\psi_\so)^{n(n-1)/2}\]
where $|\d|$ denotes the normalized absolute value of $\d$.
\end{proposition}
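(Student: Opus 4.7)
The first equality $\e_\As^\LS(s,\pi,\psi_\so)=\e_\As^{\Gal}(s,\pi,\psi_\so)$ is quoted directly from the facts recalled in Paragraph~\ref{not91} (Shahidi for characteristic zero, Henniart--Lomel\'i for characteristic~$p$). So the plan is to compute $\e_\As^{\Gal}(s,\pi,\psi_\so)$ explicitly and match it to the right-hand side.

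First I would write $\pi=\mu_1\times\dots\times\mu_n$ with unramified characters $\mu_i$ of $\F^\times$, so that $\rho(\pi)=\mu_1\oplus\dots\oplus\mu_n$ is trivial on $\mathrm{SL}_2(\CC)$. Since each $\mu_i$ is unramified and $\s(\w_\F)/\w_\F\in\o^\times$, one has $\mu_i\circ\s=\mu_i$. I would then apply the multiplicative-induction formula \cite[Lemma~7.1]{Pra92} for a direct sum,
\[
\M_{\F/\F_\so}\left(\bigoplus_i \mu_i\right) =
\bigoplus_{i=1}^n \mu_i|_{\F_\so^\times}
\;\oplus\;
\bigoplus_{1\le k<l\le n} \Ind_{\F/\F_\so}(\mu_k\mu_l),
\]
and multiplicativity of $\e^{\Gal}$ to reduce to computing the epsilon factor of each summand.

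For the summands $\mu_i|_{\F_\so^\times}$: these are unramified characters of $\F_\so^\times$ and $\psi_\so$ has conductor $0$, so the corresponding epsilon factor equals $1$. For the induced summands $\Ind_{\F/\F_\so}(\mu_k\mu_l)$, I would apply the induction formula \cite[(30.4.2)]{BHbook}:
\[
\e(s,\Ind_{\F/\F_\so}(\mu_k\mu_l),\psi_\so)
= \l(\F/\F_\so,\psi_\so)\cdot\e(s,\mu_k\mu_l,\psi_\so\circ\tr_{\F/\F_\so}).
\]
The character $\psi_\so\circ\tr_{\F/\F_\so}$ does not have conductor $0$ in general, so I would rewrite it as $x\mapsto\psi_\so^{\F,\d}(\d^{-1}x)$ and use Tate's formula for the change of additive character, namely $\e(s,\chi,\phi_t)=\chi(t)|t|^{s-1/2}\e(s,\chi,\phi)$, together with $\e(s,\mu_k\mu_l,\psi_\so^{\F,\d})=1$ (since $\mu_k\mu_l$ is unramified and, by hypothesis on the valuation of $\d$, $\psi_\so^{\F,\d}$ has conductor $0$). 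This gives
\[
\e(s,\mu_k\mu_l,\psi_\so\circ\tr_{\F/\F_\so})
= (\mu_k\mu_l)(\d)^{-1}\cdot|\d|^{-(s-1/2)}.
\]

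Finally I would assemble the pieces: the product over pairs $k<l$ of $(\mu_k\mu_l)(\d)^{-1}$ equals $\left(\prod_i\mu_i(\d)\right)^{-(n-1)}=\om_\pi(\d)^{1-n}$, because each index $i$ occurs in exactly $n-1$ pairs and $\om_\pi=\prod_i\mu_i$. Similarly the $|\d|$-factors combine to $|\d|^{-n(n-1)(s-1/2)/2}$, and the $\l(\F/\F_\so,\psi_\so)$ factor appears to the power $n(n-1)/2$. The main obstacle is purely bookkeeping: making sure the conductor convention and the sign/valuation of $\d$ match up correctly in both the unramified and the ramified case (where $v_\F(\d)=1-e(\F/\F_\so)$ was chosen precisely so that $\psi_\so^{\F,\d}$ has conductor $0$), and being careful with the normalization of Tate's change-of-character formula.
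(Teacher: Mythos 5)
Your proposal is correct and follows essentially the same route as the paper's proof: decompose $\M_{\F/\F_\so}(\rho(\pi))$ via Prasad's lemma, discard the unramified summands $\mu_i|_{\F_\so^\times}$, apply the inductivity formula to extract $\l(\F/\F_\so,\psi_\so)^{n(n-1)/2}$, and change the additive character from $\psi_\so\circ\tr_{\F/\F_\so}$ to $\psi_\so^{\F,\d}$ (of conductor $0$) to produce the $\om_\pi(\d)^{1-n}$ and $|\d|$ factors. The only cosmetic difference is that you carry the $|\d|^{-(s-1/2)}$ factor through the computation at general $s$, whereas the paper computes at $s=1/2$ and then recovers general $s$ by an unramified twist of $\pi$; both handle the bookkeeping correctly.
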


\begin{proof}
We use the notation of the proof of Proposition \ref{prop test flicker}. 
We thus have
\begin{eqnarray*}
\e_\As^{\Gal}\left(\frac 1 2,\pi,\psi_\so\right)
&=&
\prod\limits_{1\<k<l\<n} \e\left(\frac 1 2,
\Ind_{\F/\F_\so}(\mu_k\mu_l),\psi_\so \right) \\
&=&
\l(\F/\F_\so,\psi_\so)^{n(n-1)/2}\cdot
\prod\limits_{1\<k<l\<n}\e\left(\frac 1 2,\mu_k\mu_l,\psi_\so \circ \tr_{\F/\F_\so}\right) \\
&=&
\l(\F/\F_\so,\psi_\so)^{n(n-1)/2}\cdot\prod\limits_{1\<k<l\<n}
    (\mu_k\mu_l)(\d)^{-1}\e\left(\frac 1 2,\mu_k\mu_l,
\psi_\so^{\F,\d}\right) \\
&=&
\l(\F/\F_\so,\psi_\so)^{n(n-1)/2}\cdot\prod\limits_{1\<k<l\<n}
    (\mu_k\mu_l)(\d)^{-1} \phantom{\Bigg)} \\
&=&
\l(\F/\F_\so,\psi_\so)^{n(n-1)/2}\cdot\omega_{\pi}(\d)^{1-n}. \phantom{\Bigg)}
\end{eqnarray*}
where we ignore the epsilon factors equal to $1$.
However 
\[\e_\As^{\Gal}(s,\pi,\psi_\so)=
\e_\As^{\Gal}\left(\frac 1 2,|\cdot |^{(s-1/2)/2}\pi,\psi_\so\right)\] 
hence the previous equality 
gives the result.
\end{proof}

\subsection{Rankin--Selberg epsilon factors}

Proposition \ref{prop unramified LS-Asai},
together with Corollary \ref{cor unramified Flicker-Asai}, 
suggests to introduce the following definition.

\begin{definition}
\label{paulveyne}
For any generic irreducible representation $\pi$, we set:
\[\e_\As^\RS(s,\pi,\psi_\so,\d)=\om_\pi(\d)^{1-n}\cdot
|\d|^{-n(n-1)(s-1/2)/2}\cdot\l(\F/\F_\so,\psi_\so)^{n(n-1)/2}\cdot
\e_\As^\FK(s,\pi,\psi_\so,\d).\] 
\end{definition}

The following result, 
which is an immediate consequence of Lemma \ref{lemma twisting delta},
was brought to our attention by Beuzart-Plessis.

\begin{lemma}\label{lemma independance on delta}
For any generic irreducible representation $\pi$ of $\G$ and $t\in \F_\so^\times$, 
we have: 
\[\e_\As^\RS(s,\pi,\psi_\so,t\d)=\e_\As^\RS(s,\pi,\psi_\so,\d).\] 
\end{lemma}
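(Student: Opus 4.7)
The plan is to substitute $t\delta$ for $\delta$ in Definition \ref{paulveyne} and verify, using Lemma \ref{lemma twisting delta}, that all the extra factors arising from the $\delta$-dependent terms cancel. Since $t\in\F_\so^\times$, the values $\omega_\pi(t)$ and $|t|_\so$ make sense, and the central-character factor $\omega_\pi(t\delta)^{1-n}$ and the absolute-value factor $|t\delta|^{-n(n-1)(s-1/2)/2}$ split multiplicatively; this is the first step.

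Next, I would observe that for any $t\in\F_\so^\times$ one has $|t|=|t|_\so^{[\F:\F_\so]}=|t|_\so^2$, coming from the relation $v(t)=e(\F/\F_\so)v_\so(t)$ between valuations on $\F$ and $\F_\so$. Consequently
\[
|t\delta|^{-n(n-1)(s-1/2)/2}=|t|_\so^{-n(n-1)(s-1/2)}\cdot|\delta|^{-n(n-1)(s-1/2)/2}.
\]
Plugging into the definition of $\e_\As^\RS(s,\pi,\psi_\so,t\delta)$ gives
\[
\e_\As^\RS(s,\pi,\psi_\so,t\delta)=\omega_\pi(t)^{1-n}\cdot|t|_\so^{-n(n-1)(s-1/2)}\cdot\frac{\e_\As^\FK(s,\pi,\psi_\so,t\delta)}{\e_\As^\FK(s,\pi,\psi_\so,\delta)}\cdot\e_\As^\RS(s,\pi,\psi_\so,\delta),
\]
where I have used that $\lambda(\F/\F_\so,\psi_\so)$ does not depend on $\delta$.

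Finally, Lemma \ref{lemma twisting delta} provides
\[
\frac{\e_\As^\FK(s,\pi,\psi_\so,t\delta)}{\e_\As^\FK(s,\pi,\psi_\so,\delta)}=\omega_\pi(t)^{n-1}\cdot|t|_\so^{n(n-1)(s-1/2)},
\]
and this is exactly the inverse of the prefactor just computed, so the two contributions cancel and we obtain the claimed equality. There is no real obstacle here: the argument is a direct calculation, the only subtlety being to keep track of the factor $|t|=|t|_\so^2$ when passing between the absolute values of $\F$ and $\F_\so$.
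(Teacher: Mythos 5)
Your proposal is correct and follows the same route as the paper, which simply declares the lemma ``an immediate consequence of Lemma~\ref{lemma twisting delta}'' without writing out the computation; you have supplied exactly the details being suppressed, including the one genuinely non-trivial bookkeeping point that $|t|=|t|_\so^{2}$ for $t\in\F_\so^\times$, which is what makes the exponent $-n(n-1)(s-1/2)$ match the exponent $n(n-1)(s-1/2)$ coming from Lemma~\ref{lemma twisting delta} so that everything cancels.
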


In particular we can remove $\d$ from the notations, and set 
 \[\e_\As^\RS(s,\pi,\psi_\so)=\e_\As^\RS(s,\pi,\psi_\so,\d)\] 
for any generic irreducible representation $\pi$,
any non-trivial character $\psi_\so$ 
and any element $\d\in \mult \F$ of trace $0$. 
By Lemma \ref{lemma twisting the additive character} and the 
relation 
\begin{equation*}
\label{eq twisting the langlands constant} 
\l(\F/\F_\so,\psi_{\so,t})=\om_{\F/\F_\so}(t)\cdot\l(\F/\F_\so,\psi_\so), 
\end{equation*} 
we have:

\begin{lemma}
\label{lemma twisting the additive character 2}
For any generic irreducible representation $\pi$ of $\G$ and $t\in \F_\so^\times$, 
we have
\begin{equation*}
\e_\As^\RS(s,\pi,\psi_{\so,t}) = 
\om_{\F/\F_\so}(t)^{n(n-1)/2}\cdot\om_\pi(t)^n \cdot |t|_\so^{n^2(s-1/2)}\cdot
\e_\As^\RS(s,\pi,\psi_{\so}).
\end{equation*}
\end{lemma}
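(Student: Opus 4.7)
The plan is to unravel Definition~\ref{paulveyne} on both sides and read off the cumulative effect of replacing $\psi_\so$ by $\psi_{\so,t}$. The definition has four factors,
\[
\e_\As^\RS(s,\pi,\psi_\so,\d)=\om_\pi(\d)^{1-n}\cdot|\d|^{-n(n-1)(s-1/2)/2}\cdot\l(\F/\F_\so,\psi_\so)^{n(n-1)/2}\cdot\e_\As^\FK(s,\pi,\psi_\so,\d),
\]
and only the last two of them depend on the additive character, so those are the only ones that need to be tracked.

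First, I would apply the already-stated transformation law for the Langlands constant,
\[
\l(\F/\F_\so,\psi_{\so,t})=\om_{\F/\F_\so}(t)\cdot\l(\F/\F_\so,\psi_\so),
\]
raised to the power $n(n-1)/2$, which produces the factor $\om_{\F/\F_\so}(t)^{n(n-1)/2}$ appearing in the target formula. Next I would invoke Lemma~\ref{lemma twisting the additive character}\,(i) to handle the Flicker--Kable factor, giving the factor $\om_\pi(t)^n\cdot|t|_\so^{n^2(s-1/2)}$; note that $\d$ is kept fixed throughout, so the first two factors of the definition are unchanged. Multiplying the two accumulated scalars yields precisely the claim.

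Finally, I need to check that the formula is genuinely well-posed, i.e.\ that the quantity being twisted does not secretly depend on~$\d$. But Lemma~\ref{lemma independance on delta} already supplies this: $\e_\As^\RS(s,\pi,\psi_\so,\d)$ is independent of the choice of trace-zero element $\d\in\mult\F$, so the displayed identity may be read unambiguously as an identity between the $\d$-free symbols $\e_\As^\RS(s,\pi,\psi_{\so,t})$ and $\e_\As^\RS(s,\pi,\psi_{\so})$.

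In short, there is no real obstacle here: the proof is a one-line substitution combining Lemma~\ref{lemma twisting the additive character}\,(i) with the twist formula for $\l(\F/\F_\so,\psi_\so)$, with Lemma~\ref{lemma independance on delta} ensuring that both sides are well-defined independently of~$\d$. The only point that requires a moment of care is bookkeeping of the exponent $n(n-1)/2$ on the Langlands constant versus $n$ and $n^2$ on the Tate-style factors, which match exactly what Lemma~\ref{lemma twisting the additive character}\,(i) contributes.
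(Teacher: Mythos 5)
Your proposal is correct and follows exactly the same route as the paper: the paper's proof consists of citing Lemma~\ref{lemma twisting the additive character}\,(i) together with the transformation rule for the Langlands constant, which is precisely your one-line substitution into Definition~\ref{paulveyne}. Your additional remark invoking Lemma~\ref{lemma independance on delta} to justify the $\d$-free notation is a small, harmless clarification of something the paper established just before stating the lemma.
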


Combining 
Proposition \ref{prop unramified LS-Asai}, 
Corollary \ref{cor unramified Flicker-Asai}
and Lemmas \ref{lemma independance on delta} and 
\ref{lemma twisting the additive character 2},
we get the following result.

\begin{theorem}
\label{thm unramified epsilon equality}
For any generic unramified irreducible representation $\pi$ of $\G$,
we have
\begin{equation}
\label{pradelle}
\e_\As^\RS(s,\pi,\psi_\so)=\e_\As^\LS(s,\pi,\psi_\so).
\end{equation}
\end{theorem}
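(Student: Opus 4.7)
The plan is to reduce to a single favorable choice of $(\psi_\so,\d)$ and then propagate the equality to arbitrary non-trivial $\psi_\so$ by comparing the two transformation laws.

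First, I would choose $\psi_\so$ of conductor $0$ and $\d\in\mult\F$ of trace $0$ and valuation $1-e(\F/\F_\so)$, so that the auxiliary character $\psi_\so^{\F,\d}$ of $\F$ defined by \eqref{psifd} has conductor $0$. For this specific pair, Corollary~\ref{cor unramified Flicker-Asai} gives $\e_\As^\FK(s,\pi,\psi_\so,\d)=1$, and therefore the definition of the Rankin--Selberg normalization collapses to
\[
\e_\As^\RS(s,\pi,\psi_\so)=\om_\pi(\d)^{1-n}\cdot|\d|^{-n(n-1)(s-1/2)/2}\cdot\l(\F/\F_\so,\psi_\so)^{n(n-1)/2},
\]
where I use Lemma~\ref{lemma independance on delta} to drop $\d$ from the notation. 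On the other hand, Proposition~\ref{prop unramified LS-Asai} asserts precisely the same formula for $\e_\As^{\LS}(s,\pi,\psi_\so)$ under these same hypotheses on $\psi_\so$ and $\d$. Hence the desired equality \eqref{pradelle} holds for this particular $\psi_\so$.

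To extend the equality to an arbitrary non-trivial additive character, I would write any such character in the form $\psi_{\so,t}$ for some $t\in\mult\F_\so$ and compare the transformation laws of both sides. By Lemma~\ref{lemma twisting the additive character 2},
\[
\e_\As^\RS(s,\pi,\psi_{\so,t})=\om_{\F/\F_\so}(t)^{n(n-1)/2}\cdot\om_\pi(t)^n\cdot|t|_\so^{n^2(s-1/2)}\cdot\e_\As^\RS(s,\pi,\psi_\so),
\]
while Lemma~\ref{lemma twisting the additive character}(ii) gives the identical transformation rule
\[
\e_\As^\LS(s,\pi,\psi_{\so,t})=\om_\pi(t)^n\cdot|t|_\so^{n^2(s-1/2)}\cdot\om_{\F/\F_\so}(t)^{n(n-1)/2}\cdot\e_\As^\LS(s,\pi,\psi_\so).
\]
Since both sides transform in exactly the same way under $\psi_\so\mapsto\psi_{\so,t}$, and they agree for our chosen conductor-zero $\psi_\so$, they agree for every non-trivial $\psi_\so$.

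There is essentially no serious obstacle here: the content of the theorem sits in Proposition~\ref{prop test flicker} (the test vector computation matching $\I_\As(s,\Phi_0,\W_0)$ with $\L_\As^{\Gal}(s,\pi)$ via the Schur function identity of \cite[Ex.~5a, p.~77]{Ronald}) and Proposition~\ref{prop unramified LS-Asai} (the explicit unramified computation of $\e_\As^{\Gal}$ using the formula for multiplicative induction and the fact that $\e(1/2,\mu_k\mu_l,\psi_\so^{\F,\d})=1$ for unramified characters against a conductor-zero character). The only bookkeeping one must check is that the Rankin--Selberg normalization in Definition~\ref{paulveyne} was calibrated precisely so that these two computations cancel, and that the $\d$-scaling law of Lemma~\ref{lemma twisting delta} compensates the shift in $\d$-valuation incorporated into the normalization -- both of which are essentially built into the definition.
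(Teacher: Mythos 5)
Your proposal is correct and reproduces the paper's argument essentially verbatim: the paper likewise derives the theorem by combining Proposition~\ref{prop unramified LS-Asai} and Corollary~\ref{cor unramified Flicker-Asai} at a conductor-zero $\psi_\so$ with $\d$ of valuation $1-e(\F/\F_\so)$, then invokes Lemma~\ref{lemma independance on delta} and the matching twisting laws (Lemma~\ref{lemma twisting the additive character 2} alongside Lemma~\ref{lemma twisting the additive character}(ii)) to pass to arbitrary $\psi_\so$.
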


At the end of this section (see Theorem \ref{thm cuspidal epsilon equality}), 
we will show that \eqref{pradelle}
holds for cuspidal representa\-tions as well.

\subsection{The local factors at split places}
% \label{section split places}

Let $k/k_{\so}$ be a quadratic extension of global fields 
of characteristic different from $2$. 
Given a place $v$ of $k_\so$, we write $k_{\so,v}$ for the completion of 
$k_\so$ at $v$ and $k_v=k\otimes_{k_\so} k_{\so,v}$.
In this paragraph, we consider a place $v$ which splits in $k$, 
that is $k_v$ is a split $k_{\so,v}$-algebra.
There are thus two isomorphisms~of $k_{\so,v}$-algebras 
between $k_v$ and $k_{\so,v}\oplus k_{\so,v}$,
and one passes from one to the other by applying the 
auto\-morphism $(x,y)\mapsto(y,x)$.

Let $\pi_v$ be a generic irreducible representation of 
$\G_v=\GL_n(k_v)$ and set $\N_v=\N_n(k_v)$. 
We~fix~a~non-tri\-vial character $\psi_{\so,v}$ of $k_{\so,v}$
and an element $\d_v\in k_v^\times$ such that $\tr_{k_v/k_{\so,v}}(\d_v)=0$, 
and set 
\begin{equation*}
\psi_{\so,v}^{k_v,\d_v} : x \mapsto \psi_{\so,v}(\tr_{k_v/k_{\so,v}}(\d_v x))
\end{equation*}
which is a non-trivial character of $k_v$ trivial on $k_{\so,v}$. 
We denote by $|\cdot|_{\so,v}$ the normalized absolute value on $k_{\so,v}$. 

We first suppose that $v$ is finite, and set $q_{\so,v}$ for the cardinality of 
$k_{\so,v}$. 
Take $\W_v\in \Ww(\pi_v,\psi_v)$ and $\Phi_v\in \Cc_c^\infty(k_{\so,v}^n)$.
By \cite[Theorem 2.7]{JPSS}, the integral 
\[\I_\As(s,\Phi_v,\W_v)=\int_{\N(k_{\so,v})\backslash \G(k_{\so,v})} 
  \W_v(h)\Phi_v(\etavec h) |\det(h)|^s\ dh\] 
is absolutely convergent when the real part of $s$ is 
larger than a real number $r_v$ depending only on $\pi_v$, it
extends to an element of 
$\CC[q_{\so,v}^{s},q_{\so,v}^{-s}]$, and these integrals span a fractional ideal
of $\CC[q_{\so,v}^{s},q_{\so,v}^{-s}]$ generated by a unique Euler factor 
denoted $\L_{\As}(s,\pi_v)$. 
Also, there is a unit in $\CC[q_{\so,v}^{s},q_{\so,v}^{-s}]$, 
which~we denote by 
$\e_{\As}^{\FK}(s,\pi_v,\psi_{\so,v},\d_v)$ 
for the sake of coherent notations, 
such that 
\[\frac{\I_\As(1-s,\widehat{\Phi}_v,\widetilde{\W}_v)}
{\L_\As(1-s,\pi_v^\vee)}
= \e_\As^\FK(s,\pi_v,\psi_{\so,v},\d_v)\cdot
\frac{\I_\As(s,\Phi_v,\W_v)}{\L_\As(s,\pi_v)}\]
where the Fourier transform of $\Phi_v$ is defined with repect to the 
character $\psi_{\so,v}$. 

\begin{definition}
\label{paulveynesplit}
We set 
\begin{equation*}
\e_\As^\RS(s,\pi_v,\psi_{\so,v},\d_v) = 
\om_\pi(\d_v)^{1-n}\cdot
|\d_v|^{-n(n-1)(s-1/2)/2}\cdot
\e_\As^\FK(s,\pi_v,\psi_{\so,v},\d_v).
\end{equation*}
\end{definition}

\begin{remark}
\label{paulveynesplitrmk}
Comparing with Definition \ref{paulveyne} in the inert case,
there is no Langlands constant~ap\-pearing in Definition \ref{paulveynesplit}. 
However, note that the character $\om_{k_v/k_{\so,v}}$ of $k_{\so,v}^\times$ 
trivial on $k_v/k_{\so,v}$-norms is trivial. 
In analogy with the inert case, we may set 
the Langlands constant $\l(k_v/k_{\so,v},\psi_{\so,v})$ to be equal to 
$\e(1/2,\om_{k_v/k_{\so,v}},\psi_{\so,v})$, but this root number is equal to 
$1$ by the classical properties of Tate epsilon factors.
\end{remark}

A computation similar to the one carried out in the proof of 
Lemma \ref{lemma twisting delta} shows that this local factor
$\e_\As^\RS(s,\pi_v,\psi_{\so,v},\d_v)$ is independent of $\d_v$ hence we 
write 
\[\e_\As^\RS(s,\pi_v,\psi_{\so,v})=\e_\As^\RS(s,\pi_v,\psi_{\so,v},\d_v).\] 
When $v$ is archimedean, the discussion above remains true up to the appropriate 
modifications~(the $\L$-factor is meromorphic rather than an Euler factor, 
and the epsilon factor is entire rather than a Lau\-rent monomial) appealing 
to \cite[Theorem 2.1]{JacquetRSarchi} instead of \cite[Theorem 2.7]{JPSS}, 
and we define the local~fac\-tor 
$\e_\As^\RS(s,\pi_v,\psi_{\so,v},\d_v)$
as in Definition \ref{paulveynesplit}.

Now we compare these 
epsilon factors to the epsilon factors of pairs defined by the authors of 
\cite{JPSS} and \cite{JacquetRSarchi}.

\begin{lemma}
\label{lemma split asai vs rankin selberg epsilon factor}
Let $\phi$ be an isomorphism of $k_{\so,v}$-algebras between $k_v$ and 
$k_{\so,v}\oplus k_{\so,v}$.
{It induces~an isomorphism of groups between $\GL_n(k_v)$ and 
$\GL_n(k_{\so,v})\times\GL_n(k_{\so,v})$, still denoted $\phi$.}
Write $\pi_v\circ\phi$ as a 
tensor product $\pi_{1,v}\otimes \pi_{2,v}$ 
of two generic irreducible representations of $\GL_n(k_{\so,v})$. 
Then 
\[\e_\As^\RS(s,\pi_v,\psi_{\so,v})
=\e^{\RS}(s,\pi_{1,v},\pi_{2,v},\psi_{\so,v})
=\e^{\RS}(s,\pi_{2,v},\pi_{1,v},\psi_{\so,v})\] 
where $\e^{\RS}(s,\pi_{1,v},\pi_{2,v},\psi_{\so,v})$ is the epsilon factor denoted 
$\e(s,\pi_{1,v},\pi_{2,v},\psi_{\so,v})$ 
in {\rm \cite[Theorem 2.7]{JPSS}}~if~$v$ is finite, 
and is the one canonically associated to the gamma factor of 
{\rm \cite[Theorem 2.1]{JacquetRSarchi}} if $v$ is archi\-medean.
\end{lemma}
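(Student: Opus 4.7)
The strategy is to identify, under the isomorphism $\phi$, the local Asai integral as a Jacquet--Piatetski-Shapiro--Shalika (JPSS) Rankin--Selberg integral for the pair $(\pi_{1,v},\pi_{2,v})$, and then to compare the corresponding functional equations.

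First, I would translate the setup through $\phi$. Since $\tr_{k_v/k_{\so,v}}(\d_v)=0$, via $\phi$ we have $\d_v=(\d_1,-\d_1)$ for some $\d_1\in k_{\so,v}^\times$, and the character $\psi_{\so,v}^{k_v,\d_v}$ factors as $\psi'_{\so,v}\otimes(\psi'_{\so,v})^{-1}$ with $\psi'_{\so,v}(x)=\psi_{\so,v}(\d_1 x)$. Correspondingly, the non-degenerate character of $\N(k_v)$ attached to \eqref{psifd} factors as $\psi_1\otimes\psi_1^{-1}$, the fixed-point subgroup $\G(k_v)^\s$ is sent to the diagonal $\GL_n(k_{\so,v})$, and the Whittaker model decomposes as $\Ww(\pi_v,\psi_v)=\Ww(\pi_{1,v},\psi_1)\otimes\Ww(\pi_{2,v},\psi_1^{-1})$.

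Second, for $\W=W_1\otimes W_2$ with $W_i\in\Ww(\pi_{i,v},\psi_i)$ and $\Phi\in\Cc_c^\infty(k_{\so,v}^n)$, a direct unwinding of definitions gives
\[
\I_\As(s,\Phi,\W)=\int_{\N(k_{\so,v})\backslash \GL_n(k_{\so,v})} W_1(g)W_2(g)\Phi(\etavec g)|\det g|_{\so,v}^s\,dg,
\]
which is precisely the JPSS local integral $\Psi(s,W_1,W_2,\Phi)$ of \cite{JPSS} in the non-archimedean case (resp.\ of \cite{JacquetRSarchi} in the archimedean case) for the pair $(\pi_{1,v},\pi_{2,v})$ attached to the character $\psi'_{\so,v}$. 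In particular, $\L_\As(s,\pi_v)=L(s,\pi_{1,v}\times\pi_{2,v})$.

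Third, I would compare the two functional equations. The JPSS functional equation involves the Fourier transform $\widehat{\Phi}^{\psi'_{\so,v}}$ and produces $\e^{\RS}(s,\pi_{1,v},\pi_{2,v},\psi'_{\so,v})$, whereas the Asai functional equation uses $\widehat{\Phi}^{\psi_{\so,v}}$ and produces $\e_\As^\FK(s,\pi_v,\psi_{\so,v},\d_v)$. Converting between the two Fourier transforms via the identity $\widehat{\Phi}^{\psi'_{\so,v}}(x)=|\d_1|_{\so,v}^{n/2}\widehat{\Phi}^{\psi_{\so,v}}(\d_1 x)$ from \eqref{changeFourier}, performing the change of variable $g\mapsto\d_1 g$ in the right-hand integral and keeping track of the central character and absolute values, one obtains an explicit relation of the form
\[
\e_\As^\FK(s,\pi_v,\psi_{\so,v},\d_v) = C(\d_1,s)\cdot \e^{\RS}(s,\pi_{1,v},\pi_{2,v},\psi'_{\so,v})
\]
for a scalar $C(\d_1,s)$ that is a routine monomial in $|\d_1|_{\so,v}$ and the central characters, obtained by the same bookkeeping as in the proofs of Lemmas~\ref{lemma twisting the additive character} and~\ref{lemma twisting delta}. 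Finally, applying the standard twisting property of the JPSS epsilon factor to pass from $\psi'_{\so,v}$ back to $\psi_{\so,v}$, and then inserting the normalization of Definition~\ref{paulveynesplit} (recalling that $|\d_v|_v=|\d_1|_{\so,v}^2$ and $\om_{\pi_v}(\d_v)=\om_{\pi_{1,v}}(\d_1)\om_{\pi_{2,v}}(-\d_1)$), all factors involving $\d_1$ cancel, yielding the first equality $\e_\As^\RS(s,\pi_v,\psi_{\so,v})=\e^{\RS}(s,\pi_{1,v},\pi_{2,v},\psi_{\so,v})$. The second equality is the standard symmetry of Rankin--Selberg epsilon factors, which is an immediate consequence of the symmetry $\Psi(s,W_1,W_2,\Phi)=\Psi(s,W_2,W_1,\Phi)$; alternatively it is forced upon us by the well-definedness of the left-hand side under the two possible choices of $\phi$, which differ by the swap $(x,y)\mapsto(y,x)$.

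The only real obstacle lies in the third step: the accurate bookkeeping of monomial correction factors arising from the mismatch between the Fourier transform of the Asai setup (using $\psi_{\so,v}$) and the one naturally attached to the JPSS integral (using $\psi'_{\so,v}$). All the corrective terms are, by design of Definition~\ref{paulveynesplit}, absorbed into the normalization.
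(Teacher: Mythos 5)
Your proposal is correct and follows essentially the same route as the paper: transport the Asai integral through $\phi$ to the JPSS integral for $(\pi_{1,v},\pi_{2,v})$, compare the two functional equations, and obtain the second equality from the other choice of $\phi$ (equivalently, the symmetry of the Rankin--Selberg factors). The paper, however, dispenses entirely with your ``only real obstacle'': since the split analogue of Lemma~\ref{lemma twisting delta} shows $\e_\As^\RS(s,\pi_v,\psi_{\so,v},\d_v)$ is independent of $\d_v$, one may take $\d_v=\phi(1,-1)$, so that $\psi_v\circ\phi=\psi_{\so,v}^{\phantom{-1}}\otimes\psi_{\so,v}^{-1}$, both Fourier transforms are taken with respect to $\psi_{\so,v}$, $|\d_v|_v=1$, and $\om_{\pi_v}(\d_v)^{1-n}=\om_{\pi_{2,v}}(-1)^{1-n}$ exactly cancels the sign in the JPSS functional equation --- no change-of-Fourier-transform or character-twisting bookkeeping is needed.
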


\begin{proof}
Since $\e_\As^\RS(s,\pi_v,\psi_{\so,v})$ does not depend on $\d_v$,
we can choose $\d_v=\phi(1,-1)$. 
Then $\psi_v\circ \phi$ can be written 
$\psi_{\so,v}^{\phantom{-1}}\otimes \psi_{\so,v}^{-1}$ 
and we have
\[\Ww(\pi_v\circ \phi,\psi_v\circ \phi)
=\Ww(\pi_{1,v},\psi^{\phantom{-1}}_{\so,v})\otimes \Ww(\pi_{2,v},\psi_{\so,v}^{-1}).\] 
Moreover, $\om_{\pi_v}(\d_v)^{n-1}=\om_{\pi_{2,v}}(-1)^{n-1}$
hence \[\e_\As^\RS(s,\pi_v,\psi_{\so,v})=\e^{\RS}(s,\pi_{1,v},\pi_{2,v},\psi_{\so,v}).\] 
Now replace $\phi$ by the other isomorphism $\phi'$ of $k_{\so,v}$-algebras 
such that $\phi'\circ\phi^{-1}:(x,y)\mapsto(y,x)$
and replace $\d_v$ by $-\d_v=\phi'(1,-1)$. 
We then get
\[\e_\As^\RS(s,\pi_v,\psi_{\so,v})=\e^{\RS}(s,\pi_{2,v},\pi_{1,v},\psi_{\so,v})\]
which proves the expected result.
\end{proof}

We give another reason for the lemma above to be true for the possibly 
surprised reader. 

\begin{remark}
\label{remark no surprise}
It is in fact well known as a part of the local Langlands correspondence for 
$\GL_n(k_{\so,v})$ that 
\begin{equation}
\label{fouche}
\e^{\RS}(s,\pi_{1,v},\pi_{2,v},\psi_{\so,v})=\e^{\RS}(s,\pi_{2,v},\pi_{1,v},\psi_{\so,v})
\end{equation}
as it is equal to the Langlands--Deligne constant 
\[\e(s,\rho(\pi_{1,v})\otimes \rho(\pi_{2,v}),\psi_{\so,v})
=\e(s,\rho(\pi_{2,v})\otimes \rho(\pi_{1,v}),\psi_{\so,v}).\] 
Equality \eqref{fouche} can also be checked as follows. 
Using the notation of \cite[Theorem 2.7]{JPSS}, one has 
\[\frac{\Psi(1-s,\widetilde{\W}_{1,v},\widetilde{\W}_{2,v},\widehat{\Phi})}
{\L^\RS(1-s,\pi_{1,v}^\vee,\pi_{2,v}^\vee)}
=\om_{\pi_{2,v}}(-1)^{n-1}\cdot
\e^{\RS}(s,\pi_{1,v},\pi_{2,v},\psi_{\so,v})\cdot
\frac{\Psi(1-s,\W_{1,v},\W_{2,v},\Phi)}{\L^\RS(s,\pi_{1,v},\pi_{2,v})}\] 
for $\W_{1,v}\in \Ww(\pi_{1,v},\psi^{\phantom{-1}}_{\so,v})$, 
$\W_{2,v}\in \Ww(\pi_{2,v},\psi_{\so,v}^{-1})$ and 
$\Phi \in \Cc_c^\infty(k_{\so,v}^n)$. 
Similarly, one has 
\[\frac{\Psi(1-s,\widetilde{\W}_{2,v},\widetilde{\W}_{1,v},
\widehat{\Phi}^{\psi_{\so,v}^{-1}})}
{\L^\RS(1-s,\pi_{2,v}^\vee,\pi_{1,v}^\vee)}
=\om_{\pi_{1,v}}(-1)^{n-1}\cdot
\e^{\RS}(s,\pi_{2,v},\pi_{1,v},\psi_{\so,v}^{-1})\cdot
\frac{\Psi(1-s,\W_{2,v},\W_{1,v},\Phi)}{\L^\RS(s,\pi_{2,v},\pi_{1,v})}\] 
for 
$\W_{1,v}\in \Ww(\pi_1,\psi^{\phantom{-1}}_{\so,v})$, 
$\W_{2,v}\in\Ww(\pi_2,\psi_{\so,v}^{-1})$ 
and $\Phi \in \Cc_c^\infty(k_{\so,v}^n)$. 
The $\L$-factors do not depend on the ordering of the representations, 
and a simple change of variable using the relation 
\eqref{changeFourier}
% $\widehat{\Phi}^{\psi_{\so,v}^{-1}}(x)=\widehat{\Phi}(-x)$
gives 
\begin{equation*}
\Psi\left(1-s,\widetilde{\W}_{2,v},\widetilde{\W}_{1,v},
\widehat{\Phi}^{\psi_{\so,v}^{-1}}\right)
=\om_{\pi_1}(-1)\cdot\om_{\pi_2}(-1)\cdot\Psi(1-s,\widetilde{\W}_{2,v},
\widetilde{\W}_{1,v},\widehat{\Phi}) 
\end{equation*}
whereas 
\[\e^{\RS}(s,\pi_{2,v},\pi_{1,v},\psi_{\so,v}^{-1})
=\om_{\pi_1}(-1)^n\cdot\om_{\pi_2}(-1)^n\cdot\e^{\RS}(s,\pi_{2,v},\pi_{1,v},\psi_{\so,v})\]
by p. 7 of \cite{JacquetRSarchi}. 
Putting the different pieces together yields 
the equality we were looking for. 
\end{remark}

\begin{remark}
% \label{no sign ambiguity}
At p. 811 of \cite{Kable}, the author notices a sign ambiguity in 
the identification of the Asai epsilon factor  
with $\e^{\RS}(s,\pi_{1,v},\pi_{2,v},\psi_{\so,v})$ due to the ordering of 
$\pi_{1,v}$ and $\pi_{2,v}$. 
Lemma \ref{lemma split asai vs rankin selberg epsilon factor} 
or Remark \ref{remark no surprise} show that there is in fact no such ambiguity.
\end{remark}

\begin{remark}
\label{fabiola}
With the same assumptions as in Lemma 
\ref{lemma split asai vs rankin selberg epsilon factor}, 
we also have the equalities
\[\L_\As(s,\pi_v)
=\L^{\RS}(s,\pi_{1,v},\pi_{2,v})
=\L^{\RS}(s,\pi_{2,v},\pi_{1,v})\] 
between local $\L$-factors.
Note that 
\begin{eqnarray*}
\L^\RS(s,\pi_{1,v},\pi_{2,v})
&=&\L^{\LS}(s,\pi_{1,v},\pi_{2,v}), \\
\e^\RS(s,\pi_{1,v},\pi_{2,v},\psi_{\so,v}) &=& \e^{\LS}(s,\pi_{1,v},\pi_{2,v},\psi_{\so,v})
\end{eqnarray*}
where the factors on the right hand side are the Langlands--Shahidi factors of 
\cite{Sh-certain}. 
It is known by \cite{Sh-Fourier} in the non-archimedean case, 
and by \cite{Sh-certain} in the archimedean case.
\end{remark}

\subsection{Global factors}
\label{section global}

As in the  previous paragraph, $k/k_{\so}$ is a quadratic  extension of global
fields of characteristic different from $2$.
We denote~by $\AA$ the ring of adeles of $k$ and by $\AA_{\so}$ that of 
$k_{\so}$. 
We suppose that all places of $k_\so$ dividing $2$, 
as well as all archimedean places in the number field case,
are split in $k$.

We fix once and for all a non-trivial character $\psi_\so$ of 
$\AA_{\so}/k_\so$ 
and a non-zero element $\d\in k$ such that $\tr_{k/k_0}(\d)=0$. 
Thus 
\begin{equation}
\label{PSIKD}
\psi_{\so}^{k,\d} : x \mapsto \psi_\so(\tr_{k/k_0}(\d x))
\end{equation} 
is a non-trivial character of $\AA$ trivial on $k+\AA_{\so}$. 
Given a place $v$ of $k_\so$,
we denote by $\psi_{\so,v}$ the local component of $\psi_\so$ at $v$.

Let $\Pi$ be a cuspidal automorphic
representation of $\GL_n(\AA)$ as in \cite{BorelJacquet}. 
It decomposes as a restricted tensor product
\begin{equation*}
\Pi=\bigotimes\limits_{v}{}' \ \Pi_v
\end{equation*} 
where $v$ ranges over the set of all places of $k_\so$.
When $v$ is inert in $k$, 
then $\Pi_v$ is the local component of $\Pi$ at the place 
of $k$ above $v$.
When $v$ is split in $k$,
and given an isomorphism $\phi_v$ of $k_{\so,v}$-algebras between $k_v$ 
and $k_{\so,v}\oplus k_{\so,v}$,
the representation $\Pi_v\circ\phi_v$ decomposes as 
$\Pi_{1,v}\otimes \Pi_{2,v}$.

Note that we have
\begin{equation*}
\L_\As(s,\Pi_v)=\L_\As^\LS(s,\Pi_v)=\L_\As^{\Gal}(s,\Pi_v)
\end{equation*}
for any place $v$ of $k_\so$.
See Theorem \ref{manciniA1} when $v$ is inert, 
and Remark \ref{fabiola} when $v$ is split.

The factors 
$\L_\As(s,\Pi_v)$ and $\e_\As^\RS(s,\Pi_v,\psi_{\so,v})$ 
have now been defined at all places of $k_\so$.
We set 
\begin{eqnarray*}
\L_\As(s,\Pi)
&=&\prod_{v} \L_\As(s,\Pi_v), \\
\e_\As^\RS(s,\Pi)
&=&
\prod_{v} \e_\As^\RS(s,\Pi_v,\psi_{\so,v}), \\
\e_\As^\LS(s,\Pi)
&=&
\prod_{v} \e_\As^\LS(s,\Pi_v,\psi_{\so,v})
\end{eqnarray*}
where the products are taken over all places $v$ of $k_\so$.

Note that by \cite{Sh-Plancherel90} and \cite{LomeliLfunctions}, 
the factor $\e_\As^\LS(s,\Pi)$ is indeed independent of 
the character \eqref{PSIKD}
and one has the func\-tional equation 
\begin{equation}
\label{global equation LS} 
\L_\As^{}(s,\Pi)=\e_\As^\LS(s,\Pi) \cdot \L_\As^{}(1-s,\Pi^\vee).
\end{equation}

In fact, we claim that with our normalization
(see Definitions \ref{paulveyne} and \ref{paulveynesplit}), 
we have 
\begin{equation}\label{global equation RS} 
\L_\As(s,\Pi)=\e_\As^\RS(s,\Pi)\cdot \L_\As(1-s,\Pi^\vee).
\end{equation} 
Let us prove this claim. 
Whatever the place $v$ of $k_\so$ is, 
the local functional equation is of the form 
\begin{equation}
\label{piscine}
\begin{split}
\frac{\I_\As(1-s,\widehat{\Phi}_v,\widetilde{\W}_v)}{\L_\As(1-s,\Pi_v^\vee)}
= \om_{\Pi_v}(\d_v)^{n-1}&\cdot|\d|_v^{n(n-1)(s-1/2)/2} \\
& 
\cdot \l(k_v/k_{\so,v},\psi_{\so,v})^{n(n-1)/2}\cdot
\e_\As^\RS(s,\Pi_v,\psi_{\so,v})\cdot
\frac{\I_\As(s,\Phi_v,\W_v)}{\L_\As(s,\Pi_v)}.
\end{split}
\end{equation}
By \cite[Proposition 5]{Kable} and \cite[Section 2, Proposition]{Flicker}, 
together with local multiplicity $1$ for Whitta\-ker functionals, 
if we take a decomposable global Schwartz function $\Phi$ and a 
decomposable global Whit\-ta\-ker function 
$\W$ in the global Whittaker model of $\Pi$,
we have:
\begin{equation*}
\label{paletot}
\prod_{v}\I_\As(1-s,\widehat{\Phi}_v,\widetilde{\W}_v)=
\prod_{v}\I_\As(s,\Phi_v,\W_v).
\end{equation*} 
To be more precise, the left hand side term makes sense when the real part of 
$-s$ is large enough, 
whereas the right hand side term makes sense when the real part of 
$s$ is large enough, and both terms admit meromorphic continuations 
to $\CC$.
It is these meromorphic continuation that are equal. 

Now let $\T$ be a finite set of places of $k_\so$, 
containing the set of archimedean places, 
such that for all $v\notin\T$ 
one has
\begin{eqnarray*}
\I_\As(s,\Phi_v,\W_v) &=& \L_{\As}(s,\Phi_v,\W_v), \\
\om_{\Pi_v}(\d_v) &=& 1, \\
|\d|_v &=& 1, \\
\l(k_v/k_{\so,v},\psi_\so) &=& 1,
\end{eqnarray*}
hence $\e_\As^\RS(s,\Pi_v,\psi_{\so,v})=1$.
Taking the product of the equalities \eqref{piscine} for all $v$, 
% and by \eqref{paletot},
we get
\begin{equation*}
\begin{split}
\L_{\As}(s,\Pi) = \Bigg(\prod_{v\in\T} % \right. 
\om_{\Pi_v}(\d_v)^{n-1}&\cdot|\d|_v^{n(n-1)(s-1/2)/2} \\
& 
% \left. 
\cdot \l(k_v/k_{\so,v},\psi_\so)^{n(n-1)/2} \cdot
  \e_\As^\RS(s,\Pi_v,\psi_{\so,v})\Bigg)\cdot
\L_{\As}(1-s,\Pi^\vee).
\end{split}
\end{equation*}
Since $\d\in k^\times$, we have
\begin{equation*}
\prod\limits_v \om_{\Pi_v}(\d_v) = \om_\Pi(\d) = 1
\quad\text{and}\quad
\prod\limits_v |\d|_v = 1.
\end{equation*}
On the other hand, we have (see Remark \ref{paulveynesplitrmk})
\begin{equation*}
\prod\limits_v \l(k_v/k_{\so,v},\psi_{\so,v}) = 
\prod\limits_v \e(1/2,\om_{k_v/k_{\so,v}},\psi_{\so,v}) = 
\e(1/2,\om_{k/k_\so}).
\end{equation*}
However, the global root number $\e(1/2,\om_{k/k_\so})$ is equal to $1$
by the dimension $1$ case of the main result
of \cite{FrohlichQueyrut}.
By the assumption on $\T$ we get
\[\prod_{v\in\T} \e_\As^\RS(s,\Pi_v,\psi_{\so,v})= \e_{\As}^{\RS}(s,\Pi)\] 
and (\ref{global equation RS}) follows.
In particular (\ref{global equation LS}) and (\ref{global equation RS}) imply:

\begin{theorem}\label{global equality} 
Let $\Pi$ be a cuspidal automorphic representation of $\GL_n(\AA)$.
Then 
\[\e_\As^\RS(s,\Pi)=\e_\As^\LS(s,\Pi).\]
\end{theorem}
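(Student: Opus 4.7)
The plan is to derive the theorem by comparing two global functional equations for the same $L$-function $\L_\As(s,\Pi)$: the one coming from the Langlands--Shahidi method and the one obtained by taking a product of the local Rankin--Selberg functional equations \eqref{piscine}. Since both functional equations relate $\L_\As(s,\Pi)$ and $\L_\As(1-s,\Pi^\vee)$, unique\-ness forces the corresponding global epsilon factors to agree, provided I can account for all the extra local normalization factors.

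More precisely, I would first record the global Langlands--Shahidi functional equation
\[
\L_\As(s,\Pi)=\e_\As^\LS(s,\Pi)\cdot\L_\As(1-s,\Pi^\vee),
\]
available from \cite{Sh-Plancherel90} in characteristic zero and \cite{LomeliLfunctions} in positive characteristic; here I use the agreement of the three local Asai $L$-factors at every place (Theorem \ref{manciniA1} at inert places and Remark \ref{fabiola} at split places), so that all occurrences of $\L_\As$ refer unambiguously to the same Euler product. Next I would invoke the global functional equation for the Flicker--Kable/Rankin--Selberg Asai integrals (from \cite{Kable} at inert places, \cite{Flicker} and \cite{JPSS}, \cite{JacquetRSarchi} at split places) which, combined with local multiplicity one for Whittaker functionals, gives an identity
\[
\prod_v\I_\As(1-s,\widehat{\Phi}_v,\widetilde{\W}_v)=\prod_v\I_\As(s,\Phi_v,\W_v)
\]
for decomposable data, with both sides understood as meromorphic continuations.

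Taking the product of the local functional equations \eqref{piscine} over all places of $k_\so$, restricting to a sufficiently large finite set $\T$ of places outside of which the local data is unramified and the local factors trivial, I obtain
\[
\L_\As(s,\Pi)=\Biggl(\prod_v \om_{\Pi_v}(\d_v)^{n-1}\cdot|\d|_v^{n(n-1)(s-1/2)/2}\cdot\l(k_v/k_{\so,v},\psi_{\so,v})^{n(n-1)/2}\Biggr)\cdot\e_\As^\RS(s,\Pi)\cdot\L_\As(1-s,\Pi^\vee).
\]
The key step is then to show that the bracketed product is $1$. This follows from three global vanishings: $\prod_v\om_{\Pi_v}(\d_v)=\om_\Pi(\d)=1$ because $\om_\Pi$ is trivial on $k^\times$ and $\d\in k^\times$; $\prod_v|\d|_v=1$ by the product formula; and $\prod_v\l(k_v/k_{\so,v},\psi_{\so,v})=\e(1/2,\om_{k/k_\so})=1$ by the abelian case of the Fröhlich--Queyrut theorem \cite{FrohlichQueyrut}, after identifying each local Langlands constant with a Tate root number as in Remark \ref{paulveynesplitrmk}.

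Comparing the resulting identity
\[
\L_\As(s,\Pi)=\e_\As^\RS(s,\Pi)\cdot\L_\As(1-s,\Pi^\vee)
\]
with the Langlands--Shahidi functional equation above, and using that $\L_\As(1-s,\Pi^\vee)$ is not identically zero, yields the claimed equality $\e_\As^\RS(s,\Pi)=\e_\As^\LS(s,\Pi)$. The main subtlety to watch for is not a difficult computation but rather bookkeeping: I must ensure the Langlands constants, central character values, and absolute values from the local normalizations match up exactly so that the global vanishings cited above apply, and I must make sure the set $\T$ is chosen compatibly with the places where each ingredient (including $\om_{\Pi_v}$, $|\d|_v$, and $\l(k_v/k_{\so,v},\psi_{\so,v})$) becomes trivial.
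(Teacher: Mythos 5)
Your proposal is correct and follows essentially the same route as the paper: take the product over all places of the local functional equations \eqref{piscine}, use the global identity $\prod_v\I_\As(1-s,\widehat{\Phi}_v,\widetilde{\W}_v)=\prod_v\I_\As(s,\Phi_v,\W_v)$ for decomposable data, and kill the extra factors via $\om_\Pi(\d)=1$, the product formula for $|\d|$, and the Fröhlich--Queyrut theorem giving $\e(1/2,\om_{k/k_\so})=1$, then compare with the Langlands--Shahidi functional equation. No substantive differences from the paper's argument.
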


Note that 
the functional equation of \cite[Theorem 5]{Kable} has a different epsilon factor 
and moreover is up to a sign. 
The presence of this sign is due to the fact that at an inert place $v$ of $k_\so$, 
Kable takes the lo\-cal factor 
$\e_\As^\FK(s,\Pi_v,\psi_{\so,v},\d_v)$ whereas we take
$\e_\As^\RS(s,\Pi_v,\psi_{\so,v})$. 

\subsection{Cuspidal representations}
% \label{section cuspidal eq}

Let $\F/\F_\so$ be our usual quadratic extension of non-archimedean local 
fields of residual characteristic different from $2$.
Fix a global field $k_\so$ such that 
$k_{\so,w}\simeq \F_\so$ at some place $w$. 
Write 
$\F\simeq\F_\so[\X]/(\P_{w})$ for $\P_{w}$ a polynomial of degree $2$ with
coefficients in $\F_\so$. 
We also fix, whenever $v$ is a place of $k_{\so}$ 
in the set $\SS$ made of all archimedean places 
and all finite places dividing $2$,
a polynomial $\P_v\in k_{\so,v}[\X]$ of degree~$2$ 
with simple roots. 
By the weak approximation~lem\-ma, there is a $\P\in k_\so[\X]$ of degree~$2$, 
as close as we want from $\P_v$ in $k_{\so,v}[\X]$ for each $v\in\SS\cup\{w\}$.
Thanks to Krasner's lemma, we take $\P$
close enough such that the extension spanned by its roots in 
the {separable} closure
$\overline{\F}$ of $\F$ is equal to $\F$,
and such that 
$k_{\so,v}[\X]/(\P)$ is split for $v\in\SS$.
Setting $k=k_\so[\X]/(\P)$, we have: 
\begin{enumerate}
\item 
$k$ is split at all archimedean places (when $k$ is a number field) 
and at all places dividing $2$;
\item 
one has $k_{\so,w}\simeq\F_\so$ and $k_{w}\simeq\F$.
\end{enumerate}

We explain below how to realize a cuspidal representation $\pi$ of 
$\G=\GL_n(\F)$ as the~local com\-ponent at $w$ 
of some suitable cuspidal automorphic representation of $\GL_n(\AA)$.
First, we realize its central character $\om_\pi$ as a local component 
of some character $\Omega$ of $\AA^\times/k^\times$.

\begin{lemma}
\label{YvonneDeGalais}
Let $\om$ be a unitary character of $\F^\times$,
and $u$ be a finite place of $k_{\so}$ different from $w$.
Then there exists a unitary automorphic character 
$\Omega:\AA^\times/k^\times\to\CC^\times$ 
such that:
\begin{enumerate}
\item 
the local component of $\Omega$ at $w$ is $\omega$,
\item
for all $v\neq u,w$, the local component $\Omega_v$ is unramified.
\end{enumerate}
\end{lemma}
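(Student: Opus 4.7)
My plan is to construct $\Omega$ by first prescribing it on a suitably large open subgroup of $\AA^\times$ containing $k^\times$ and (the image of) $k_w^\times=\F^\times$, and then extending to all of $\AA^\times$ by Pontryagin duality. The freedom to allow $\Omega$ to be ramified at $u$ is precisely what makes the obstruction to such a prescription vanish.

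Concretely, since $k_w\simeq\F$ the place $w$ is inert in $k$ with a unique place $W$ above it, and let $T$ denote the (nonempty) set of places of $k$ lying over $u$. I would then define the compact subgroup $K\subset\AA^\times$ consisting of those ideles whose components lie in $\o_{k_v}^\times$ at every finite $v\notin\{W\}\cup T$, in a fixed maximal compact subgroup of $k_v^\times$ at each archimedean place, and equal $1$ at every place of $\{W\}\cup T$. The key intermediate fact to verify is
\[
k_W^\times\cap(k^\times\cdot K)=\{1\}\quad\text{inside }\AA^\times.
\]
Indeed, if $a=\gamma\kappa$ lies in this intersection then, inspecting components at each $V\in T$ (where both $a$ and $\kappa$ are trivial by construction), one gets $\gamma=1$ in $k_V^\times$; the injectivity of the embedding $k\hookrightarrow k_V$ then forces $\gamma=1$ in $k^\times$, whence $a=1$.

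This triviality allows one to unambiguously define a unitary character $\Omega_0$ on the open subgroup $H=k_W^\times\cdot k^\times\cdot K$ of $\AA^\times$ by declaring $\Omega_0=\omega$ on $k_W^\times$ and $\Omega_0=1$ on $k^\times\cdot K$. Since $H$ is open and hence closed in the locally compact abelian group $\AA^\times$, Pontryagin duality (surjectivity of the restriction map on unitary dual groups for closed subgroups of locally compact abelian groups) yields a unitary extension $\Omega:\AA^\times\to\CC^\times$ of $\Omega_0$. By construction $\Omega$ is trivial on $k^\times$, has local component $\omega$ at $W$, and is unramified at every place of $k_\so$ distinct from $u$ and $w$.

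The main point is really the intersection computation above, which crucially uses the presence of the auxiliary place $u\neq w$: the liberty to have $\Omega$ ramified at $u$ is what eliminates any obstruction coming from weak approximation in $k^\times$. Everything else is routine Pontryagin duality for locally compact abelian groups, and the hypothesis that $\omega$ is unitary is used only to remain inside the unitary dual when performing the extension step.
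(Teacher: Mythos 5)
Your construction is essentially the paper's own: the paper takes the compact subgroup $\U=\prod_{v\neq u,w}k_v^{\times 0}$ (your $K$, up to the harmless difference of indexing by places of $k_\so$ rather than of $k$), observes that $k^\times\U\cap k_w^\times$ is trivial by exactly the computation you give at the places above $u$, and then extends $\om$ by Pontryagin duality. The intersection computation, which you rightly single out as the main point, is correct, as is the role of the auxiliary place $u$.

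One step is false as stated, however: the subgroup $H=k_W^\times\cdot k^\times\cdot K$ is \emph{not} open in $\AA^\times$. At a place $V$ of $k$ above $u$, the $V$-component of an element $a\gamma\kappa$ of $H$ is $\gamma_V$ with $\gamma\in k^\times$, so the projection of $H$ to $k_V^\times$ is the countable image of $k^\times$; hence $H$ has empty interior. What the extension argument actually needs is that one extends a continuous unitary character from a \emph{closed} subgroup, and closedness of $H$ requires a different justification than ``open hence closed.'' The paper's route supplies it: $k^\times K$ is closed because $k^\times$ is discrete and $K$ is compact, and by your intersection computation the image of $k_W^\times$ in the locally compact quotient $\AA^\times/k^\times K$ is a locally compact, hence closed, subgroup isomorphic to $k_W^\times\simeq\F^\times$; one then extends $\om$ from that image to the quotient and pulls back. (A minor further slip: $w$ need not be \emph{inert} in $k$ -- it may be ramified -- but in either case there is a unique place $W$ above it, which is all you use.) With the openness claim replaced by this closedness argument, your proof is the paper's proof.
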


\begin{proof}
The subgroup 
\[\U = \prod_{v\neq u,w} k_v^{\times 0}\] 
where $v$ ranges over all places of $k_\so$ different from $u,w$
and where $k_v^{\times 0}$ is the maximal compact~sub\-group of $k_v^\times$, 
is compact in $\AA^\times$, 
thus $k^\times\U$ is closed in $\AA^\times$.
The intersection $k^\times\U\cap k_w^{\times}$ is trivial,
thus $k_w^{\times}$ identifies with a locally compact subgroup of 
$\AA^\times/k^\times\U$. 
By Pontryagin duality, $\om$ extends 
to a unitary character $\Omega$ of $\AA^\times/k^\times\U$, 
which satisfies the required con\-ditions.
\end{proof}

\begin{lemma}
Let $\pi$ be a unitary cuspidal representation of $\G$,
and $u$ be a finite place of $k_{\so}$ which is split in $k$. 
Then there is a cuspidal automorphic representation $\Pi$ of $\GL_n(\AA)$ 
such that:
\begin{enumerate}
\item
the local component of $\Pi$ at $w$ is isomorphic to $\pi$,
\item
for all $v\neq u,w$, the local component $\Pi_v$ is unramified.
\end{enumerate}
\end{lemma}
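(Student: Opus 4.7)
The plan is to realize $\pi$ as the local component at $w$ of a cuspidal automorphic representation by a standard application of the simple trace formula for $\GL_n$ over $k$, following the globalization technique of Deligne--Kazhdan--Vign\'eras and Henniart. The auxiliary split place $u$ will serve as an \emph{isolation place}: inserting a supercuspidal matrix coefficient there simultaneously simplifies the geometric side (to a sum of elliptic orbital integrals) and the spectral side (to a sum over cuspidal representations only).

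First, since $u$ splits in $k$ into two places $u_1,u_2$ with $k_{u_i}\simeq k_{\so,u}$, I would fix an irreducible supercuspidal representation $\tau$ of $\GL_n(k_{u_1})$. By a direct strengthening of Lemma~\ref{YvonneDeGalais} allowing prescribed ramification also at $u_1$ (proved by the same Pontryagin duality argument, since $u_1\neq w$), I would extend $\omega_\pi$ to a unitary id\`ele class character $\Omega$ of $\AA^\times/k^\times$ whose components at $w$ and $u_1$ are $\omega_\pi$ and $\omega_\tau$ respectively, and which is unramified elsewhere.

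Next I would build a global test function $f=\prod_v f_v$ on $\GL_n(\AA)$, transforming by $\Omega^{-1}$ under the centre, by taking $f_w$ to be a matrix coefficient of $\pi$, $f_{u_1}$ a matrix coefficient of $\tau$, $f_{u_2}$ and all other non-archimedean $f_v$ the characteristic function of a hyperspecial maximal compact subgroup (of mass $1$), and, in the number field case, $f_v$ at archimedean places a suitable bump function (our assumption that all archimedean places of $k_\so$ split in $k$ ensures no quadratic obstruction arises there). Since $\pi$ is unitary cuspidal, and since cuspidal is the same as supercuspidal for non-archimedean $\GL_n$ with complex coefficients, the matrix coefficients $f_w$ and $f_{u_1}$ are well defined modulo centre and can be normalized so that $\tr\pi(f_w)$ and $\tr\tau(f_{u_1})$ are non-zero.

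Applying the simple trace formula to $f$, the supercuspidal coefficient $f_{u_1}$ collapses the geometric side to a finite sum of elliptic regular semisimple orbital integrals and the spectral side to a sum $\sum_\Pi m(\Pi)\tr\Pi(f)$ over cuspidal automorphic $\Pi$ of central character $\Omega$; the identity contribution (or an elliptic one) can be arranged to be non-zero by the choice of matrix coefficients, making the spectral side non-zero. Hence some $\Pi$ satisfies $\tr\Pi(f)\neq 0$, which by orthogonality of supercuspidal matrix coefficients forces $\Pi_w\simeq\pi$ and $\Pi_{u_1}\simeq\tau$, while $\tr\Pi_v(f_v)\neq 0$ for the normalized spherical $f_v$ forces $\Pi_v$ unramified at every other finite place. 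This yields the required $\Pi$. The main technical point is the availability of a clean statement of the simple trace formula that applies uniformly in the function-field and number-field cases with our choice of test function; this is standard and requires no new input.
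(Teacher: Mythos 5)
Your proposal follows a Deligne--Kazhdan--Vign\'eras-style globalization (insert a supercuspidal matrix coefficient at an auxiliary split place) rather than the small-support route of Henniart's thesis that the paper actually cites. Both are in principle legitimate, and the two spectral points you make (Schur orthogonality of supercuspidal matrix coefficients forcing $\Pi_w\simeq\pi$, and unit spherical functions forcing $\Pi_v$ unramified away from $u,w$) are correct. The substantive deviation is at $u$: the paper keeps $f_u$ a single smooth, compactly-supported-mod-centre function on $\GL_n(k_u)$ whose support is \emph{shrunk} so that $f(g^{-1})f(\gamma g)=0$ for every $\gamma\in\GL_n(k)$ with $\gamma\notin k^\times$; this is precisely what forces the geometric side to reduce to the nonzero central contribution. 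You instead decompose $u$ into $u_1,u_2$, put a supercuspidal coefficient at $u_1$, and a unit spherical function at $u_2$.

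This is where the gap lies. The supercuspidal at $u_1$ is in fact redundant for the spectral isolation, since $f_w$ is already a supercuspidal matrix coefficient and by itself kills the non-cuspidal spectrum; and a matrix coefficient of $\tau$ has support of a size dictated by $\tau$, so it gives you no control over cancellation among the non-central elliptic orbital integrals. Your key sentence, ``the identity contribution (or an elliptic one) can be arranged to be non-zero by the choice of matrix coefficients,'' is exactly the missing step, and nothing in your construction justifies it. The standard fix --- and what the paper does --- is to shrink the support of a test function at a place on which the lemma imposes no constraint, so that the only rational $\gamma$ with nonvanishing global orbital integral is central; together with the self-adjointness normalization $f_v(g^{-1})=\overline{f_v(g)}$, which the paper also imposes, this gives a nonzero cuspidal contribution. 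In your setup the natural place to shrink is $u_2$: since $u_2$ lies above $u$, the lemma says nothing about $\Pi_{u_2}$, so you are free to replace the unit spherical function there by a bump of small support. As written, your argument stops one step short of a complete proof.
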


\begin{proof}
The proof follows that of \cite[Appendice 1]{Henn-thesis}, 
adapted to our context, 
the reductive group~of in\-terest here being the restriction of $\GL_n$ from 
$k$ to $k_\so$. 
Let $\Omega$ be a unitary character as in Lemma \ref{YvonneDeGalais} extending 
the central character $\om_\pi$.

For each finite place $v\neq u,w$ of $k_{\so}$, we let $f_v$ denote the
complex function on $\GL_n(k_v)$ supported on $k_v^\times\GL_n(\o_{k_v})$ 
such that $f_v(zk)=\Omega_v(z)$ for all $z\in k_v^\times$ and 
$k\in\GL_n(\o_{k_v})$. 

If $k_{\so}$ is a number field and $v$ is archimedean, 
we choose a smooth complex function $f_v$ on $\GL_n(k_v)$,
compactly supported mod the centre $k_v^\times$, 
such that $f_v(1)=1$ and $f_v(zg)=\Omega_v(z)f_v(g)$ for all~ele\-ments
$z\in k_v^\times$ and $g\in\GL_n(k_v)$. 

We let $f_w$ be a coefficient of $\pi$ such that $f_{w}(1)=1$.

Finally we choose a smooth complex function $f_u$ on $\GL_n(k_u)$,
compactly supported mod the centre, 
such that $f_u(1)=1$ and $f_u(zg)=\Omega_u(z)f_u(g)$ for all 
$z\in k_u^\times$ and $g\in\GL_n(k_u)$,
and of support small enough such that 
\begin{equation*}
f(g^{-1})f(\gamma g)=0,
\quad
\text{for all $g,\gamma\in\GL_n(k)$ such that
$\g\notin k^\times$},
\end{equation*} 
where $f$ is the product of all the $f_v$, 
as in \cite[Appendice 1]{Henn-thesis}, top of p. 148.

We may also assume that $f_v(g^{-1})=\overline{f_v(g)}$ for all $v$ and all 
$g\in\GL_n(k_v)$.

Then 
there is a cuspidal automorphic
representation  $\Pi$ of  $\GL_n(\AA)$
such that $f_v$ acts non-trivially on $\Pi_v$ for each place $v$ of $k_\so$. 
In particular $\Pi_{w}\simeq \pi$ and $\Pi_v$ is unramified at every place 
different from $w$ and $u$. 
\end{proof}

Now let us consider a cuspidal representation $\pi$ of $\G\simeq\GL_n(k_w)$.
The character $\om_\pi|\cdot|_w^{-s}$ is~unitary for some $s\in\CC$,
thus $\pi_{1}=\pi|\det|_w^{-s}$ is unitary.
Lemma \ref{YvonneDeGalais} gives us a
cuspidal automorphic~re\-presentation $\Pi_{1}$ of $\GL_n(\AA)$.
Denoting by $|\cdot|$ the idelic norm on $\AA^\times/k^\times$, 
the cuspidal automorphic representation $\Pi=\Pi_{1}|\det|^s$
has a local component at $w$ isomorphic to $\pi$,
and all its local compotents at $v\neq w,u$ are unramified. 

We recalled in Remark \ref{fabiola} that 
\[\e_\As^\RS(s,\Pi_v,\psi_{\so,v})=\e_\As^\LS(s,\Pi_v,\psi_{\so,v})\] 
when $v$ is split (in particular when $v=u$), 
hence from Theorem \ref{thm unramified epsilon equality} and Theorem 
\ref{global equality}, we get
\[\e_\As^\RS(s,\Pi_{w},\psi_{\so,w})=\e_\As^\LS(s,\Pi_{w},\psi_{\so,w}).\]

Thus we have proved:

\begin{theorem}
\label{thm cuspidal epsilon equality}
Let $\pi$ be a cuspidal representation of $\G=\GL_n(\F)$ and $\psi_\so$ be a 
non-trivial character of $\F_\so$. 
Then \[\e_\As^\RS(s,\pi,\psi_\so)=\e_\As^\LS(s,\pi,\psi_\so).\]
\end{theorem}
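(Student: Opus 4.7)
The plan is to reduce the local identity at $\F/\F_\so$ to a global identity and then use cancellation at well-understood auxiliary places. First, I would twist $\pi$ by $|\det|^{-s_0}$ for a suitable $s_0\in\CC$ so that the twisted representation $\pi_1$ is unitary. Both $\e_\As^\RS(s,\cdot,\psi_\so)$ and $\e_\As^\LS(s,\cdot,\psi_\so)$ transform identically under unramified twists (one checks this directly from the definitions and the fact that unramified twisting on the Galois side corresponds to a shift of the spectral parameter), so it is harmless to replace $\pi$ by $\pi_1$.

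Next, I would globalize by applying the two lemmas stated just before the theorem. Namely, I first choose, via weak approximation and Krasner's lemma, a global quadratic extension $k/k_\so$ of characteristic different from $2$ with a distinguished finite place $w$ such that $k_{\so,w}\simeq \F_\so$ and $k_w\simeq\F$, arranging moreover that every archimedean place of $k_\so$ (in the number field case) and every place dividing $2$ splits in $k$. Then I invoke Lemma~\ref{YvonneDeGalais} to realize $\omega_{\pi_1}$ as the local component at $w$ of a unitary Hecke character $\Omega$ of $\AA^\times/k^\times$ which is unramified outside $w$ and one auxiliary finite split place $u$, and finally use the Henniart-style construction recalled in the text, based on a matrix coefficient of $\pi_1$ at $w$ and on truncated unramified test functions at the remaining places, to produce a cuspidal automorphic representation $\Pi$ of $\GL_n(\AA)$ with $\Pi_w\simeq\pi_1$ and $\Pi_v$ unramified for every $v\neq w,u$.

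The conclusion is then bookkeeping. By Theorem~\ref{global equality}, one has
\[
\prod_v \e_\As^\RS(s,\Pi_v,\psi_{\so,v}) \;=\; \prod_v \e_\As^\LS(s,\Pi_v,\psi_{\so,v}).
\]
At every place $v\neq w$ the local factors on the two sides coincide: at the split places (which include all archimedean places, all dyadic places, and the auxiliary place $u$) this is Remark~\ref{fabiola} together with Lemma~\ref{lemma split asai vs rankin selberg epsilon factor}; at the remaining inert places $\Pi_v$ is unramified, so Theorem~\ref{thm unramified epsilon equality} applies. Cancelling all of these factors out of the global identity leaves exactly $\e_\As^\RS(s,\Pi_w,\psi_{\so,w})=\e_\As^\LS(s,\Pi_w,\psi_{\so,w})$, which, via $\Pi_w\simeq\pi_1$ and the initial twist reduction, yields the theorem for $\pi$.

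The main obstacle is the globalization step: one has to guarantee that the places where $\Pi$ has any nontrivial ramification, apart from $w$ itself, are precisely the places where the matching of the two epsilon factors is already known. This is the reason for insisting that $k/k_\so$ splits at all archimedean and dyadic places and for confining the remaining ramification to a single finite split place $u$ — once that is arranged, Remark~\ref{fabiola} and Theorem~\ref{thm unramified epsilon equality} cover every non-$w$ place and the cancellation works.
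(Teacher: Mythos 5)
Your proposal is correct and follows essentially the same route as the paper: globalize $\F/\F_\so$ via weak approximation and Krasner's lemma so that all archimedean and dyadic places split, realize (a unitary twist of) $\pi$ as the component at $w$ of a cuspidal automorphic representation unramified outside $w$ and one auxiliary split place $u$, and cancel all non-$w$ factors in the global identity of Theorem~\ref{global equality} using Remark~\ref{fabiola} at split places and Theorem~\ref{thm unramified epsilon equality} at inert unramified places. The only (harmless) cosmetic difference is that you reduce to the unitary case by the compatibility of both epsilon factors with unramified twists, whereas the paper globalizes the unitary twist $\pi_1$ and then twists the automorphic representation by a power of the idelic norm to recover $\pi$ at $w$.
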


When $\pi$ is cuspidal, 
Theorem \ref{thm cuspidal epsilon equality} tells us that
\begin{equation}
\label{laurierrose}
\e_{\As}\left(\frac 1 2,\pi,\psi_\so,\d\right) = 
\omega_\pi(\d)^{n-1}\cdot\lambda(\F/\F_\so,\psi_\so)^{-n(n-1)/2}
\cdot \e_{\As}^{\LS}\left(\frac 1 2,\pi,\psi_\so\right).
\end{equation}
If in addition $\pi$ is distinguished,
then combining this equality
with Theorem \ref{distinguished cuspidal Asairootnumber}
and since we~have 
$\om_\pi(\d^{-1})=\om_\pi(\d)$ as $\pi$ is distinguished and 
$\d^2\in\F_\so^\times$,
we recover \cite[Theorem 1.1]{AnandRoot} for dis\-tin\-guished 
cuspidal representations.

\begin{remark}
\label{coherent} 
When $\pi$ is cuspidal and $\om_{\F/\F_0}$-distinguished, 
we may go in the opposite direction: 
applying \cite[Theorem 1.1]{AnandRoot} together with 
\eqref{laurierrose} gives us 
the value of $\e_\As^\FK(1/2,\pi,\psi_\so,\d)$ 
when $\pi$ is~cus\-pidal and $\om_{\F/\F_0}$-distinguished.
\end{remark}

\begin{remark}
It is shown in \cite[Theorem 1.2]{AnandRoot} that the global Asai root number 
of a $\sigma$-self-dual~cus\-pidal automorphic representation is $1$.
Hence, by 
Theorem \ref{global equality}, the same holds for the Asai factor defined via 
the Rankin--Selberg method. 
Globalizing local distinguished cuspidal representations~as 
local components of distinguished cuspidal automorphic representations 
as in 
\cite{Prasad-SP} or \cite{Gan-Lomeli} and follow\-ing the methods of 
\cite{AnandRoot}, it is possible to prove that 
$\e_\As^\FK(1/2,\pi,\psi_\so,\d)=1$ 
by global methods as well. However our 
proof in this paper has the advantage that it is purely local. 
\end{remark}

\begin{remark}
  In his recent preprint \cite{B-P18}, Beuzart-Plessis extends
  Theorem \ref{thm cuspidal epsilon equality} to all 
generic~re\-pre\-sentations, using a global method as well.
\end{remark}

\appendix

\begin{center}
{\bf Appendix}
\end{center}

\section{Some remarks in positive characteristic}
\label{section positive char}

We use the notation of Section \ref{Notation} and Paragraph \ref{not91}.
In particular, $\G$ denotes the group $\GL_n(\F)$ for some $n\>1$,
and we have defined Asai local $\L$-factors
$\L_\As^{}(s,\pi)$, $\L_\As^{\LS}(s,\pi)$ and $\L_\As^{\Gal}(s,\pi)$
for all generic irreducible representations of $\G$.
We will first prove that these factors are all equal.

\begin{theorem}
\label{manciniA1}
For any generic irreducible complex representation $\pi$ of $\G$,
we have
\begin{equation*}
\L_\As^\FK(s,\pi)=\L_\As^{\Gal}(s,\pi)=\L_\As^\LS(s,\pi).
\end{equation*}
\end{theorem}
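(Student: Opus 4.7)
The plan has two ingredients. First, the equality $\L_\As^\LS(s,\pi) = \L_\As^{\Gal}(s,\pi)$ in positive characteristic is the content of \cite{HenniartLomeli}, which establishes compatibility of Lomel\'i's Langlands--Shahidi Asai factors with the local Langlands correspondence for $\GL_n$ over function fields (Laumon--Rapoport--Stuhler, Lafforgue). So it suffices to prove $\L_\As(s,\pi) = \L_\As^{\Gal}(s,\pi)$ when $\F$ has positive characteristic~$p$.

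I would first reduce to cuspidal~$\pi$ via multiplicativity. On the Flicker--Kable side, multiplicativity under parabolic induction is Matringe's theorem from \cite{MatringeManuscripta}, whose proof is valid verbatim in positive characteristic. On the Galois side, Prasad's decomposition \cite[Lemma~7.1]{Pra92} reduces the Asai L-factor of an induced representation to $\L_\As^{\Gal}$ of the pieces plus standard Rankin--Selberg L-factors for $\GL_n\times\GL_n$ pairs; the latter equal their analytic counterparts in positive characteristic by combining the $\GL_n$ local Langlands correspondence with Lomel\'i's Rankin--Selberg results, as noted in Remark~\ref{fabiola}. Via the Zelevinsky classification we are reduced to the case of cuspidal~$\pi$.

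For cuspidal $\pi$, I would globalize $\pi \simeq \Pi_w$ as the local component at an inert place $w$ of a cuspidal automorphic representation $\Pi$ of $\GL_n$ over a function field $k/k_\so$, unramified at all finite places except $w$ and an auxiliary split place $u$, following the globalization method of Section~\ref{section Asai RS epsilon factors}. At each unramified inert place, Proposition~\ref{prop test flicker} together with its explicit Galois computation yields $\L_\As(s,\Pi_v) = \L_\As^{\Gal}(s,\Pi_v)$; at the split place $u$, both factors coincide with the standard Rankin--Selberg L-function, which matches the Galois L-factor by the $\GL_n$ local Langlands correspondence. Applying the global functional equations -- Lomel\'i's for $\L_\As^\LS$ and the Kable-type equation for $\L_\As$ adapted from \cite{Kable} to the function field setting (the local integral representation and Eulerian expansion are purely formal and go through) -- together with agreement of local factors at all $v \neq w$, the ratio $\L_\As(s,\pi)/\L_\As^{\Gal}(s,\pi)$ becomes a unit in $\CC[q_\so^{s},q_\so^{-s}]$ that is a quotient of two Euler factors of the form $1/P(q_\so^{-s})$ with $P(0)=1$, hence equals~$1$.

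The main obstacle is ensuring the argument avoids circularity with Theorem~\ref{thm cuspidal epsilon equality}, whose proof itself uses Theorem~\ref{manciniA1}. This is handled by deriving the global functional equation for $\L_\As$ using only the primitive unramified and split-place epsilon factor compatibilities (Corollary~\ref{cor unramified Flicker-Asai} and Lemma~\ref{lemma split asai vs rankin selberg epsilon factor}), both of which are established without invoking Theorem~\ref{manciniA1}; the relevant product of local epsilon factors at ``bad'' places other than $w$ is then a positive scalar that can be absorbed into the ratio, leaving the equality of local L-factors at $w$ as the only possible conclusion.
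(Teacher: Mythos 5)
Your overall plan (use Henniart--Lomel\'i for $\L_\As^\LS=\L_\As^{\Gal}$, a multiplicativity reduction, and a globalization argument for the base case) is in the right spirit, and is broadly the same kind of argument the paper invokes, but there are concrete gaps in the way you have set it up. First, the reduction ``to cuspidal'' is not correct: what the inductivity relations of \cite[Proposition~4.22]{MatringeConjectures} (on the Flicker--Kable side) and \cite[Lemma~7.1]{Pra92} (on the Galois side) achieve is a reduction to \emph{products of discrete series} --- generic irreducible representations of $\G$ are precisely the irreducible products of essentially square-integrable representations, and the Asai $\L$-factor of a segment $\Delta=[\rho,\rho\nu^{\ell-1}]$ is \emph{not} a simple product of Asai $\L$-factors of the cuspidal $\rho\nu^k$. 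So your base case must be discrete series, not cuspidal. This matters because the paper's proof handles exactly that base case by noting that the local and global results in \cite[Sections~3--4]{Kable} (in particular \cite[Theorem~5]{Kable}) go through over function fields, so that the globalization argument of \cite[Theorem~1.6]{AnandRajan} applies verbatim; you cannot shortcut to cuspidal.

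Second, the globalization step as written is circular and also too quick. Proposition~\ref{prop test flicker}, which you invoke at unramified inert places, deduces $\I_\As(s,\Phi_0,\W_0)=\L_\As(s,\pi)$ in the ramified case precisely \emph{by appealing to Theorem~\ref{manciniA1}}; to use it here you would have to independently compute the Flicker--Kable $\L$-factor (not just the integral) of an unramified principal series in the $\F/\F_\so$ ramified case. The same issue infects Corollary~\ref{cor unramified Flicker-Asai}. Finally, your concluding step --- ``the ratio is a unit, hence equal to~$1$'' --- does not fall out of the two global functional equations alone: comparing them only yields a relation of the form $R(s)=c(s)\,R^\vee(1-s)$, where $R(s)$ is the local ratio at~$w$ and $c(s)$ is the ratio of global $\epsilon$-factors, and you have not shown $c(s)=1$ without already knowing the $\epsilon$-factor comparison at~$w$ (which is Theorem~\ref{thm cuspidal epsilon equality}, proved later and using Theorem~\ref{manciniA1}). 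The argument of \cite{AnandRajan} avoids this by also exploiting the factorization of $\L(s,\pi\times\pi^\sigma)$ into Asai and twisted-Asai $\L$-factors on both the analytic and Galois sides, together with the dichotomy theorem; these additional inputs are essential and are missing from your sketch.
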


\begin{proof}
When $\F$ has characteristic $0$, this follows from 
\cite{MatringeConjectures,MatringeManuscripta,MatringeGeneric} 
and \cite{AnandRajan}, \cite{Henn-L}.

Now we notice that the local results in \cite[Section 3]{Kable} 
hold in positive characteristic,
and the global results of \cite[Section 4]{Kable} also
hold in positive characteristic though written in characteristic $0$~only.
Indeed they refer to \cite{Flicker} which~is for any global field. 
The main point is that \cite[Theorem 5]{Kable} is true for function fields, and its 
proof slightly simplifies because of the absence of archimedean places. 
This implies that,
when $\F$ has characteristic $p\neq2$, the equality 
\begin{equation*}
\L_\As^\FK(s,\pi)=\L_\As^\LS(s,\pi)
\end{equation*} 
holds for any discrete series representation:
the ingredients which make the proof of 
\cite[Theorem 1.6]{AnandRajan} work are then all available. 
Once again, 
notice 
that its proof simplifies in the positive characteristic case as there are no 
archimedean places to worry about. 

Now notice that \cite[Theorem 3.1]{MatringeManuscripta}
holds when $\F$ has characteristic $p$. 
Indeed its proof relies on \cite[Theorem 3.1.2]{Ok} which is for any 
non-archimedean local field of odd residual characteristic. 
Then the classification of generic distinguished representations in 
\cite{MatringeGeneric} relies only on the geometric lemma of 
Bernstein--Zelevinsky, the Bernstein--Zelevinsky explicit description of 
discrete series representations and their Jacquet 
modules, and the fact that a distinguished irreducible representation of $\G$ 
is $\sigma$-self-dual. All the aforementioned results are true in positive 
characteristic (different from $2$ for the latter) hence the classification of 
\cite{MatringeGeneric} still holds when $\F$ has characteristic $p$. 

Finally, the Cogdell--Piatetski-Shapiro method 
of derivatives to analyze the exceptional poles used in 
\cite{MatringeConjectures} still works in positive characteristic as well (for 
example the original paper \cite{CPS} is written~in arbitrary characteristic) 
hence 
the inductivity relation of $\L_\As^\FK(s,\pi)$ for any {generic} irreducible 
repre\-sen\-tation (see \cite[Proposition 4.22]{MatringeConjectures}) follows. 
All in all, when $\F$ has characteristic $p$, we have
\begin{equation*}
\L_\As^\FK(s,\pi)=\L_\As^{\Gal}(s,\pi)
\end{equation*} 
for any {generic} irreducible representation.
\end{proof}

We now prove that the dichotomy theorem of \cite{Kable} and \cite{MR2063106} 
holds when $\F$ has characteristic $p$.

\begin{theorem}
\label{global proof of dichotomy}
Let $\pi$ be a $\s$-self-dual discrete series representation of $\G$.
Then $\pi$ is either distinguished or $\om_{\F/\F_\so}$-distinguished, 
but not both. 
\end{theorem}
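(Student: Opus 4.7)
My plan is to adapt the global arguments of Kable \cite{Kable} and Anandavardhanan--Kable--Tandon \cite{MR2063106}, which establish dichotomy in characteristic $0$, to the function field setting. The proof rests on two standard ingredients that are now available in positive characteristic: the factorization of Langlands parameters
\[
\Ind_{\F/\F_\so}(\rho\otimes\rho^\s)=\M'_{\F/\F_\so}(\rho)\oplus\(\M'_{\F/\F_\so}(\rho)\otimes\om_{\F/\F_\so}\),
\]
and the analytic characterization of distinction via the pole of the global Asai $\L$-function at $s=1$. The first is purely formal; the second (Flicker's criterion) is proved via the Rankin--Selberg integral representation of the period and carries over to function fields without change.

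First, I would globalize $\pi$. Mimicking Subsection \ref{section global}, choose a quadratic extension $k/k_\so$ of global function fields of characteristic $p$ and a place $w$ of $k_\so$ such that $k_w\simeq \F$ and $k_{\so,w}\simeq\F_\so$, arranging that all places of $k_\so$ dividing~$2$ are split in $k$. When $\pi$ is supercuspidal, the Poincaré series / approximation argument of Section~\ref{section global} produces a $\s$-self-dual cuspidal automorphic representation $\Pi$ of $\GL_n(\AA)$ with $\Pi_w\simeq\pi$ and controlled local components at all other places. When $\pi$ is a non-supercuspidal discrete series $\Delta(\varrho,m)$, I would globalize the supercuspidal $\varrho$ and pass to the associated residual (Speh) representation, which is a discrete automorphic representation with the correct local component at $w$.

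Second, translate to $\L$-functions. Since $\Pi\simeq (\Pi^\s)^\vee$, the global Rankin--Selberg $\L$-function $\L(s,\Pi\times\Pi^\s)$ has a simple pole at $s=1$; by the factorization above, this equals $\L_\As(s,\Pi)\cdot\L_\As(s,\Pi\otimes\om_{k/k_\so})$, so exactly one of the two factors has a simple pole at $s=1$. By the function-field version of Flicker's criterion, that factor having a pole means $\Pi$ (respectively $\Pi\otimes\om_{k/k_\so}$) is globally distinguished by $\GL_n(\AA_\so)$. Using the controlled ramification at all $v\neq w$, where distinction of $\Pi_v$ is automatic (at split places) or easily checked (at unramified inert places), one descends to a non-zero local invariant linear form at $w$, yielding the distinction of $\pi$ or $\pi\otimes\om_{\F/\F_\so}$. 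Exclusivity (not both) follows from the same factorization: if both Asai factors had a pole at $s=1$, then $\L(s,\Pi\times\Pi^\s)$ would have a double pole there, contradicting the simple pole for self-dual cuspidal $\GL_n$.

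The main obstacle is the globalization step for non-supercuspidal discrete series in positive characteristic, which requires replicating the characteristic-$0$ Poincaré-series constructions in the function field setting while maintaining $\s$-self-duality and controlling all non-$w$ local components. A secondary point to verify carefully is that the pole characterization of global distinction (Flicker's theorem) remains valid over function fields; this is expected since the argument via Rankin--Selberg integrals and the unfolding to the Flicker period uses only absolute convergence in a right half-plane and non-vanishing of local integrals, both of which are formal and characteristic-free, but it deserves explicit verification.
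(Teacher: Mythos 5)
Your approach is genuinely different from the paper's, and it contains a concrete gap.

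The paper's proof of this theorem is \emph{purely local}, given the machinery built up in the previous appendix (Theorem~\ref{manciniA1}). It uses the factorization $\L(s,\pi,\pi^\s)=\L_\As(s,\pi)\cdot\L_\As(s,\om\otimes\pi)$ of the \emph{local} Rankin--Selberg $\L$-factor via the Langlands parameter identity $\Ind'_{\F/\F_\so}(\rho\otimes\rho^\s)=\M'_{\F/\F_\so}(\rho)\oplus\om_{\F/\F_\so}\M'_{\F/\F_\so}(\rho)$, then invokes \cite[Prop.~8.1, Thm.~8.2]{JPSS} to see that a $\s$-self-dual discrete series has a \emph{simple} pole at $s=0$ on the left, whence exactly one factor on the right has a pole, and finally concludes via Matringe's local characterization \cite[Prop.~3.4]{MatringeManuscripta} (pole of $\L_\As$ at $s=0$ is equivalent to distinction, valid in characteristic~$p$ since it rests only on Ok's result). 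No new globalization is needed; the only global input is already packaged in Theorem~\ref{manciniA1}. Your proposal instead re-runs the global argument of Kable and Anandavardhanan--Kable--Tandon from scratch, which is a coherent alternative route, but it is heavier and, as you note yourself, faces a genuine obstruction that the local route sidesteps entirely.

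The concrete gap: your globalization of a non-supercuspidal discrete series $\Delta(\varrho,m)$ as a local component of the residual (Speh) automorphic representation attached to a globalization of $\varrho$ does \emph{not} produce the correct local component at $w$. By the Moeglin--Waldspurger classification, the local component of a residual representation $\mathrm{Speh}(\Sigma,m)$ at any place $v$ is the local Speh representation, i.e.\ the Langlands \emph{quotient} of $\Sigma_v\nu^{(m-1)/2}\times\cdots\times\Sigma_v\nu^{(1-m)/2}$, which is non-generic; the generalized Steinberg $\Delta(\varrho,m)$ is the unique irreducible \emph{submodule} and is generic. These are different representations, so your $\Pi_w$ would not be $\pi$. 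One would instead need to realize $\Delta(\varrho,m)$ as a local component of a \emph{cuspidal} automorphic representation with controlled ramification, which in positive characteristic requires a separate argument. A secondary issue is the exclusivity step: the simple pole of the global $\L(s,\Pi\times\Pi^\s)$ shows that at most one of the \emph{global} Asai factors has a pole, hence at most one of $\Pi$, $\Pi\otimes\om$ is \emph{globally} distinguished; but local distinction of $\pi$ and $\om\pi$ at the single place $w$ does not by itself force a double pole of the global Asai $\L$-functions, so you cannot conclude ``not both'' without a local pole criterion --- which is exactly the ingredient the paper uses, making the global detour for exclusivity circular.
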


\begin{proof}
When $\F$ has characteristic $0$, this is 
\cite[Theorem 4]{Kable} and \cite[Corollary 1.6]{MR2063106}.

Assume that $\F$ has characteristic $p$.
If $\pi$ is a discrete series representation of $\G$ and $\omega$ is 
a cha\-rac\-ter of $\mult \F$ extending $\omega_{\F/\F_0}$, 
the equality 
\[\L(s,\pi,\pi^\sigma)=\L_\As(s,\pi)\cdot\L_\As(s,\omega\otimes \pi)\] 
becomes a 
consequence of the relation 
\[\Ind'_{\F/\F_\so}\left(\rho(\pi)\otimes \rho(\pi)^\s\right)
=\M'_{\F/\F_\so}(\rho(\pi))\oplus 
\om_{\F/\F_\so} \M'_{\F/\F_\so}(\rho(\pi)).\] 
Then, if $\pi$ is $\s$-self-dual, 
the Rankin--Selberg local $\L$-factor 
$\L^\RS(s,\pi,\pi^\sigma)$ has a simple pole at $s=0$ accor\-ding to 
Proposition $8.1$ and Theorem $8.2$ of \cite{JPSS}, hence 
either $\L_\As(s,\pi)$ or $\L_\As(s,\omega\otimes \pi)$ has a pole at $s=0$ 
but not both. Finally 
one concludes appealing to \cite[Proposition 3.4]{MatringeManuscripta} (the 
paper \cite{MatringeManuscripta} is 
valid for $\F$ of characteristic $p$ 
as it only relies on the paper \cite{Ok} which is true in 
this setting). 
\end{proof}

Notice that \cite{VS} gives a purely local proof of Theorem 
\ref{global proof of dichotomy}
when $\pi$ is cuspidal. 

\section{Modular versions of results by Bruhat, Kable and Ok}
\label{AppB}

In this appendix, which culminates in \ref{AB3},
we generalize three results which were known for complex representations 
only.

In \ref{parB1}, we generalize a result of Bruhat on equivariant distributions 
to the case of smooth represen\-tations of a locally profinite group with
coefficients in an (almost) arbitrary commutative ring. 
For complex representations, a formal proof can be found in an 
unpublished~ver\-sion of Rodier's paper \cite{RodierWhittaker} on 
Whittaker models. 
The result is also stated in \cite{RodierWhittaker} as Theorem 4
and refers to Bruhat's~thesis as a reference.

\subsection{A modular version of a result of Bruhat on equivariant distributions}
\label{parB1}

In this subsection,
$\GGG$ is a locally profinite group,
$\HHH$ is a closed subgroup of $\GGG$ and 
$\RRR$ is~a commutative ring with unit.
We assume that 
there is a right invariant $\RRR$-valued measure $dh$ on $\HHH$
giving measure $1$ to some compact open subgroup of~$\HHH$.
According to \cite[I.2.4]{Vig96}, this is equivalent~to assuming 
that $\HHH$ has a compact open subgroup whose pro-order is 
invertible in $\RRR$.

Let $\ttt$ be a smooth representation of $\HHH$ on an $\RRR$-module $\V$. 
Write $\Cc^\infty_c(\GGG,\V)$ 
for the space of locally constant, compactly supported 
functions on $\GGG$ with values in $\V$, 
which canonically identifies with $\Cc^\infty_c(\GGG,\RRR)\otimes\V$,
and write $\ind_\HHH^\GGG(\ttt)$ for the compact induction of $\ttt$ to $\GGG$.
Both are equipped with an action of $\GGG$ by right translations, 
denoted $g \cdot f : x \mapsto f(xg)$ for all $g,x\in\GGG$.

We denote by $\d=\d_{\HHH}$ the character of $\HHH$ such that 
$d(xh)=\d(x)dh$ for all $x\in\HHH$, that is
\begin{equation*}
\int_\HHH f(xh)\ dh = \d(x)^{-1}\cdot\int_\HHH f(h)\ dh
\end{equation*}
for all $f\in\Cc_{{\rm c}}^\infty(\HHH,\RRR)$ and $x\in\HHH$.
We will use the fact that
\begin{equation*}
\int_\HHH f(h^{-1})\ dh = \int_\HHH \d(h)^{-1}f(h)\ dh 
\end{equation*}
as well as the fact that 
the restriction of $\d$ to any~com\-pact open subgroup of $\HHH$ is trivial. 

We start with the following lemma, 
proved by Rodier in \cite[Proposition 1]{RodierWhittaker}. 
Unlike Rodier, 
we use unnormalized induction.

\begin{lemma}
\label{projection}
\label{HautJura}
\begin{enumerate}
\item
For all $f\in\Cc^\infty_c(\GGG,\V)$, the function
\begin{equation*}
\pp(f) : g \mapsto \int\limits_\HHH \tau(h^{-1}) f(hg)\ dh
\end{equation*}
is in $\ind^\GGG_\HHH(\tau)$.
\item
The map $\pp:\Cc^\infty_c(\GGG,\V)\to\ind^\GGG_\HHH(\tau)$
defined in {\rm (i)} is surjective.
\item
The map $\pp$ is $\GGG$-equivariant and,
for all $f\in\Cc^\infty_c(\GGG,\V)$ and $x\in\HHH$,
one has
\begin{equation*}
\pp(f_x) = \pp(\d(x)^{-1}\tau(x)f)
\end{equation*}
where $f_x$ is the function $g\mapsto f(xg)$.
\end{enumerate}
\end{lemma}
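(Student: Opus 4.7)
The plan is to handle (i) and (iii) as routine changes of variables against the right-invariant $\RRR$-valued measure $dh$, and to concentrate on the substantive part, which is the surjectivity assertion in (ii). The modular character $\d$ enters precisely when we translate on the left.

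For (i), fix $g \in \GGG$ and observe that $\HHH \cap \supp(f)g^{-1}$ is compact, so $h \mapsto \tau(h^{-1})f(hg)$ is compactly supported in $\HHH$ and the integral defining $\pp(f)(g)$ makes sense. The transformation law $\pp(f)(xg) = \tau(x)\pp(f)(g)$ for $x \in \HHH$ follows from the substitution $h \mapsto hx$ and right-invariance of $dh$; local constancy on the right is inherited from $f$, and $\supp(\pp(f)) \subseteq \HHH \cdot \supp(f)$ gives compact support modulo $\HHH$. For (iii), $\GGG$-equivariance is immediate from $\pp(g \cdot f)(y) = \int_\HHH \tau(h^{-1})f(hyg)\, dh = \pp(f)(yg)$. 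The identity $\pp(f_x) = \pp(\d(x)^{-1}\tau(x)f)$ follows from the substitution $h = x^{-1}k$, which introduces the factor $\d(x)^{-1}$ via $d(xh) = \d(x)dh$; the relation $\tau((x^{-1}k)^{-1}) = \tau(k^{-1})\tau(x)$ then produces the extra factor $\tau(x)$.

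For (ii), given $F \in \ind_\HHH^\GGG(\tau)$, I plan to build a preimage as a sum of localized pieces. Choose a compact $C \subseteq \GGG$ with $\supp(F) \subseteq \HHH C$ and a compact open subgroup $K$ of $\GGG$ on which $F$ is right-invariant; finitely many pairwise disjoint double cosets $\HHH g_1 K, \ldots, \HHH g_n K$ then cover $\supp(F)$. Setting $v_i = F(g_i)$, I define $\phi_i \in \Cc^\infty_c(\GGG, \V)$ to be $\vol(\HHH \cap g_i K g_i^{-1})^{-1} v_i$ on $g_i K$ and zero elsewhere. A direct evaluation gives $\pp(\phi_i)(g_i) = v_i$ (using that $\tau(h) v_i = v_i$ for $h \in \HHH \cap g_i K g_i^{-1}$), while $\pp(\phi_i)(g_j) = 0$ for $j \ne i$ because its integrand is supported on $\HHH \cap g_i K g_j^{-1}$, which is empty when $g_j \notin \HHH g_i K$. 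Combined with the $\HHH$-equivariance of $\pp(\phi_i)$ from (i) and right-$K$-invariance, this forces $\pp(\phi_i)$ to agree with $F$ on $\HHH g_i K$ and vanish elsewhere, so $\pp(\sum_i \phi_i) = F$.

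The main obstacle is arranging simultaneously that each volume $\vol(\HHH \cap g_i K g_i^{-1})$ is invertible in $\RRR$ and that $\tau(h) v_i = v_i$ for all $h \in \HHH \cap g_i K g_i^{-1}$; both can be secured by shrinking $K$. By hypothesis, $\HHH$ has a compact open subgroup $\KKK_0$ of measure $1$ whose pro-order is invertible in $\RRR$, so for $K$ sufficiently small each $\HHH \cap g_i K g_i^{-1}$ lies in $\KKK_0$, and its volume $1/[\KKK_0 : \HHH \cap g_i K g_i^{-1}]$ is invertible since the index is a positive integer dividing the pro-order of $\KKK_0$. Smoothness of $\tau$ and the finiteness of $\{v_1, \ldots, v_n\}$ then permit a further shrinking of $K$ so that $\tau$ acts trivially on each $v_i$ through the corresponding intersection.
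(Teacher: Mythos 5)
Parts (i) and (iii) are correct and run essentially as in the paper. For (ii), your strategy of patching together localized preimages supported on cosets $g_iK$ matches the paper's construction of the building-block function, but the way you arrange the properties of the groups $\HHH\cap g_iKg_i^{-1}$ is circular. You first choose $K$ (with $F$ right-$K$-invariant) and representatives $g_1,\dots,g_n$ covering $\supp(F)$, and then propose to \emph{shrink} $K$ so that each $\vol(\HHH\cap g_iKg_i^{-1})$ becomes invertible. But once $K$ is replaced by a smaller $K'$, the sets $\HHH g_iK'$ need not cover $\supp(F)$: $\sum_i\pp(\phi_i)$ is supported only in $\bigcup_i\HHH g_iK'$ and so cannot equal $F$. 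Re-decomposing $\supp(F)$ into $\HHH$-$K'$-double cosets produces new representatives for which the invertibility was never arranged, so the shrinking step does not terminate.

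Two observations close the gap. First, the condition $\tau(h)v_i=v_i$ for $h\in\HHH\cap g_iKg_i^{-1}$ is \emph{automatic}: writing $k=g_i^{-1}hg_i\in K$, the defining property of $\ind^\GGG_\HHH(\tau)$ together with right-$K$-invariance of $F$ gives $\tau(h)v_i=F(hg_i)=F(g_ik)=F(g_i)=v_i$, so no shrinking is needed for this. Second, for the measure condition, replace $K$ by a compact open $K'$ which is \emph{normal in $K$} (take the core in $K$ of any small enough open subgroup) and satisfies $\HHH\cap g_iK'g_i^{-1}\subseteq\KKK_0$ for $i=1,\dots,n$. Then every representative in the $\HHH$-$K'$-decomposition of $\supp(F)$ has the form $g_ik$ with $k\in K$, normality gives $\HHH\cap(g_ik)K'(g_ik)^{-1}=\HHH\cap g_iK'g_i^{-1}$, and $F(g_ik)=F(g_i)=v_i$, so the required properties persist and the patching argument goes through. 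The paper sidesteps this bookkeeping by constructing only the single building-block preimage and invoking the standard fact that such localized functions span $\ind^\GGG_\HHH(\tau)$; your approach, once corrected, in effect reproves that fact.
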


\begin{proof}
First, we prove (i).
For all $g\in\GGG$, the integral 
\begin{equation*}
\int\limits_\HHH \tau(h^{-1}) f(hg)\ dh
\end{equation*}
is a finite sum since $f$ is smooth and compactly supported.
For any $x\in\HHH$ and $g\in\GGG$, one has 
\begin{equation*}
\pp(f)(xg) 
= \int\limits_\HHH \tau(h^{-1}) f(hxg)\ dh
= \int\limits_\HHH \tau(xh^{-1}) f(hg)\ dh
= \tau(x)\left(\pp(f)(g)\right)
\end{equation*}
since $dh$ is right invariant.
It follows that $\pp(f)$ is in $\ind^\GGG_\HHH(\tau)$.

Let us prove (ii).
Given $v\in\V$ and $g\in\GGG$,
there is an open subgroup $\J$ of $\GGG$
such that $\HHH\cap g\J g^{-1}$~lea\-ves $v$ in\-va\-riant 
and its measure is invertible in $\RRR$.
Let $\phi:\GGG\to\V$ be the function supported in $\HHH g\J$ 
and defined by $\phi(hgj)=\tau(h) v$ for all $h\in\HHH$ and $j\in\J$.
It belongs to $\ind^\GGG_\HHH(\tau)$,
and the~li\-near span of all such maps is the full induced representation, 
hence it suffices to show that such a $\phi$ is in the image of 
$\pp$ to prove that $\pp$ is surjective.
Let $f:\GGG\to\V$ be the function supported in $g\J$ and defined
for all $x\in g\J$ by 
\begin{equation*}
f(x)=\frac{1}{dh(\HHH\cap g\J g^{-1})} \cdot v.
\end{equation*}
One checks immediately that $f\in\Cc^\infty_c(\GGG,\V)$ and $\pp(f)=\phi$.

Finally, let us prove (iii).
One has
\begin{equation*}
\pp(f_x) 
= \int\limits_\HHH \tau(h^{-1}) f(xhg)\ dh
= \int\limits_\HHH \tau(h^{-1}x) f(hg)\d(x)^{-1}\ dh
= \int\limits_\HHH \tau(h^{-1})\left(\d(x)^{-1}\tau(x) f(hg)\right)\ dh
\end{equation*}
which is indeed equal to $\pp(\d(x)^{-1}\tau(x)f)$.
\end{proof}

\begin{remark}
One can reformulate Lemma \ref{HautJura}(iii) as follows.
The space $\Cc^\infty_c(\GGG,\V)$ has an action of $\GGG$ by 
right translations,
as well as an action of $\HHH$ defined by
\begin{equation}
\label{actionH}
x \odot f : g \mapsto \d(x)^{-1}\tau(x) f(x^{-1}g).
\end{equation}
for all $x\in\HHH$ and $f\in\Cc^\infty_c(\GGG,\V)$. 
Then the map $\pp$ is $\GGG$-equivariant and $\HHH$-invariant.
\end{remark}

Recall that,
given $f\in\Cc^\infty_c(\GGG,\V)$ and $x\in\HHH$,
we write $f_x$ for the function $g\mapsto f(xg)$.

\begin{lemma}
\label{bruhat}
\begin{enumerate}
\item
Let $\Ll$ be a linear form on $\Cc^\infty_c(\GGG,\V)$ such that 
\begin{equation}
\label{lapindujura}
\Ll(f_x) = \Ll(\d(x)^{-1}\tau(x)f)
\end{equation}
for all $f\in\Cc^\infty_c(\GGG,\V)$ and $x\in\HHH$.
Then there is a unique linear form $\Ll'$ on $\ind^\GGG_\HHH(\tau)$ such that 
$\Ll=\Ll'\circ \pp$.
\item
The map $\Ll'\mapsto\Ll'\circ \pp$ is an isomorphism of $\RRR$-modules:
\begin{equation*}
  \Hom_{\RRR}(\ind^\GGG_\HHH(\tau),\RRR) \simeq
  \Hom_{\HHH}(\Cc^\infty_c(\GGG,\V),\RRR) 
\end{equation*}
where the right hand side is made of all linear forms on
$\Cc^\infty_c(\GGG,\V)$ satisfying \eqref{lapindujura}.
\end{enumerate}
\end{lemma}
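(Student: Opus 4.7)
Uniqueness of $\Ll'$ in (i) is immediate from the surjectivity of $\pp$ proved in Lemma~\ref{HautJura}(ii), and part (ii) follows formally from (i): the map $\Ll'\mapsto\Ll'\circ\pp$ is injective by surjectivity of $\pp$ and surjective onto the space of equivariant forms by~(i). The whole content therefore lies in the existence assertion in~(i), which reduces to showing that $\Ll$ vanishes on $\ker(\pp)$.

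Let $\Nn$ be the $\RRR$-submodule of $\Cc^\infty_c(\GGG,\V)$ spanned by the elements $f_x-\d(x)^{-1}\tau(x)f$, for $x\in\HHH$ and $f\in\Cc^\infty_c(\GGG,\V)$. The hypothesis on $\Ll$ says exactly that $\Ll|_\Nn=0$, and Lemma~\ref{HautJura}(iii) gives $\Nn\subseteq\ker(\pp)$, so it suffices to prove the reverse inclusion $\ker(\pp)\subseteq\Nn$. I would first perform the change of variable $f\mapsto\d(x)\tau(x^{-1})f$ in the defining relation of $\Nn$ to check that $\Nn$ also contains every difference $f-x\odot f$, where $\odot$ is the $\HHH$-action introduced in the remark following Lemma~\ref{HautJura}; in other words, the $\odot$-action of $\HHH$ becomes trivial on $\Cc^\infty_c(\GGG,\V)/\Nn$.

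Given $f\in\ker(\pp)$, I would choose a compact open subgroup $\J\subseteq\GGG$, with pro-order invertible in $\RRR$, small enough that $f$ is right $\J$-invariant and that, for every double coset $\HHH g\J$ meeting $\supp(f)$, the compact open subgroup $\J^g_0:=\HHH\cap g\J g^{-1}$ of $\HHH$ fixes under $\tau$ all the relevant values of $f$. Using $\odot$-triviality modulo $\Nn$ to translate supports along $\HHH$, and then averaging over $\J^{g_i}_0$ (which is again an $\odot$-equivalence modulo $\Nn$, legitimate because $dh(\J^{g_i}_0)$ is a unit in $\RRR$ and $\d$ is trivial on compact open subgroups), one reduces $f$ modulo $\Nn$ to a finite sum
$$\widetilde f=\sum_{i=1}^r \mathbf{1}_{g_i\J}\otimes w_i$$
where $g_1,\dots,g_r$ are representatives of the finitely many double cosets of $\HHH\backslash \GGG/\J$ appearing in $\supp(f)$ and each $w_i\in\V$ is $\J^{g_i}_0$-invariant under $\tau$.

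A direct calculation yields $\pp(\mathbf{1}_{g_i\J}\otimes w_i)(g_i)=dh(\J^{g_i}_0)\,w_i$ thanks to the $\J^{g_i}_0$-invariance of $w_i$, and since the double cosets $\HHH g_i\J$ are pairwise disjoint, the vanishing $\pp(\widetilde f)=\pp(f)=0$ forces $w_i=0$ for every $i$. Hence $f\equiv 0\pmod{\Nn}$, which completes the proof of $\ker(\pp)=\Nn$ and thus of~(i). The main subtlety, which will require some careful bookkeeping but is not a genuine obstacle, is the tracking of the modular character $\d$ and of the pointwise $\tau$-action when passing between the generators $f_x-\d(x)^{-1}\tau(x)f$ and the cleaner $\odot$-formulation, and ensuring at each reduction step that one remains within $\Cc^\infty_c(\GGG,\V)/\Nn$.
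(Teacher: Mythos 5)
Your proposal is correct and takes a genuinely different route from the paper's. Both proofs boil down to showing that $\ker(\pp)\subseteq\ker(\Ll)$ for any $\Ll$ satisfying~\eqref{lapindujura}; the difference is in how this inclusion is established. The paper argues \emph{indirectly}: it proves the identity $\Ll(\phi\,\pp(f))=\Ll(\pp'(\phi)\,f)$ for all test functions $\phi$ (a localization trick requiring careful uniform estimates over a compact set $\C$ of $\HHH$) and then chooses $\phi$ so that $\pp'(\phi)\equiv1$ on $\supp(f)$, reading off $\Ll(f)=\Ll(\phi\,\pp(f))$; this never identifies $\ker(\pp)$ itself. You instead compute $\ker(\pp)$ \emph{explicitly}: you show it coincides with the submodule $\Nn$ generated by the twisted $\HHH$-equivariance relations, i.e.\ that $\ind^\GGG_\HHH(\tau)$ is exactly the module of $(\HHH,\odot)$-coinvariants of $\Cc^\infty_c(\GGG,\V)$. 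Your reduction of a kernel element to $\sum_i\mathbf{1}_{g_i\J}\otimes w_i$ by translating supports and averaging over $\J_0^{g_i}=\HHH\cap g_i\J g_i^{-1}$, followed by evaluating $\pp$ at $g_i$, is sound: after averaging the coefficients are automatically $\J_0^{g_i}$-invariant, the double cosets $\HHH g_i\J$ are disjoint so the pieces of $\pp(\widetilde f)$ do not interact, and the invertibility of $dh(\J_0^{g_i})$ then forces each $w_i=0$. Your route is more structural and makes part~(ii) completely transparent; the paper's is more compact but less revealing.

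One small imprecision you should fix. You stipulate that $\J\subseteq\GGG$ should have pro-order invertible in $\RRR$. The standing hypothesis in this subsection is only that \emph{$\HHH$} possesses a compact open subgroup $\K_0$ with invertible pro-order; nothing is assumed about $\GGG$, so such a $\J$ might not exist. What you actually need is that each $dh(\J_0^{g_i})$ is a unit in $\RRR$, and this you obtain for free by choosing $\J$ small enough that $\HHH\cap g\J g^{-1}\subseteq\K_0$ for all $g\in\supp(f)$ (possible by compactness of $\supp(f)$ and continuity of $(g,j)\mapsto gjg^{-1}$, exactly as in the paper's construction of the subgroup $\U$). Then $dh(\J_0^{g_i})=\bigl[\K_0:\J_0^{g_i}\bigr]^{-1}$ is invertible, and the averaging over $\J_0^{g_i}$ is likewise legitimate since the relevant finite index divides the pro-order of $\K_0$. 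With this adjustment the proof is complete; your additional requirement that $\J_0^{g}$ fix the original values of $f$ under $\tau$ is harmless but unnecessary, since the averaging step already produces $\J_0^{g_i}$-invariant coefficients.
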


\begin{proof}
First, we notice that, if $\Ll'$ is a linear form on $\ind^\GGG_\HHH(\tau)$,
then $\Ll=\Ll'\circ\pp$ is a linear form on $\Cc^\infty_c(\GGG,\V)$
satisfying \eqref{lapindujura}, by Lemma \ref{projection}. 

Now let $\Ll$ be a linear form on $\Cc^\infty_c(\GGG,\V)$
satisfying \eqref{lapindujura}.
We denote by $\pp'$ the natural projection from 
$\Cc_c^{\infty}(\GGG,\RRR)$ onto $\Cc_c^{\infty}(\HHH\backslash\GGG,\RRR)$
defined by 
\begin{equation*}
\pp'(\phi)(g)= \int_\HHH \phi(hg)\ dh.
\end{equation*}
Note that $\Cc_c^{\infty}(\HHH\backslash\GGG,\RRR)$ is the compact induction 
$\ind^\GGG_\HHH(\one)$ of the trivial $\RRR$-character of $\HHH$,
thus $\pp'$ is a particular case of the projection given by Lemma \ref{projection}
when one chooses for $\ttt$ the trivial character.
Let us fix $f\in\Cc_c^\infty(\GGG,\V)$ and $\phi\in \Cc_c^\infty(\GGG,\RRR)$, 
and notice that both $\phi \pp(f)$ and 
$\pp'(\phi)f$ considered as functions on $\GGG$ are in $\Cc_c^\infty(\GGG,\V)$.
We are going to prove that
\begin{equation*}
\Ll(\phi \pp(f))=\Ll(\pp'(\phi) f).
\end{equation*}
Let $\K$ be a compact open subgroup of $\HHH$ leaving $f$ and $\phi$ fixed 
under left translations, and acting~tri\-vially on all vectors in the image of 
$f$ (which is possible for the linear span of the image of $f$~in $\V$ is finite 
dimensional). 
One defines the compact subset: 
\[\C=\K\left[\left(\supp(f)\supp(\phi)^{-1}\cup 
\supp(\phi)\supp(f)^{-1}\right)\cap\HHH\right]\K\] 
of $\HHH$.
It is stable under $x\mapsto x^{-1}$ and $\K$-bi-invariant.
We claim that there is a compact open~sub\-group $\U$ of $\K$ such that: 
\begin{enumerate}
\item
one has $x\U x^{-1}\subseteq\K$ for all $x\in\C$, and
\item
the pro-order of $\U$ is invertible in $\RRR$.
\end{enumerate}
Indeed, consider the continuous function $\mu:\HHH\times\HHH\to\HHH$ 
defined by $(x,y)\mapsto xyx^{-1}$.
The preimage $\mu^{-1}(\K)$ is an open subset of $\HHH\times\HHH$ containing 
$\C\times\{1\}$.
For all $x\in\C$, there are an open neighbour\-hood $\B_x$ of $x$ 
and a compact open subgroup $\U_x$ of $\HHH$ such that $\B_x\times\U_x$ 
is contained in $\mu^{-1}(\K)$.~As 
$\C$ is compact, $\C\times\{1\}$ is contained in the union of finitely many 
$\B_x\times\U_x$.
The intersection $\U$ of these finitely many $\U_x$ satisfies (i).
To get (ii), one chooses a small enough open subgroup of $\U$.

We are now in a position to prove the sought equality. 
First notice that, for all $g\in\GGG$, 
the function $h\mapsto\phi(g) \tau(h)^{-1} f(hg)$
is constant on any $\U$-double coset in $\C$.
Let $\A$ be a set of representatives of $\U\backslash\C/\U$.
Writing $k(a)=dh(\U a\U)$ for all $a\in\A$, 
and noticing that 
$\phi(g) \tau(h)^{-1} f(hg)$ vanishes when $h\notin\supp(f)\supp(\phi)^{-1}$,
we get
\begin{equation}
\label{TonyBuddenbrook}
\phi \pp(f)(g) 
= \int\limits_{\C} \phi(g) \tau(h)^{-1} f(hg)\ dh 
= \sum\limits_{ a} k(a) \phi(g) \tau( a^{-1}) f_{ a}(g)
\end{equation}
for all $g\in\GGG$, where $a$ ranges over $\A$.

Similarly, using the formula 
\begin{equation*}
\pp'(\phi)(g)
= \int_\HHH \phi(hg)\ dh
= \int_\HHH \d(h)^{-1}\phi(h^{-1}g)\ dh
\end{equation*}
for all $g\in\GGG$, one has
\begin{equation*}
\pp'(\phi) f 
= \sum\limits_{ a} k(a) \d( a)^{-1}\phi_{ a^{-1}} f.
\end{equation*}
Hence 
\begin{eqnarray*}
\Ll(\pp'(\phi) f)
&=& \sum\limits_{ a} k(a) \Ll(\d( a)^{-1}\phi_{a^{-1}} f) \\
&=& \sum_{ a} k(a) \Ll(\d( a)^{-1}(\phi f_{ a})_{ a^{-1}}) \\
&=& \sum_{ a} k(a) \Ll(\tau( a)^{-1}\phi f_{ a})
\end{eqnarray*}
which is equal to $\Ll(\phi \pp(f))$ by \eqref{TonyBuddenbrook}.

Now, by Lemma \ref{projection}(ii), 
there is $\phi\in\Cc_c^\infty(\GGG,\RRR)$ such that $\pp'(\phi)$ 
is~equal to $1$ on $\supp(f)$. 
For such a $\phi$, one has $\Ll(f)=\Ll(\phi \pp(f))$. 
From this latter equality, we deduce that the kernel of $\pp$
is contained in that of $\Ll$, which proves Lemma \ref{bruhat}. 
\end{proof}

Now let $\HHH'$ be another closed subgroup of $\GGG$ and
$\chi$ be an $\RRR$-character of $\HHH'$.
Let $\Ll$ be a linear form as in Lemma \ref{bruhat}
and suppose that one has 
\begin{equation}
\label{equation right-equiv-0} 
\Ll(y\cdot f)=\chi(y)\Ll(f),
\quad
f\in\Cc_c^\infty(\GGG,\V),
\quad
y\in\HHH'.
\end{equation}
Then, by uniqueness of the linear form $\Ll'$ 
corresponding to $\Ll$, one has
\begin{equation}
\label{ThomasBuddenbrook}
\Ll'(y\cdot\phi)=\chi(y)\Ll'(\phi),
\quad
\phi\in\ind_\HHH^\GGG(\tau).
\end{equation}
We arrive to the following result which we 
shall use many times hereunder. 

\begin{corollary}
\label{corollary equiv linear forms on iduced reps}
The map $\Ll'\mapsto \Ll'\circ p$ is an isomorphism of $\RRR$-modules
between:
\begin{enumerate}
\item
linear forms $\Ll'$ on $\ind_\HHH^\GGG(\tau)$ satisfying \eqref{ThomasBuddenbrook}, 
and
\item
linear forms $\Ll$ on $\Cc_c^\infty(\GGG,\V)$ satisfying
\eqref{lapindujura} and \eqref{equation right-equiv-0}.
\end{enumerate}
\end{corollary}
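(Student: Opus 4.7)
The plan is to derive this corollary directly from Lemma~\ref{bruhat} by cutting out the correct sub-$\RRR$-modules on each side. Lemma~\ref{bruhat}(ii) already yields an isomorphism of $\RRR$-modules between linear forms $\Ll'$ on $\ind_\HHH^\GGG(\tau)$ (with no equivariance assumption) and linear forms $\Ll$ on $\Cc^\infty_c(\GGG,\V)$ satisfying \eqref{lapindujura}, implemented by $\Ll' \mapsto \Ll'\circ \pp$. So the task reduces to showing that, under this isomorphism, the condition \eqref{ThomasBuddenbrook} on $\Ll'$ corresponds exactly to the condition \eqref{equation right-equiv-0} on $\Ll$.

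For the forward direction, suppose $\Ll'$ satisfies \eqref{ThomasBuddenbrook} and set $\Ll=\Ll'\circ\pp$. By Lemma~\ref{HautJura}(iii), the map $\pp$ is $\GGG$-equivariant, so for every $y\in\HHH'$ and every $f\in\Cc^\infty_c(\GGG,\V)$ we have $\pp(y\cdot f)=y\cdot\pp(f)$, whence
\[
\Ll(y\cdot f)=\Ll'(y\cdot\pp(f))=\chi(y)\Ll'(\pp(f))=\chi(y)\Ll(f),
\]
which is \eqref{equation right-equiv-0}.

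Conversely, suppose $\Ll$ satisfies both \eqref{lapindujura} and \eqref{equation right-equiv-0}, and let $\Ll'$ be the unique linear form on $\ind_\HHH^\GGG(\tau)$ with $\Ll=\Ll'\circ\pp$ provided by Lemma~\ref{bruhat}(i). Fix $y\in\HHH'$ and consider the linear form $\Mm'$ on $\ind_\HHH^\GGG(\tau)$ defined by $\Mm'(\phi)=\chi(y)^{-1}\Ll'(y\cdot\phi)$. Using again the $\GGG$-equivariance of $\pp$,
\[
\Mm'(\pp(f))=\chi(y)^{-1}\Ll'(y\cdot \pp(f))=\chi(y)^{-1}\Ll'(\pp(y\cdot f))=\chi(y)^{-1}\Ll(y\cdot f)=\Ll(f)
\]
for every $f\in\Cc^\infty_c(\GGG,\V)$. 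By the uniqueness statement of Lemma~\ref{bruhat}(i), this forces $\Mm'=\Ll'$, i.e.\ $\Ll'(y\cdot\phi)=\chi(y)\Ll'(\phi)$, which is \eqref{ThomasBuddenbrook}.

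There is no real obstacle here beyond checking that the two equivariance conditions match up through $\pp$; the only nontrivial inputs are the surjectivity of $\pp$ and its $\GGG$-equivariance, both already recorded in Lemmas~\ref{HautJura} and~\ref{bruhat}.
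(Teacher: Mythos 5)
Your proof is correct and follows essentially the same route as the paper: the paper also deduces the corollary by combining the isomorphism of Lemma~\ref{bruhat} with the uniqueness of $\Ll'$ and the $\GGG$-equivariance of $\pp$ (the paper states this in one line, and you have simply written out the two directions in detail). Nothing further is needed.
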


\subsection{A modular version of a result of Kable}
\label{AB2}

In this subsection,
we generalize a result of Kable (\cite[Proposition 1]{Kable}) 
to the case of smooth~re\-pre\-sentations of $\GL_n(\F)$ with coefficients in a 
commutative ring with sufficiently many roots of unity of $p$-power order 
and in which $p$ is invertible.
In fact, we expand and simplify Kable's proof, 
appealing to Theorem \ref{bruhat} when he appeals to Warner~\cite{warner}.

We go back to the main notation of the paper:
$\G$ is the group $\GL_n(\F)$ where~$\F/\F_\so$ is a~quadra\-tic extension,
$\s$ is the Galois involution and
$\P$ is the mirabolic sub\-group of~$\G$.
We also write $\G'$ for the group $\GL_{n-1}(\F)$
considered as a subgroup of $\G$ in the
usual way, and $\P'$ for the mirabolic sub\-group of~$\G'$.
Denoting by $\U$ the unipotent radical of $\P$,
one has the semi-direct product decomposition $\P=\G' \U$.

We also assume that $\FC$ is a commutative ring with unit, such that $p$
is invertible in $\FC$ and there is a non-trivial 
$\FC$-character $\psi_\so$ of $\F_\so$.

Let $\psiu$ be the restriction to $\U$ 
of the standard $\s$-self-dual non-degenerate character $\psi$ of $\N$
defined by \eqref{Muller} for some non-zero $\d\in\F^\times$ of trace $0$.

Since $p$ is invertible in $\FC$ and $\G$ is locally pro-$p$,
there is a non-zero right invariant measure $dh$~on $\P'\U$ with values in $\FC$,
giving measure $1$ to some compact open subgroup.
Given any smooth~repre\-sentation $\tau$ of $\P'$ on an $\FC$-module $\V$,
we denote by $\tau\otimes\psiu$ the~re\-pre\-sentation of $\P'\U$ defined by 
\begin{equation*}
\tau\otimes\psiu : xu \mapsto \psiu(u)\tau(x)
\end{equation*} 
for $x\in\P'$ and $u\in\U$.
Following \cite{BZ76}, we set 
\[\Phi^+(\tau)= \ind_{\P'\U}^\P(\tau\otimes\psiu).\] 
This defines a functor from smooth $\FC$-representations of $\P'$ to smooth 
$\FC$-representations of $\P$.
Note that, since we use the unnormalized version of the functor $\Phi^+$ as in 
\cite{BZ76}, we do not have to worry about the existence of a square root of 
$q$ in $\FC$. 

We will write $\nu$ and $\nu_\so$ for the unramified characters
$g\mapsto|\det(g)|$ and $g\mapsto|\det(g)|_\so$,
respectively. 

\begin{proposition}
\label{kable's result}
For any smooth $\FC$-representation $\tau$ of $\P'$
and any character $\chi$ of $\P^\s$, one has an isomorphism:
\begin{equation*}
\Hom_{\P^\sigma}(\Phi^+(\tau),\chi)\simeq \Hom_{\P'^\s}(\tau ,\chi\nu_\so)
\end{equation*}
of $\FC$-modules.
\end{proposition}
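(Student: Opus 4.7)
The strategy is to apply Corollary~\ref{corollary equiv linear forms on iduced reps} from the previous appendix with $\GGG=\P$, $\HHH=\P'\U$, $\HHH'=\P^\sigma$ and $\tau$ replaced by $\tau\otimes\psiu$. This would identify $\Hom_{\P^\sigma}(\Phi^+(\tau),\chi)$ with the space of linear forms $\Ll$ on $\Cc_c^\infty(\P,\V)$ satisfying the two transformation properties
\begin{equation*}
\Ll(f_x)=\delta_{\P'\U}(x)^{-1}\Ll\bigl((\tau\otimes\psiu)(x)f\bigr)\quad(x\in\P'\U),\qquad \Ll(y\cdot f)=\chi(y)\Ll(f)\quad(y\in\P^\sigma).
\end{equation*}
Next, I would stratify $\Cc_c^\infty(\P,\V)$ by the $(\P'\U,\P^\sigma)$-double cosets in $\P$. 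Using that $\U$ is normal in $\P$, one has the identification $\P/\P'\U\simeq \G'/\P'\simeq \F^{n-1}\setminus\{0\}$ via the last row of the $\G'$-block, under which the $\P^\sigma$-action factors through the natural $\GL_{n-1}(\F_\so)$-action on $\F^{n-1}\setminus\{0\}$. The double coset of the identity corresponds to the $\GL_{n-1}(\F_\so)$-orbit of $e_{n-1}$, which is $\F_\so^{n-1}\setminus\{0\}$.

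For the double coset of the identity, with stabilizer $\P'^\sigma\U^\sigma$, a direct computation produces the desired term $\Hom_{\P'^\sigma}(\tau,\chi\nu_\so)$: this uses $\psiu|_{\U^\sigma}=\one$ (which holds exactly because $\tr_{\F/\F_\so}(\delta)=0$) and the automatic triviality of $\chi|_{\U^\sigma}$, together with the calculation $\delta_{\P'\U}\delta_{\P^\sigma}^{-1}|_{\P'^\sigma\U^\sigma}=\nu_\so$ which accounts for the modulus factor. The remaining task, which is the heart of the argument, is to prove that every other double coset contributes zero. Given a representative $g=g'\in\G'$ with $v=g'e_{n-1}\notin\F_\so^{n-1}$, the stabilizer of $g'\P'\U$ in $\P^\sigma$ contains $\U^\sigma$, and the two equivariance constraints would force the twisted character $(\psiu)^{g'}$ restricted to $\U^\sigma$ to equal $\chi|_{\U^\sigma}=\one$. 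An explicit calculation identifies $(\psiu)^{g'}|_{\U^\sigma}$ with the character $*\mapsto\psi_\so\bigl(\tr_{\F/\F_\so}(\delta(g'^{-1}*)_{n-1})\bigr)$ on $\U^\sigma\simeq\F_\so^{n-1}$, which is trivial if and only if the $(n-1)$-th row of $g'^{-1}$ lies in $\F_\so^{n-1}$, a condition equivalent (independently of the choice of representative in $g'\P'$) to $v\in\F_\so^{n-1}$. This gives the required contradiction.

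\textbf{Main obstacle.} The delicate point is making the informal Mackey/orbit decomposition of distributions rigorous in the $\FC$-linear setting of Corollary~\ref{corollary equiv linear forms on iduced reps}: one must show that a linear form $\Ll$ as above is completely determined by its restriction to the open piece $\Cc_c^\infty(\P'\U\cdot\P^\sigma,\V)$, which requires verifying that on each non-trivial $(\P'\U,\P^\sigma)$-orbit the local equivariant distributions vanish, a statement that reduces via the stabilizer analysis to the character computation above but must be run orbit by orbit through the (infinite) decomposition of $\F^{n-1}\setminus\{0\}$ under $\GL_{n-1}(\F_\so)$. A secondary technical point, easier but still requiring care, is the exact computation of the modulus factor yielding precisely $\nu_\so$ rather than some other unramified twist.
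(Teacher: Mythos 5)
Your plan starts from the same entry point as the paper -- Corollary~\ref{corollary equiv linear forms on iduced reps} applied with $\GGG=\P$, $\HHH=\P'\U$, $\ttt=\tau\otimes\psiu$ -- and it correctly isolates the two computational hearts of the matter: the twisted character $u_\so\mapsto\psiu(g u_\so g^{-1})$ on $\U^\s$ is trivial exactly when the relevant row of $g$ lies in $\F_\so^{n-1}$ (using $\tr_{\F/\F_\so}(\d)=0$), and the modulus bookkeeping produces precisely $\nu_\so$. But the organization is genuinely different from the paper's, and the difference matters for the obstacle you flag. The paper does \emph{not} perform a Mackey decomposition of $\P$ into $(\P'\U,\P^\s)$-double cosets. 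Instead it integrates over $\U$ against $\psiu^{-1}$ to get a surjection $\Aa:\Cc_c^\infty(\P,\V)\to\Cc_c^\infty(\P'\G'^\s,\V)$, shows that the adjoint $\Aa^*$ is a bijection onto the space of forms satisfying the two $\U$-conditions (the surjectivity being the support statement, handled by a single local argument as in Kable: off $\P'\G'^\s$ the twisted character of $\U^\s$ is non-trivial and locally constant in the group variable, so the distribution vanishes there all at once), then realizes $\P'\G'^\s$ as the homogeneous space $\P'^\s\backslash(\P'\times\G'^\s)$ and applies Corollary~\ref{corollary equiv linear forms on iduced reps} twice more. This completely sidesteps the ``orbit by orbit through the infinite decomposition'' problem you identify as your main obstacle: one never needs to enumerate the $\GL_{n-1}(\F_\so)$-orbits on $\F^{n-1}\setminus\F_\so^{n-1}$, only to kill distributions on the \emph{open complement} of the good set in one stroke.

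Two points in your write-up need correction. First, the topology is backwards: $\P'\U\P^\s$ is a \emph{closed} subset of $\P$ with empty interior (its image in $\P'\U\backslash\P\simeq\F^{n-1}\setminus\{0\}$ is $\F_\so^{n-1}\setminus\{0\}$, which is closed and nowhere dense), so it is not ``the open piece''; the correct logic is that the equivariant form kills all functions supported on the open complement and therefore factors through restriction to the closed good set -- the exact sequence runs in that direction. Second, the residual ``main obstacle'' is not merely technical in your formulation: as stated, a stratification argument over infinitely many orbits with nontrivial closure relations is not something the Bruhat-type machinery of Corollary~\ref{corollary equiv linear forms on iduced reps} hands you, so your proof is incomplete at its central step. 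The fix is exactly the paper's (or Kable's) one-shot local argument on the open complement, after which your computation on the remaining closed coset -- suitably reorganized as in the paper via the identification with $\ind^{\P'\times\G'^\s}_{\P'^\s}(\one\otimes\V)$ -- does yield $\Hom_{\P'^\s}(\tau,\chi\nu_\so)$.
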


\begin{proof}
First, we apply Corollary \ref{corollary equiv linear forms on iduced reps}
with $\GGG=\P$, $\HHH=\P'\U$ and $\ttt=\tau\otimes\psiu$.
Since the character $\d_{\P'\U}$ associated with $\P'\U$ is equal to 
$\nu^{2}$, we get an isomorphism of $\FC$-modules
from $\Hom_{\P^\s}(\Phi^+(\tau),\chi)$ to the space 
of all linear forms $\T$ on 
$\Cc_c^\infty(\P,\V)$ such that:
\begin{eqnarray}
\label{equation right-equiv} 
\T(g_\so\cdot f) &=& \chi(g'_\so)\cdot\T(f), \\ % \quad g_\so\in\G'^\s, \\
\label{equation right-equiv bis} 
\T(u_\so\cdot f) &=& \T(f), \\ % \quad u_\so\in\U^\s, \\
\label{equation left-equiv} 
\T(f_{g}) &=& \T(\nu(g)^{-2}\tau(g)f), \\ % \quad g\in\P', \\
\label{equation left-equiv bis} 
\T(f_u) &=& \psiu(u)\T(f), % \quad u\in\U
\end{eqnarray}
for all $g_\so\in\G'^\s$, $u_\so\in\U^\s$, 
$g\in\P'$, $u\in\U$ and $f\in\Cc_c^\infty(\P,\V)$.
We now consider the $\FC$-linear map
$\Aa$ from $\Cc_c^\infty(\P,\V)$ to $\Cc^\infty_c(\P'\G'^\s,\V)$
defined by
\begin{equation*}
\Aa (f) : x \mapsto \int\limits_\U \psiu^{-1}(u)f(ux)\ du
\end{equation*}
for all $f\in\Cc_c^\infty(\P,\V)$ and $x\in\P'\G'^\s$,
where $du$ is some right invariant measure on $\U$.
It is obtained by composing the map
\begin{equation*}
f \mapsto \left(x\mapsto\int\limits_\U \psiu^{-1}(u)f(ux)\ du\right),
\end{equation*}
with $x\in\G'$,
with the res\-triction map from $\Cc_c^\infty(\G',\V)$
to $\Cc_c^\infty(\P'\G'^\s,\V)$.
The former is surjective since $\Cc_c^\infty(\P,\V)$ canonically identifies with
$\Cc_c^\infty(\U,\FC)\otimes \Cc_c^\infty(\G',\V)$,
and so is the latter since $\P'\G'^\s$ is a closed subset of $\G'$ 
(for it is made of all matrices in $\G'$ the last row of which is fixed by $\s$).
Thus the adjoint map
\begin{equation}
\label{ChristianBuddenbrook}
\Aa^* : \Hom_{\FC}(\Cc_c^\infty(\P'\G'^\s,\V),\FC)
\to
\Hom_{\FC}(\Cc_c^\infty(\P,\V),\FC) 
\end{equation}
is injective.
We claim that its image is the space of all linear forms $\T$
satisfying \eqref{equation right-equiv bis} and \eqref{equation left-equiv bis}.

First, let us check that the image of $\Aa^*$ is contained in that space.
Indeed, given a linear form $\SS$ in the left hand side of
\eqref{ChristianBuddenbrook}
and $f\in \Cc_c^\infty(\P,\V)$, one has
$\Aa(f_u)=\psiu(u)\Aa(f)$
for all $u\in\U$ and
\begin{eqnarray*}
  \Aa(u_\so\cdot f)(x)
  &=& \int\limits_{\U} \psiu^{-1}(u) f(uxu_\so)\ du \\
  &=& \psiu^{-1}(xu_\so x^{-1}) \int\limits_{\U} \psiu^{-1}(u) f(ux)\ du 
\end{eqnarray*}
for all $x\in\P'\G'^\s$ and $u_\so\in\U^\s$.
Since $\psiu(xu_\so x^{-1})=1$ for all $x\in\P'\G'^\s$,
this is equal to $\Aa(f)(x)$ as expected.
(Note that we used the fact that $\psiu$ is trivial on $\U^\s$.)
To prove surjectivity, we follow the second paragraph of the proof of 
\cite[Proposition 1]{Kable} at p.~797.

Now consider a linear form $\SS$ on $\Cc_c^\infty(\P'\G'^\s,\V)$.
We check immediately that
$\Aa^*(\SS)=\SS\circ\Aa$ satisfies \eqref{equation right-equiv}
if and only if
\begin{equation}
\label{equation right-equiv 2} 
\SS(g'_\so \cdot f) = \chi(g'_\so)\cdot\SS(f)
\end{equation}
for all $f\in\Cc_c^\infty(\P'\G'^\s,\V)$ and $g'_\so\in \G'^\s$.
On the other hand,
$\SS\circ\Aa$ satisfies \eqref{equation left-equiv} if and only if
\begin{equation}
\label{equation left-equiv 2} 
\SS(f_{p'}) = \SS(\nu^{-1}(p') \tau(p')f)
\end{equation} 
for all $f\in\Cc_c^\infty(\P'\G'^\s,\V)$
and $p'\in \P'$.
Indeed, notice that
\begin{eqnarray*}
  \Aa(f_{p'})(x)
  &=& \int\limits_{\U} \psiu^{-1}(u) f(p'ux)\ du \\
  &=& \int\limits_{\U} \psiu^{-1}(p'up'^{-1}) f(up'x) \nu^{-1}(p')\  du \\
  &=& \int\limits_{\U} \psiu^{-1}(u) f(up'x) \nu^{-1}(p')\  du   
\end{eqnarray*}
for all $f\in\Cc_c^\infty(\P,\V)$, $x\in\P'\G'^\s$ and $p'\in\P'$,
where the second equality follows from the fact that the character
$\d_{\P'}$ associated with $\P'$ is $\nu$,
and the third one from the fact that $\P'$ normalizes $\psiu$.
It follows that $\Aa^*$ induces an isomorphism of $\FC$-modules
between:
\begin{enumerate}
\item
the space of linear forms $\T$ on $\Cc_c^\infty(\P,\V)$ satisfying 
\eqref{equation right-equiv}, \eqref{equation right-equiv bis}, 
\eqref{equation left-equiv} and \eqref{equation left-equiv bis}, 
and
\item
the space of linear forms $\SS$ on $\Cc_c^\infty(\P'\G'^\s,\V)$ satisfying
\eqref{equation right-equiv 2} and \eqref{equation left-equiv 2}.
\end{enumerate}
Now consider the map $(x,y)\mapsto x^{-1}y$
from $\P'\times\G'^\s$ onto $\P'\G'^\s$.
It identifies $\P'\G'^\s$ with the homo\-geneous space
$\P'^\s\backslash(\P'\times\G'^\s)$
where $\P'^\s=\P'\cap\G'^\s$ is diagonally embedded in
$\P'\times\G'^\s$.
This thus identifies the space $\Cc_c^\infty(\P'\G'^\s,\V)$
with the compact induction
$\ind^{\P'\cap\G'^\s}_{\P'^\s}(\one\otimes\V)$
where $\one\otimes\V$ denotes the trivial representation
of $\P'^\s$ on $\V$.
Namely, $f\in\Cc_c^\infty(\P'\G'^\s,\V)$
identifies with the function $\phi$ on
$\P'\cap\G'^\s$ defined by
$\phi(x,y)=f(x^{-1}y)$ for $(x,y)\in\P'\cap\G'^\s$.
This thus gives us an isomorphism of $\FC$-modules
between:
\begin{enumerate}
\item
the space of linear forms $\SS$ on $\Cc_c^\infty(\P'\G'^\s,\V)$ satisfying 
\eqref{equation right-equiv 2} and \eqref{equation left-equiv 2}, and
\item
the space of linear forms $\Q$ on $\ind^{\P'\cap\G'^\s}_{\P'^\s}(\one\otimes\V)$ such that 
\begin{equation}
\label{equation right-equiv 3} 
\Q((p',g'_\so)\cdot\phi)=\chi(g'_\so)\cdot\Q(\nu(p')\tau(p'^{-1})\phi) 
\end{equation}
for all $\phi\in\ind^{\P'\cap\G'^\s}_{\P'^\s}(\one\otimes\V)$
and $(p',g'_\so)\in \P' \times \G'^\s$. 
\end{enumerate}
We now apply Corollary \ref{corollary equiv linear forms on iduced reps} again,
with $\GGG=\P'\times\G'^\s$, $\HHH=\P'^\s$ and $\ttt=\one\otimes\V$.
Since the character $\d_{\P'^\s}$ associated with $\P'^\s$ is equal to 
$\nu_\so$, we get an isomorphism of $\FC$-modules between:
\begin{enumerate}
\item
the space of linear forms $\Q$ on $\ind^{\P'\cap\G'^\s}_{\P'^\s}(\one\otimes\V)$ 
satisfying \eqref{equation right-equiv 3}, and
\item
the space of linear forms $\L$ on $\Cc_c^\infty(\P'\times\G'^\s,\V)$
such that:
\begin{eqnarray}
\label{equation right-equiv 4} 
\L((p',g'_\so)\cdot\phi) &=& \chi(g'_\so)\cdot\L(\nu(p')\tau(p'^{-1})\phi), \\ 
\label{equation left-equiv 4} 
\L(\phi_{p'_\so}) &=& \nu_\so^{-1}(p'_\so)\cdot\L(\phi), 
\end{eqnarray}
for all $\phi\in\Cc_c^\infty(\P'\times\G'^\s,\V)$,
$p'_\so\in\P'^\s$ and $(p',g'_\so)\in\P'\times\G'^\s$.
\end{enumerate}
For $\phi$ and $\L$ as above,
we define $\phi^\vee:(x,y)\mapsto\phi(x^{-1},y^{-1})$
and $\M(\phi)=\L(\phi^\vee)$.
This defines an iso\-morphism of $\FC$-modules between:
\begin{enumerate}
\item
the space of linear forms $\L$ on $\Cc_c^\infty(\P'\times\G'^\s,\V)$ 
satisfying \eqref{equation right-equiv 4} and 
\eqref{equation left-equiv 4}, and
\item
the space of linear forms $\M$ on $\Cc_c^\infty(\P'\times\G'^\s,\V)$
such that:
\begin{eqnarray}
\label{equation right-equiv 5}
\M(p'_\so\cdot\phi) &=& \nu_\so(p'_\so)\cdot\M(\phi), \\ 
\label{equation left-equiv 5}
\M(\phi_{(p',g'_\so)}) &=& 
\M(\chi^{-1}(g'_\so )\nu^{-1}(p')\tau(p')\phi), 
\end{eqnarray}
for all $\phi\in\Cc_c^\infty(\P'\times\G'^\s,\V)$,
$p'_\so\in\P'^\s$ and $(p',g'_\so)\in\P'\times\G'^\s$.
\end{enumerate}
We now apply Corollary 
\ref{corollary equiv linear forms on iduced reps} again,
with $\GGG=\HHH=\P'\times\G'^\s$ and $\ttt=\tau\otimes\chi^{-1}$.
Since the character $\d_{\P'\times\G'^\s}$ associated with
$\P'\times\G'^\s$ is equal to
$\nu^{-1}\otimes 1$, we get an isomorphism of
$\FC$-modules between:
\begin{enumerate}
\item
the space of linear forms $\M$ on $\Cc_c^\infty(\P'\times\G'^\s,\V)$
satisfying \eqref{equation right-equiv 5} and 
\eqref{equation left-equiv 5}, and
\item
the space of linear forms $t$ on
$\ind^{\P'\times\G'^\s}_{\P'\times\G'^\s}(\tau\otimes\chi^{-1})$
such that
\begin{equation*}
% \label{finfin}
t(p'_\so\cdot\varphi) = \nu_\so(p'_\so)\cdot\varphi
\end{equation*}
for all 
$\varphi\in\ind^{\P'\times\G'^\s}_{\P'\times\G'^\s}(\tau\otimes\chi^{-1})$ 
and $p'_\so\in\P'^\s$. 
\end{enumerate}
Finally, one verifies that the map $\varphi\mapsto\varphi(1,1)$
from $\ind^{\P'\times\G'^\s}_{\P'\times\G'^\s}(\tau\otimes\chi^{-1})$
to $\V$ induces an     isomorphism of $\FC$-modules between the space of
linear forms $t$ as above and $\Hom_{\P'^\s}(\tau,\chi\nu_\so)$,
which ends the proof of the proposition.
\end{proof}

\subsection{A modular version of a result of Ok for cuspidal representations}
\label{AB3}

In this subsection, 
we generalize a result of Ok 
(\cite[Theorem~3.1.2]{Ok})
on irreducible complex repre\-sentations of $\G=\GL_n(\F)$.
More precisely, 
using Proposition \ref{kable's result}, we prove it for any 
\textit{cuspidal}~re\-presentation of $\G$ with coefficients in an 
algebraically closed field of characteristic different from~$p$.

In this subsection, $\FC$ is an algebraically closed field of characteristic 
different from $p$.

\begin{proposition}
\label{Ok in general}
Let $\pi$ be a cuspidal representation of $\G$ with coefficients in $\FC$.
Then the space $\Hom_{\P^\s}(\pi,\one)$ has dimension $1$.
If in addition $\pi$ is $\H$-distinguished, 
then we have 
\[\Hom_{\P^\s}(\pi,\one)=\Hom_{\G^\s}(\pi,\one).\]
\end{proposition}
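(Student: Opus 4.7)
The plan is to prove that $\Hom_{\P^\s}(\pi,\one)$ is one-dimensional by iterating Proposition \ref{kable's result}, reducing the computation to a trivial character on the trivial group; the second assertion will then follow from a dimension count.

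First, I would invoke the Kirillov model for the cuspidal representation $\pi$: the restriction $\pi|_\P$ is isomorphic to $\ind_\N^\P(\psi)$, which in turn is isomorphic to $(\Phi^+)^{n-1}(\one)$ where the iterated functor is applied along the chain of mirabolic subgroups $\P_1 \subset \P_2 \subset \cdots \subset \P_n = \P$ and $\one$ denotes the trivial representation of the trivial group $\P_1 = \{1\}$. This is established in the modular setting, with the unnormalized induction convention used here, in Vignéras' work \cite{Vig96} on representations of $\GL_n$ over local fields.

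Next, I would iterate Proposition \ref{kable's result} exactly $n-1$ times. Starting with $\chi_n = \one$ viewed as a character of $\P_n^\s$ and defining inductively $\chi_{k-1} = \chi_k|_{\P_{k-1}^\s} \cdot \nu_\so$ as a character of $\P_{k-1}^\s$ (using that $\P_{k-1}^\s \subseteq \P_k^\s$ since $\P_{k-1} \subset \G'_k \subset \P_k$ via the chain above), each iteration produces an isomorphism
\[
\Hom_{\P_k^\s}\!\left((\Phi^+)^{k-1}(\one),\chi_k\right) \;\simeq\; \Hom_{\P_{k-1}^\s}\!\left((\Phi^+)^{k-2}(\one),\chi_{k-1}\right).
\]
After $n-1$ applications, this chain of isomorphisms identifies $\Hom_{\P^\s}(\pi,\one)$ with $\Hom_{\P_1^\s}(\one,\chi_1)$. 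Since $\P_1 = \{1\}$ is the trivial group, this target space is one-dimensional over $\FC$.

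For the second assertion, the inclusion $\Hom_{\G^\s}(\pi,\one) \subseteq \Hom_{\P^\s}(\pi,\one)$ is automatic from $\P^\s \subseteq \G^\s$. If $\pi$ is distinguished, then the left-hand side is non-zero, so since the right-hand side has dimension exactly one, the inclusion must be an equality. The main potential obstacle in this plan is the careful verification of the Kirillov model identification $\pi|_\P \simeq (\Phi^+)^{n-1}(\one)$ in the modular setting with the unnormalized convention, together with the bookkeeping for the characters $\chi_k$ through the iteration; both are routine given \cite{Vig96} and Proposition \ref{kable's result}.
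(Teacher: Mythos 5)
Your proposal is correct and follows essentially the same route as the paper: identify $\pi|_\P$ with $\ind_\N^\P(\psi)=(\Phi^+)^{n-1}\Psi^+(\one)$ via \cite{BZ76} and \cite[III.1]{Vig96}, apply Proposition~\ref{kable's result} a total of $n-1$ times, and deduce the second assertion by the dimension count. Your explicit bookkeeping of the characters $\chi_k$ picking up factors of $\nu_\so$ at each step is a detail the paper leaves implicit, but it does not change the argument.
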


\begin{proof}
By \cite{BZ76} and \cite[III.1]{Vig96}, the restriction of $\pi$ to $\P$ is 
isomorphic to $\ind^\P_\N(\psi)$, 
where $\psi$ is the~stan\-dard $\s$-self-dual non-degenerate character 
of $\N$ which has been fixed at the beginning of \ref{AB2}.
This induced representation can be written $(\Phi^+)^{n-1}\Psi^+(\one)$,
where $\one$ denotes the trivial character of the trivial group, 
$\Psi^+(\one)$ is the trivial character of the (trivial) mirabolic subgroup 
$\P_1(\F)$ and $\Phi^+$ is the functor which has been defined in \ref{AB2}.
Applying $n-1$ times Proposition \ref{kable's result}, we get the expected result. 
\end{proof}

% \bibliographystyle{plain}
% \bibliography{TVandRNA}

\begin{thebibliography}{10}

\bibitem{AnandRoot}
U.~K. Anandavardhanan.
\newblock Root numbers of {A}sai {$L$}-functions.
\newblock {\em Int. Math. Res. Not.}, 2008.
\newblock Art. ID rnn125.

\bibitem{MR2063106}
U.~K. Anandavardhanan, Anthony~C. Kable, and R.~Tandon.
\newblock Distinguished representations and poles of twisted tensor
  {$L$}-functions.
\newblock {\em Proc. Amer. Math. Soc.}, 132(10):2875--2883, 2004.

\bibitem{AnandMatringe}
U.~K. Anandavardhanan and Nadir Matringe.
\newblock Test vectors for local periods.
\newblock {\em Forum Math.}, 29(6):1245--1260, 2017.

\bibitem{AnandMondal}
U.~K. Anandavardhanan and Amiya~Kumar Mondal.
\newblock On the degree of certain local {$L$}-functions.
\newblock {\em Pacific J. Math.}, 276(1):1--17, 2015.

\bibitem{AnandRajan}
U.~K. Anandavardhanan and C.~S. Rajan.
\newblock Distinguished representations, base change, and~re\-ducibility for
  unitary groups.
\newblock {\em Int. Math. Res. Not.}, (14):841--854, 2005.

\bibitem{BZ76}
I.~N. Bernstein and A.~V. Zelevinsky.
\newblock Representations of the group {${\rm GL}(n,F),$} where {$F$}
  is~a~lo\-cal non-{A}rchimedean field.
\newblock {\em Uspehi Mat. Nauk}, 31(3):5--70, 1976.

\bibitem{B-P18}
R. Beuzart-Plessis.
\newblock {A}rchimedean theory and $\epsilon$-factors for the
{A}sai {R}ankin-{S}elberg integrals.
\newblock {arXiv:1812.00053}, 2018.

\bibitem{BorelJacquet}
A.~Borel and H.~Jacquet.
\newblock Automorphic forms and automorphic representations.
\newblock In {\em Automor\-phic forms, representations and {$L$}-functions},
  Proc. Sympos. Pure Math., XXXIII, {P}art 1, pages 189--207. Amer. Math. Soc.,
  Providence, R.I., 1979.

\bibitem{BHLTL1}
Colin~J. Bushnell and Guy Henniart.
\newblock Local tame lifting for {${\rm GL}(N)$}. {I}. {S}imple characters.
\newblock {\em Inst. Hautes \'Etudes Sci. Publ. Math.}, (83):105--233, 1996.

\bibitem{BH98}
Colin~J. Bushnell and Guy Henniart.
\newblock Supercuspidal representations of {${\rm GL}_n$}: explicit {W}hittaker
  functions.
\newblock {\em J. Algebra}, 209(1):270--287, 1998.

\bibitem{BH99}
Colin~J. Bushnell and Guy Henniart.
\newblock Calculs de facteurs epsilon de paires pour {${\rm GL}_n$}
sur un corps local. I. 
\newblock {\em Bull. London Math. Soc.}, 31(5):534–-542, 1999. 

\bibitem{BHbook}
Colin~J. Bushnell and Guy Henniart.
\newblock {\em The local {L}anglands conjecture for {$\rm GL(2)$}}, volume 335
  of {\em Grundlehren der Mathematischen Wissenschaften}.
\newblock Springer-Verlag, Berlin, 2006.

\bibitem{BHEffectiveLC}
Colin~J. Bushnell and Guy Henniart.
\newblock To an effective local {L}anglands correspondence.
\newblock {\em Mem. Amer. Math. Soc.}, 231(1087):v+88, 2014.

\bibitem{BK}
Colin~J. Bushnell and Philip~C. Kutzko.
\newblock {\em The admissible dual of {${\rm GL}(N)$} via compact open
  subgroups}, volume 129 of {\em Annals of Mathematics Studies}.
\newblock Princeton University Press, Princeton, NJ, 1993.

\bibitem{BKviatypes}
Colin~J. Bushnell and Philip~C. Kutzko.
\newblock Smooth representations of reductive {$p$}-adic groups: structure
  theory via types.
\newblock {\em Proc. London Math. Soc. (3)}, 77(3):582--634, 1998.

\bibitem{BKsemi}
Colin~J. Bushnell and Philip~C. Kutzko.
\newblock Semisimple types in {${\rm GL}_n$}.
\newblock {\em Compositio Math.}, 119(1):53--97, 1999.

\bibitem{CasselsFrohlich}
J.~W. Cassels and A.~Fr\"ohlich.
\newblock {\em Algebraic number theory}.
\newblock Academic Press, London; Thompson Book Co., Inc., Washington, D.C.,
  1967.

\bibitem{CPS}
J.~W. Cogdell and I.~I. Piatetski-Shapiro.
\newblock Derivatives and {L}-functions for {$GL_n$}.
\newblock In {\em Representation theory, number theory, and invariant theory},
  volume 323 of {\em Progr. Math.}, pages 115--173. Birkh\"auser/Springer,
  Cham, 2017.

\bibitem{Flicker}
Yuval~Z. Flicker.
\newblock Twisted tensors and {E}uler products.
\newblock {\em Bull. Soc. Math. France}, 116(3):295--313, 1988.

\bibitem{Flicker-zeroes}
Yuval~Z. Flicker.
\newblock On zeroes of the twisted tensor {$L$}-function.
\newblock {\em Math. Ann.}, 297(2):199--219, 1993.

\bibitem{FrohlichQueyrut}
A.~Fr\"ohlich and J.~Queyrut.
\newblock On the functional equation of the {A}rtin {$L$}-function for
  characters of real representations.
\newblock {\em Invent. Math.}, 20:125--138, 1973.

\bibitem{Gan-Lomeli}
W.-T. Gan and L.~A. Lomeli.
\newblock Globalization of supercuspidal representations over function fields
  and applications. {P}reprint, 2016.

\bibitem{GK}
I.~M. Gelfand and D.~A. Kazhdan.
\newblock Representations of the group {${\rm GL}(n,K)$} where {$K$} is a local
  field.
\newblock In {\em Lie groups and their representations ({P}roc. {S}ummer
  {S}chool, {B}olyai {J}\'anos {M}ath. {S}oc., {B}udapest, 1971)}, pages
  95--118. Halsted, New York, 1975.

\bibitem{Henn-thesis}
Guy Henniart.
\newblock La conjecture de {L}anglands locale pour {${\rm GL}(3)$}.
\newblock {\em M\'em. Soc. Math. France (N.S.)}, (11-12):186, 1984.

\bibitem{Henn-L}
Guy Henniart.
\newblock Correspondance de {L}anglands et fonctions {$L$} des carr\'es
  ext\'erieur et sym\'etrique.
\newblock {\em Int. Math. Res. Not. IMRN}, (4):633--673, 2010.

\bibitem{HenniartLomeli}
Guy Henniart and Luis Lomel\'\i.
\newblock Characterization of {$\gamma$}-factors: the {A}sai case.
\newblock {\em Int. Math. Res. Not.}, (17):4085--4099, 2013.

\bibitem{Ichino-Lapid-Mao}
Atsushi Ichino, Erez Lapid, and Zhengyu Mao.
\newblock On the formal degrees of square-integrable representations of odd
  special orthogonal and metaplectic groups.
\newblock {\em Duke Math. J.}, 166(7):1301--1348, 2017.

\bibitem{JPSS}
H.~Jacquet, I.~I. Piatetskii-Shapiro, and J.~A. Shalika.
\newblock Rankin-{S}elberg convolutions.
\newblock {\em Amer. J. Math.}, 105(2):367--464, 1983.

\bibitem{JacquetRSarchi}
Herv\'e Jacquet.
\newblock Archimedean {R}ankin-{S}elberg integrals.
\newblock In {\em Automorphic forms and {$L$}-functions {II}. {L}ocal aspects},
  volume 489 of {\em Contemp. Math.}, pages 57--172. Amer. Math. Soc.,
  Providence, RI, 2009.

\bibitem{Kable}
Anthony~C. Kable.
\newblock Asai {$L$}-functions and {J}acquet's conjecture.
\newblock {\em Amer. J. Math.}, 126(4):789--820, 2004.

\bibitem{RKNM}
Robert Kurinczuk and Nadir Matringe.
\newblock Test vectors for local cuspidal {R}ankin--{S}elberg integrals.
\newblock {\em Nagoya Math. J.}, pages 1--23, 2017.

\bibitem{Kutzko77}
P.~C. Kutzko.
\newblock Mackey's theorem for nonunitary representations.
\newblock {\em Proc. Amer. Math. Soc.}, 64(1):173--175, 1977.

\bibitem{LomeliLfunctions}
Luis~Alberto Lomel\'\i.
\newblock On automorphic {$L$}-functions in positive characteristic.
\newblock {\em Ann. Inst. Fourier (Grenoble)}, 66(5):1733--1771, 2016.

\bibitem{Ronald}
I.~G. Macdonald.
\newblock {\em Symmetric functions and {H}all polynomials}.
\newblock The Clarendon Press, Oxford University Press, New York, second
  edition, 2015.

\bibitem{MatringeConjectures}
Nadir Matringe.
\newblock Conjectures about distinction and local {A}sai {$L$}-functions.
\newblock {\em Int. Math. Res. Not. IMRN}, (9):1699--1741, 2009.

\bibitem{MatringeManuscripta}
Nadir Matringe.
\newblock Distinguished representations and exceptional poles of the
  {A}sai-{$L$}-function.
\newblock {\em Manuscripta Math.}, 131(3-4):415--426, 2010.

\bibitem{MatringeGeneric}
Nadir Matringe.
\newblock Distinguished generic representations of {${\rm GL}(n)$} over
  {$p$}-adic fields.
\newblock {\em Int. Math. Res. Not. IMRN}, (1):74--95, 2011.

\bibitem{MatringeOffen}
Nadir Matringe and Omer Offen.
\newblock Gamma factors, root numbers, and distinction.
\newblock {\em Canad. J. Math.}, 70(3):683--701, 2018.

\bibitem{MSt}
Alberto M{\'\i}nguez and Vincent S{\'e}cherre.
\newblock Types modulo {$\ell$} pour les formes int\'erieures de {${\rm GL}_n$}
  sur un corps local non archim\'edien.
\newblock {\em Proc. Lond. Math. Soc. (3)}, 109(4):823--891, 2014.
\newblock With an appendix by Vincent S\'echerre and Shaun Stevens.

\bibitem{MurnaghanRepkaTAMS99}
Fiona Murnaghan and Joe Repka.
\newblock Reducibility of some induced representations of {$p$}-adic~uni\-tary
  groups.
\newblock {\em Trans. Amer. Math. Soc.}, 351(1):193--210, 1999.

\bibitem{Ok}
Youngbin Ok.
\newblock {\em Distinction and gamma factors at $1/2$: supercuspidal case}.
\newblock PhD thesis, Columbia University, 1997.

\bibitem{StPa}
Vytautas Paskunas and Shaun Stevens.
\newblock On the realization of maximal simple types and epsilon factors of
  pairs.
\newblock {\em Amer. J. Math.}, 130(5):1211--1261, 2008.

\bibitem{Pra92}
Dipendra Prasad.
\newblock Invariant forms for representations of {${\rm GL}_2$} over a local
  field.
\newblock {\em Amer. J. Math.}, 114(6):1317--1363, 1992.

\bibitem{PrasadDUKE}
Dipendra Prasad.
\newblock On a conjecture of {J}acquet about distinguished representations of
  {${\rm GL}(n)$}.
\newblock {\em Duke Math. J.}, 109(1):67--78, 2001.

\bibitem{Prasad-SP}
Dipendra Prasad and Rainer Schulze-Pillot.
\newblock Generalised form of a conjecture of {J}acquet and a local
  consequence.
\newblock {\em J. Reine Angew. Math.}, 616:219--236, 2008.

\bibitem{RodierWhittaker}
Fran\c cois Rodier.
Whittaker models for admissible representations of reductive {$p$}-adic split 
groups. 
In \textit{Harmonic analysis on homogeneous spaces},
Proc. Sympos. Pure Math. XXVI, 
pages 425--430, Amer. Math. Soc., Providence, R.I., 1973.

\bibitem{VS}
Vincent S{\'e}cherre.
\newblock Supercuspidal representations of {GL}$_n({F})$ distinguished by a
  {G}alois involution.
\newblock 2018.
\newblock Preprint.

\bibitem{Sh-certain}
Freydoon Shahidi.
\newblock On certain {$L$}-functions.
\newblock {\em Amer. J. Math.}, 103(2):297--355, 1981.

\bibitem{Sh-Fourier}
Freydoon Shahidi.
\newblock Fourier transforms of intertwining operators and {P}lancherel
  measures for {${\rm GL}(n)$}.
\newblock {\em Amer. J. Math.}, 106(1):67--111, 1984.

\bibitem{Sh-Plancherel90}
Freydoon Shahidi.
\newblock A proof of {L}anglands' conjecture on {P}lancherel measures;
  complementary series for {$p$}-adic groups.
\newblock {\em Ann. of Math. (2)}, 132(2):273--330, 1990.

\bibitem{S2}
Shaun Stevens.
\newblock Double coset decompositions and intertwining.
\newblock {\em Manuscripta Math.}, 106(3):349--364, 2001.

\bibitem{StevensIntertwining}
Shaun Stevens.
\newblock Intertwining and supercuspidal types for {$p$}-adic classical groups.
\newblock {\em Proc. London Math. Soc. (3)}, 83(1):120--140, 2001.

\bibitem{TateCorvallis}
J.~Tate.
\newblock Number theoretic background.
\newblock In {\em Automorphic forms, representations and {$L$}-func\-tions},
  Proc. Sympos. Pure Math. XXXIII, {P}art 2, pages 3--26. Amer. Math. Soc.,
  Providence, R.I., 1979.

\bibitem{Vig96}
Marie-France Vign{\'e}ras.
\newblock {\em Repr\'esentations {$\ell$}-modulaires d'un groupe r\'eductif
  {$p$}-adique avec {$\ell\ne p$}}, volume 137 of {\em Progress in
  Mathematics}.
\newblock Birkh\"auser Boston Inc., Boston, MA, 1996.

\bibitem{warner}
G.~Warner. 
\newblock {\em Harmonic analysis on semi-simple Lie groups I.}
\newblock Springer Verlag, New York, 1972.

\end{thebibliography}

% \author{U.~K.~Anandavardhanan}
{\small\textsc{Department of Mathematics, Indian Institute of Technology Bombay, 
Mumbai 400076, India}, 
\textit{E-mail:} \verb!anand@math.iitb.ac.in!}

% \author{Robert Kurinczuk}
{\small\textsc{Department of Mathematics, Imperial College London, 
London SW7 2AZ, United Kingdom}, 
\textit{E-mail:} \verb!robkurinczuk@gmail.com!}

% \author{Nadir Matringe}
{\small\textsc{Université de Poitiers, 
Laboratoire de Mathématiques et Applications, 
% Téléport 2, BP 30179, Boulevard Marie et Pierre Curie, 
86962, Futuroscope Chasseneuil Cedex, France},
\textit{E-mail:} \verb!nadir.matringe@math.univ-poitiers.fr!}

% \author{Vincent Sécherre}
{\small\textsc{Laboratoire de Mathémati\-ques de Versailles, 
UVSQ, CNRS, Université Paris-Saclay, 78035, Versailles, France},
\textit{E-mail:} \verb!vincent.secherre@math.uvsq.fr!}

% \author{Shaun Stevens}
{\small\textsc{School of Mathematics, 
University of East Anglia, Norwich NR4 7TJ, United Kingdom},
\textit{E-mail:} \verb!Shaun.Stevens@uea.ac.uk!}

\end{document}